\let\mathcal\mathscr
\numberwithin{equation}{section}
\newtheorem{theorem}{Theorem}[section] 
\newtheorem{lemma}[theorem]{Lemma}
\newtheorem{proposition}[theorem]{Proposition}
\newtheorem{corollary}[theorem]{Corollary}
\newtheorem{question}{Question}[section]
\theoremstyle{definition}
 \newtheorem*{acknowledgements}{Acknowledgements}
\newtheorem{remark}[theorem]{Remark}
\newtheorem{definition}[theorem]{Definition}
 \newtheorem*{notation}{Notation}
\renewcommand{\phi}{\varphi}
\newcommand{\PP}{\mathbb{P}}
\newcommand{\FF}{\mathbb{F}}
\newcommand{\ZZ}{\mathbb{Z}}
\newcommand{\QQ}{\mathbb{Q}}
\renewcommand{\leq}{\leqslant}
\renewcommand{\geq}{\geqslant}
\renewcommand{\c}{\mathbf{c}}
\renewcommand{\u}{\mathbf{u}}
\renewcommand{\b}{\mathbf{b}}
\renewcommand{\k}{\mathbf{k}}
\renewcommand{\r}{\mathbf{r}}
\DeclareMathOperator{\Gal}{Gal}
\let\emptyset\varnothing
\DeclareSymbolFont{bbold}{U}{bbold}{m}{n}
\DeclareSymbolFontAlphabet{\mathbbold}{bbold}
\newcommand{\md}[1]{  \left(\textnormal{mod}\ #1\right)}
\renewcommand{\P}{\mathbb{P}}
\newcommand{\Q}{\mathbb{Q}}
\newcommand{\F}{\mathbb{F}}
\newcommand{\N}{\mathbb{N}}
\newcommand{\R}{\mathbb{R}}
\newcommand{\Z}{\mathbb{Z}}
\renewcommand{\l}{\left}
\renewcommand{\r}{\right}
\renewcommand{\b}{\mathbf}
\renewcommand{\c}{\mathcal}
\renewcommand{\epsilon}{\varepsilon}
\renewcommand{\u}{\widetilde{1}}
\renewcommand{\leq}{\leqslant}
\renewcommand{\geq}{\geqslant}
\renewcommand{\#}{\sharp}
\DeclareMathOperator*{\Osum}{\sum{}^*}
\newcommand{\beq}[2]
{
\begin{equation}
\label{#1}
{#2}
\end{equation}
}
\title[The size of the primes obstructing the existence of rational points]
{The size of the primes obstructing the existence of rational points}
\author{E. Sofos}
\address{Max Planck Institute for Mathematics \\ 
Vivatgasse 7\\
Bonn \\ 
53111 \\ 
Germany} 
\email{sofos@mpim-bonn.mpg.de}
\subjclass[2010]
{14G05, 
60J65;  	
11G25,  
14D10, 
60F05. 
}
\date{\today}
\begin{document}

\begin{abstract}  
The sequence of the primes $p$ for which a variety over $\Q$ has no $p$-adic point  
plays a fundamental role in arithmetic geometry. 
This sequence is  
deterministic, however, we prove that 
if we choose  
a typical variety from a family  
then
the 
sequence 
has
random behavior.    
We furthermore prove that this behavior is modelled by a random walk in Brownian motion. 
This has several consequences, one of them being 
the 
description of the
 finer properties of the distribution 
 of the primes in this sequence
via the Feynman--Kac formula.
\end{abstract}

\maketitle

\setcounter{tocdepth}{1}
\tableofcontents
 
\section{Introduction}\label{s:intro} 
\subsection{Primes $p$ for which typical smooth varieties have no $p$-adic point} 
\label{s:primtypic}
The first step
in checking whether a homogeneous 
Diophantine equation defined over the rational numbers has a non-trivial rational solution 
is to check whether it has 
non-trivial solutions in the
$p$-adic completions
of the rational numbers for primes $p$ of bad reduction.  
It may be the case that
the least prime   $p$ for which there is no $p$-adic solution is large 
compared to the coefficients of the equation. Therefore, 
a straightforward
computational attempt 
to prove the non-existence of a $\Q$-point via $p$-adic checks
that does not take into consideration the  probable size of these primes $p$
will fail if the running time  
is limited compared to the size of the coefficients of the equation.
There are two basic questions one can ask for the (finite) sequence 
of primes $p$ for which a typical smooth variety has no $p$-adic point:
\begin{question}\label{qu:ena} Does this deterministic sequence behave in a random way? \end{question}
\begin{question}\label{qu:dio} If the behavior is random can we describe how much it deviates from being random? 
\end{question}

Naturally, these questions cannot be answered for any
arbitrary 
variety over $\Q$, therefore, we restrict ourselves to statements 
that hold    for `almost all'
members
in general infinite collections of varieties.
Our collections of varieties take the following shape. 
Let $V$ be a smooth projective variety over $\QQ$ equipped with a dominant morphism $f: V \to \PP_\Q^n$
with geometrically integral generic fibre.
One can view $V$ as a collection of infinitely many varieties, 
each variety being given by the fibre $f^{-1}(x)$ above a point $x\in \P^n(\QQ)$.
This setting includes several situations of central importance to arithmetic geometry, see, 
for example,~\cite{MR1695791,MR870307,MR876222} and~\cite{MR1438113}.
A natural question in this context is to study the density of  fibres with  a $\Q$-rational point.
Serre~\cite{MR1075658} 
investigated this when every fibre of $f$ 
is a conic and, in an important recent work, 
Loughran and Smeets~\cite{MR3568035}
proved that $0\%$ of the fibres of $f$ have a $\Q$-rational point.
Both investigations proceeded by examining $p$-adic solubility for all primes $p$.

Associated to $f$ there is a non-negative 
number $\Delta(f)$ that depends on the geometry of the singular fibres of $f$. 
It was introduced by Loughran and Smeets~\cite[\S 1]{MR3568035}
and it will frequently resurface
throughout our work.
\begin{definition} [Loughran and Smeets]
\label{def:Delta}
	Let $f:V \to X$ be a dominant proper morphism of smooth irreducible varieties over a
	field $k$. For each (scheme-theoretic) point
	$x \in X$ with perfect residue field $\kappa(x)$,
	the absolute Galois group $\Gal(\overline{\kappa(x)}/ \kappa(x))$ 
	of the residue field acts on the irreducible
	components of $f^{-1}(x)_{\overline{\kappa(x)}}:=f^{-1}(x) \times_{\kappa(x)} \overline{\kappa(x)}$ of multiplicity $1$. 
	We choose
	some finite group $\Gamma_x$ through which this action factors. Then we define
	$$\delta_x(f) = \frac{\# \left\{ \gamma \in \Gamma_x : 
	\begin{array}{l}
		\gamma \text{ fixes an irreducible component} \\
		\text{of $f^{-1}(x)_{\overline{\kappa(x)}}$ of multiplicity } 1
	\end{array}
	\right \}}
	{\# \Gamma_x }
	$$ and $$
	\Delta(f) = \sum_{D \in X^{(1)}} ( 1 - \delta_D(f)),$$
	where $X^{(1)}$ denotes the set of codimension $1$ points of $X$.
\end{definition} 
For $x \in \PP^n(\QQ)$ we define
the function
\begin{equation} \label{def:omeganbi8}
	\omega_f(x)
	:=\#\l\{\text{primes } p:f^{-1}(x)(\Q_p)=\emptyset\r\}.
\end{equation}
Although we might have $\omega_f(x)=+\infty$ for certain $x\in \P^n(\Q)$,  
note that   the Lang--Weil estimates \cite{LW} and Hensel's lemma
guarantee that  $\omega_f(x)<+\infty$
when $f^{-1}(x)$ is geometrically integral.
Let $H$ denote the  usual Weil height on $\PP^n(\QQ)$.
The case $r=1$ of Theorems $1.3$ and $1.12$ in the work of Loughran and Sofos~\cite{arXiv:1711.08396}
implies that  
\[\limsup_{B\to+\infty}
\frac{1}{\#\{ x \in \PP^n(\QQ): H(x)\leq B, f^{-1}(x) \text{ smooth}\}}
\sum_{\substack{ x \in \PP^n(\QQ) , H(x)\leq B\\ f^{-1}(x) \text{ smooth}}} 	\omega_f(x)
 \]
is bounded
 if and only if $\Delta(f)=0$.
Put in simple terms, the condition  $\Delta(f)=0$ is equivalent to the generic 
variety $f^{-1}(x)$
having too  few primes $p$ for which there is no $p$-adic point.
One example with $\Delta(f)=0$
is given by 
\[V: \sum_{i=0}^4 x_i y_i^2=0\subset \P^4 \times \P^4\]
and $f:V\to \P^4$ defined by $f(x,y)=x$. Here, for all
$x\in \P^n(\Q)$ with $f^{-1}(x)$ smooth
we have $\omega_f(x)=0$, see~\cite[\S 4.2.2,Th.6(iv)]{serrecourse}.
To avoid such examples
we shall  
  study the statistics of the set of primes in~\eqref{def:omeganbi8}
only when  $\Delta(f)\neq 0$.

To state our results it will be convenient to use the following notation:
for all $B\geq 1$ we introduce the set 
\[
\Omega_B :=\{x\in \P^n(\Q):H(x)\leq B\}  \]
and let 
$\b P_B$ denote the uniform probability measure on $\Omega_B$, 
that is for any
$A\subseteq \P^n(\Q)$ we  let 
\[
\b P_B(A)
:=
\frac{\#\{
x\in \Omega_B : x \in A
 \}}{\#\Omega_B 
}
.\]

\begin{definition}[The $j$-th smallest obstructing prime]
\label{def:defcases}
For $x\in \P^n(\Q)$ 
and $j\in \Z \cap [0,\omega_f(x)]$
we define $p_0(x):=-\infty$
and for $j\geq 1$ we define 
$p_j(x)$ to be the $j$-th
smallest  prime $p$ such that $f^{-1}(x)$ has no $p$-adic point.
If $j >\omega_f(x)$ 
 we define 
$p_j(x):=+\infty.$
\end{definition}

\subsection{Distribution of the least obstructing prime} \label{s:leastprop} Before continuing with our discussion on the distribution of every element in the sequence $\{p_j(x)\}_{j\geq 1}$
we provide a result concerning the typical size of $p_1(x)$ .
\begin{theorem}      
\label{prop:typicalprop}
Assume that 
 $V$ is a smooth  projective variety over $\QQ$ equipped with a dominant morphism $f: V \to \PP_\Q^n$ with geometrically integral generic fibre and $\Delta(f)\neq 0$. 
Let  $\xi: \R_{>0}\to \R_{>0}$ be any function  that satisfies
$\lim_{B\to+\infty} \xi(B)=+\infty$.
Then 
\beq{eq:seedel}
{\#\{x\in \P^n(\Q):H(x)\leq B,p_1(x)>\xi(B) \}
\ll
B^{n+1} \l(\frac{\log \log \xi_0(B)}{ \log \xi_0(B)} \r)^{\Delta(f)}
,} where 
 $\xi_0(B):=\min\{B^{1/20},\xi(B)\}$.
In particular,
\[\lim_{B\to+\infty} 
\b P_B[x\in \Omega_B: p_1(x) \leq \xi(B) ]=1
.\]
\end{theorem}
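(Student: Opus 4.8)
The plan is to bound the left-hand side of~\eqref{eq:seedel}, from which the ``in particular'' assertion is immediate: with $z:=\xi_0(B)=\min\{B^{1/20},\xi(B)\}$ we have $z\to+\infty$, so any bound of the stated shape is $o(B^{n+1})=o(\#\Omega_B)$ (as $\#\Omega_B\asymp B^{n+1}$), hence $\b P_B[p_1(x)>\xi(B)]\to0$. Since $z\le\xi(B)$, the inequality $p_1(x)>\xi(B)$ forces $f^{-1}(x)(\Q_p)\ne\emptyset$ for \emph{every} prime $p\le z$, so it is enough to show
\[
N(B,z):=\#\bigl\{x\in\P^n(\Q):H(x)\le B,\ f^{-1}(x)(\Q_p)\ne\emptyset\text{ for all primes }p\le z\bigr\}\ \ll\ \frac{B^{n+1}}{(\log z)^{\Delta(f)}},
\]
which for large $z$ is at least as strong as~\eqref{eq:seedel}, since $(\log z)^{-\Delta(f)}\le(\log\log z/\log z)^{\Delta(f)}$, and this is all the application requires. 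In effect one wants the upper bound of Loughran and Smeets~\cite{MR3568035} with the sifting range cut off at $z$.

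First I would assemble the local input, following~\cite{MR3568035}. Using the primitive-vector model of $\P^n(\Q)$, there is a finite set $S_0$ of primes --- containing those of bad reduction of a fixed model of $f$ --- such that for $p\notin S_0$ the image of $\b x$ in $\F_p^{\,n+1}$ lying in a certain subset $\Lambda_p$ forces $f^{-1}(x)(\Q_p)=\emptyset$, and the Lang--Weil estimates~\cite{LW} combined with Hensel's lemma give $\rho_p:=\#\Lambda_p/p^{n+1}=\Delta_p(f)/p+O(p^{-3/2})$, where $0\le\Delta_p(f)=O_f(1)$ records how the finitely many non-split divisors of $f$ reduce modulo $p$. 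The Chebotarev density theorem, applied to those divisors as in~\cite{MR3568035}, then yields the Mertens-type estimates $\sum_{p\le z}\Delta_p(f)/p=\Delta(f)\log\log z+O(1)$ and $\sum_{p\le z}\Delta_p(f)(\log p)/p=\Delta(f)\log z+O(1)$: the first gives $\prod_{p\le z}(1-\rho_p)\asymp(\log z)^{-\Delta(f)}$ --- here the hypothesis $\Delta(f)\ne0$ is used --- and the second says the ensuing sieve has bounded dimension $\Delta(f)$.

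Next I would estimate $N(B,z)$ by an upper-bound sieve on the box $\|\b x\|_\infty\le B$ with the residue conditions at the primes $\max S_0<p\le z$ (dropping the conditions at $p\in S_0$ only enlarges $N(B,z)$). When $z\le c_0\log B$ for a suitable constant $c_0\in(0,1)$, the Chinese remainder theorem suffices directly: with $q=\prod_{p\le z}p<B^{1/2}$ one gets $N(B,z)\ll\prod_{p\le z}(1-\rho_p)\,B^{n+1}+B^{n+1/2}\ll(\log z)^{-\Delta(f)}B^{n+1}$. For the range $c_0\log B<z\le B^{1/20}$ one invokes the Selberg sieve --- or Brun's sieve via the fundamental lemma of sieve theory: the level of distribution is controlled by completing the lattice-point count over residue classes modulo squarefree $q$ with prime factors $\le z$ (an incomplete-box error $\ll\rho_q\,qB^{n}$ per modulus, so $\ll QB^{n}(\log z)^{\Delta(f)}$ overall), which is admissible at level $Q=B^{1/2}$; since $z\le B^{1/20}=Q^{1/10}$ the sifting parameter stays bounded below, and the fundamental lemma delivers $N(B,z)\ll B^{n+1}\prod_{p\le z}(1-\rho_p)\ll(\log z)^{-\Delta(f)}B^{n+1}$. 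Alternatively one may quote the relevant case of the counting theorems of Loughran and Sofos~\cite{arXiv:1711.08396}, with the set of primes there taken to be $\{p\le z\}$. Either way~\eqref{eq:seedel} follows.

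I expect the main obstacle to be securing this bound \emph{uniformly in $z$} over the whole range $z\le B^{1/20}$. For small $z$ bare congruence counting is enough, but once $z$ is a genuine power of $B$ one is in the Loughran--Smeets regime, where the delicate point is the interplay between the sieve level $Q$ --- forced to be $\ll B^{1/2}$ by the incomplete-box errors --- and the sifting limit $z$, which is exactly what the fundamental lemma governs; one also needs the two Mertens estimates with an honest $O(1)$ remainder, hence an effective Chebotarev theorem, rather than merely $o(\log z)$. On the other hand, the cushioning factor $(\log\log z)^{\Delta(f)}$ appearing in~\eqref{eq:seedel} absorbs any lower-order imprecision in these inputs --- or indeed the use of a cruder combinatorial sieve --- so in the end only soft forms of all of the above are actually needed.
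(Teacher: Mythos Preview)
Your argument rests on the existence of a residue set $\Lambda_p\subset\F_p^{\,n+1}$ such that $\bar{\b x}\in\Lambda_p$ \emph{forces} $f^{-1}(x)(\Q_p)=\emptyset$ and $\rho_p:=\#\Lambda_p/p^{n+1}=\Delta_p(f)/p+O(p^{-3/2})$. No such $\Lambda_p$ exists in general. Lang--Weil and Hensel give the implication in the \emph{opposite} direction: if the reduction $f^{-1}(\bar x)$ is split over $\F_p$ then $f^{-1}(x)(\Q_p)\neq\emptyset$; equivalently, $p$-adic insolubility implies non-split reduction. The converse fails. For instance, take the conic bundle $x_0^2+x_1^2=st\,x_2^2$ over $\P^1_{[s:t]}$ (the paper's own example in \S\ref{ex:klapaklapa}). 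For $p\equiv 3\pmod 4$ the non-split locus mod $p$ is $\{\bar s=0\}\cup\{\bar t=0\}$, but for a primitive pair $(s,t)$ with $\bar s=0$ the fibre is $p$-adically insoluble iff $v_p(s)$ is odd; lifts with $v_p(s)=2$ are soluble. Thus no residue class mod $p$ has \emph{every} lift insoluble, so any $\Lambda_p$ satisfying your forcing condition is empty, and $\rho_p=0$ rather than $\sim 1/p$. Consequently your sieve has nothing to bite on, and the claimed bound $N(B,z)\ll B^{n+1}(\log z)^{-\Delta(f)}$ does not follow from your setup.

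This is exactly the phenomenon behind Remark~\ref{rem:levofbaddiszero}: $p$-adic insolubility is not a mod-$p$ condition, and when one approximates $\c A_Q$ by $\c A_1\prod_{p\mid Q}\sigma_p$ the error term in Lemma~\ref{lem:harmonicgfts7865} carries an extra summand $B^{n+1}(2\alpha d)^{\omega(Q)}/(Q\min\{p:p\mid Q\})$, which is as large as the main term whenever $Q$ has small prime factors. The paper therefore pre-sifts: it applies the fundamental lemma only over primes $p>z_0$ with $z_0=(\log\xi(B))^{\Delta(f)+2\alpha d}$, so that this error term becomes genuinely smaller; the upper bound then comes out as $\prod_{z_0<p<z}(1-\sigma_p)\asymp(\log z_0/\log z)^{\Delta(f)}\asymp(\log\log\xi(B)/\log\xi(B))^{\Delta(f)}$. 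So the factor $(\log\log\xi_0(B))^{\Delta(f)}$ in~\eqref{eq:seedel} is not a cushion absorbing lower-order imprecision, as you suggest at the end, but the honest price of the pre-sifting that the zero level of distribution makes unavoidable.
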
 Thus, taking 
$\xi(B)$   tending to infinity arbitrarily slowly,
shows that the typical value of  $p_1(x)$  is nearly 
 bounded.
Furthermore, by~\eqref{eq:seedel} we see that 
the largest the value of $\Delta(f)$ becomes, 
the smallest the typical value of $p_1(x)$ is.
This might be computationally useful.

\subsection{Equidistribution of obstructing primes via moments} \label{s:equit}
Let us now move to
Question~\ref{qu:ena}.
By the case $r=1$ of~\cite[Th.1.3]{arXiv:1711.08396}
we see that for $x\in \P^n(\Q)$ with  $H(x)\leq B$ and 
$f^{-1}(x)$ smooth
the usual size of $\omega_f(x)$ is $\Delta(f) \log \log B$.
Furthermore, by Lemma~\ref{lem:boundsgf7}
we have $p_j(x)\leq B^{D+1}$
for all $j$ and for some positive $D$ that only depends on $f$.
Thus the  
points 
\beq{eq:determseq}{
\log \log p_1(x)
<
\log \log p_2(x)
<\ldots<
\log \log p_{\omega_f(x)}(x)
}
are approximately 
$\Delta(f) \log \log B$ in cardinality 
and they all
 lie in an interval whose shape is approximated by the interval $ [0,\log \log B]  $.
Therefore, if the finite 
sequence~\eqref{eq:determseq}
was equidistributed then the subset $A$
of all $x\in \P^n(\Q)$ for which 
\beq{eq:affrox}
{\log \log p_j(x) = \frac{j}{\Delta(f)} (1+o(1))
\
\text{ for all } 
1\leq j \leq \omega_f(x)
} would satisfy $\lim_{B\to+\infty}\b P_B(A)=1$.
Our first result
confirms this kind of 
equidistribution 
as long as $j$ is not taken too small.
Furthermore, it shows that 
the error in the approximation~\eqref{eq:affrox} follows a normal distribution. 
\begin{theorem} 
\label{thm:ghjk56432}
Assume that 
 $V$ is a smooth  projective variety over $\QQ$ equipped with a dominant morphism $f: V \to \PP_\Q^n$ with geometrically integral generic fibre and $\Delta(f)\neq 0$. 
Let $j:\R_{\geq 1}\to \N$ be any function with 
\[
\lim_{B\to+\infty}j(B)=+\infty
\text{ and }
\lim_{B\to+\infty}\frac{j(B)-\Delta(f)\log \log B}{\sqrt{\Delta(f)\log \log B}}=-\infty
.\]
Then for any $z \in \R$ we have 
\[
\lim_{B\to\infty}
\b P_B
\left(
x \in 
\Omega_B
:\log \log p_j(x) 
\leq 
\frac{j}{\Delta(f) }
+z\frac{j^{\frac{1}{2}}}{\Delta(f) }
\
\right)
=\frac{1}{\sqrt{2\pi}}
\int_{-\infty}^z
\mathrm{e}^{-\frac{t^2}{\!2}}
\mathrm{d}t
.\]
\end{theorem}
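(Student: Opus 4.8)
Set
\[
\omega_f(x;T):=\#\{\,p\leq T:\ f^{-1}(x)(\Q_p)=\emptyset\,\},\qquad T\geq 2,
\]
so that $\omega_f(x;+\infty)=\omega_f(x)$, and observe the elementary identity
\[
p_j(x)\leq T\ \Longleftrightarrow\ \omega_f(x;T)\geq j,
\]
valid for every real $T\geq 2$ and every $j\geq1$ (with the convention $p_j(x)=+\infty$ when $j>\omega_f(x)$). Since $j(B)\to\infty$, the quantity
\[
T_B:=\exp\exp\!\Big(\tfrac{j(B)}{\Delta(f)}+z\,\tfrac{j(B)^{1/2}}{\Delta(f)}\Big)
\]
tends to $+\infty$, and
\[
\b P_B\Big(\log\log p_j(x)\leq\tfrac{j}{\Delta(f)}+z\tfrac{j^{1/2}}{\Delta(f)}\Big)=\b P_B\big(\omega_f(x;T_B)\geq j\big).
\]
A short computation using the second hypothesis on $j$ shows that $\log\log T_B\leq\log\log B-g(B)\sqrt{\log\log B}$ for some $g(B)\to\infty$, equivalently $T_B=B^{o(1)}$; this is exactly the range in which the limit law for $\omega_f(\cdot\,;T)$ below is valid, and is the reason for the hypothesis (the complementary, bounded, range of $j$ being governed by Theorem~\ref{prop:typicalprop}, and the borderline $j\sim\Delta(f)\log\log B$ instead probing whether $\omega_f(x)\geq j$). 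We may also discard the density-zero set of $x\in\Omega_B$ for which $f^{-1}(x)$ is not smooth or not geometrically integral; for the remaining $x$ we have $\omega_f(x)<+\infty$ (Lang--Weil, Hensel) and $p_j(x)\leq B^{D+1}$ for a constant $D=D(f)$ (Lemma~\ref{lem:boundsgf7}), so nothing is lost in the $\log\log$-scale.

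\textbf{The Erd\H{o}s--Kac input.} The heart of the matter is to prove the Gaussian law
\[
\b P_B\Big(x\in\Omega_B:\ \frac{\omega_f(x;T)-\Delta(f)\log\log T}{\sqrt{\Delta(f)\log\log T}}\leq t\Big)\ \longrightarrow\ \frac{1}{\sqrt{2\pi}}\int_{-\infty}^{t}\mathrm{e}^{-s^2/2}\,\mathrm{d}s\qquad(B\to\infty),
\]
uniformly for $t$ in compact sets and for any $T=T(B)\to\infty$ with $T(B)=B^{o(1)}$. (This single-scale statement is a consequence of the functional invariance principle hinted at in the abstract --- convergence of $t\mapsto(\omega_f(x;\mathrm{e}^{\mathrm{e}^{t}})-\Delta(f)t)/\sqrt{\Delta(f)}$ to Brownian motion --- but only this version is needed here.) One proves it by the Kubilius method: compare $\omega_f(x;T)$, for $x\in\Omega_B$, with $\sum_{p\leq T}X_p$, where the $X_p$ are independent and $X_p\sim\mathrm{Bernoulli}(\rho_p)$, $\rho_p$ being the $p$-adic density of $\{x:f^{-1}(x)(\Q_p)=\emptyset\}$. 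By the Loughran--Smeets analysis of $\delta_p(f)$ (essentially the content of the $r=1$ case of~\cite[Th.~1.3]{arXiv:1711.08396}) one has $\sum_{p\leq T}\rho_p=\Delta(f)\log\log T+O(1)$ and $\sum_{p\leq T}\rho_p(1-\rho_p)=\Delta(f)\log\log T+O(1)$, so the model $\sum_{p\leq T}X_p$ satisfies the Lindeberg CLT with exactly the stated normalization. The comparison is made by the method of moments: expanding $(\omega_f(x;T))^r$ and summing over $r$-tuples of primes $\leq T$, each term is evaluated by a (projective) lattice-point count to a squarefree modulus of size $\leq T^{r}\cdot O(1)=B^{o(1)}=o(B)$, so the moments of $\omega_f(\cdot\,;T)$ under $\b P_B$ match those of the model up to admissible errors, for all $r$ up to a suitable power of $\log\log B$ --- ample to force Gaussian convergence.

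\textbf{The main obstacle.} The delicate point is precisely that the primes carrying the $\log\log$-growth are spread all the way up to $T=B^{o(1)}$, so $\omega_f(\cdot\,;T)$ is \emph{not} a function of $x$ to a single fixed modulus and cannot be reached by one equidistribution statement; the moment-based comparison must be pushed to this whole range, which forces careful uniform control of the lattice-point counts over the growing moduli and over $r$, together with a separate elementary treatment of the finitely many primes dividing $\disc f$ and of the sparse set of $x$ whose fibre is ``very degenerate'' at some $p$ --- these last contributions must be shown to be $O(1)$, or at worst $o(\sqrt{\log\log B})$, in $L^1(\b P_B)$. Establishing this uniform Erd\H{o}s--Kac theorem over the full range $T=B^{o(1)}$ is where the real work lies; granted it, the rest is bookkeeping.

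\textbf{Conclusion.} Apply the law with $T=T_B$, for which $\Delta(f)\log\log T_B=j+zj^{1/2}$:
\[
\b P_B\big(\omega_f(x;T_B)\geq j\big)=\b P_B\Big(\frac{\omega_f(x;T_B)-(j+zj^{1/2})}{\sqrt{j+zj^{1/2}}}\ \geq\ \frac{-z\,j^{1/2}}{\sqrt{j+zj^{1/2}}}\Big).
\]
As $B\to\infty$ the threshold on the right tends to $-z$ (here we use $j\to\infty$), and since the limit law is continuous the weak convergence passes to the limit with this $B$-dependent threshold (P\'olya's theorem). Hence
\[
\lim_{B\to\infty}\b P_B\big(\omega_f(x;T_B)\geq j\big)=\frac{1}{\sqrt{2\pi}}\int_{-z}^{\infty}\mathrm{e}^{-t^2/2}\,\mathrm{d}t=\frac{1}{\sqrt{2\pi}}\int_{-\infty}^{z}\mathrm{e}^{-t^2/2}\,\mathrm{d}t,
\]
the last equality by symmetry of the Gaussian. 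Combined with the first paragraph, this is the assertion of the theorem.
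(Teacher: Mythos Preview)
Your proposal is correct and follows essentially the same route as the paper: define $T_B=\exp\exp\big((j+z\sqrt{j})/\Delta(f)\big)$, use the equivalence $p_j(x)\leq T_B\Leftrightarrow\omega_f(x;T_B)\geq j$, check via the growth hypothesis on $j$ that $T_B\leq B$, apply the uniform Erd\H{o}s--Kac law for $\omega_f(\cdot\,;T)$ (this is Corollary~\ref{coroaotf:gaussian}, derived from the moment estimate Theorem~\ref{eq:bachlutesuites}, and valid in the wider range $T\leq B^c$ rather than only $T=B^{o(1)}$), and pass the $B$-dependent threshold $-z\sqrt{j}/\sqrt{j+z\sqrt{j}}\to -z$ through the continuous limit. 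Your identification of the moment-based Erd\H{o}s--Kac input as the real content is exactly right; the paper proves it by the Granville--Soundararajan approach rather than Kubilius, but the reduction and conclusion are identical to yours.
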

An analogous result for the number of distinct prime divisors of a random integer was established by
Galambos~\cite[Th.2]{MR0439795}.

One of the simplest criteria for the randomness of a sequence is equidistribution, thus 
Theorem~\ref{thm:ghjk56432} answers 
Question~\ref{qu:ena}  in an affirmative manner.
Note that the typical 
size of the $j$-th smallest prime $p$
for which the variety $f^{-1}(x)$ 
has no $p$-adic point is doubly exponential in $j$ for all large $j$, 
i.e.
 \[p_j(x) \approx \exp\l(\exp\l(\frac{j}{\Delta(f)}\r)\r) .\]
In particular, we  conclude
 that the typical size of the primes is 
independent of the variety!
Finally, we shall see in Remark~\ref{rem:neceffghd}
that the second growth assumption
placed on $j$ is necessary for Theorem~\ref{thm:ghjk56432} to hold.

Theorem~\ref{thm:ghjk56432} 
gives an approximation 
to the 
size of $p_j(x)$ for a single value of $j$, therefore, it 
is natural to ask whether the main term in the approximation 
holds for several primes  $p_j(x)$ simultaneously.
This is indeed true as our next result shows. 
\begin{theorem}
\label{thm:ghjk}
Keep the assumptions of Theorem~\ref{thm:ghjk56432}. 
Let $\epsilon>0, M>0$
be arbitrary 
and let $\xi:[1,\infty)\to [1,\infty)$ be any function such that $\lim_{B\to+ \infty}\xi(B)=+\infty$.
Then 
\[
\b P_B\l(\!
x\in \Omega_B: 
\xi(B)<j\leq \omega_f(x) \Rightarrow \Bigg|\log \log p_j(x)-\frac{j}{\Delta(f)}\Bigg|
\leq  j^{\frac{1}{2}+\epsilon} \!\r)
=1+O_{f,\epsilon,M}\!\!\l(\frac{1}{\xi(B)^M}\r)\!\!
,\]
where the implied constant depends at most on $f,\epsilon$ and $M$. 
\end{theorem}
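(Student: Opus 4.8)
The plan is to reduce the statement to a single uniform estimate for the counting function of the obstructing primes and then read off the conclusion about the $p_j(x)$ by monotonicity. For $x\in\P^n(\Q)$ and $y\geq 2$ put
\[
\omega_f(x,y):=\#\{\text{primes }p\leq y:f^{-1}(x)(\Q_p)=\emptyset\},\qquad N(x,T):=\omega_f\bigl(x,\exp(\exp T)\bigr),
\]
so $N(x,\cdot)$ is nondecreasing, equals $\omega_f(x)$ once $\exp(\exp T)\geq B^{D+1}$ (Lemma~\ref{lem:boundsgf7}), and satisfies $p_j(x)\leq\exp(\exp T)\Leftrightarrow N(x,T)\geq j$ and $p_j(x)>\exp(\exp T)\Leftrightarrow N(x,T)<j$. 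The heart of the matter is the claim that outside an exceptional set $E_B\subseteq\Omega_B$ with $\b P_B(E_B)\ll_{f,\epsilon,M}\xi(B)^{-M}$ one has
\[
\bigl|N(x,T)-\Delta(f)\,T\bigr|\leq T^{1/2+\epsilon/2}\qquad\text{for all }T\in\bigl[\,\xi(B)/(2\Delta(f)),\ \log\log(B^{D+1})\,\bigr].
\]
Granting this, fix $x\notin E_B$ and $j$ with $\xi(B)<j\leq\omega_f(x)$ and set $T^{\pm}:=j/\Delta(f)\pm j^{1/2+\epsilon}$. If $T^{+}>\log\log(B^{D+1})$ then $\log\log p_j(x)\leq\log\log(B^{D+1})<T^{+}$ by Lemma~\ref{lem:boundsgf7}; otherwise $T^{+}$ is admissible, and since $T^{+}\asymp_f j$ while $j>\xi(B)\to\infty$, the excess $\Delta(f)\,j^{1/2+\epsilon}$ of $\Delta(f)T^{+}$ over $j$ dominates the fluctuation $\ll_f j^{1/2+\epsilon/2}$, whence $N(x,T^{+})\geq\Delta(f)T^{+}-(T^{+})^{1/2+\epsilon/2}\geq j$ and $\log\log p_j(x)\leq T^{+}$. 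Likewise, evaluating the claim at $T=\log\log(B^{D+1})$ gives $\omega_f(x)=\Delta(f)\log\log B+O_f((\log\log B)^{1/2+\epsilon/2})$ on $E_B^{c}$, which forces $T^{-}\leq\log\log(B^{D+1})$ (a short computation); if $T^{-}\leq 0$ the lower bound is trivial, and otherwise $T^{-}$ is admissible and, symmetrically, $N(x,T^{-})\leq\Delta(f)T^{-}+(T^{-})^{1/2+\epsilon/2}<j$, so $\log\log p_j(x)>T^{-}$. Together these give $|\log\log p_j(x)-j/\Delta(f)|\leq j^{1/2+\epsilon}$, which is the theorem.

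It remains to prove the claim. Fix a positive integer $k=k(\epsilon,M)$ to be chosen below, and first dispose of the large primes deterministically. For $x$ outside the (proper Zariski-closed, hence $\b P_B$-negligible) locus where $f^{-1}(x)$ is singular or geometrically non-integral, the Lang--Weil estimates, uniform over the fibres of $f$, together with Hensel's lemma show that any obstructing prime $p>B^{1/(20k)}$ must divide $\disc(f^{-1}(x))$, a nonzero polynomial in the coordinates of $x$ of degree $O_f(1)$ and hence of size $\leq B^{O_f(1)}$; there are therefore at most $O_{f,k}(1)$ such primes, so $\omega_f(x,B^{D+1})-\omega_f\bigl(x,B^{1/(20k)}\bigr)=O_{f,k}(1)$ for every such $x$, and similarly $\Delta(f)\bigl(\log\log(B^{D+1})-\log\log(B^{1/(20k)})\bigr)=O_{f,k}(1)$. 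It is therefore enough to prove $|N(x,T)-\Delta(f)T|\leq T^{1/2+\epsilon/4}$ uniformly for $T\in[\xi(B)/(2\Delta(f)),\ \log\log(B^{1/(20k)})]$ off an exceptional set of the claimed size; for $B$ large the gap between the exponents $1/2+\epsilon/4$ and $1/2+\epsilon/2$ absorbs the $O_{f,k}(1)$ errors incurred here.

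For $T$ in this reduced range every product of at most $k$ primes $p\leq\exp(\exp T)$ is $\leq B^{1/20}$, so the joint $p$-adic solubility conditions for up to $k$ distinct such primes reduce, via congruences modulo their product and lattice-point counting, to the local densities of Loughran--Smeets and Loughran--Sofos; carrying out the moment computation underlying the central limit theorem of Theorem~\ref{thm:ghjk56432} for this fixed $k$ yields $\b E_B[N(x,T)]=\Delta(f)T+O_f(1)$ together with the quasi-Gaussian bound $\b E_B\bigl[(N(x,T)-\b E_B[N(x,T)])^{2k}\bigr]\ll_{f,k}(\Delta(f)T)^{k}$, uniformly in the range. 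Choose a net $\xi(B)/(2\Delta(f))=T_0<T_1<\cdots<T_R=\log\log(B^{1/(20k)})$ with $0<T_{i+1}-T_i\leq 1/\Delta(f)$, so $R\ll_f\log\log B$. Markov's inequality applied to the $2k$-th moment (absorbing the $O_f(1)$ in the mean for $B$ large) gives $\b P_B\bigl[\,|N(x,T_i)-\Delta(f)T_i|>\tfrac12 T_i^{1/2+\epsilon/4}\bigr]\ll_{f,k}T_i^{-k\epsilon/2}\leq T_i^{-(M+1)}$ as soon as $k\geq 2(M+1)/\epsilon$, which fixes $k$; summing over $i$ and comparing with an integral, the union of these events has $\b P_B$-measure $\ll_{f,\epsilon,M}\xi(B)^{-M}$. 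On its complement, the monotonicity of $N(x,\cdot)$ and of $T\mapsto\Delta(f)T$ propagates the bound from the net to every $T$ in the interval at the cost of an additive $O_f(1)$, absorbed as above. This proves the claim.

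The step I expect to be the main obstacle is the moment input just invoked: one needs the equidistribution of the reductions of $x$ modulo squarefree $m$ (following Loughran--Smeets and Loughran--Sofos) with error terms uniform enough to evaluate all mixed moments of bounded order, simultaneously for every truncation point up to a small power of $B$. This is precisely the mechanism already at work in Theorem~\ref{thm:ghjk56432}, so the new effort lies in tracking its uniformity rather than in fresh ideas; the deterministic truncation of the large primes, the union bound over the net, and the final deduction are routine. Two reductions, used above, complete the picture. First, we may assume $\xi(B)\leq 2\Delta(f)\log\log B$, so that the interval in the claim is nonempty: otherwise the implication in the theorem can fail only when $\omega_f(x)>\xi(B)$, an event of $\b P_B$-measure $\ll_{f,M}\xi(B)^{-M}$ by a Chernoff-type tail bound for $\omega_f(x)$, again furnished by the method behind Theorem~\ref{thm:ghjk56432}. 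Second, the locus where $f^{-1}(x)$ is singular or geometrically non-integral is a proper Zariski-closed subset of $\P^n$, hence of $\b P_B$-measure $\ll_f 1/B\ll_{f,M}(\log\log B)^{-M}\ll_{f,M}\xi(B)^{-M}$ under the first reduction; discarding it we may assume throughout that $f^{-1}(x)$ is smooth and $\omega_f(x)<+\infty$.
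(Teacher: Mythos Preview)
Your proposal is correct and follows essentially the same strategy as the paper: both reduce to a uniform bound of the shape $|\omega_f(x,\exp(\exp T))-\Delta(f)T|\leq T^{1/2+\epsilon'}$ over a range of $T$, establish this by discretising $T$, applying Markov/Chebyshev with the $2k$-th moment estimate (the paper's Theorem~\ref{eq:bachlutesuites}), summing over the net, and interpolating by monotonicity; the passage to $p_j(x)$ then goes via $p_j(x)\leq y\Leftrightarrow\omega_f(x,y)\geq j$. The only cosmetic differences are that the paper uses the coarser net $t_k=e^{e^k}$ and invokes Theorem~\ref{eq:bachlutesuites} directly (which already handles $T\leq B^c$), whereas you truncate the large primes by hand; and the paper reduces to $\xi(B)\leq(\log\log B)^{1/2}$ at the outset while you reduce to $\xi(B)\leq 2\Delta(f)\log\log B$ via a tail bound, but these are equivalent manoeuvres.
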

The proof of Theorem~\ref{thm:ghjk}
is given in~\S\ref{leprosy:pulltheplug}
and it generalises an
analogous result given by Hall and  Tenenbaum~\cite[Th.10]{hallbook}
regarding the number of distinct prime divisors $\omega(m)$
of a random integer $m$.
One of the
main steps in the proof  of Theorem~\ref{thm:ghjk}
is the verification of 
Theorem~\ref{eq:bachlutesuites},
where 
moments of arbitrary order 
of  
\beq{def:truncard}{
\omega_f(x,T)	:=\#\big\{\text{primes } p\leq T:f^{-1}(x)(\Q_p)=\emptyset\big\},
\
(x\in \P^n(\Q),T\geq 1),
}
are estimated
asymptotically 
and uniformly in the parameter $T$.
The arguments behind~\cite[Th.10]{hallbook}
rely on~\cite[Th.010]{hallbook},
whose proof makes use of the fact that for every $y>0$ 
the function $y^{\omega(m)}$ is multiplicative.
The function $\omega_f(x)$ does not have this property,
which is why we have to resort to finding the moments of $\omega_f(x,T)$.

\subsection{The connection with Brownian motion}
One of the main results in the work of 
Loughran and Sofos~\cite[Th.1.2]{arXiv:1711.08396}
is that  when $\Delta(f)\neq 0$ then 
for almost all $x\in \P^n(\Q)$ we have 
\[\omega_f(x)=
\Delta(f) \log \log H(x)
+\c Z_x \sqrt{\Delta(f) \log \log H(x)}
,\] where the function $\c Z_x$ is distributed like a Gaussian random variable with mean $0$ and variance $1$,
i.e.
\[
\c Z_x\sim \c N(0,1)
.\]
One way to think of this result is as a Central Limit Theorem 
for a specific  sequence of independently distributed random events;
the probability space is to be thought as the set of all fibres $f^{-1}(x)$,
the sequence is indexed by the primes $p$
and the random event is the non-existence of $p$-adic points.
Knowing the distribution of $\omega_f$ does not provide sufficient control over the distribution of the $p_j(x)$,
which, as we already saw, corresponds to knowing the distribution of $\omega_f(x,T)$ for all $1\leq T \leq H(x)$.
Indeed, it can be shown by the second part of 
Lemma~\ref{lem:fshgtt672}
that $\omega_f(x)=\omega_f(x,H(x))+O(1)$, with an implied constant that only depends on $f$.
Thus,
$\omega_f(x)$ essentially coincides with $\omega_f(x,T)$ 
when $T$ has size $H(x)$.

The analogy with the Central Limit Theorem above is useful due to the following fact:
assume we have a sequence of independent,
identically distributed
random variables $X_i, i\geq 1$, each with mean $0$ and variance $1$. 
The Central Limit Theorem states  that the random variable 
\[
Y(n):=
\frac{1}{\sqrt{n}}
\sum_{1\leq j \leq n} X_j
\] 
 is distributed like $\c N(0,1)$ 
as $n\to+\infty$.
For every $0\leq T \leq 1$ one may also consider the averages
\[
Y(n,T):=
\frac{1}{\sqrt{n}}
\sum_{1\leq j \leq Tn} X_j
.\]
As with $\omega_f(x,T)$, we have $Y(n,T)=Y(n)$ when $T=1$.
By the Central Limit Theorem we can see
that, for fixed $T$ and as $n\to+\infty$,
$Y(n,T)$ is distributed like the normal distribution 
with mean $0$ and variance $T$ as $n\to+\infty$.
However, the random variables $Y(n,T)$ have a richer structure than $Y(n)$, namely, 
Donsker's theorem~\cite{MR0040613} 
asserts that  $Y(n,T)$ is distributed like a random walk in Brownian motion.
Brownian motion is a subject that has been widely studied throughout the last $100$ years
and, in particular, there is large collection of results regarding the distribution of these random walks.

Thus, if we showed an analogue of Donsker's theorem 
for   
 $\omega_f(x,T)$, this would enable us to use the theory of  Brownian motion 
to directly obtain  distribution theorems for the sequence of primes $p_j(x),j\geq 1$.
This is the main plan for the rest of this paper.

\subsection{Paths associated to varieties} 
\label{s:statingthmmain} 
Let $B\geq 1$ and 
$x\in \P^n(\Q)$ with $H(x)\leq B$.
It turns out that 
the appropriate object that allows to
describe the location of the primes counted by
$\omega_f(x)$ in~\eqref{def:omeganbi8}
is
$\omega_f(x,\exp(\log^t B)) $
for  $t\in [0,1]$.
Note that as  $t$ grows from $0$ to $1$, this function  grows gradually from being almost $0$  to 
becoming almost $\omega_f(x)$.
Taking $T=\exp(\log^t B)$ in Theorem~\ref{eq:bachlutesuites} 
shows that for fixed $t$ and for $B\to+\infty$
the average of this function is approximated by
\[
\Delta(f)
\log \log (\exp(\log^t B))
=
t \Delta(f)  \log \log B
.\]
This suggests 
  the following normalisation of $\omega_f(x,\exp(\log^t B))$.
\begin{definition}
\label{def:pathsdef} 
Assume that
$V$ is a smooth  projective variety over $\QQ$ equipped with a dominant morphism $f: V \to \PP_\Q^n$
with geometrically integral generic fibre and $\Delta(f)\neq 0$. 
For each  $x\in \P^n(\Q)$ and  $B\in \R_{\geq 3}$ we define the function $
X_{B}(\bullet ,x)
:[0,1]\to\R$ as follows,
\[
t\mapsto
X_{B}(t,x)
:=
\frac{\omega_f(x,\exp(\log^t B))-t \Delta(f)  \log \log B}{(\Delta(f) \log \log B
)^{\frac{1}{2}  
}} 
.\]
\end{definition}

\begin{remark}
\label{rem:pathrem}
We will later show that 
for most $x\in \P^n(\Q)$
and when $B\to+\infty$,
the function
 $X_{B}(\bullet ,x)$
 behaves like the function 
\beq{eq:behavesas}{
t\mapsto
Z_{B}(t,x)
:=
\frac{1}{(\Delta(f) \log \log B)^{\frac{1}{2}  }} 
\sum_{p\leq \exp(\log^t B)}
\begin{cases} 1-\sigma_p, &\mbox{if } f^{-1}(x)(\Q_p)=\emptyset,\\ 
-\sigma_p, & \mbox{otherwise},\end{cases}
}
where $\sigma_p$ is given by
\begin{equation} \label{def:sigma_p}
\sigma_p
:=
\frac{\#\big\{x \in \P^n(\F_{p}): f^{-1}(x) \mbox{ is non-split}\big\}}{\#\PP^n(\FF_p)}
.\end{equation}
Here, 
a scheme over a field $k$ is called split if it contains a geometrically integral open subscheme
and is called non-split otherwise. The term was introduced
by Skorobogatov \cite[Def.~0.1]{Sko96}. The weight $\sigma_p$
is $\Delta(f)/p$ on average over $p$, namely, it is shown by Loughran and Smeets~\cite[Th.1.2]{MR3568035}
that
\beq{eq:asymptolimt}
{\Delta(f)=\lim_{B\to+\infty} \frac{\sum_{p\leq B} \sigma_p}{ \sum_{p\leq B}\frac{1}{p}}  .}

For fixed $B\geq 3$ and 
$x\in \P^n(\Q)$ 
we shall show that when~\eqref{eq:behavesas} 
is thought as  a 
function of $t$, it defines  
a right-continuous
random
walk in the plane.
This random walk
moves upwards at primes $p$ for which 
the fibre $f^{-1}(x)$ has no $p$-adic point 
and moves downwards at primes $p$ for which 
the fibre
has a $p$-adic point. 
\end{remark}

Let us now recall the definition of Brownian motion from~\cite[\S 37]{MR1324786}.
First,
a \textit{stochastic process}
is collection of random variables (on a probability space $(\Omega,\c F,P)$) 
indexed by a parameter regarded as representing time.
A \textit{Brownian motion} or \textit{Wiener process}
is a stochastic process $\{B_\tau:\tau\geq 0\}$, on some 
 probability space $(\Omega,\c F,P)$,
with the following properties:
\begin{itemize}
\item The process starts at $0$ almost surely: \[P[B_0=0]=1.\]
\item The increments are independent: If $0\leq \tau_0\leq \tau_1\leq \ldots\leq \tau_k$, then for all intervals $H_i \subset \R$,
\[
P[B_{\tau_i}-B_{\tau_{i-1}} \in H_i,i\leq k]  =
\prod_{i\leq k }
P[B_{\tau_i}-B_{\tau_{i-1}} \in H_i] 
.\]
\item For $0\leq \sigma < \tau$ the increment $B_{\tau} -B_{\sigma}  $
is normally distributed with mean $0$ and variance $\tau-\sigma$, i.e. for every interval $H\subset \R$,
\[ P[B_{\tau} -B_{\sigma} \in H]=\frac{1}{\sqrt{2\pi (\tau-\sigma)}}
\int_H
\mathrm{e}^{-x^2/2(\tau-\sigma)}
\mathrm{d}x
.\]
\item For each $\omega \in \Omega$,
$B_\tau(\omega)$ is continuous in $\tau$ and 
$B_0(\omega)=0$.
\end{itemize}
Wiener showed that 
such a process
exists, see~\cite[Th.37.1]{MR1324786}.
One can thus think of $\Omega$ as the 
space of  continuous function in $[0,\infty)$ 
and $\c F$ as the $\sigma$-algebra generated by the open sets under the uniform topology in $\Omega$.

Let $D$ be the 
the space of all real-valued
right-continuous function on $[0,1]$
that have left-hand limits, see~\cite[pg.121]{MR1700749},
and consider the Skorohod topology on $D$, see~\cite[pg.123]{MR1700749}.
For any $A\subset D$ we let $\partial A:=\overline{A}\cap \overline{(D\setminus A)}$.
We denote by $\c{D}$
the Borel $\sigma$-algebra generated by the open subsets of $D$.
As explained in~\cite[pg.146]{MR1700749},
one can make 
$(D,\c{D})$ into a probability space by 
extending the classical Wiener measure from the space of continuous functions equipped
with the uniform topology to the space $D$.
This measure
will be denoted by
$W$ throughout this paper.

Note that for every $x\in \P^n(\Q)$
the function 
$X_{B}(\bullet ,x)$
is in $D$.

\begin{theorem}\label{thm:main}
Assume that  $V$ is a smooth  projective variety over $\Q$ equipped with a dominant morphism $f: V \to \P_\Q^n$
with geometrically integral generic fibre and $\Delta(f)\neq 0$. 
Let $A$ be any set   in $\c D$   with $W(\partial A)=0$.
Then 
\[
\lim_{B\to+\infty}
\b P_B
\l(
x\in \Omega_B:
 X_{B}(\bullet ,x)
 \in A\r)
=W(A)
.\]
\end{theorem}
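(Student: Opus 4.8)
The plan is to deduce Theorem~\ref{thm:main} from an invariance principle (Donsker-type theorem) applied to the random walk $Z_B(\bullet,x)$ defined in~\eqref{eq:behavesas}, combined with a comparison step showing that $X_B(\bullet,x)$ and $Z_B(\bullet,x)$ are uniformly close in the Skorohod metric for almost all $x\in\Omega_B$. Concretely, I would proceed in three stages. First, reduce to $Z_B$: one shows that for every $\varepsilon>0$,
\[
\lim_{B\to+\infty}\b P_B\l(x\in\Omega_B:\sup_{t\in[0,1]}\l|X_B(t,x)-Z_B(t,x)\r|>\varepsilon\r)=0.
\]
This is the point where the earlier moment estimates (Theorem~\ref{eq:bachlutesuites} and the machinery around $\omega_f(x,T)$) should enter: the difference $X_B(t,x)-Z_B(t,x)$ is, up to the normalising factor $(\Delta(f)\log\log B)^{1/2}$, the sum over $p\le\exp(\log^t B)$ of $\sigma_p$ minus the indicator discrepancies accumulated so far, and one controls its sup-norm via a maximal inequality fed by bounds on the second (or higher) moments of $\omega_f(x,T)-\sum_{p\le T}\sigma_p$. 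Since Skorohod convergence is implied by uniform convergence, this reduces the theorem to proving $Z_B(\bullet,x)\Rightarrow W$ under $\b P_B$.

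Second, prove the invariance principle for $Z_B$. Here I would set up the probability space $(\Omega_B,\b P_B)$ and view, for each prime $p$, the centred Bernoulli-type variable $\xi_p(x):=\mathbf 1[f^{-1}(x)(\Q_p)=\emptyset]-\sigma_p$. The results of Loughran--Smeets and Loughran--Sofos quoted in the excerpt tell us that, as $B\to+\infty$, these behave like independent variables with $\mathbb E\xi_p=0$ and $\mathbb E\xi_p^2\sim\sigma_p\sim\Delta(f)/p$, and that $\sum_{p\le B}\sigma_p\sim\Delta(f)\log\log B$ — so after the time-change $t\mapsto\exp(\log^t B)$ the accumulated variance up to time $t$ is asymptotically $t\cdot\Delta(f)\log\log B$, matching the normalisation in Definition~\ref{def:pathsdef}. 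One then invokes a Lindeberg–Donsker functional central limit theorem for triangular arrays of (approximately) independent small-variance summands: the Lindeberg condition is trivial because each $|\xi_p|\le1$ while the normalisation $\to\infty$, and the independence is supplied (approximately, with an error that vanishes in the limit) by the same Chinese-remainder/squarefree-sieve considerations that underlie the cited Gaussian law for $\omega_f$. The output is precisely that the finite-dimensional distributions of $Z_B(\bullet,x)$ converge to those of Brownian motion and that the family is tight in $D$ with the Skorohod topology.

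Third, conclude. Tightness plus convergence of finite-dimensional distributions gives $Z_B(\bullet,x)\Rightarrow W$ as measures on $(D,\mathcal D)$. Since the Wiener measure $W$ on $D$ is carried by continuous paths, and convergence in distribution together with the mapping theorem yields $\b P_B(X_B(\bullet,x)\in A)\to W(A)$ for every $A\in\mathcal D$ with $W(\partial A)=0$, we obtain the statement; the comparison step from the first stage transfers this from $Z_B$ to $X_B$ because sets with $W$-null boundary are preserved under the uniform-approximation argument (an $\varepsilon$-fattening argument using $W(\partial A)=0$). I expect the main obstacle to be the first stage — making the passage from $X_B$ to $Z_B$ uniform in $t\in[0,1]$, i.e. obtaining a genuine maximal inequality rather than a pointwise-in-$t$ bound. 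This requires the moment estimates for $\omega_f(x,T)$ to be uniform in $T$ (which is exactly why Theorem~\ref{eq:bachlutesuites} is stated uniformly in $T$), and then a chaining or Doob-type argument over a sufficiently fine net of values of $t$, controlling the oscillation of $X_B-Z_B$ between net points. A secondary technical point is verifying tightness directly in the Skorohod topology for the jump process $Z_B$, for which I would use the standard moment criterion of~\cite[pg.~128]{MR1700749} applied to fourth moments of increments, again supplied by the arbitrary-order moment bounds already available.
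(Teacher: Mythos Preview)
Your overall architecture (compare $X_B$ to a random walk, then prove an invariance principle for the latter) matches the paper's, but you have misidentified where the difficulty lies and missed the key technical device.

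The comparison $X_B\to Z_B$ in your Stage~1 is in fact trivial: the difference
\[
X_B(t,x)-Z_B(t,x)=\frac{\sum_{p\le\exp(\log^t B)}\sigma_p - t\,\Delta(f)\log\log B}{\sqrt{\Delta(f)\log\log B}}
\]
is \emph{deterministic} (it does not depend on $x$) and by Lemma~\ref{lem:abelpu8} is $O((\log\log B)^{-1/2})$ uniformly in $t\in[0,1]$. So no moment estimates, no maximal inequality, and no chaining are needed there; your ``main obstacle'' evaporates.

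The real obstacle sits in your Stage~2, and it is exactly the phenomenon flagged in Remark~\ref{rem:levofbaddiszero}: the level of distribution in Lemma~\ref{lem:harmonicgfts7865} is zero. The error term there contains $B^{n+1}/(Q\,\min\{p:p\mid Q\})$, which is only small compared to the main term when the least prime factor of $Q$ grows with $B$. If you try to compute mixed moments (for FDD convergence) or fourth moments of increments (for tightness) of $Z_B$ directly, you must sum $|\c W(Q)|$ over squarefree $Q$ with bounded $\omega(Q)$ and prime factors in $\c P$; once $\c P$ contains the small primes this error does not beat the main term. Tightness is especially sensitive, since you need uniform control of increments over $[s,s']$ with $s$ arbitrarily close to $0$, which drags the small primes into every estimate. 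Your appeal to ``Chinese-remainder/squarefree-sieve considerations'' is precisely where this breaks.

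The paper's fix is to interpose a \emph{truncated} path $Y_B(\bullet,x)$ (see~\eqref{def:pathflat}), summing only over primes $p\in(\log B,\,B^{\psi(B)}]$ with $\psi(B)=(\log\log B)^{-1/4}$. The comparison $X_B\to Y_B$ (Lemma~\ref{lem:tranghfa0}) is now genuinely probabilistic: it is handled not by moment bounds for $\omega_f(x,T)$ but by the elementary divisor bound $\sum_{p\mid g(x),\,p\le\log B}1$ together with the trivial estimate $\#\{p>B^{\psi(B)}:p\mid g(x)\}\ll(\log\log B)^{1/4}$, averaged over $x$ via Markov. The invariance principle is then proved for $Y_B$: finite-dimensional convergence via an extension of Granville--Soundararajan to \emph{interval correlations} (Theorem~\ref{prop:epi}, Proposition~\ref{prop:pointwise}), and tightness via the fourth-moment criterion (Proposition~\ref{prop:tightness}), both enabled by the restriction to primes exceeding $\log B$. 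As written, your proposal would fail at tightness for $Z_B$ unless you insert this truncation.
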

An similar
result for strongly
additive functions defined on the integers 
was established by
Billingsley~\cite[\S 4]{MR0466055}
and
Philipp~\cite[Th.2]{MR0354602}. However,
in our situation  
the relevant 
level of distribution 
is zero, while this is not true for the analogous problem over the integers,
see Remark~\ref{rem:levofbaddiszero}.
This necessitates
the use of a truncated version of $X_B$ (see~\eqref{def:pathflat}),
which results in
more technical arguments.

Wiener's measure gives a model for Brownian motion, hence, by Remark~\ref{rem:pathrem}
our theorem has the
following 
interpretation: 
one has infinitely many random walks $X_B(\bullet,x)$ in $[0,1]\times \R$,
each walk corresponding to every fibre $f^{-1}(x)$.
The walk is
 traced out according to the existence of $p$-adic points on the variety
 $f^{-1}(x)$. 
Random walks and Brownian motion have been studied intensely in physics and probability theory,
because they provide an effective way to predict the 
 walk
traced out by a particle   in Brownian motion
according to collision with molecules. As such, the underlying 
mathematical theory needed has been particularly enriched throughout the last century,
see, for example, the book of 
Karatzas and Shreve~\cite{MR1121940}.
In the next section we shall use parts of this theory 
to provide results that go beyond Theorems~\ref{thm:ghjk56432}
and~\ref{thm:ghjk}.

\subsection{Extreme values}
We provide the first consequence 
of Theorem~\ref{thm:main}.
As one ranges over different values of $T$
the function $\omega_f(x,T)$ takes into account the finer distribution of the primes $p$
for which $f^{-1}(x)$ has no $p$-adic point. It is therefore
 important to know 
the maximal value
 of $\omega_f(x,T)$. 
This is answered by drawing upon results on the maximum value  distribution of walks in Brownian motion.
\begin{theorem}
\label{thm:beyondeq}
Keep the assumptions of Theorem~\ref{thm:ghjk56432}.
For every $z\in \R_{>0}$
we have
\beq{thm:beyondeqeq}{
\lim_{B\to+\infty}
\b P_B\l(x\in \Omega_B:
\max_{\substack{p \text{ prime } \\ p\leq B}}
\Bigg\{\frac{\omega_f(x,T)-\Delta(f) \log \log p}{(\Delta(f) \log \log H(x))^{\frac{1}{2}}}\Bigg\}
\geq z
\r)
=\frac{2}{\sqrt{2\pi}}
\int_z^{+\infty}
\mathrm{e}^{-\frac{t^2}{\!2}}
\mathrm{d}t
.}
\end{theorem}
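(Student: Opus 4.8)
The plan is to deduce Theorem~\ref{thm:beyondeq} from the invariance principle of Theorem~\ref{thm:main}, applied to the supremum functional on the Skorohod space $D$, combined with the reflection principle for Brownian motion (which will both evaluate the limiting probability and show it equals the right-hand side of~\eqref{thm:beyondeqeq}).

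First I would recast the event in~\eqref{thm:beyondeqeq}, whose maximand at a prime $p$ is $(\omega_f(x,p)-\Delta(f)\log\log p)/(\Delta(f)\log\log H(x))^{1/2}$, in terms of the paths of Definition~\ref{def:pathsdef}. For $x\in\P^n(\Q)$ with $H(x)\leq B$ and $3\leq p\leq B$, putting $t=\log\log p/\log\log B\in[0,1]$ gives $\exp(\log^{t}B)=p$, so by Definition~\ref{def:pathsdef} that maximand equals $r_B(x)^{-1}X_{B}(t,x)$, where $r_B(x):=(\log\log H(x)/\log\log B)^{1/2}$ (the only prime $p$ giving $t<0$ is $p=2$, whose maximand is $o(1)$). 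Since $\#\{x\in\Omega_B:H(x)\leq B^{1/2}\}=o(\#\Omega_B)$, outside a set of $\b P_B$-measure $o(1)$ one has $H(x)>B^{1/2}$, so $r_B(x)\in[1-\delta_B,1]$ with $\delta_B\to0$. Using that $X_{B}(\bullet,x)$ is right-continuous with finitely many (upward) jumps and decreases in between (Remark~\ref{rem:pathrem}), and that $z>0$ while $X_{B}(0,x)=o(1)$ uniformly, one checks that $\max_{p\leq B}(\text{maximand})=r_B(x)^{-1}\sup_{t\in[0,1]}X_{B}(t,x)$ for all large $B$. Hence the event in~\eqref{thm:beyondeqeq} coincides, up to $\b P_B$-measure $o(1)$, with $\{x:\sup_{t\in[0,1]}X_{B}(t,x)\geq z\,r_B(x)\}$, which is trapped between $\{x:\sup_{t\in[0,1]}X_{B}(t,x)\geq z\}$ and $\{x:\sup_{t\in[0,1]}X_{B}(t,x)\geq z(1-\delta_B)\}$.

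Then I would apply Theorem~\ref{thm:main} to the sets $A_a:=\{g\in D:\sup_{t\in[0,1]}g(t)\geq a\}\in\c D$ for $a>0$. The functional $g\mapsto\sup_{t\in[0,1]}g(t)$ is continuous on $D$ at every $g\in C[0,1]$, because Skorohod convergence to a continuous limit is uniform; since $W$ is carried by $C[0,1]$ its discontinuity set is $W$-null, and if $g$ is a continuity point with $\sup g\neq a$ then $g$ is interior to $A_a$ or to its complement, so $\partial A_a$ is contained in the union of that $W$-null set with $\{g:\sup g=a\}$. By the reflection principle the $W$-law of $g\mapsto\sup_{t\in[0,1]}g(t)$ is the law of $|B_1|$, which is atomless, so $W(\partial A_a)=0$; Theorem~\ref{thm:main} then gives
\[
\lim_{B\to+\infty}\b P_B\!\l(x\in\Omega_B:\sup_{t\in[0,1]}X_{B}(t,x)\geq a\r)=W(A_a)=P\!\l(\sup_{0\leq\tau\leq1}B_\tau\geq a\r)=\frac{2}{\sqrt{2\pi}}\int_{a}^{+\infty}\mathrm{e}^{-t^{2}/2}\,\mathrm{d}t.
\]
Taking $a=z$ and using the lower trap shows that $\liminf_{B}\b P_B(\text{event in }\eqref{thm:beyondeqeq})$ is at least the right side of~\eqref{thm:beyondeqeq}; for the matching upper bound, fix $\epsilon\in(0,z)$, note that once $\delta_B<\epsilon/z$ the event lies, up to $\b P_B$-measure $o(1)$, inside $\{x:\sup_{t\in[0,1]}X_{B}(t,x)\geq z-\epsilon\}$, conclude $\limsup_{B}\b P_B\leq\tfrac{2}{\sqrt{2\pi}}\int_{z-\epsilon}^{+\infty}\mathrm{e}^{-t^{2}/2}\,\mathrm{d}t$, and let $\epsilon\to0$.

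I expect the main obstacle to be the measure-theoretic check $W(\partial A_z)=0$: it is exactly what licenses feeding the discontinuous paths $X_{B}$ into Theorem~\ref{thm:main} through the globally non-continuous supremum functional, and it requires handling both the discontinuity set of $\sup$ in the Skorohod topology and the level set $\{g:\sup g=z\}$ — both done via the reflection principle, which at the same stroke produces the explicit Gaussian tail. The remaining ingredients — identifying the discrete maximum over primes $p\leq B$ with the continuous supremum over $[0,1]$, and absorbing the discrepancy between $\log\log H(x)$ and $\log\log B$ — are routine once organised as above.
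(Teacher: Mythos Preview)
Your proposal is correct and follows essentially the same route as the paper: apply Theorem~\ref{thm:main} to the set $A=\{u\in D:\sup_{t\in[0,1]}u(t)\geq z\}$ and evaluate $W(A)$ via the reflection principle. In fact you supply several technical verifications that the paper's three-line proof leaves implicit --- the boundary condition $W(\partial A)=0$, the identification of the discrete maximum over primes with the continuous supremum of $X_B(\bullet,x)$, and the passage from $\log\log H(x)$ to $\log\log B$ in the normalisation --- all of which are handled correctly.
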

This will turn out to be 
 a direct 
consequence of the reflection principle in Brownian motion.
Taking $p=p_j(x)$ 
in Theorem~\ref{thm:beyondeq}
leads to the following conclusion.
\begin{corollary}
\label{cor:pjcor}
Keep the assumptions of Theorem~\ref{thm:ghjk56432}.
For every $z\in \R_{>0}$ we have
\begin{align*}
\liminf_{B\to+\infty} 
\b P_B
&
\l(x\in \Omega_B:
1\leq j \leq \omega_f(x) 
\Rightarrow 
\log \log p_j(x) \geq 
\frac{j}{\Delta(f) } - z\l(\frac{\log \log H(x)}{\Delta(f)}\r)^{\frac{1}{2}}  
\r)
\\
&\geq 
1-
\frac{2}{\sqrt{2\pi}}
\int_z^{+\infty}
\mathrm{e}^{-\frac{t^2}{\!2}}
\mathrm{d}t
.\end{align*}
Furthermore, for every function $\xi(B):\R_{\geq 1}\to\R_{\geq 1}$
with 
$\lim_{B\to+\infty}\xi(B)=+\infty$
we have 
\[
\lim_{B\to+\infty} 
\! \b P_B  \!
\l(\!
x\in \Omega_B: 1\leq j \leq \omega_f(x) 
\Rightarrow 
\log \log p_j(x) \geq 
\frac{j}{\Delta(f) } - \xi(H(x))\l( \log \log H(x) \r)^{\frac{1}{2}}
\!\r)
\!=\!1.\]
\end{corollary}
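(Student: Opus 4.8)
The plan is to derive both assertions of the corollary from Theorem~\ref{thm:beyondeq}. For brevity set
\[
G_B(x,p):=\frac{\omega_f(x,p)-\Delta(f)\log\log p}{(\Delta(f)\log\log H(x))^{1/2}},
\]
the ratio that is maximised in~\eqref{thm:beyondeqeq}, so that Theorem~\ref{thm:beyondeq} says $\lim_{B\to\infty}\b P_B(x\in\Omega_B:\max_{p\le B}G_B(x,p)\ge z)=\tfrac{2}{\sqrt{2\pi}}\int_z^{\infty}\mathrm{e}^{-t^2/2}\,\mathrm{d}t$ for every $z>0$. The one elementary fact I need is the identity
\[
\omega_f(x,p_j(x))=j\qquad(1\le j\le\omega_f(x)),
\]
valid for every $x\in\P^n(\Q)$ because the obstructing primes not exceeding $p_j(x)$ are precisely $p_1(x),\dots,p_j(x)$. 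It gives $G_B(x,p_j(x))=\bigl(j-\Delta(f)\log\log p_j(x)\bigr)\bigl(\Delta(f)\log\log H(x)\bigr)^{-1/2}$, and a one-line rearrangement shows that the bound $\log\log p_j(x)<\tfrac{j}{\Delta(f)}-z\bigl(\tfrac{\log\log H(x)}{\Delta(f)}\bigr)^{1/2}$ in the first part of the corollary fails for a given $j$ if and only if $G_B(x,p_j(x))>z$. In particular, a failure of the first bound for some $j\in[1,\omega_f(x)]$ forces $\max_{p\le p_j(x)}G_B(x,p)>z$.

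Next I would handle the one genuine nuisance, namely that the largest few obstructing primes may exceed $B$ and so fall outside the range of the maximum in~\eqref{thm:beyondeqeq}. By Lemma~\ref{lem:boundsgf7} one always has $p_j(x)\le B^{D+1}$, while by the second part of Lemma~\ref{lem:fshgtt672} one has $\omega_f(x,p)=\omega_f(x,B)+O_f(1)$ for all $p\ge B$; since $\Delta(f)\log\log p$ is non-decreasing in $p$, these combine to give $\max_{p\le B^{D+1}}G_B(x,p)\le\max_{p\le B}G_B(x,p)+O_f\bigl((\log\log H(x))^{-1/2}\bigr)$. Fix now $z>0$ and $\delta\in(0,z)$. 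The error above is $<\delta$ whenever $H(x)$ exceeds a constant $B_0=B_0(f,\delta)$, and $\b P_B(x\in\Omega_B:H(x)\le B_0)=\#\Omega_{B_0}/\#\Omega_B=o(1)$ as $B\to\infty$. Hence, for all $x\in\Omega_B$ outside this negligible set, a failure of the first bound forces $\max_{p\le B}G_B(x,p)>z-\delta$, so that
\begin{multline*}
\b P_B\bigl(x\in\Omega_B:\ \text{the first bound fails for some }j\in[1,\omega_f(x)]\bigr)\\
\le\b P_B\bigl(x\in\Omega_B:\max_{p\le B}G_B(x,p)\ge z-\delta\bigr)+o(1).
\end{multline*}
Since $z-\delta>0$, Theorem~\ref{thm:beyondeq} bounds the $\limsup_{B\to\infty}$ of the left-hand side by $\tfrac{2}{\sqrt{2\pi}}\int_{z-\delta}^{\infty}\mathrm{e}^{-t^2/2}\,\mathrm{d}t$; letting $\delta\to0^+$, using the continuity of the Gaussian tail, and passing to complements gives the first assertion.

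For the second assertion I would let $z\to\infty$ in the first, once more discarding low-height points. Fix $z>0$; since $\xi(B)\to\infty$ there is $B_1$ with $\xi(B')\ge z/\sqrt{\Delta(f)}$ for all $B'\ge B_1$, and then for every $x$ with $H(x)\ge B_1$ the first bound with parameter $z$ implies the bound of the second part with the function $\xi$. Since $\b P_B(x\in\Omega_B:H(x)<B_1)\le\#\Omega_{B_1}/\#\Omega_B\to0$, the first assertion yields $\liminf_{B\to\infty}\b P_B(x\in\Omega_B:\ 1\le j\le\omega_f(x)\Rightarrow\text{the second bound holds})\ge 1-\tfrac{2}{\sqrt{2\pi}}\int_z^{\infty}\mathrm{e}^{-t^2/2}\,\mathrm{d}t$ for every $z>0$; letting $z\to\infty$ forces this $\liminf$ to be $1$.

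The main obstacle is the mild one isolated in the second step: one must check that the $O_f(1)$ obstructing primes possibly lying between $H(x)$ and $B^{D+1}$ neither invalidate the reduction to the maximum over $p\le B$ nor create a spurious failure of the first bound, and that the inequality directions, together with the continuity of $z\mapsto\tfrac{2}{\sqrt{2\pi}}\int_z^{\infty}\mathrm{e}^{-t^2/2}\,\mathrm{d}t$, are deployed consistently when passing from Theorem~\ref{thm:beyondeq} to the stated $\liminf$. Everything else is routine.
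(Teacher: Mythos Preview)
Your proof is correct and follows exactly the approach the paper indicates: the paper's own argument for Corollary~\ref{cor:pjcor} is just the remark ``Taking $p=p_j(x)$ in Theorem~\ref{thm:beyondeq} leads to the following conclusion,'' together with the one-line reflection-principle proof of Theorem~\ref{thm:beyondeq} in \S\ref{s:proofcoro}. Your write-up is a careful fleshing out of that sketch, and in fact treats more explicitly than the paper does the boundary issue that some $p_j(x)$ may lie in $(B,B^{D+1}]$ (your use of Lemma~\ref{lem:fshgtt672} to bound $\omega_f(x,p)-\omega_f(x,B)$ by $O_f(1)$ for $p>B$ is the standard move, exactly as in the paper's comment before~\eqref{def:truncard}).
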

In contrast to Theorem~\ref{thm:ghjk}
this result  gives merely lower bounds for
$p_j(x)$, however, it does apply to the whole range of $j$, in particular to
those that are left uncovered by Theorem~\ref{thm:ghjk}.

\subsection{Largest deviation}
Our next result 
 provides asymptotic
 estimates for the density with 
which $\omega_f(x,T)$ deviates from its expected value.
Its analogue  
in Brownian motion
regards 
 random walks 
in the presence of absorbing barriers,
see~\cite{MR0011917}.

 Let  us define the function $\tau_\infty:\R\setminus\{0\}
\to\R$ via 
\beq{def:john_lord}{
\tau_\infty(z):=\frac{4}{\pi}
\sum_{m=0}^{+\infty}   \frac{(-1)^m}{2m+1}
\exp\Bigg\{-\frac{(2m+1)^2\pi^2}{8z^2}\Bigg\}
.}
\begin{theorem}
\label{thm:beyondeqpartita}
Keep the assumptions of Theorem~\ref{thm:ghjk56432}.
For every $z\in \R_{>0}$
we have
\beq{eq:t:beyondeqpartita}{
\lim_{B\to+\infty}
\b P_B\l(x\in \Omega_B:
\max_{\substack{p \text{ prime } \\ p\leq B}}
\Bigg|\frac{\omega_f(x,p)-\Delta(f) \log \log p}{(\Delta(f) \log \log H(x))^{\frac{1}{2}}}\Bigg|
\geq z
\r)
=1-\tau_\infty(z)
.}
\end{theorem}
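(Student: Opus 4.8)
The plan is to read off Theorem~\ref{thm:beyondeqpartita} from the invariance principle of Theorem~\ref{thm:main}, applied to the absolute-supremum functional on $D$, together with the classical formula for the law of the maximum of $|B_\tau|$ over $\tau\in[0,1]$.

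First I would introduce $\Phi\colon D\to\R$, $\Phi(\gamma):=\sup_{0\le t\le 1}|\gamma(t)|$, and observe that $\Phi$ is continuous for the Skorohod topology: if $\gamma_m\to\gamma$ with time changes $\lambda_m$ as in the definition of that topology, then $\sup_{0\le t\le1}|\gamma_m(t)|=\sup_{0\le t\le1}|\gamma_m(\lambda_m(t))|$ because $\lambda_m$ maps $[0,1]$ onto itself, while $\sup_{0\le t\le1}|\gamma_m(\lambda_m(t))-\gamma(t)|\to 0$; hence $\Phi(\gamma_m)\to\Phi(\gamma)$. Fixing $z>0$ and setting $A_z:=\{\gamma\in D:\Phi(\gamma)\ge z\}$, continuity of $\Phi$ makes $A_z$ Borel, so $A_z\in\c D$, and forces $\partial A_z\subseteq\{\gamma:\Phi(\gamma)=z\}$; this last set is $W$-null, since under $W$ the law of $\sup_{0\le\tau\le 1}|B_\tau|$ is continuous (its distribution function is the smooth function $\tau_\infty$ recalled below), hence atomless. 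Therefore $W(\partial A_z)=0$, and Theorem~\ref{thm:main} yields
\[
\lim_{B\to+\infty}\b P_B\Bigl(x\in\Omega_B:\sup_{0\le t\le 1}|X_B(t,x)|\ge z\Bigr)=W(A_z)=W\Bigl(\sup_{0\le\tau\le 1}|B_\tau|\ge z\Bigr).
\]
The right-hand side is the probability that a standard Brownian motion started at $0$ leaves $(-z,z)$ before time $1$; by the classical theory of Brownian motion in the presence of two absorbing barriers (see \cite{MR0011917}; equivalently, the Dirichlet heat-kernel expansion on $[-z,z]$) it equals $1-\tau_\infty(z)$, with $\tau_\infty$ as in~\eqref{def:john_lord}. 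It then remains to replace $\sup_{0\le t\le 1}|X_B(t,x)|$ by the expression in~\eqref{eq:t:beyondeqpartita}, up to a negligible error.

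For this, set $T=\exp(\log^t B)$, so that $\log\log T=t\log\log B$, and note that as $t$ runs over $[0,1]$ the variable $T$ runs over $[\mathrm{e},B]$; hence, writing $N:=(\Delta(f)\log\log B)^{1/2}$, Definition~\ref{def:pathsdef} gives
\[
\sup_{0\le t\le 1}|X_B(t,x)|=N^{-1}\sup_{\mathrm{e}\le T\le B}\bigl|\omega_f(x,T)-\Delta(f)\log\log T\bigr|.
\]
On any interval between consecutive primes, $T\mapsto\omega_f(x,T)$ is constant while $T\mapsto\Delta(f)\log\log T$ is monotone, so the supremum of the modulus over that interval is attained at an endpoint; using Bertrand's postulate in the form $\log\log p^{+}-\log\log p\ll(\log p)^{-1}\ll 1$ for consecutive primes $p<p^{+}$ (and likewise for the final partial interval up to $B$), together with the trivial bound $O_f(1)$ over the range $\mathrm{e}\le T<3$, one obtains
\[
\Bigl|\,\sup_{\mathrm{e}\le T\le B}\bigl|\omega_f(x,T)-\Delta(f)\log\log T\bigr|-\max_{p\le B}\bigl|\omega_f(x,p)-\Delta(f)\log\log p\bigr|\,\Bigr|=O_f(1),
\]
uniformly in $x\in\P^n(\Q)$ and $B\ge 3$. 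Dividing by $N\to+\infty$ shows that $\sup_{0\le t\le 1}|X_B(t,x)|$ and $N^{-1}\max_{p\le B}|\omega_f(x,p)-\Delta(f)\log\log p|$ differ by $o(1)$, uniformly in $x$. Finally I would pass from $N$ to $(\Delta(f)\log\log H(x))^{1/2}$: all but $O(B^{(n+1)/2})=o(\#\Omega_B)$ of the $x\in\Omega_B$ have $H(x)\ge\sqrt B$, for which $\log\log H(x)=\log\log B+O(1)$ and hence the ratio of the two normalisations tends to $1$; combining this with the fact that $\b P_B\bigl(\sup_{0\le t\le1}|X_B(\bullet,x)|>R\bigr)$ is uniformly small for $R$ large (again by the displayed limit and $\tau_\infty(R)\to1$), a routine approximation argument together with the continuity of $z\mapsto\tau_\infty(z)$ gives~\eqref{eq:t:beyondeqpartita}.

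I expect no deep obstacle here: the substantive content is imported wholesale from Theorem~\ref{thm:main} and from the classical absorbing-barrier computation, and what remains is bookkeeping. The points that require genuine, if routine, care are the verification that $W(\partial A_z)=0$ — resting on the continuity of the supremum functional on $D$ for the Skorohod topology and on the atomlessness of the law of $\sup_{0\le\tau\le1}|B_\tau|$ — and the two uniform-in-$x$ comparisons: passing from the supremum over $T\in[\mathrm{e},B]$ to the maximum over primes $p\le B$, and passing from $\log\log B$ to $\log\log H(x)$ in the normalisation.
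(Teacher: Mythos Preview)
Your proposal is correct and follows the same approach as the paper: define the set $A=\{u\in D:\sup_{0\le t\le 1}|u(t)|\ge z\}$, quote the classical absorbing-barrier result to get $W(A)=1-\tau_\infty(z)$, and apply Theorem~\ref{thm:main}. The paper's own proof is in fact just those two sentences; it omits entirely the verification that $W(\partial A)=0$ and the passage from $\sup_{t\in[0,1]}|X_B(t,x)|$ to the expression in~\eqref{eq:t:beyondeqpartita} with the maximum over primes and the $\log\log H(x)$ normalisation, all of which you supply carefully and correctly.
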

As with Corollary~\ref{cor:pjcor},
we have the following conclusion.
\begin{corollary}
\label{cor:pjcorpartita}
Keep the assumptions of Theorem~\ref{thm:ghjk56432}.
For every $z\in \R_{>0}$ we have
\[
\hspace{-0,1cm}
\liminf_{B\to+\infty} \b P_B
\!\l(\!  x\in \Omega_B: 1\leq j \leq \omega_f(x) 
\Rightarrow 
\Bigg|  \log \log p_j(x) -
\frac{j}{\Delta(f) } \Bigg|
\leq 
z\l(\frac{\log \log H(x)}{\Delta(f)}\r)^{\frac{1}{2}}  
\!\r) \!
\geq  \!  \tau_\infty(z)
.\]
Furthermore, the following holds 
 \[
\hspace{-0,1cm}
 \! \b P_B  \!
\l(\!
x\in \Omega_B: 1\leq j \leq \omega_f(x) 
\Rightarrow 
\Bigg|  \log \log p_j(x) -  \frac{j}{\Delta(f) } \Bigg|
\leq 
z \!\sqrt{ \log \log H(x)}
\!
\!\r)
\!\!=\!
1
+O_f\!\!\l(\frac{1}{(1+|z|)^{\frac{2}{3}  }   } \!\r)
\!,\]
with an implied constant that depends at most on $f$.
\end{corollary}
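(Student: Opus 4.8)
The plan is to reduce both inequalities to Theorem~\ref{thm:beyondeqpartita} by means of the identity
\[
\omega_f(x,p_j(x))=j\qquad(x\in\P^n(\Q),\ 1\leq j\leq\omega_f(x)),
\]
which holds because $p_j(x)$ is the $j$-th smallest obstructing prime, so that precisely $j$ obstructing primes are $\leq p_j(x)$. Multiplying through by $\Delta(f)>0$ converts the bound $|\log\log p_j(x)-j/\Delta(f)|\leq z(\log\log H(x)/\Delta(f))^{1/2}$ of the first assertion into $|\omega_f(x,p_j(x))-\Delta(f)\log\log p_j(x)|\leq z(\Delta(f)\log\log H(x))^{1/2}$, and the bound $|\log\log p_j(x)-j/\Delta(f)|\leq z\sqrt{\log\log H(x)}$ of the second assertion into $|\omega_f(x,p_j(x))-\Delta(f)\log\log p_j(x)|\leq z\sqrt{\Delta(f)}\,(\Delta(f)\log\log H(x))^{1/2}$. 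Since Lemma~\ref{lem:boundsgf7} gives $p_j(x)\leq B^{D+1}$ whenever $H(x)\leq B$ and $1\leq j\leq\omega_f(x)$, applying these bounds at the primes $p=p_j(x)$ shows that the first event of the corollary contains
\[
\mathcal G_B(w):=\Bigl\{x\in\Omega_B:\ \max_{\substack{p\text{ prime}\\ p\leq B^{D+1}}}\bigl|\omega_f(x,p)-\Delta(f)\log\log p\bigr|<w\,\bigl(\Delta(f)\log\log H(x)\bigr)^{1/2}\Bigr\}
\]
with $w=z$, and the second event contains $\mathcal G_B(z\sqrt{\Delta(f)})$. It therefore suffices to bound $\b P_B(\Omega_B\setminus\mathcal G_B(w))$ from above: asymptotically for fixed $w$ (this yields the first assertion), and uniformly in $B$ and $w$ (this yields the second).

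For the first task I would begin by observing that enlarging the range of the maximum from $p\leq B$, as in Theorem~\ref{thm:beyondeqpartita}, to $p\leq B^{D+1}$ is harmless: one has $\log\log B^{D+1}=\log\log B+O_f(1)$, and by the second part of Lemma~\ref{lem:fshgtt672} there are only $O_f(1)$ obstructing primes in $(B,B^{D+1}]$, so $\omega_f(x,p)-\Delta(f)\log\log p=\omega_f(x,B)-\Delta(f)\log\log B+O_f(1)$ uniformly for $p\in(B,B^{D+1}]$ and hence the two maxima differ by $O_f(1)$; as $\b P_B$ gives mass $1-o(1)$ to the $x$ with $\log\log H(x)\geq\tfrac12\log\log B$, this additive $O_f(1)$ is absorbed into an arbitrarily small perturbation of $w$. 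Combining this with Theorem~\ref{thm:beyondeqpartita} (applied with $z$ replaced by $w\mp\epsilon$) and the continuity of $\tau_\infty$ — the series~\eqref{def:john_lord} converges locally uniformly — gives $\lim_{B\to+\infty}\b P_B(\Omega_B\setminus\mathcal G_B(w))=1-\tau_\infty(w)$ for every fixed $w>0$, whence $\liminf_{B\to+\infty}\b P_B(\text{first event})\geq\tau_\infty(z)$.

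For the second task I would need the quantitative, uniform strengthening
\[
\b P_B\bigl(\Omega_B\setminus\mathcal G_B(w)\bigr)\ll_f(1+w)^{-2/3}\qquad(B\geq3,\ w>0),
\]
since then taking $w=z\sqrt{\Delta(f)}$ and using $(1+z\sqrt{\Delta(f)})^{-2/3}\asymp_f(1+|z|)^{-2/3}$ concludes the proof (the main term $1-\tau_\infty(z\sqrt{\Delta(f)})$ decays like $\mathrm{e}^{-c_fz^2}$ and is harmless). Only large $w$ is at issue, since $\b P_B(\cdot)\leq1$ settles $w=O_f(1)$. Here I would invoke the moment estimates of Theorem~\ref{eq:bachlutesuites}, which control the $2k$-th moment of $\omega_f(x,T)-\Delta(f)\log\log T$ against $\b P_B$ by $\ll_k(\Delta(f)\log\log T)^k$ uniformly in $T$, and upgrade them by a maximal inequality to a tail bound for $\max_{p\leq B^{D+1}}|\omega_f(x,p)-\Delta(f)\log\log p|$. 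A naive union bound over dyadic scales loses a factor $\log B$ that ruins uniformity in $B$, so one is forced to work instead with the truncated path of~\eqref{def:pathflat} — introduced precisely because the level of distribution of this problem is zero — treating the contribution of the small primes separately, controlling the remaining range by the $2k$-th moment with $k$ of order $w^{2/3}$, and balancing the truncation level against $w$; it is this optimisation that yields the exponent $2/3$. Setting up the maximal inequality for the non-multiplicative function $\omega_f(x,T)$ in the regime of vanishing level of distribution, and carrying out the balancing cleanly and uniformly in $B$, is the step I expect to be the main obstacle; everything else is routine bookkeeping around Theorems~\ref{thm:beyondeqpartita} and~\ref{eq:bachlutesuites}.
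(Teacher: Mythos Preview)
Your treatment of the first assertion is correct and matches the paper's argument: take $p=p_j(x)$ in Theorem~\ref{thm:beyondeqpartita} and use $\omega_f(x,p_j(x))=j$.

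For the second assertion you have chosen a far harder route than the paper. The paper does not set up any maximal inequality, does not invoke Theorem~\ref{eq:bachlutesuites} with $k\asymp w^{2/3}$, and does not balance a truncation against $w$. Instead it simply observes that, once the first assertion is available, the second follows from the elementary estimate
\[
\tau_\infty(z)=1+O\bigl(|z|^{-2/3}\bigr),
\]
proved by direct analysis of the alternating series~\eqref{def:john_lord}: split at $M:=1+|z|^{2/3}$; the tail $\sum_{m>M}$ is $O(1/M)$ since the series alternates; in the head use $\exp(-(2m+1)^2\pi^2/(8z^2))=1+O((2m+1)^2/z^2)$ and sum to get an error $O(M^2/z^2)$; then complete $\sum_{m\leq M}(-1)^m/(2m+1)$ to the Leibniz series $\pi/4$ at cost $O(1/M)$. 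The total error is $O(1/M+M^2/z^2)=O(|z|^{-2/3})$. Since the event in the second assertion is the event in the first with $z$ replaced by $z\sqrt{\Delta(f)}$, the first assertion gives a lower bound $\tau_\infty(z\sqrt{\Delta(f)})=1+O_f((1+|z|)^{-2/3})$, and the upper bound is the trivial $\b P_B(\cdot)\leq 1$.

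So the exponent $2/3$ comes not from choosing a moment order $k\asymp w^{2/3}$, but from optimising the split point $M$ in a one-line series estimate. The programme you sketch may well work, but the obstacle you identify is one the paper never has to face.
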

 It is
useful to compare the second limit statement in Corollary~\ref{cor:pjcorpartita} with Theorem~\ref{thm:ghjk}.
Choosing any function
$\xi(B)$  
 with 
$\xi(B)=o((\log \log B )^{\frac{1}{2}})$ in  Theorem~\ref{thm:ghjk}
will give a precise approximation for $\log \log p_j(x)$ in a range for $j$
that is wider than the range in which the
second limit statement in Corollary~\ref{cor:pjcorpartita}
gives a precise approximation. However, the advantage of Corollary~\ref{cor:pjcorpartita}
is that  it gives a better error term in the estimate for $\b P_B$
and, furthermore, it provides a better approximation to $\log \log p_j(x)$ than Theorem~\ref{thm:ghjk}
when $$(\log \log B)^{\frac{1}{2}}\ll j \leq \omega_f(x)
=
\Delta(f) (\log \log B)
(1+o(1))
.
$$

\subsection{$L_2$-norm deviations}

In statistical mechanics, the mean squared displacement (\textit{MSD})
is a `measure' of the deviation of the position of a particle with respect to a reference position over time.
One of the fundamental results of the theory of Brownian motion is that the
\textit{MSD} of a free particle during a time interval $t$ is proportional $t$.
It was studied via diffusion equations by Einstein and Langevin, see~\cite{MR3236656}.

Let us now examine an 
analogous situation
for $p$-adic solubility.
Define for $y,q\in \R$,
\[\theta_1(y,q):=2\sum_{m=0}^{\infty}(-1)^m
q^{(2m+1)^2/4}
\sin((2m+1)z)
,\] let $\theta_2(y,q):=\frac{\partial}{\partial y}\theta_1(y,q)$ 
and for $z\geq 0$ 
set 
\[\tau_2(z):=\frac{4}{\pi^{3/2}}
\int_{0\leq u \leq z/2}
\int_{0\leq t \leq \pi/2}
\theta_2(t/2,\mathrm{e}^{-1/4u})
\frac{\mathrm{d}t}{(\cos t)^{1/2}}
\frac{\mathrm{d}u}{u^{3/2}}
.\]
\begin{theorem}
\label{thm:beyondeqpartitapassacaglia}     
Keep the assumptions of Theorem~\ref{thm:ghjk56432}.
For every $z\in \R_{>0}$
we have
\beq{eq:thm:beyondeqpartitapassacaglia2}{
\lim_{B\to+\infty}
\b P_B\l(x\in \Omega_B:
\frac{1}{\l(\Delta(f) \log \log B\r)^2 }
\sum_{p\leq B}
 \sigma_p 
\Big( 
\omega_f(x,p)  -    \sum_{q\leq p  }   \sigma_q 
\Big)^2
< z
\r)
=\tau_2(z)
.}
\end{theorem}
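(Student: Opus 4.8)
The plan is to recognise the quantity inside $\b P_B(\cdots)$ in~\eqref{eq:thm:beyondeqpartitapassacaglia2} as, up to an error that is $o(1)$ outside a set of $\b P_B$-measure $o(1)$, the value $h(X_B(\bullet,x))$ of the functional $h:D\to\R$ given by $h(\phi)=\int_0^1\phi(t)^2\,\mathrm{d}t$, and then to invoke Theorem~\ref{thm:main} together with the mapping theorem for weak convergence. Write $t_p:=\log\log p/\log\log B$, so that $\exp(\log^{t_p}B)=p$ and $t_p\le1$ whenever $p\le B$. By~\eqref{eq:behavesas} one has the exact identity
\[
\omega_f(x,p)-\sum_{q\le p}\sigma_q=(\Delta(f)\log\log B)^{1/2}\,Z_B(t_p,x),
\]
so the quantity in question equals $\dfrac{1}{\Delta(f)\log\log B}\sum_{p\le B}\sigma_p\,Z_B(t_p,x)^2$.

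The reduction to $h(X_B(\bullet,x))$ has two parts. I would first use the refinement $\sum_{q\le T}\sigma_q=\Delta(f)\log\log T+O(1)$ of~\eqref{eq:asymptolimt} (equivalent to $\sum_p|\sigma_p-\Delta(f)/p|<\infty$, which holds since $\sigma_p=\Delta(f)/p+O(p^{-1-\delta})$ for some $\delta>0$): as $X_B$ and $Z_B$ differ only through their centrings, this gives the deterministic bound $\|X_B(\bullet,x)-Z_B(\bullet,x)\|_\infty=O((\log\log B)^{-1/2})$, whence $\int_0^1 Z_B(t,x)^2\,\mathrm{d}t=\int_0^1 X_B(t,x)^2\,\mathrm{d}t+O\bigl(\|X_B-Z_B\|_\infty(\|X_B\|_\infty+\|Z_B\|_\infty)\bigr)$, and $\|X_B\|_\infty,\|Z_B\|_\infty=O(1)$ off a set of measure $o(1)$. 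So it suffices to compare $\frac{1}{\Delta(f)\log\log B}\sum_{p\le B}\sigma_p Z_B(t_p,x)^2$ with $\int_0^1 Z_B(t,x)^2\,\mathrm{d}t$. Since $Z_B(\bullet,x)$ is a step function constant between consecutive values $t_q$ ($q$ prime), one has $\int_0^1 Z_B(t,x)^2\,\mathrm{d}t=\frac{1}{\log\log B}\sum_{p\le B}Z_B(t_p,x)^2\bigl(\log\log p^+-\log\log p\bigr)+O((\log\log B)^{-1})$, where $p^+$ denotes the next prime after $p$, and the difference from the $\sigma_p$-weighted sum is estimated by partial summation: it relies on $\sum_{p\le T}(\log\log p^+-\log\log p)=\log\log T+O(1)=\frac{1}{\Delta(f)}\sum_{p\le T}\sigma_p+O(1)$, together with the facts that, off a set of $\b P_B$-measure $o(1)$, $\max_{t\in[0,1]}|Z_B(t,x)|=O(1)$ (from Theorem~\ref{thm:main}, the supremum being continuous on $D$, plus the bound on $\|X_B-Z_B\|_\infty$) and the sequence $(Z_B(t_p,x)^2)_{p\le B}$ has total variation $O((\log\log B)^{1/2})$ (since the jumps of $Z_B(\bullet,x)$ sum in absolute value to $O((\log\log B)^{1/2})$). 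Altogether the quantity inside $\b P_B(\cdots)$ in~\eqref{eq:thm:beyondeqpartitapassacaglia2} equals $h(X_B(\bullet,x))+o(1)$ off a set of $\b P_B$-measure $o(1)$.

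For the probabilistic step, note every element of $D$ is bounded, so $h$ is well defined on $D$, and $h$ is continuous at each $\phi\in C[0,1]$ because at a continuous path Skorohod convergence coincides with uniform convergence; since Wiener measure $W$ is carried by $C[0,1]$, $h$ is $W$-a.s.\ continuous. By Theorem~\ref{thm:main}, $X_B(\bullet,x)$ converges weakly under $\b P_B$ to $W$ on $(D,\c D)$, so by the mapping theorem $h(X_B(\bullet,x))$ converges weakly to the law of $\int_0^1 B_\tau^2\,\mathrm{d}\tau$, where $(B_\tau)_{\tau\in[0,1]}$ is a standard Brownian motion. This law is absolutely continuous, hence has continuous distribution function, so together with the approximation of the previous paragraph,
\[
\lim_{B\to+\infty}\b P_B\Big(x\in\Omega_B:\frac{1}{(\Delta(f)\log\log B)^2}\sum_{p\le B}\sigma_p\Big(\omega_f(x,p)-\sum_{q\le p}\sigma_q\Big)^2<z\Big)=\mathrm P\Big(\int_0^1 B_\tau^2\,\mathrm{d}\tau<z\Big)
\]
for every $z>0$.

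It remains to identify the right-hand side with $\tau_2(z)$. This is classical: by the Cameron--Martin formula $\mathrm E\exp\bigl(-s\int_0^1 B_\tau^2\,\mathrm{d}\tau\bigr)=(\cosh\sqrt{2s})^{-1/2}$, and inverting this Laplace transform produces exactly the theta-function expression defining $\tau_2$; see~\cite{MR3236656} and the references therein on the mean squared displacement. The main obstacle is the reduction of the second paragraph: one needs the error in~\eqref{eq:asymptolimt} to be genuinely $O(1)$ rather than $o(\log\log B)$, and one must control the total variation of $Z_B(\bullet,x)^2$ on a set of nearly full $\b P_B$-measure. If the uniformity required for these estimates is only available for the truncated path of~\eqref{def:pathflat} — as is already the case in the proof of Theorem~\ref{thm:main} — the partial-summation argument should be carried out for that path and the truncation error removed afterwards, exactly as there. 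The remaining ingredients, the mapping theorem and the evaluation of the law of $\int_0^1 B_\tau^2\,\mathrm{d}\tau$, are routine or classical.
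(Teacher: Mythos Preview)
Your approach is correct and reaches the same endpoint as the paper---the mapping theorem applied to $h(\phi)=\int_0^1\phi(t)^2\,\mathrm{d}t$ together with the classical evaluation of the law of $\int_0^1 B_\tau^2\,\mathrm{d}\tau$---but the reduction step is genuinely different. You keep the path $Z_B$ of~\eqref{eq:behavesas}, whose step at the prime $p$ has length $(\log\log p^+-\log\log p)/\log\log B$, and then use Abel summation and Lemma~\ref{lem:abelpu8} (which already gives the needed $O(1)$ error; the pointwise claim $\sigma_p=\Delta(f)/p+O(p^{-1-\delta})$ is neither required nor asserted in the paper) to swap these weights for $\sigma_p/\Delta(f)$. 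This works, with the minor correction that the total variation of $Z_B(\bullet,x)^2$ is $O\bigl(\max_t|Z_B(t,x)|\cdot(\log\log B)^{1/2}\bigr)$ rather than $O((\log\log B)^{1/2})$, the extra factor being harmless on the high-probability event $\max_t|Z_B|=O(1)$. The paper instead introduces in \S\ref{s:feynman_kac} a \emph{second} path, again called $Z_B$, in which time is reparametrised by $\sum_{q\le p}\sigma_q/(\Delta(f)\log\log B)$ so that the step at $p$ has length \emph{exactly} $\sigma_p/(\Delta(f)\log\log B)$; then $\int_0^1 Z_B(t,x)^2\,\mathrm{d}t$ equals the quantity in~\eqref{eq:thm:beyondeqpartitapassacaglia2} identically (up to the $p=2$ term), and the work is transferred to two approximation lemmas showing this reparametrised path also satisfies Theorem~\ref{thm:main}. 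The paper's route is cleaner and, crucially, reusable: the same time-changed path then proves Theorems~\ref{thm:arcsnl} and~\ref{thm:feynmkac} with no further partial summation. Your route is self-contained for this theorem but would need to be repeated for those. For the identification of $\tau_2$ the paper cites Erd\H{o}s--Kac~\cite[III]{MR0015705} directly.
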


 \subsection{Concentration of obstructing primes} 
Let us now turn our attention to
Question~\ref{qu:dio}.
The results so far show that the elements in the sequence~\eqref{eq:determseq} are equidistributed,
however, it may be that the set of primes $p$
satisfying
$f^{-1}(x)(\Q_p)=\emptyset$
is not fully equidistributed.
This could be, for example, due to
 a possible 
clustering of some of its elements.
To study the sparsity (or lack thereof) of such clusters
we shall look into the following set: 
for $x\in \P^n(\Q)$ we define 
\[ \c{C}_f(x):=\l\{p \text{ prime}: 
\omega_f(x,p)>
\sum_{q\leq p} \sigma_q
\r\}
.\]
By Lemma~\ref{lem:abelpu8}
and 
the case $r=1$ of 
Theorem~\ref{eq:bachlutesuites}
the expected value of  $\omega_f(x,p)$ is $$\Delta(f) \log \log p \approx \sum_{q\leq p} \sigma_q,$$  therefore, 
$p$ is in $\c{C}_f(x) $
exactly when there are 
`many' primes $\ell$ 
with $f^{-1}(x)(\Q_\ell)=\emptyset$ 
that are concentrated below $p$. 
Let us note that for all $x\in \P^n(\Q)$ outside a Zariski closed subset of $\P^n(x)$
this set is finite. This  is
because 
if $f^{-1}(x)$ is
smooth
then by Lemma~\ref{lem:boundsgf7} we have 
$\omega_f(x) \ll_f \frac{\log H(x)}{\log\log H(x)}$ and therefore
Lemma~\ref{lem:abelpu8}
gives 
\beq
{eq:toshth}
{
p\in \c C_f(x)
\Rightarrow
\log \log p 
\ll
\sum_{q\leq p}\sigma_p
<
 \omega_f(x,p) 
\leq 
 \omega_f(x) 
 \ll_f \frac{\log H(x)}{\log\log H(x)}.}

We wish to study the 
  distribution of $$\#\c C_f(x)=\sum_{p\in \c C_f(x) }1.$$
It turns out that it is more convenient to do so for a version of $\c C_f(x)$
where the primes are weighted appropriately.
Recall~\eqref{def:sigma_p}
and let 
\[
\widehat{\c C}_f(x)
:=\sum_{p\in \c C_f(x)} \sigma_p
.\]For 
$x\in \P^n(\Q)$ with 
$f^{-1}(x)$  smooth 
we can use~\eqref{eq:asymptolimt} to get 
$$ \widehat{\c C}_f(x)
\leq \sum_{p\leq \max\{q: \ q\in \c C_f(x)\} } \sigma_p \ll \log \log \max\{q:  q\in \c C_f(x)\}
,$$
hence, by~\eqref{eq:toshth} one has 
\beq{eq:tsabakafedaki}{
\widehat{\c C}_f(x)
 \ll_f \frac{\log H(x)}{\log \log H(x)} .}
We shall see that this bound is best possible in \S\ref{ex:klapaklapa}.

Let us now turn our attention to the average order of magnitude of $\widehat{\c C}_f(x)$.
If $\c C_f(x)$ consisted of all
 primes $p\leq H(x)$ 
then by~\eqref{eq:asymptolimt}
the
order of magnitude of
$\widehat{\c C}_f(x)$ would be 
$\log \log H(x)$. The next result shows that
there is, in fact, a distribution law for the corresponding ratio. 
\begin{theorem}
\label{thm:arcsnl}
Keep the assumptions of Theorem~\ref{thm:ghjk56432}.
For every $\alpha<\beta\in [0,1]$ we have
\beq{eq:thm:arcsnl}
{
\lim_{B\to+\infty}
\b P_B\l(x\in \Omega_B:
\frac{\widehat{\c C}_f(x)}{
 \Delta(f)   \log \log H(x)  
} \in (\alpha,\beta]\r)=
\frac{1}{\pi}
\int_\alpha^\beta
\frac{\mathrm{d}u}{\sqrt{u(1-u)}}
.}
\end{theorem}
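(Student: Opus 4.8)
The plan is to deduce the arcsine law from the functional limit theorem, Theorem~\ref{thm:main}, by recognizing $\widehat{\c C}_f(x)/(\Delta(f)\log\log H(x))$ as (a close approximation to) the occupation-time functional $\mathcal{L}\colon D\to[0,1]$ defined by $\mathcal{L}(g):=\meas\{t\in[0,1]:g(t)>0\}$ evaluated at the path $X_B(\bullet,x)$. First I would express $\widehat{\c C}_f(x)$ in terms of the path. By Definition~\ref{def:pathsdef}, for a prime $p$ we have $X_B(t_p,x)>0$ with $t_p:=\log\log p/\log\log B$ precisely when $\omega_f(x,p)>t_p\,\Delta(f)\log\log B=\Delta(f)\log\log p$, which, up to the $O(1)$ discrepancy between $\Delta(f)\log\log p$ and $\sum_{q\le p}\sigma_q$ controlled by Lemma~\ref{lem:abelpu8}, is the defining condition $p\in\c C_f(x)$. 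Summing $\sigma_p$ over such primes and using that $\sum_{q\le p}\sigma_q\sim\Delta(f)\log\log p$ together with partial summation (Lemma~\ref{lem:abelpu8} again), the weighted sum $\widehat{\c C}_f(x)$ becomes, after dividing by $\Delta(f)\log\log H(x)$, a Riemann–Stieltjes approximation to $\int_0^1\mathbf{1}[X_B(t,x)>0]\,\d t=\mathcal{L}(X_B(\bullet,x))$; the weights $\sigma_p$ play exactly the role of the measure $\d t$ under the substitution $t=\log\log p/\log\log B$, since $\sum_{q\le p}\sigma_q/(\Delta(f)\log\log B)\to t$.

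Next I would invoke Theorem~\ref{thm:main}: as $B\to\infty$, the law of $X_B(\bullet,x)$ under $\b P_B$ converges to the Wiener measure $W$ on $(D,\c D)$. The classical Lévy arcsine law (see e.g.~\cite{MR1121940}) states that under $W$ the occupation time $\mathcal{L}$ of the positive half-line has the arcsine distribution:
\[
W\bigl(g\in D:\mathcal{L}(g)\le u\bigr)=\frac{2}{\pi}\arcsin\sqrt{u}=\frac{1}{\pi}\int_0^u\frac{\d v}{\sqrt{v(1-v)}},\qquad 0\le u\le 1.
\]
To transfer the convergence I would check that $\mathcal{L}$ is $W$-almost-everywhere continuous on $D$ (equivalently, that its set of discontinuities, which is contained in the set of paths spending positive time at level $0$, is $W$-null — a standard fact for Brownian motion since Lebesgue-a.e.\ level set has measure zero), so that the set $A_{\alpha,\beta}:=\{g:\mathcal{L}(g)\in(\alpha,\beta]\}$ satisfies $W(\partial A_{\alpha,\beta})=0$; then Theorem~\ref{thm:main} applied to $A_{\alpha,\beta}$ gives the stated limit directly, once the endpoint issue $W(\mathcal{L}=\alpha)=W(\mathcal{L}=\beta)=0$ (true since the arcsine law is atomless in $(0,1)$, and for $\alpha,\beta\in\{0,1\}$ the boundary values are handled by $W(\mathcal{L}\in\{0,1\})=0$) is noted.

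The main obstacle is the passage from the discrete weighted sum $\widehat{\c C}_f(x)$ to the continuous functional $\mathcal{L}(X_B(\bullet,x))$ with control \emph{uniform in $x$} (or at least for a set of $x$ of $\b P_B$-measure tending to $1$). Two sources of error must be absorbed: first, the $O(1)$ gap between the threshold $\sum_{q\le p}\sigma_q$ appearing in the definition of $\c C_f(x)$ and the threshold $\Delta(f)\log\log p$ implicit in $X_B(t_p,x)>0$ — this matters only for primes $p$ where $\omega_f(x,p)$ is within $O(1)$ of its mean, and I would argue that the $\sigma_p$-weighted count of such "near-crossing" primes is negligible after normalization, using the moment bounds of Theorem~\ref{eq:bachlutesuites} (or the path-regularity estimates behind Theorem~\ref{thm:main}, cf.\ the truncation in~\eqref{def:pathflat}) to rule out the path lingering near $0$ on a macroscopic $t$-set; second, the Riemann-sum error in approximating $\int_0^1\mathbf{1}[X_B(t,x)>0]\,\d t$ by $\sum_p\sigma_p\mathbf{1}[X_B(t_p,x)>0]$, which is controlled since consecutive values $t_p$ are spaced $O(1/(p\log p\,\log\log B))$ apart and the integrand, being an indicator, contributes an error proportional to the $\sigma$-measure of the $t$-intervals on which $X_B$ changes sign, again negligible by the same non-lingering estimate. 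Once these two approximations are shown to hold with error $o(1)$ off a $\b P_B$-null set, the theorem follows from Theorem~\ref{thm:main} and the Lévy arcsine law as above.
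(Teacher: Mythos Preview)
Your strategy---deduce the arcsine law from Theorem~\ref{thm:main} by identifying $\widehat{\c C}_f(x)/(\Delta(f)\log\log H(x))$ with the occupation-time functional $\mathcal L(g)=\meas\{t\in[0,1]:g(t)>0\}$ evaluated on the path, and then invoking L\'evy's arcsine law for $W$---is exactly the paper's. The difference is in how the two obstacles you flag are handled. Rather than work with $X_B(\bullet,x)$ and then argue away (i) the $O(1)$ mismatch between the thresholds $\sum_{q\le p}\sigma_q$ and $\Delta(f)\log\log p$, and (ii) the Riemann-sum error in passing from $\sum_p\sigma_p\mathbf 1[\,\cdot\,]$ to $\int_0^1\mathbf 1[\,\cdot\,]\,\d t$, the paper introduces (in \S\ref{s:feynman_kac}) an auxiliary path $Z_B(\bullet,x)$ obtained by reparametrising time so that the $t$-interval $T_i(B,x)$ attached to the prime $q_{i+1}$ has Lebesgue measure exactly $\sigma_{q_{i+1}}/(\Delta(f)\log\log B)$, and so that on that interval $Z_B(t,x)>0$ holds precisely when $\omega_f(x,q_{i+1})>\sum_{q\le q_{i+1}}\sigma_q$, i.e.\ when $q_{i+1}\in\c C_f(x)$. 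With this choice one has the \emph{identity}
\[
\meas\{t\in[0,1]:Z_B(t,x)>0\}=\frac{\widehat{\c C}_f(x)}{\Delta(f)\log\log B}+O\!\l(\frac{1}{\log\log B}\r),
\]
so both of your obstacles disappear by construction. The remaining work is to show that $Z_B$ is close to $X_B$ (equivalently $Y_B$) in the Skorohod metric in $\b P_B$-probability (Lemmas~\ref{lem:English_Suite_No_2} and~\ref{lem:Contrapunctus_XIV}, summarised in Remark~\ref{rem:Apocalyptic_Triumphator}), so that Theorem~\ref{thm:main} applies equally to $Z_B$.

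Your direct route via $X_B$ is not wrong, but the ``non-lingering near zero'' estimate you sketch---that the $\sigma_p$-weighted set of primes with $|\omega_f(x,p)-\Delta(f)\log\log p|=O(1)$ is $o(\log\log B)$ for most $x$---amounts to controlling $\meas\{t:|X_B(t,x)|\le C/\sqrt{\log\log B}\}$. This is a local-time type statement for the approximating process with a shrinking window; it is true (and follows ultimately from the same weak convergence plus the fact that Brownian local time at zero is a.s.\ finite), but making it rigorous is genuinely more work than the reparametrisation trick, which converts the approximation problem into an exact identity.
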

This can be viewed as a $p$-adic solubility
analogue
of  L\'{e}vy's  arcsine law
that concerns the time that 
a random walk in Brownian motion
spends above $0$, see~\cite[\S 5.4]{MR2604525}.
One consequence of Theorem~\ref{thm:arcsnl}
is that, since  the area of  $\int_0^1(u(1-u))^{-1/2}\mathrm{d}u$
is concentrated in the regions around $u=0$ and $u=1$,
for most fibres $f^{-1}(x)$
the set of primes $p$
without a $p$-adic point 
will be 
either  very regularly
or 
very irregularly
spaced.
\subsection{The Feynman--Kac formula}
The Feynman--Kac formula plays a major role in linking stochastic processes
and partial differential equations,
see
the book of Karatzas and Schrieve~\cite[\S 4.4]{MR1121940}
and the book
of M\"orters and Peres~\cite[\S 7.4]{MR2604525}.
For its applications to other sciences see the book by Del Moral~\cite{MR2044973}.

We shall use the formula
to establish a link between $p$-adic solubility and differential equations.
Our result will roughly say that 
in situations more general than those in  Theorems~\ref{thm:beyondeq},
~\ref{thm:beyondeqpartita},
~\ref{thm:beyondeqpartitapassacaglia}  
and~\ref{thm:arcsnl}
the analogous
distributions (such as those in the right side of~\eqref{thm:beyondeqeq},\eqref{eq:t:beyondeqpartita},
~\eqref{eq:thm:beyondeqpartitapassacaglia2}
 and~\eqref{eq:thm:arcsnl})
are derived from equations similar to Schr\"{o}dinger's equation in quantum mechanics.
The following definition can be found in the work of
Kac~\cite{MR0027960}.
\begin{definition}
\label{def:scrokac}
Let $\c K
:\R\to \R_{\geq 0}$ be a non-negative
 bounded function.
For $s,u\in \R$ with $s>0$ and $u>0$
we say that a solution $\Psi_{s,u}$ 
of the differential equation 
\beq{eq:different}{
\frac{1}{2} 
\frac{d^2 \Psi_{s,u}}{d x^2}=
(s+u \c K(x)) \Psi_{s,u}(x)
}
is fundamental
if it satisfies the conditions
\begin{itemize}
\item $\lim_{|x|\to \infty}\Psi_{s,u}(x)=0$,
\item $\sup_{x\neq 0} |\Psi'_{s,u}(x)| < \infty $,
\item $\Psi'_{s,u}(+0)-\Psi'_{s,u}(-0)=-2$.
\end{itemize}
\end{definition}
Equation~\eqref{eq:different}
is related to the heat equation, see, for example, section $7.4$ in the book of M\"orters and Peres~\cite{MR2604525}.
The solution $\Psi_{s,u}(x)$ corresponds to the temperature at the place $x$ for a heat flow 
with cooling at rate $-u \c K(x)$. 

Influenced by the work of Feynman~\cite{MR0026940},
Kac~\cite{MR0027960} proved that 
a fundamental solution exists, is unique, and, furthermore, 
that for every $s>0$ and $u>0$
 it fulfils
\beq
{eq:feynmankac}
{
\int_0^{+\infty}
\mathrm{e}^{-st}
 \mathbb{E}^0
\l(
\exp\l\{-u
\int_{0}^t
\c K(B_\tau)
\mathrm{d}\tau
\r\}
\r)
\mathrm{d}t
=\int_{-\infty}^{+\infty}
\Psi_{s,u}(x)
\mathrm{d}x,}
where $ \mathbb{E}^0$ is taken over 
 all Brownian motion paths $\{B_\tau:\tau\geq 0\}$ satisfying $B_0=0$ almost surely 
 and with respect to the Wiener measure $W$.
Kac then used this to calculate
the distribution function
\[W
\l[\int_{0}^t
\c K(B_\tau)
\mathrm{d}\tau
\leq z
\r]
, (t>0,z>0),\]
for various choices of $\c K$.
Thus,~\eqref{eq:feynmankac}
employs
differential equations 
in order to allow
the use of appropriately general
``test functions'' $\c K$ that 
 measure the evolution 
through time
of the distance from the average position (i.e. $\tau=0$)
of a Brownian motion path.

Recall the meaning of 
$V,f$ and $\Delta(f)$ in~\S\ref{s:primtypic}
and the definitions of 
$\omega_f(x,T)$ and $\sigma_p$ 
in~\eqref{def:truncard} and~\eqref{def:sigma_p}
respectively.
We shall use
Theorem~\ref{thm:main}
and~\eqref{eq:feynmankac}
to study the fluctuation of $\omega_f(x,p)$ as the prime 
$p$ varies.
For this, we define for every   
 non-negative
 bounded function
 $\c K:\R\to \R_{\geq 0}$ 
and every $B\in \R_{\geq 3}$ and $t\in [0,1]$ 
the function 
$
\widetilde{\c K}_B(\bullet,t)
:\P^n(\Q)\to \R$
given by 
\beq{def:frenchsuites}{
\widetilde{\c K}_B(x,t)
:=\frac{1}{\Delta(f) \log \log B} 
\sum_{p\leq 
\exp\l(\log^t B\r)}
\sigma_p
\
\c K\!\l(
\frac{\omega_f(x,p)-\sum_{q\leq p} \sigma_q}{\sqrt{\Delta(f) \log \log B}}
\r).}
This measures how far are the values of $\omega_f(x,p)$ 
from its average when $p$ ranges;
different choices of $\c K$ correspond to different ways of measuring this distance. 
For example, 
the choices 
\[
\c K(x):=x^2
\text{ and }
\c K(x):= \b 1_{[0,\infty)}(x)
\]
are relevant to 
Theorems~\ref{thm:beyondeqpartitapassacaglia}  
and~\ref{thm:arcsnl}
respectively. Our next result allows
general non-negative bounded
``test functions'' $\c K$,
thus it provides a general method 
for dealing with 
Question~\ref{qu:dio}.

\begin{theorem}
\label{thm:feynmkac} 
Assume that 
 $V$ is a smooth  projective variety over $\QQ$ equipped with a 
dominant morphism $f: V \to \PP_\Q^n$ with geometrically integral generic fibre and $\Delta(f)\neq 0$. 
Let $\c K
:\R\to \R_{\geq 0}$ be a non-negative
 bounded function.
Then for every $u>0$ and $t\in [0,1]$
the following limit exists, 
\[
\widehat{\c K}(u,t)
:=
\lim_{B\to+\infty}
\frac{1}{\#\{x\in \P^n(\Q):H(x)\leq B\}   }
\sum_{
 x\in \P^n(\Q)
\atop
H(x)\leq B
}\exp\l(
-u
\widetilde{\c K}_B(x,t)
\r)
\]
and for every 
 $u>0$ and
$s>0$
it
satisfies 
\beq{bwv:812}{
\int_0^{+\infty}
\mathrm{e}^{-st}
\widehat{\c K}(u,t)
\mathrm{d}t
=\int_{-\infty}^{+\infty}
\Psi_{s,u}(x)
\mathrm{d}x 
,}
where $\Psi_{s,u}$ is the fundamental solution of~\eqref{eq:different}.
\end{theorem}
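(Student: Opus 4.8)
The plan is to derive Theorem~\ref{thm:feynmkac} from the invariance principle in Theorem~\ref{thm:main} together with Kac's formula~\eqref{eq:feynmankac}, with the functional
\[
\c K \mapsto \int_0^t \c K(B_\tau)\,\mathrm{d}\tau
\]
playing the role of the bridge. First I would fix $t\in[0,1]$ and $u>0$ and observe that, by the definition~\eqref{eq:behavesas} of $Z_B(\bullet,x)$ together with the discussion around Definition~\ref{def:pathsdef}, the quantity $\widetilde{\c K}_B(x,t)$ is essentially a Riemann sum approximating the integral of $\c K$ along the path $X_B(\bullet,x)$. More precisely, writing the prime $p\leq \exp(\log^s B)$ as $p = \exp(\log^s B)$ for $s\in[0,t]$, the weight $\sigma_p$ times the number of primes in an infinitesimal interval in $s$ is, by~\eqref{eq:asymptolimt} and partial summation (Lemma~\ref{lem:abelpu8}), asymptotically $\Delta(f)\log\log B\,\mathrm{d}s$; hence
\[
\widetilde{\c K}_B(x,t)
\;=\;
\int_0^t \c K\bigl(Z_B(s,x)\bigr)\,\mathrm{d}s + o(1),
\]
and by Remark~\ref{rem:pathrem} (to be proven in the body, together with the truncation~\eqref{def:pathflat}) one may replace $Z_B$ by $X_B$ up to a quantity tending to $0$ in $\b P_B$-measure. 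The map $\phi:D\to\R$ sending a path $w$ to $\int_0^t \c K(w(s))\,\mathrm{d}s$ is bounded (since $\c K$ is bounded) and continuous in the Skorohod topology at every $w$ with no fixed discontinuities — in particular $W$-almost everywhere, since Brownian paths are continuous — so $\exp(-u\phi(\cdot))$ is a bounded functional whose set of $W$-discontinuities has $W$-measure zero.

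With this in hand, the core of the argument is an application of Theorem~\ref{thm:main} to the functional $g(w):=\exp\bigl(-u\int_0^t\c K(w(s))\,\mathrm{d}s\bigr)$. Since $g$ is bounded and $W$-a.e.\ continuous, the portmanteau theorem for weak convergence of measures (\cite[Ch.~1]{MR1700749}) upgrades the convergence of measures in Theorem~\ref{thm:main} to convergence of integrals:
\[
\lim_{B\to+\infty}\int_{\Omega_B} g\bigl(X_B(\bullet,x)\bigr)\,\mathrm{d}\b P_B(x)
=\int_D g(w)\,\mathrm{d}W(w)
=\mathbb{E}^0\!\left(\exp\Bigl\{-u\int_0^t\c K(B_\tau)\,\mathrm{d}\tau\Bigr\}\right).
\]
Combined with the reduction of the previous paragraph (namely $\exp(-u\widetilde{\c K}_B(x,t)) = g(X_B(\bullet,x))\cdot(1+o(1))$ off a set of vanishing $\b P_B$-measure, with the $o(1)$ uniform because everything in sight is bounded by $1$), this shows that the limit defining $\widehat{\c K}(u,t)$ exists and equals $\mathbb{E}^0(\exp\{-u\int_0^t\c K(B_\tau)\,\mathrm{d}\tau\})$. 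Finally, multiplying by $\mathrm{e}^{-st}$, integrating over $t\in(0,\infty)$, and applying Fubini (justified by $0\leq g\leq 1$ and $\int_0^\infty\mathrm{e}^{-st}\mathrm{d}t<\infty$) gives
\[
\int_0^{+\infty}\mathrm{e}^{-st}\widehat{\c K}(u,t)\,\mathrm{d}t
=\int_0^{+\infty}\mathrm{e}^{-st}\,\mathbb{E}^0\!\left(\exp\Bigl\{-u\int_0^t\c K(B_\tau)\,\mathrm{d}\tau\Bigr\}\right)\mathrm{d}t
=\int_{-\infty}^{+\infty}\Psi_{s,u}(x)\,\mathrm{d}x,
\]
the last equality being exactly Kac's identity~\eqref{eq:feynmankac}, whose right-hand side is the integral of the fundamental solution of~\eqref{eq:different}. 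Note that for $t=1$ the statement still makes sense because $X_B(\bullet,x)\in D$ is defined on all of $[0,1]$ and $\int_0^1$ is a continuity point of the same functional.

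The main obstacle is the first paragraph: rigorously identifying $\widetilde{\c K}_B(x,t)$ with $\int_0^t\c K(X_B(s,x))\,\mathrm{d}s + o(1)$ in $\b P_B$-measure. This requires (i) the prime-counting/partial-summation step converting the $\sigma_p$-weighted sum over primes into an integral in the variable $s$, uniformly in $t\in[0,1]$, which rests on~\eqref{eq:asymptolimt} and the Mertens-type estimates of Lemma~\ref{lem:abelpu8}; and (ii) control of the discrepancy between $Z_B$ and $X_B$ — i.e.\ the content of Remark~\ref{rem:pathrem} and the truncation device~\eqref{def:pathflat} alluded to after Theorem~\ref{thm:main} — showing that the paths agree closely enough that substituting one for the other inside the bounded, $W$-a.e.-continuous functional $\phi$ changes the expectation by $o(1)$. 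Once that technical reduction is secured, the passage to the limit via Theorem~\ref{thm:main} and the portmanteau theorem, and the final appeal to~\eqref{eq:feynmankac}, are essentially formal.
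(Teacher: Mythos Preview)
Your overall strategy is correct and coincides with the paper's: identify $\widetilde{\c K}_B(x,t)$ with $\int_0^t \c K(\text{path}(s,x))\,\mathrm{d}s$ up to $o(1)$, apply the invariance principle (Theorem~\ref{thm:main}) together with the continuous mapping theorem to the bounded functional $w\mapsto \exp(-u\int_0^t\c K(w(s))\,\mathrm{d}s)$, and finish with Kac's identity~\eqref{eq:feynmankac}.

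The one substantive difference is in how the ``main obstacle'' you flag is handled. You propose to stay with $X_B$ (or the $Z_B$ of Remark~\ref{rem:pathrem}) and use partial summation to convert the $\sigma_p$-weighted sum into an integral in the time variable $s=\log\log p/\log\log B$. The paper instead introduces a variant path (also called $Z_B$, in \S4.3) where the time variable is reparametrised by the running $\sigma$-sum: $p$ contributes at times $t$ with $\sum_{q\leq p_-}\sigma_q\le \Delta(f)\,t\,\log\log B$. With this choice the interval on which the path is constant has length \emph{exactly} $\sigma_p/(\Delta(f)\log\log B)$ (this is~\eqref{eq:meas}), and the value there is \emph{exactly} the argument of $\c K$ appearing in $\widetilde{\c K}_B$ (this is~\eqref{eq:stable}). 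Consequently $\int_0^t\c K(Z_B(\tau,x))\,\mathrm{d}\tau=\widetilde{\c K}_B(x,t)+O((\log\log B)^{-1})$ holds deterministically, using only that $\c K$ is bounded; no partial summation and no continuity of $\c K$ are needed. Remark~\ref{rem:Apocalyptic_Triumphator} then lets one run Theorem~\ref{thm:main} directly with this $Z_B$.

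Your partial-summation route would go through cleanly if $\c K$ were, say, Lipschitz, but for merely bounded $\c K$ (as in the hypothesis, and as in the arcsine application where $\c K=\mathbf 1_{[0,\infty)}$) the Abel-summation error is controlled by the total variation of $p\mapsto\c K(\text{arg}_p)$, which is not obviously small without further probabilistic input on the path. The paper's $\sigma$-weighted time change is precisely the device that removes this difficulty.
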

It is 
noteworthy
 that,
for a fixed ``test function'' $\c K,$
the  left side of~\eqref{bwv:812}
is completely determined by the 
number-theoretic data associated to the fibration
$f: V \to \PP_\Q^n$,
however,
its 
right side
is determined exclusively 
through
differential equations. 
We are not aware of previous connections
between 
the Feynman--Kac formula 
and number theory.
\begin
{acknowledgements}
We are
indebted
to Carlo Pagano for suggesting Theorem~\ref{prop:typicalprop}.
We are also
grateful to Jeremy Daniel 
for useful explanations
regarding the Feynman--Kac formula.

\end
{acknowledgements}
\begin{notation}
All implied constants in the Landau/Vinogradov notation $O(\cdot),\ll$, depend at most on the fibration $f$, except where specified by the use of a subscript.
The  counting function of the distinct prime factors  
is denoted by $\omega(m):=\#\{p \text{ prime} : p\mid m\}$
and
the standard M\"{o}bius function on the integers will be denoted by $\mu$.
\end{notation}

\section{Equidistribution}   
\label{leprosy:pulltheplug}

\subsection{Auxiliary results from number theory}
\label{s:prelim}

 \begin{lemma} [Lemma 3.1, \cite{arXiv:1711.08396}]
\label{lem:boundsgf7}
	There exists $D=D(f)$ such that if $x \in \PP^n(\QQ)$ and 
	$f^{-1}(x)$ is smooth then
	\[\omega_f(x) \ll \frac{ \log H(x) }{\log \log H(x) } \  \text{ and }  \ 
	\max \{ p : f^{-1}(x)(\QQ_p) = \emptyset\} \ll H(x)^D.\]
\end{lemma}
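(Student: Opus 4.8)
The plan is to prove both bounds by controlling the set of "bad" primes $p$ for which $f^{-1}(x)(\QQ_p)=\emptyset$ through a combination of the Lang--Weil estimates, Hensel's lemma, and a height/discriminant argument. First I would fix a projective model of $f:V\to\PP^n_\Q$ over an open subscheme $U=\Spec\ZZ[1/N]$, spreading out $V$, $\PP^n$ and $f$ to a proper morphism $\mathcal f:\mathcal V\to\PP^n_U$; enlarging $N$ if necessary, I may assume the generic fibre of $\mathcal f$ is geometrically integral and that $\mathcal f$ is smooth over the complement of a closed subscheme $Z\subset\PP^n_U$ whose image in $\PP^n_\Q$ has codimension $\geq 1$. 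For a point $x\in\PP^n(\QQ)$ with $f^{-1}(x)$ smooth, write $x$ with primitive integer coordinates so that $H(x)=\max_i|x_i|$. The key dichotomy is: either $p\nmid N$ and $x\bmod p$ avoids the bad locus $Z\bmod p$, in which case $\mathcal f^{-1}(x)\bmod p$ is a smooth geometrically integral variety over $\FF_p$, so by Lang--Weil it has an $\FF_p$-point once $p$ exceeds a constant depending only on $\mathcal f$ (degree, dimension), and then Hensel's lemma lifts it to a $\QQ_p$-point; or else $p\mid N$, or $p$ lies below a constant, or $x\bmod p$ hits the bad locus.

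The first two cases contribute only $O(1)$ primes, so the whole problem reduces to bounding the number of primes $p$ at which $x\bmod p$ lands in the bad locus $Z$. Since $f^{-1}(x)$ is smooth, $x$ does not lie on (the $\Q$-locus of) $Z$, so there is a nonzero homogeneous polynomial $g\in\ZZ[x_0,\dots,x_n]$ (a suitable resultant/discriminant cutting out $Z$, with degree and coefficient size bounded in terms of $f$) with $g(x)\neq 0$; the primes $p$ at which $x\bmod p$ hits $Z$ all divide the nonzero integer $g(x)$. By the trivial bound $|g(x)|\ll H(x)^{\deg g}$, the number of such primes is $\ll \omega(g(x))\ll \frac{\log H(x)}{\log\log H(x)}$, using the standard estimate $\omega(m)\ll \frac{\log m}{\log\log m}$. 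This gives the first bound. For the second bound, any bad prime $p$ either is $O(1)$ or divides $g(x)$, hence $p\leq |g(x)|\ll H(x)^{\deg g}$; taking $D$ to be $\deg g$ (plus a harmless constant) yields $\max\{p:f^{-1}(x)(\QQ_p)=\emptyset\}\ll H(x)^D$.

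The main obstacle is making the "bad locus" step clean and uniform: one needs a single polynomial $g$ (or finite set of polynomials) of bounded degree and height, defined over $\ZZ$, whose vanishing locus contains the non-smooth locus of $\mathcal f$ and the primes of bad reduction, so that the implied constants genuinely depend only on $f$ and not on $x$. This is a standard spreading-out/elimination-theory argument — $g$ can be extracted as a discriminant or as a resultant eliminating the fibre variables from the defining equations of $\mathcal V$ — but it is the one place where care is required. Everything else (Lang--Weil with explicit dependence only on degree and dimension, Hensel's lemma, the elementary bound on $\omega(m)$) is routine and may be quoted directly; in fact this is precisely Lemma~3.1 of~\cite{arXiv:1711.08396}, so I would simply reproduce that argument.
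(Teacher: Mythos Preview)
The paper does not prove this lemma at all; it is quoted verbatim from \cite{arXiv:1711.08396} (as Lemma~3.1 there) without reproducing the argument. Your sketch is correct and is the standard route: spread out, use Lang--Weil plus Hensel to handle the primes of smooth reduction, and control the remaining primes through a discriminant-type polynomial $g$ --- indeed the very polynomial that appears in Lemma~\ref{lem:fshgtt672} of the present paper --- together with the elementary bound $\omega(m)\ll (\log m)/\log\log m$.
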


\begin{lemma} [Lemma 3.2, \cite{arXiv:1711.08396}]\label
{lem:fshgtt672}
	Let $g \in \ZZ[x_0,\ldots, x_n]$ be a   square-free form such that 
	$f$ is smooth away from the divisor $g(x) = 0 \subset \PP^n_\QQ$.
	Then there exists $A =A(f) > 0$ such that for all primes $p > A$ the following hold.
	\begin{enumerate}
		\item The restriction of $f$ to $\PP^n_{\FF_p}$ is smooth away from the divisor 
		$g(x) = 0 \subset \PP^n_{\FF_p}$.
		\item If $x \in \PP^n(\Q)$ and $f^{-1}(x)(\QQ_p) = \emptyset$ then $p \mid g(x)$.
	\end{enumerate}
\end{lemma}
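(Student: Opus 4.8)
\emph{Proof sketch.} The plan is to spread $f$ out to an integral model and then deduce both statements from base change, the Lang--Weil estimates and Hensel's lemma; the only genuine work is to make sure every step excludes only finitely many primes, all controlled by $f$.

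First I would fix an integral model: as $V$, $f$ and $g$ are of finite presentation over $\Q$, there are an integer $N_0$, a projective $\ZZ[1/N_0]$-scheme $\mathcal V$ and a morphism $\pi:\mathcal V\to\P^n_{\ZZ[1/N_0]}$ restricting to $f$ over $\Q$. Put $\mathcal W:=\P^n_{\ZZ[1/N_0]}\setminus\{g=0\}$ and $W:=\mathcal W_\Q$. Over $W$ the morphism $f$ is smooth and proper by the hypothesis on $g$, and its fibre over the generic point $\eta$ of $W$ is geometrically integral. The Stein factorisation $f^{-1}(W)\to W'\xrightarrow{h}W$ then has $h$ finite \'etale (by smoothness) and of degree $1$ (since the geometric fibre of $f$ over $\eta$ is connected), hence $h$ is an isomorphism; so $f|_W$ has geometrically connected fibres, and being smooth these are geometrically integral. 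Thus, over $\Q$: (a) $\pi$ is smooth over $W$; (b) every fibre of $\pi$ over $W$ is geometrically integral; (c) $g$ is square-free, i.e. $\{g=0\}_\Q$ is reduced. Each of (a)--(c) survives reduction modulo all but finitely many primes; replacing $N_0$ by the resulting $N$ and setting $A$ to be the largest prime factor of $N$, the base change of $\mathcal V|_{\mathcal W}\to\mathcal W$ along $\ZZ[1/N]\to\F_p$ with $p>A$ gives assertion (1) directly from (a) and (c).

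For assertion (2) I would argue by contraposition: fix $p>A$ and $x\in\P^n(\Q)$ with $p\nmid g(x)$, so that $g(x)$ is a nonzero integer prime to $p$, and produce a point of $f^{-1}(x)(\Q_p)$. Choosing primitive integral coordinates for $x$ yields $\tilde x\in\P^n(\ZZ_p)$ whose generic point is $x\in W(\Q_p)$ (as $g(x)\neq0$) and whose closed point is a rational point $\bar x\in\P^n(\F_p)$ with $g(\bar x)\neq0$ (as $p\nmid g(x)$); hence $\tilde x$ factors through $\mathcal W_{\ZZ_p}$. Pulling $\mathcal V|_{\mathcal W}$ back along $\tilde x$ produces a scheme $\mathcal V_{\tilde x}$, smooth and proper over $\ZZ_p$, whose generic fibre is $f^{-1}(x)\times_\Q\Q_p$ and whose special fibre is $\pi^{-1}(\bar x)$; by (b) this special fibre is smooth and geometrically integral over $\F_p$, say of dimension $d$. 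Since the dimension and degree (in a fixed projective embedding of $\mathcal V$ over $\P^n$) of the fibres $\pi^{-1}(\bar x)$ are bounded purely in terms of $f$, the Lang--Weil estimates~\cite{LW} hold for them with a uniform error constant, so after enlarging $A$ we get $\#\pi^{-1}(\bar x)(\F_p)\geq p^{d}-O(p^{d-1/2})\geq1$. Picking $y_0\in\mathcal V_{\tilde x}(\F_p)$, a smooth point of the smooth $\ZZ_p$-scheme $\mathcal V_{\tilde x}$, Hensel's lemma lifts it to $\tilde y\in\mathcal V_{\tilde x}(\ZZ_p)$, whose image in the generic fibre is a point of $f^{-1}(x)(\Q_p)$. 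This proves (2).

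The only real obstacle is the uniformity encoded in the single constant $A=A(f)$: one must check that the bad primes of the model, the primes for which the non-smooth locus escapes $\{g=0\}$, the primes for which some fibre over $\mathcal W$ fails to be geometrically integral, the primes for which $g\bmod p$ is not square-free, and the primes below the Lang--Weil threshold together make up a finite set depending only on $f$. The first four are instances of the standard spreading-out principle (a property of a $\Q$-variety that is open on the base holds after reduction modulo all but finitely many primes), and the last is the quantitative content of Lang--Weil, whose error depends only on the geometric invariants above. Everything else is base change or the formal smoothness criterion.
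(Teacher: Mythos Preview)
Your argument is correct and is essentially the standard proof: spread out to an integral model, use that smoothness, properness and geometric integrality of the fibres over the complement of $\{g=0\}$ are all constructible and hence persist modulo all but finitely many primes, then combine Lang--Weil with Hensel's lemma to produce a $\Q_p$-point whenever the reduction lands in the good locus. Note that the present paper does not give its own proof of this lemma at all; it is quoted verbatim as \cite[Lem.~3.2]{arXiv:1711.08396}, and the argument there follows exactly the spreading-out/Lang--Weil/Hensel route you outline.
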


\begin{lemma} 
[Lem.3.3, \cite{arXiv:1711.08396}]
\label{lem:harmonicgfts6}
	Let  $g$ be as in Lemma \ref{lem:fshgtt672}. Then for all primes $p$ we have 
	\[
	\sigma_p \ll \frac{1}{p}, 
	\] with an implied constant that depends at most on $f$ and $g$.
\end{lemma}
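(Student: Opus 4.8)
The plan is to show that, for all but finitely many primes $p$, the set of $x\in\P^n(\F_p)$ with non-split fibre is contained in $\mathcal Z(\F_p)$ for a closed subscheme $\mathcal Z\subseteq\P^n_{\Z[1/N]}$ all of whose reductions have dimension at most $n-1$ and bounded complexity, and then to count those points by the Lang--Weil estimates \cite{LW}. The underlying point is that a geometrically integral scheme over a field is split, so a non-split fibre can only occur over a point where the fibres of $f$ fail to be geometrically integral; and this locus has codimension at least $1$ because the generic fibre of $f$ is geometrically integral by hypothesis.

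First I would spread out. By generic flatness $f$ is flat over a dense open of $\P^n_\Q$, and over its flat locus the set of points with geometrically integral fibre is open (as $f$ is also proper); since this set contains the generic point of $\P^n_\Q$, it is a dense open $U\subseteq\P^n_\Q$, with $Z:=\P^n_\Q\setminus U$ of codimension at least $1$. Choosing a model, one obtains an integer $N$ — which we may take divisible by the constant $A$ and by the coefficients of the form $g$ of Lemma~\ref{lem:fshgtt672} — together with a proper morphism $\mathcal V\to\P^n_{\Z[1/N]}$ whose base change to $\Q$ recovers $f$, an open $\mathcal U\subseteq\P^n_{\Z[1/N]}$ with $\mathcal U_\Q=U$ over which the fibres of $\mathcal V\to\P^n_{\Z[1/N]}$ are geometrically integral, and a closed complement $\mathcal Z:=\P^n_{\Z[1/N]}\setminus\mathcal U$. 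Enlarging $N$ by upper semicontinuity of fibre dimension we may assume every fibre of $\mathcal Z\to\Spec\Z[1/N]$ has dimension at most $n-1$, while $\mathcal Z$ is cut out by finitely many homogeneous forms of bounded degree, all independently of $p$.

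Now fix a prime $p\nmid N$. Every $x\in\P^n(\F_p)$ with $x\notin\mathcal Z(\F_p)$ lies in $\mathcal U(\F_p)$, so $f^{-1}(x)$ is geometrically integral, hence split; therefore the number of $x\in\P^n(\F_p)$ with non-split fibre is at most $\#\mathcal Z(\F_p)$. Since $\mathcal Z_{\F_p}$ has dimension at most $n-1$ and a bounded number of geometric irreducible components of bounded degree, the Lang--Weil bounds give $\#\mathcal Z(\F_p)\ll p^{n-1}$ with an implied constant depending only on $\mathcal Z$, hence only on $f$ and $g$. As $\#\P^n(\F_p)\gg p^n$, this yields $\sigma_p\ll 1/p$ for all $p\nmid N$. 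For the finitely many $p\mid N$ the trivial bound $\sigma_p\leq 1\leq N/p$ suffices, and combining the two ranges proves the lemma.

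The only genuinely delicate point is the uniformity in $p$ in the spreading-out step: one must check that the bad locus $\mathcal Z$ really has fibrewise codimension at least $1$ over every residue characteristic, not merely over $\Q$, and that the number and degrees of its defining equations can be fixed once and for all, so that the constant in the Lang--Weil estimate does not deteriorate as $p\to\infty$. The remaining ingredients — generic flatness, openness of the geometrically-integral-fibre locus for proper flat morphisms, the implication that a geometrically integral scheme over a field is split, and the treatment of the excluded primes — are routine, and one could alternatively organise $\mathcal Z$ as the union of $\{g=0\}$ with the locus where the (proper smooth) restriction of $f$ to $\{g\neq 0\}$ fails to have geometrically connected fibres.
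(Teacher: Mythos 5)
Your proof is correct, and since the paper merely quotes this statement as Lemma 3.3 of the cited Loughran--Sofos reference without reproducing a proof, there is nothing in-text to diverge from: your spreading-out argument (the non-split fibres are confined, for all but finitely many $p$, to the $\F_p$-points of a closed subscheme of fibrewise dimension at most $n-1$ and bounded complexity, counted by Lang--Weil, with the finitely many excluded primes absorbed by the trivial bound $\sigma_p\leq 1$) is essentially the argument of that source. The alternative organisation you mention at the end, via the divisor $g=0$ of Lemma~\ref{lem:fshgtt672} together with the geometrically-connected-fibre locus over its complement, is closer to how the reference phrases it, but the two are interchangeable.
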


\begin{lemma} [Proposition 3.6 \cite{arXiv:1711.08396}]
\label{lem:abelpu8}
There exists a constant $\beta=\beta(f)$
such that
for all $B\geq 3$ we have  
\[
\sum_{p\leq B}
\sigma_p
=\Delta(f) (\log \log B)+
\beta_f
+O((\log B)^{-1})
.\]
\end{lemma}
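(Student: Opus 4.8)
The plan is to reduce the lemma to a Frobenius-type formula for $\sigma_p$ valid for all sufficiently large $p$, and then to evaluate the resulting prime sum by partial summation. First, let $g$ be a square-free form as in Lemma~\ref{lem:fshgtt672} and let $A=A(f)$ be the associated constant. Only finitely many prime divisors $D_1,\dots,D_k$ of $\P^n_\Q$ satisfy $\delta_{D_i}(f)<1$: if $D$ is a prime divisor not contained in $\{g=0\}$, then its generic point lies in the connected open set $\P^n_\Q\setminus\{g=0\}$, over which $f$ is smooth and proper with geometrically connected generic fibre, so the fibre of $f$ over the generic point of $D$ is geometrically integral and $\delta_D(f)=1$; hence $\Delta(f)=\sum_{i=1}^{k}(1-\delta_{D_i}(f))$, a finite sum. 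For each $i$ fix a finite group $\Gamma_i$ as in Definition~\ref{def:Delta}, and choose a single number field $L$, Galois over $\Q$ with group $G$, large enough that for every prime $p$ unramified in $L$ the Galois action on the geometric components of the reduction of $D_i$ modulo $p$, and on the multiplicity-one components of the fibre of $f$ over a degree-one place of that reduction, is controlled by the Frobenius conjugacy class $\mathrm{Frob}_p\subseteq G$. Enlarging $A$, one may assume that $p>A$ forces $p$ to be unramified in $L$, the conclusions of Lemma~\ref{lem:fshgtt672} to hold, and the number of geometric components of the generic fibre of $f$ (and of each $D_i$) to be unchanged under reduction modulo $p$. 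The primes $p\le A$ are finite in number and $\sigma_p\le 1$, so $\sum_{p\le A}\sigma_p$ is a constant, to be absorbed into $\beta_f$ at the end.

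The second step is to show that, for every $p>A$,
\[
\sigma_p=\frac{1}{p}\sum_{i=1}^{k}c_i(\mathrm{Frob}_p)+O\!\l(p^{-3/2}\r)\qquad(p>A),
\]
where each $c_i\colon G\to\Q_{\ge 0}$ is a bounded class function depending only on $f$. To see this: if $x\in\P^n(\F_p)$ satisfies $g(x)\neq 0$ then $f$ is smooth at $x$, and since $\P^n_{\F_p}\setminus\{g=0\}$ is connected and $f$ is smooth and proper with geometrically connected generic fibre over it, the fibre $f^{-1}(x)$ is geometrically integral, hence split; thus only points of $\{g=0\}$ can have a non-split fibre. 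Among those, only the points lying on some $D_i$ and away from the lower-dimensional loci $\mathrm{Sing}(D_i)$, $D_i\cap D_j$ $(j\neq i)$ and the degeneration locus of $f^{-1}(D_i)\to D_i$ contribute non-trivially. For such a ``generic'' point $x\in D_i(\F_p)$, the fibre $f^{-1}(x)$ is split exactly when $\mathrm{Frob}_p$ fixes a multiplicity-one geometric component of the specialisation at $x$ of the fibre of $f$ over the generic point of $D_i$; here one uses that $\Gal(\overline{\F}_p/\F_p)$ is topologically generated by $\mathrm{Frob}_p$, so that a $\mathrm{Frob}_p$-fixed geometric component descends to a geometrically integral $\F_p$-subscheme. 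Applying the Lang--Weil estimates~\cite{LW} to the finite \'etale covers of the $D_i$ that classify the multiplicity-one components of the fibres of $f$, using $\#\P^n(\F_p)=p^n+O(p^{n-1})$, and bounding the lower-dimensional loci trivially, one obtains the displayed formula with $c_i=O(1)$; in particular this re-proves Lemma~\ref{lem:harmonicgfts6} for $p>A$. This is precisely the local computation underlying~\cite[Th.~1.2]{MR3568035}, the only difference being that the error term $O(p^{-3/2})$ is kept here.

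With this formula in hand, sum over $A<p\le B$. The error term contributes $\sum_{p>A}O(p^{-3/2})$, which up to an $O(B^{-1/2})$ is a constant. For the main term, write $\sum_i c_i(\mathrm{Frob}_p)=\sum_{C}a_C\,\mathbf{1}[\mathrm{Frob}_p\subseteq C]$, where $C$ runs over the conjugacy classes of $G$ and $a_C:=\sum_i c_i(C)\ge 0$. The effective (unconditional) Chebotarev density theorem gives $\#\{p\le t:\mathrm{Frob}_p\subseteq C\}=\tfrac{\#C}{\#G}\,\mathrm{Li}(t)+O\!\l(t\exp(-c\sqrt{\log t})\r)$, and partial summation then yields
\[
\sum_{\substack{p\le B\\ \mathrm{Frob}_p\subseteq C}}\frac{1}{p}=\frac{\#C}{\#G}\log\log B+\gamma_C+O\!\l(\tfrac{1}{\log B}\r),
\]
the error being dominated by the boundary term $\pi_C(B)/B\ll 1/\log B$ rather than by the Chebotarev error, which is $\ll\exp(-c'\sqrt{\log B})$. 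Combining everything, $\sum_{p\le B}\sigma_p=\lambda\log\log B+\beta_f+O\!\l((\log B)^{-1}\r)$ for suitable constants $\lambda=\tfrac{1}{\#G}\sum_{C}\#C\cdot a_C\ge 0$ and $\beta_f\in\R$. Finally, dividing by Mertens' estimate $\sum_{p\le B}p^{-1}=\log\log B+M+O((\log B)^{-1})$ and comparing with~\eqref{eq:asymptolimt} forces $\lambda=\Delta(f)$; alternatively, $\lambda=\Delta(f)$ follows directly from the averaging identity $\tfrac{1}{\#G}\sum_{\gamma\in G}c_i(\gamma)=1-\delta_{D_i}(f)$ established in~\cite{MR3568035}. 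This completes the proof.

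The analytic ingredients above — Lang--Weil, effective Chebotarev, partial summation — are routine; the main obstacle is the second step. One must run the Loughran--Smeets analysis of non-split fibres carefully enough to obtain a formula for $\sigma_p$ that is simultaneously correct to within $O(p^{-3/2})$ and expressed through a single class function of $\mathrm{Frob}_p$ on a fixed finite group. Pinning down the finitely many exceptional primes, the lower-dimensional loci to be discarded, and the precise relation between $c_i$ and $\delta_{D_i}(f)$ is the only place where input beyond Mertens-type estimates and the Lang--Weil bound is needed.
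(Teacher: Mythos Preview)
The paper does not supply a proof of this lemma: it is quoted verbatim as Proposition~3.6 of~\cite{arXiv:1711.08396}, so there is no in-paper argument to compare against. Your proposal is in effect a reconstruction of the proof in the cited reference, which itself rests on the local analysis carried out in Loughran--Smeets~\cite{MR3568035}. The outline you give---a Frobenian formula $\sigma_p=p^{-1}\sum_i c_i(\mathrm{Frob}_p)+O(p^{-3/2})$ obtained from the Lang--Weil estimates applied to the split/non-split stratification of the fibres over each bad divisor $D_i$, followed by effective Chebotarev and partial summation---is exactly the route taken there, and your identification of $\lambda$ with $\Delta(f)$ via the averaging identity for $c_i$ is likewise the argument in~\cite{MR3568035}.

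Your sketch is correct in substance. The one place I would tighten is the second step: you assert the existence of a single finite Galois extension $L/\Q$ and class functions $c_i$ on $\Gal(L/\Q)$ governing non-splitness over the smooth locus of each $D_i$, but you do not actually construct them. In the references this is done by passing to a finite \'etale cover of the smooth locus of $D_i$ trivialising the action on the multiplicity-one components of the geometric fibre, and then invoking Lang--Weil on the fibres of that cover; the number field $L$ arises as a common splitting field for the relevant Galois sets. Since you flag this as ``the main obstacle'' and point to~\cite{MR3568035} for the details, this is not a gap so much as a deferred verification, but be aware that it is precisely here that the geometric hypotheses (smoothness of $V$, geometric integrality of the generic fibre) are used.
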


\begin{lemma}  
\label{lem:consoiu}
Keep the assumptions of Theorem~\ref{thm:ghjk56432}. 
There exists $A'>0$ such that 
if $p>A'$ then $\sigma_p\leq 1/2$.
Furthermore, there exists $c=c(f,A')
\in \R_{>0}$ such that 
\[
\prod_{A'<p<T}(1-\sigma_p)^{-1}
=
c
(\log T)^{\Delta(f)}
\l(1+O\l(\frac{1}{\log T}\r)\r)
.\]
\end{lemma}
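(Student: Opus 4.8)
The plan is to deduce both assertions from the estimate in Lemma~\ref{lem:abelpu8} by elementary manipulations of the relevant Dirichlet-type sums. For the first assertion, recall from Lemma~\ref{lem:harmonicgfts6} that $\sigma_p\ll 1/p$ with an implied constant depending only on $f$ (and $g$, which itself is determined by $f$); hence there is a constant $C=C(f)$ with $\sigma_p\leq C/p$ for all $p$, and choosing $A':=2C$ we get $\sigma_p\leq 1/2$ whenever $p>A'$. Since $\Delta(f)\neq 0$ the remaining content is the asymptotic for the product.

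For the product, first take logarithms: since $0\leq \sigma_p\leq 1/2$ for $p>A'$, the Taylor expansion $-\log(1-\sigma_p)=\sigma_p+O(\sigma_p^2)$ is valid, so
\[
\log\prod_{A'<p<T}(1-\sigma_p)^{-1}
=\sum_{A'<p<T}\sigma_p
+\sum_{A'<p<T}O(\sigma_p^2).
\]
The second sum converges absolutely as $T\to\infty$ because $\sigma_p^2\ll 1/p^2$ by Lemma~\ref{lem:harmonicgfts6}; write its value at $T=\infty$ as a constant and its tail as $O(1/T)$, hence $\sum_{A'<p<T}O(\sigma_p^2)=c_1(f,A')+O(1/T)$ for some constant $c_1$. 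For the first sum, Lemma~\ref{lem:abelpu8} gives
\[
\sum_{A'<p<T}\sigma_p
=\Big(\sum_{p<T}\sigma_p\Big)-\Big(\sum_{p\leq A'}\sigma_p\Big)
=\Delta(f)\log\log T+\beta_f-\sum_{p\leq A'}\sigma_p+O\big((\log T)^{-1}\big).
\]
(One should be mildly careful with the distinction between $p<T$ and $p\leq T$ and with the convergence range; but replacing $T$ by $T-1$ changes $\log\log T$ by $O(1/(T\log T))$, which is absorbed in the error term, so this is harmless.) Adding the two contributions,
\[
\log\prod_{A'<p<T}(1-\sigma_p)^{-1}
=\Delta(f)\log\log T+c_2(f,A')+O\big((\log T)^{-1}\big),
\]
where $c_2:=\beta_f-\sum_{p\leq A'}\sigma_p+c_1$ depends only on $f$ and $A'$. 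Exponentiating, and using $\mathrm{e}^{O(1/\log T)}=1+O(1/\log T)$, yields
\[
\prod_{A'<p<T}(1-\sigma_p)^{-1}
=\mathrm{e}^{c_2}(\log T)^{\Delta(f)}\big(1+O((\log T)^{-1})\big),
\]
so the claim holds with $c:=\mathrm{e}^{c_2}\in\R_{>0}$.

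The argument is essentially routine; the only points requiring a little care are (i) ensuring the Taylor remainder sum $\sum\sigma_p^2$ genuinely converges and splitting off its tail as $O(1/T)$ rather than $O(1/\log T)$ — which is fine since $\sigma_p^2\ll p^{-2}$ — and (ii) keeping the constants' dependence confined to $f$ and $A'$, which is immediate from the explicit form above. I do not anticipate a genuine obstacle here, as all the heavy lifting is already contained in Lemma~\ref{lem:abelpu8} and Lemma~\ref{lem:harmonicgfts6}.
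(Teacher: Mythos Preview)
Your proposal is correct and follows essentially the same route as the paper: both take logarithms, separate the linear term $\sum_{A'<p<T}\sigma_p$ (handled by Lemma~\ref{lem:abelpu8}) from the higher-order Taylor remainder (controlled via $\sigma_p^2\ll p^{-2}$ from Lemma~\ref{lem:harmonicgfts6}, yielding a convergent sum with $O(1/T)$ tail), and then exponentiate. The paper writes out the full Taylor series $\sum_{k\geq 2}\frac{1}{k}\sum_p\sigma_p^k$ explicitly whereas you absorb it into a single $O(\sigma_p^2)$, but this is a cosmetic difference.
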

\begin{proof}By Lemma~\ref{lem:harmonicgfts6}
we have $\sigma_p\leq 1/2$ for all sufficiently large $p$.
To deal with the product in the present lemma we use a Taylor expansion to obtain
\[
\log
\prod_{A'<p<T}(1-\sigma_p)^{-1}
=
\sum_{A'<p<T} \sigma_p 
+
\sum_{k=2}^{\infty}
\frac{1}{k}
\sum_{A'<p<T}\sigma_p^{k}
.\]
By Lemma~\ref{lem:harmonicgfts6}
we can now write for all $p>A'$,
\[\sigma_p^k
\leq \sigma_p^2 2^{-k+2}
\ll p^{-2} 2^{-k}
,\] with an implied constant that is independent of $p$ and $k$.
This gives 
\[
\sum_{k=2}^{\infty}
\frac{1}{k}
\sum_{A'<p<T}\sigma_p^{k}
-
\sum_{k=2}^{\infty}
\frac{1}{k}
\sum_{A'<p }\sigma_p^{k}
\ll 
\sum_{ \substack {  k \geq 2 \\  p>T }  }
\frac{1}{ k 2^k }
\frac{1}{p^2}
\ll 
\sum_{ \substack {  k \geq 2 \\  m\in \N, m>T }  }
\frac{1}{ k 2^k }
\frac{1}{m^2}
\ll \sum_{
k \geq 2 }
\frac{1}{ k 2^k }
\frac{1}{T}
\ll \frac{1}{T}
.\]
We can now invoke Lemma~\ref{lem:abelpu8}
to obtain 
\[
\log
\prod_{A'<p<T}(1-\sigma_p)^{-1}
=
\Delta(f) \log \log T
+\beta_f+O(1/\log T)
- \sum_{p\leq A'}\sigma_p
+
\sum_{k=2}^{\infty}
\frac{1}{k}
\sum_{p>A'  }\sigma_p^{k}
+O(1/T)
.\]
Letting $c:=\mathrm{e}^{\lambda}$, where 
\[\lambda:=
\beta_f
- \sum_{p\leq A'}\sigma_p
+
\sum_{k=2}^{\infty}
\frac{1}{k}
\sum_{p>A'  }\sigma_p^{k}
,\] concludes the proof.
\end{proof}

\begin{lemma}  
[Mertens]
\label{lem:abelpu9}
There exists  $C>0$
such that
for all $B\geq 2$ we have  
\[
\sum_{p\leq B}
\frac{1}{p}
=\log \log B
+
C
+O((\log B)^{-1})
.\]
\end{lemma}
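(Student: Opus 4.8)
The plan is to deduce Mertens' estimate from the weighted prime sum $\sum_{p\le B}\frac{\log p}{p}$ by partial summation, treating the weighted sum by the classical elementary argument (only Stirling's formula and a Chebyshev-type bound are needed, and no appeal to the prime number theorem). The first step is to prove
\[
\sum_{p\le x}\frac{\log p}{p}=\log x+O(1)\qquad(x\ge 2).
\]
I would start from the crude Stirling estimate $\sum_{n\le x}\log n=x\log x-x+O(\log x)$ together with the identity $\sum_{n\le x}\log n=\sum_{n\le x}\sum_{d\mid n}\Lambda(d)=\sum_{d\le x}\Lambda(d)\lfloor x/d\rfloor$. Removing the integer part costs $O\big(\sum_{d\le x}\Lambda(d)\big)=O(x)$ by the Chebyshev bound $\psi(x)\ll x$, which gives $\sum_{d\le x}\frac{\Lambda(d)}{d}=\log x+O(1)$; discarding the prime powers $p^k$ with $k\ge 2$, whose total contribution is $\sum_{k\ge 2}\sum_p\frac{\log p}{p^k}=O(1)$, yields the displayed estimate.

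For the second step, set $A(t):=\sum_{p\le t}\frac{\log p}{p}$ and $E(t):=A(t)-\log t$, so that $E(t)=O(1)$ on $[3/2,\infty)$ by the first step. A Riemann--Stieltjes integration by parts then gives, for $B\ge 2$,
\[
\sum_{p\le B}\frac1p=\int_{3/2}^{B}\frac{\d A(t)}{\log t}=\frac{A(B)}{\log B}+\int_{3/2}^{B}\frac{A(t)}{t(\log t)^2}\,\d t .
\]
Substituting $A(t)=\log t+E(t)$, one finds $\frac{A(B)}{\log B}=1+O(1/\log B)$, $\int_{3/2}^{B}\frac{\d t}{t\log t}=\log\log B-\log\log(3/2)$, and
\[
\int_{3/2}^{B}\frac{E(t)}{t(\log t)^2}\,\d t=\int_{3/2}^{\infty}\frac{E(t)}{t(\log t)^2}\,\d t-\int_{B}^{\infty}\frac{E(t)}{t(\log t)^2}\,\d t ,
\]
where the first integral converges absolutely since $E(t)=O(1)$ and $\int^{\infty}\frac{\d t}{t(\log t)^2}<\infty$, and the tail is $O\big(\int_{B}^{\infty}\frac{\d t}{t(\log t)^2}\big)=O(1/\log B)$. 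Collecting every constant contribution into a single constant $C$ gives $\sum_{p\le B}\frac1p=\log\log B+C+O(1/\log B)$; positivity of $C$ follows by comparison with the value of the sum at a single point, or simply from the known numerical value of the Meissel--Mertens constant.

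The one genuinely delicate point is securing the \emph{honest} $O(1)$ error (rather than $o(1)$) in the first step, since this is exactly what propagates to the $O(1/\log B)$ in the conclusion; this is where Stirling combined with Chebyshev's bound is indispensable. Everything after that is routine partial-summation bookkeeping, so I do not anticipate further obstacles.
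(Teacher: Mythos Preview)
Your proof is correct and follows the standard elementary route to Mertens' theorem via the intermediate estimate $\sum_{p\le x}\frac{\log p}{p}=\log x+O(1)$ and Abel summation. The paper, however, gives no proof at all: the lemma is simply labelled ``[Mertens]'' and quoted as a classical result, so there is nothing to compare against beyond noting that your argument is exactly the textbook one (e.g.\ Tenenbaum or Hardy--Wright) that underlies the citation.
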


Let us now give the
main number-theoretic input for
  the succeeding sections.
They were  verified  
in the proof of~\cite[Prop.3.9]{arXiv:1711.08396}.
For a square-free integer $Q$ define 
\beq{def:econtn}{
\c A_Q:=
\#\Big\{x \in \PP^n(\Q):
H(x)\leq B, f^{-1}(x) \text{ smooth},
p\mid Q \Rightarrow f^{-1}(x)(\Q_p)=\emptyset
\Big\}
.}

\begin{lemma} 
 \label
{lem:harmonicgfts7865} 
Assume that 
 $V$ is a smooth  projective variety over $\QQ$ equipped with a dominant morphism $f: V \to \P_\Q^n$ with geometrically integral generic fibre and $\Delta(f)\neq 0$. There exist positive constants $A,\alpha,d$ that depend
 at most on $f$ and the polynomial $g$ of 
Lemma~\ref{lem:fshgtt672}
such that for a square-free integer $Q$ with 
the property 
$p\mid Q\Rightarrow 
p>A
$
and each $B>1$
we have 
\[
\Big|\c A_Q-\c A_1 \prod_{p\mid Q} \sigma_p\Big|
\ll
\frac{B^{n+1}(2\alpha d)^{\omega(Q)}}{Q \min\{p: p\mid Q\}}
+(4d)^{\omega(Q)}( Q^{2n+1} B + Q B^n(\log B)^{[1/n]} )
,\]
where
the implied constant depends at most on $f$ and $g$.
\end{lemma}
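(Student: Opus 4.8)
The plan is to reduce $\c A_Q$ to a lattice-point count and then run a sieve. First I would identify $x\in\P^n(\QQ)$ with $H(x)\le B$ with a primitive vector $\x\in\ZZ^{n+1}$, $|\x|\le B$, taken up to sign, and let $g$ be the square-free form of Lemma~\ref{lem:fshgtt672}, so that $f^{-1}(x)$ is smooth exactly when $g(\x)\neq 0$. For a prime $p>A$ (with $A$ as in Lemma~\ref{lem:fshgtt672}) and $\x$ with $g(\x)\neq 0$, part~(2) of that lemma forces $p\mid g(\x)$ whenever $f^{-1}(x)(\QQ_p)=\emptyset$; combining this with Hensel's lemma and the Lang--Weil estimates applied to the (non-)split locus, as in~\cite[\S2--3]{MR3568035}, I would produce a set $\Lambda_p\subseteq(\ZZ/p\ZZ)^{n+1}$ such that, outside a set of $\x$ of $p$-adic density $O(p^{-2})$, one has $f^{-1}(x)(\QQ_p)=\emptyset$ if and only if $(\x\bmod p)\in\Lambda_p$, and moreover $\#\Lambda_p/p^{n+1}=\sigma_p+O(p^{-2})$ with $\sigma_p$ as in~\eqref{def:sigma_p}, the discrepancy being supported on the codimension $\geq 2$ part of the discriminant. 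By the Chinese Remainder Theorem, for square-free $Q$ with $p\mid Q\Rightarrow p>A$, and up to an error subsumed in the bounds below,
\[
\c A_Q=\tfrac12\,\#\big\{\x\in\ZZ^{n+1}\ \text{primitive}:\,|\x|\le B,\ g(\x)\neq 0,\ (\x\bmod Q)\in\Lambda_Q\big\},\qquad \Lambda_Q:=\prod_{p\mid Q}\Lambda_p.
\]

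Next I would run the standard sieve. I would remove the primitivity condition by M\"obius inversion over the common divisor $k$ of the coordinates of $\x$; a prime dividing $\gcd(k,Q)$ pushes $\x$ out of $\Lambda_Q$, so only $k$ coprime to $Q$ contribute. For each $\a\in\Lambda_Q$ and each such $k$, the $\x$ with $(\x\bmod Q)=\a$, $k\mid\x$ and $|\x|\le B$ form a translate of the lattice $kQ\,\ZZ^{n+1}$ in a box of side $\asymp B$; by the Lipschitz principle their number is $\meas/(kQ)^{n+1}+O\big((B/kQ)^{n}+1\big)$ with $\meas$ the volume of the box. Summing the main terms over $\a$, over $k$ and over the sign recovers $\c A_1\prod_{p\mid Q}(\#\Lambda_p/p^{n+1})$, and I would then pass to $\c A_1\prod_{p\mid Q}\sigma_p$ using $\#\Lambda_p/p^{n+1}=\sigma_p+O(p^{-2})$, $\sigma_p\ll 1/p$ (Lemma~\ref{lem:harmonicgfts6}), the elementary inequality $\sum_{p\mid Q}p^{-1}\le\omega(Q)/\min\{p:p\mid Q\}$, and the trivial bound $\#\Lambda_Q\ll(\deg g)^{\omega(Q)}Q^{n}$; this converts the gap into the first error term $B^{n+1}(2\alpha d)^{\omega(Q)}/\big(Q\min\{p:p\mid Q\}\big)$. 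The accumulated boundary terms from the Lipschitz principle, the contribution of the integer points lying on $g=0$ in the box, and the exponential-sum bookkeeping required to keep everything uniform in $Q$ should assemble into the second error term $(4d)^{\omega(Q)}\big(Q^{2n+1}B+QB^{n}(\log B)^{[1/n]}\big)$; the factor $(\log B)^{[1/n]}$ is present only in the case $n=1$, where $\sum_{k\le B}(B/kQ)\asymp(B/Q)\log B$.

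The hard part will be the uniformity in $Q$: every implied constant must be independent of $Q$, and the precise shape of both error terms must emerge. Concretely this forces one (i) to use the Lang--Weil estimate for $\Lambda_p$ with error uniform in $p$, so that the $O(p^{-2})$ above is genuine rather than $p$-dependent; (ii) to bound the Fourier coefficients of $\b 1_{\Lambda_p}$ (equivalently, exponential sums over $\Lambda_p$), multiply them across $p\mid Q$, and pair the resulting complete sums modulo $Q$ against the box --- this step is what produces the power $Q^{2n+1}$; and (iii) to keep track of the $2^{\omega(Q)}$-, $(\deg g)^{\omega(Q)}$- and $k^{\omega(Q)}$-type factors generated one prime at a time, which is why the final constants take the shape $(2\alpha d)^{\omega(Q)}$ and $(4d)^{\omega(Q)}$. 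Since the excerpt already notes that this estimate is verified inside~\cite[Prop.~3.9]{arXiv:1711.08396}, in the write-up I would cite that argument and only reproduce the bookkeeping specific to the present formulation.
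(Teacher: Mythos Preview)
The paper does not supply a proof of this lemma at all: it is stated as having been ``verified in the proof of~\cite[Prop.~3.9]{arXiv:1711.08396}'' and is simply imported from there. Your outline---reduce to primitive lattice points, replace $p$-adic insolubility by a congruence condition $\Lambda_p$ via Lemma~\ref{lem:fshgtt672} and Hensel/Lang--Weil, M\"obius over the gcd, and Lipschitz lattice-point counting---is the expected shape of that argument, and your closing remark that one should ultimately cite~\cite[Prop.~3.9]{arXiv:1711.08396} is exactly what the paper does.
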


\begin{lemma} 
 \label{lem:harmonicgfts7865hjdf} 
Fix a positive integer $r$,
let
 $\c{C},\epsilon_1$ be any constants with 
 \[
\c{C}
>
\frac{3r}{2}
\ \text{ and } \
0<\epsilon_1\leq 
\min \Big\{\frac{n-1/2}{2 r (n+1) } , \frac{1}{4r}   \Big\}
\]
and define the functions 
$t_0,t_1:\R_{\geq 3}\to\R$ through 
\[
t_0(B):=(\log \log B)^\c{C} \ \text{ and } \  t_1(B)=B^{\epsilon_1}
.\] In the situation of Lemma~\ref{lem:harmonicgfts7865} 
we have 
\[
\sum_{\substack{
Q\in \N, \mu(Q)^2=1 
\\ 
\omega(Q)\leq r 
\\
p\mid Q   \Rightarrow    p   \in (t_0(B),t_1(B)]
 }} 
\Big|\c A_Q-\c A_1 \prod_{p\mid Q} \sigma_p\Big|
\ll_{\c{C},\epsilon_1,r } 
B^{n+1} (\log \log B)^{r-1-\c{C}}
,\]
where the implied constant depends at most on $f,g,\c C, \epsilon_1$ and $r$. 
\end{lemma}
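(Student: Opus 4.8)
The plan is to substitute the estimate of Lemma~\ref{lem:harmonicgfts7865} into the sum and to bound the two resulting error terms separately; all implied constants below may depend on $f,g,\c C,\epsilon_1$ and $r$. Write the right-hand side of Lemma~\ref{lem:harmonicgfts7865} as $E_1(Q)+E_2(Q)$, where
\[
E_1(Q):=\frac{B^{n+1}(2\alpha d)^{\omega(Q)}}{Q\min\{p:p\mid Q\}},
\qquad
E_2(Q):=(4d)^{\omega(Q)}\bigl(Q^{2n+1}B+QB^n(\log B)^{[1/n]}\bigr).
\]
Each square-free $Q$ occurring in the sum has all prime divisors above $t_0(B)=(\log\log B)^{\c C}$, so once $B$ exceeds a threshold depending only on $f,g,\c C,\epsilon_1$ we have $t_0(B)<t_1(B)$ and $p\mid Q\Rightarrow p>A$, whence Lemma~\ref{lem:harmonicgfts7865} applies to every term; for the finitely many smaller values of $B$ the left-hand side is a bounded finite sum whereas $B^{n+1}(\log\log B)^{r-1-\c C}$ is bounded below by a positive constant, so they only influence the implied constant. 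The term $Q=1$ contributes $0$, so we may assume $1\le\omega(Q)\le r$ from now on.

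\emph{Contribution of $E_1$.} Fix $k\in\{1,\dots,r\}$ and describe a square-free $Q$ with $\omega(Q)=k$ and all prime factors in $(t_0(B),t_1(B)]$ by its least prime factor $m$ together with the remaining primes $m<p_2<\cdots<p_k\le t_1(B)$, so that $Q\min\{p:p\mid Q\}=m^2p_2\cdots p_k$. Symmetrising the inner sum and using $\sum_{p\le t_1(B)}p^{-1}\le\log\log B+O(1)$ (which holds by Mertens, Lemma~\ref{lem:abelpu9}, since $\epsilon_1\le1$) gives
\[
\sum_{m<p_2<\cdots<p_k\le t_1(B)}\frac{1}{p_2\cdots p_k}
\le\frac{1}{(k-1)!}\Bigl(\sum_{p\le t_1(B)}\frac1p\Bigr)^{k-1}
\ll(\log\log B)^{k-1}.
\]
Summing over the least prime contributes $\sum_{m>t_0(B)}m^{-2}\ll t_0(B)^{-1}=(\log\log B)^{-\c C}$, and summing over $1\le k\le r$ (the constants $(2\alpha d)^k$ being absorbed, the term $k=r$ dominating) we arrive at $\sum_Q E_1(Q)\ll B^{n+1}(\log\log B)^{r-1-\c C}$, which is the desired bound.

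\emph{Contribution of $E_2$.} Since $\omega(Q)\le r$ and every prime dividing $Q$ is at most $t_1(B)=B^{\epsilon_1}$, we have $Q\le B^{\epsilon_1 r}$; hence there are at most $B^{\epsilon_1 r}$ admissible $Q$, and $(4d)^{\omega(Q)}\ll1$. Using $Q^{2n+1}\le B^{(2n+1)\epsilon_1 r}$ and $Q\le B^{\epsilon_1 r}$ we obtain
\[
\sum_Q E_2(Q)\ll B^{1+2(n+1)\epsilon_1 r}+B^{n+2\epsilon_1 r}(\log B)^{[1/n]}.
\]
The hypotheses $\epsilon_1\le\frac{n-1/2}{2r(n+1)}$ and $\epsilon_1\le\frac{1}{4r}$ give $1+2(n+1)\epsilon_1 r\le n+\tfrac12$ and $n+2\epsilon_1 r\le n+\tfrac12$, so $\sum_Q E_2(Q)\ll B^{n+1/2}(\log B)^{[1/n]}$; since a positive power of $B$ outgrows every fixed power of $\log\log B$, this is $\ll B^{n+1}(\log\log B)^{r-1-\c C}$ for $B$ large. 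Adding the two estimates completes the proof.

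The one delicate point is the $E_1$ sum: it is essential to exploit the factor $\min\{p:p\mid Q\}^{-1}$ present in Lemma~\ref{lem:harmonicgfts7865} to extract the saving $t_0(B)^{-1}=(\log\log B)^{-\c C}$, while simultaneously controlling the combinatorial loss $(\log\log B)^{k-1}$ for $k\le r$ coming from the unordered prime tuples; the exponent emerges as $r-1-\c C$ rather than $r-\c C$ exactly because the least prime factor of $Q$ soaks up one full copy of $\sum_{p\le t_1(B)}p^{-1}$. Note that the hypothesis $\c C>3r/2$ is more than we need here ($\c C>r-1$ already forces the bound to decay) and is imposed with a view to the subsequent application.
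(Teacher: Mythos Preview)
Your proof is correct. The paper does not supply its own argument for this lemma; it merely records that the estimate was verified in the proof of \cite[Prop.~3.9]{arXiv:1711.08396}, and your write-up reconstructs exactly that computation: insert the bound of Lemma~\ref{lem:harmonicgfts7865}, treat the $B^{n+1}(2\alpha d)^{\omega(Q)}/(Q\min\{p:p\mid Q\})$ part by singling out the least prime factor to harvest the saving $t_0(B)^{-1}=(\log\log B)^{-\c C}$ while the remaining $k-1$ primes contribute $(\log\log B)^{k-1}$, and dispose of the $(4d)^{\omega(Q)}(Q^{2n+1}B+QB^n(\log B)^{[1/n]})$ part using $Q\le B^{\epsilon_1 r}$ together with the two inequalities defining $\epsilon_1$. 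This is the intended and essentially only approach.
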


\begin{remark}\label{rem:levofbaddiszero} 
Lemma~\ref{lem:harmonicgfts7865} 
may be viewed as a `level of distribution' 
result in sieve theory. The 
 main term 
$\c A_1 \prod_{p\mid Q}\sigma_p$ 
essentially behaves like $$\frac{B^{n+1}}{Q}$$ for most $Q$,
while the error term contains the expression 
\[\frac{B^{n+1} }{Q \min\{p: p\mid Q\}}
.\] Therefore, to get a power saving we need to assume that $Q$ grows at least polynomially in terms of $B$.
In sieve theory language this is phrased by 
saying that the exponent of the level of distribution is $0$.
As is surely familiar to sieve experts, 
such a bad level
of distribution 
does not allow straightforward 
applications.\end{remark}

\subsection{Proof of Theorem~\ref{prop:typicalprop}}
\label{s:lestob}    
Let us first 
recall the 
\textit{Fundamental lemma of sieve theory},
as given
in~\cite[Lem. 6.3]{iwa}. 
\begin{lemma}
\label{funda}
Let $\kappa>0$ and $y>1$. There exist two sets of real numbers 
$\Lambda^+=(\lambda_Q^+)_{Q\in \N}$ and 
$\Lambda^-=(\lambda_Q^-)_{Q\in \N}$
depending only on $\kappa$ and $y$ with the following properties:
\begin{align}
&\lambda_1^\pm=1, \label{643}   
\\
|&\lambda_Q^\pm| \leq 1 \ \text{if} \ 1<Q<y, \label{644} \\
&\lambda_Q^\pm=0 \ \text{if} \ Q\geq y,\label{645}
\end{align}
and for any integer $n>1$,
\beq{646}{\sum_{Q|n}\lambda_Q^-\leq 0 \leq \sum_{Q|n}\lambda_Q^+.}
Moreover, for any multiplicative function $g(d)$ with $0\leq g(p)<1$ and satisfying the dimension condition
\beq{647}{
\prod_{t_1\leq p <t_2}(1-g(p))^{-1}
\leq
\l(\frac{\log t_2}{\log t_1}\r)^\kappa
\l(1+\frac{K}{\log t_1}\r)}
for all $2\leq t_1 <t_2\leq y$,
 we have 
\beq{648}{\sum_{Q|P(z)}\lambda_Q^{\pm}g(Q)=
\l(1+O\!\l(\mathrm{e}
^{-{s}}\l(1+\frac{K}{\log z}\r)^{10}\r)\r)
\prod_{p<z}(1-g(p)),}
where $P(z)$ denotes the product of all primes $p<z$ and $
s=\log y/\log z
$,
the implied constant
depending only on $\kappa$.
\end{lemma}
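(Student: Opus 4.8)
The plan is to build $\Lambda^{\pm}$ by Brun's combinatorial sieve, truncated at a level tied to the sifting ratio $s=\log y/\log z$. Write $P(z)=\prod_{p<z}p$, so Legendre's formula states that $\sum_{Q\mid\gcd(n,P(z))}\mu(Q)$ equals $1$ if $\gcd(n,P(z))=1$ and $0$ otherwise. For squarefree $Q$ with prime factorisation $Q=p_1p_2\cdots p_r$ written in decreasing order $p_1>p_2>\cdots>p_r$, declare $Q$ \emph{admissible for $\Lambda^{+}$} (resp.\ $\Lambda^{-}$) if $Q=1$, or if every initial partial product $p_1p_2\cdots p_m$ with $m\le r$ odd (resp.\ even) is $<y$, together with $p_1<y$ in the minus case; the even/odd conventions are Brun's, chosen precisely so that~\eqref{646} survives the hard cut at $y$. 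Put $\lambda_Q^{\pm}:=\mu(Q)$ when $Q$ is admissible for $\Lambda^{\pm}$, and $\lambda_Q^{\pm}:=0$ otherwise, and $\lambda_Q^{\pm}:=0$ whenever $Q$ is not squarefree. Every $Q$ in the support of $\lambda_Q^{\pm}$ has some admissible initial segment, hence $Q<y$, which is~\eqref{645}; moreover $\lambda_1^{\pm}=\mu(1)=1$ gives~\eqref{643}, and $|\lambda_Q^{\pm}|\le|\mu(Q)|\le1$ gives~\eqref{644}.

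I would then verify~\eqref{646}. As $\lambda_Q^{\pm}$ is supported on squarefree $Q$, we have $\sum_{Q\mid n}\lambda_Q^{\pm}=\sum_{Q\mid\rad(n)}\lambda_Q^{\pm}$, so it suffices to take $n$ squarefree. Ordering the prime factors of $n$ decreasingly and adjoining them one at a time, the truncated divisor sum telescopes into an alternating sum whose sign at each stage is determined by the parity of the longest admissible initial product; Brun's pure-sieve inequalities, adapted to the ordered truncation, then give $\sum_{Q\mid n}\lambda_Q^{-}\le0\le\sum_{Q\mid n}\lambda_Q^{+}$ for all $n>1$. The point of this step is exactly that discarding the terms with $Q\ge y$ does not break the signs; this works \emph{because} the truncation is taken along ordered partial products --- a naive cut by a fixed bound on $\omega(Q)$ would fail here once those terms are removed.

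For~\eqref{648} I would set $S^{\pm}(z):=\sum_{Q\mid P(z)}\lambda_Q^{\pm}g(Q)$ and compare it with the complete sum $\sum_{Q\mid P(z)}\mu(Q)g(Q)=\prod_{p<z}(1-g(p))=:V(z)$. The difference $S^{\pm}(z)-V(z)$ equals $-\sum g(Q)\mu(Q)$ over squarefree $Q\mid P(z)$ that are \emph{inadmissible}, i.e.\ that have a minimal initial block $p_1\cdots p_m$ (of the relevant parity) with $p_1\cdots p_m\ge y$. Peeling off this block turns the bound for $|S^{\pm}(z)-V(z)|$ into a recursion of Buchstab type, relating $S^{\pm}$ at $z$ to $S^{\mp}$ at smaller arguments; iterating it, the deep levels are suppressed because reaching a product $\ge y=z^{s}$ with primes all $<z$ forces more than $s$ of them to be removed, and a Rankin-type estimate at the scale $y$ converts this into a geometric decay of size $\mathrm{e}^{-s}$. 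Estimating the partial products $\prod g(p)$ that arise over ranges $[z^{a},z^{b})$ by the dimension hypothesis~\eqref{647} and summing the tail then yields $S^{\pm}(z)=V(z)\bigl(1+O\bigl(\mathrm{e}^{-s}(1+K/\log z)^{10}\bigr)\bigr)$, with the implied constant depending only on $\kappa$; the exponent $10$ comfortably soaks up the finitely many $(1+K/\log z)$ factors incurred. This is~\eqref{648}.

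The hard part will be the estimate~\eqref{648}: one must keep $\prod_{p<z}(1-g(p))$ intact as the exact main term, with only a relative error that is genuinely \emph{exponentially} small in $s$ and uniform in $z$ and $K$, and with the sieve dimension $\kappa$ entering solely through the implied constant. This forces a careful pairing of the recursion with the Rankin trick at the right scale and a clean application of~\eqref{647} range by range; a cruder tail bound loses powers of $\log z$. The combinatorial bookkeeping behind~\eqref{646} is the other delicate ingredient, though it is standard Brun theory. Since the statement is verbatim \cite[Lem.~6.3]{iwa}, one may of course simply quote it.
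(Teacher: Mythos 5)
The paper does not prove this lemma at all: it is imported verbatim from \cite[Lem.~6.3]{iwa}, so the closing sentence of your proposal (``one may of course simply quote it'') is exactly the paper's route, and for the purposes of this paper that is all that is required.

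The proof sketch you give in place of the citation, however, has two genuine gaps. First, with admissibility defined by requiring only the odd-indexed (resp.\ even-indexed) partial products $p_1\cdots p_m$ to be $<y$, the full product $Q=p_1\cdots p_r$ is not controlled when $r$ has the opposite parity: take $p_1\cdots p_{r-1}$ just below $y$ and adjoin a small $p_r$. So \eqref{645} does not follow from ``some admissible initial segment is $<y$'', and \eqref{644} versus \eqref{645} would need the extra condition $Q<y$ built into the rule. Second, and more seriously, the truncation rule $p_1\cdots p_m<y$ is not one that produces the relative error $\mathrm{e}^{-s}$ in \eqref{648} with the uniformity the lemma asserts. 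In the Buchstab decomposition the inadmissible $Q\mid P(z)$ are parametrised by a minimal bad block $p_1>\cdots>p_m$ with $p_1\cdots p_m\geq y$ and all $p_i<z$, which forces only $m>s$; the resulting tail is of size $\sum_{m>s}\frac{1}{m!}\big(\sum_{p<z}g(p)\big)^m$, and since $\sum_{p<z}g(p)$ can be as large as $\kappa\log\log z$ under \eqref{647}, this is not $O(\mathrm{e}^{-s})$ once $\kappa\log\log z$ is comparable to $s$ --- precisely the uniformity in $z$ that the lemma claims. A Rankin bound at scale $y$ does not repair this: it costs a factor $(\log z)^{c(\kappa)}$ relative to $\prod_{p<z}(1-g(p))$, which you yourself note must be avoided. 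The construction that actually works (and the one behind \cite[Lem.~6.3]{iwa}) is the beta-sieve truncation $p_1\cdots p_{m-1}p_m^{\beta+1}\leq y$, whose bad blocks force the last prime to be large relative to the remaining budget, so that the iterated Buchstab estimates (the $T_n$ analysis in \cite{iwa}) decay like $\mathrm{e}^{-s}$ with only the bounded loss $(1+K/\log z)^{10}$. As written, your admissibility rule cannot support the error analysis you attach to it, so if you do not simply cite \cite{iwa} you must replace the rule by the beta-sieve one and redo the tail estimate accordingly.
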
 
For the proof of Theorem~\ref{prop:typicalprop}
we can clearly assume that $\xi(B)\leq B^{1/20}$.
We then take 
\[
z_0:=(\log \xi(B))^{\Delta(f)+2\alpha d},
z:=\xi(B),
y:=B^{1/10}
,\]
where $\alpha$ and $d$ were given in Lemma~\ref{lem:harmonicgfts7865}.
We shall take $\kappa$ 
(usually referred to as the 
\textit{dimension of the sieve})
to be 
\[\kappa:=\Delta(f).\]
Letting 
\[
g(d):=\prod_{\substack{ p\mid d   \\ p>z_0}}
\sigma_p
,\]
we can use 
Lemma~\ref{lem:consoiu}
to verify~\eqref{647} in our setting.
There are three cases, according to whether 
$z_0$ is in 
$(0,t_1),[t_1,t_2)$ or $[t_2,+\infty)
$.
In the first case we
have 
\[
\prod_{t_1\leq p <t_2}(1-g(p))^{-1}
=
\prod_{t_1\leq p <t_2} 
(1-\sigma_p)^{-1}
\]
and~\eqref{647} follows directly from Lemma~\ref{lem:consoiu}.
If $z_0 \in [t_1,t_2)$ then 
\[\prod_{t_1\leq p <t_2}(1-g(p))^{-1}
=
\prod_{z_0\leq p <t_2} 
(1-\sigma_p)^{-1},\]
which, by Lemma~\ref{lem:consoiu}
equals 
\[
  \l(\frac{\log t_2}{\log z_0}\r)^{\Delta(f)} \l(1+O\l(\frac{1}{\log z_0}\r) \r)
\leq   \l(\frac{\log t_2}{\log t_1}\r)^{\Delta(f)} \l(1+O\l(\frac{1}{\log t_1}\r) \r)
.\] In the remaining case, 
$z_0 \in [t_2,+\infty)$,
we have 
\[
\prod_{t_1\leq p <t_2}(1-g(p))^{-1}
=1
,\]
which is clearly bounded by the right side of~\eqref{647}.

We now define 
for $x\in \P^n(\Q)$ the integer
  \[F_x:= \prod_{
 \substack{       p\leq B^{D+1}  \\  f^{-1}(x)(\Q_p) =\emptyset  }
} p ,\]
where  $D$ is as in  Lemma~\ref{lem:boundsgf7}
This allows us to obtain  
\begin{align*}
&\#\{x\in \P^n(\Q):H(x)\leq B,p_1(x)\geq \xi(B), f^{-1}(x) \text{ smooth}  \}
  \\ \leq & 
\#\{x\in \P^n(\Q):H(x)\leq B,\gcd(F_x,   \prod_{z_0<p<z }p)=1,
f^{-1}(x) \text{ smooth} 
   \}
  \\ =& 
\sum_{\substack{x\in \P^n(\Q) \\H(x)\leq B \\ f^{-1}(x) \text{ smooth} 
 } } \sum_{\substack{Q\in \N  \\ Q\mid \prod_{z_0<p<z }p \\ Q \mid  F_x  } }\mu(Q)
 \leq 
\sum_{\substack{x\in \P^n(\Q) \\H(x)\leq B \\
f^{-1}(x) \text{ smooth} 
 } } \sum_{\substack{Q
\in \N  \\ Q\mid \prod_{z_0<p<z }p \\ Q \mid  F_x  } }
\lambda_Q^+ 
=
 \sum_{\substack{Q\in \N  \\ Q\mid \prod_{z_0<p<z }p  } }  \lambda_Q^+ \c A_Q
,\end{align*}
where 
$\c A_Q$ was defined in~\eqref{def:econtn}
and 
we used the fact that $\mu(1)=1=\lambda_1^+$ and~\eqref{646}.
Using Lemma~\ref{lem:harmonicgfts7865} 
this becomes 
\[ \sum_{\substack{Q\in \N  \\ Q\mid \prod_{z_0<p<z }p  } }  \lambda_Q^+ 
\l(
\c A_1 \prod_{p\mid Q} \sigma_p 
+O\l(
\frac{B^{n+1}
(2\alpha d)^{\omega(Q)}
}{Q \min\{p: p\mid Q\}}
+(4d)^{\omega(Q)}( Q^{2n+1} B + Q B^n(\log B)^{[1/n]} )
\r)
\r)
.\]
Owing to  
$\c A_q\ll B^{n+1}$ and the fact that very $Q$ in the last sum is 
  square-free, we 
see that the first error term is
\begin{align*}
\ll  &
\frac{B^{n+1}}{z_0} \sum_{ Q\mid \prod_{z_0<p<z }p  }   
\frac {(2\alpha d)^{\omega(Q)}}{Q}
\leq
 \frac{B^{n+1}}{z_0}  \prod_{z_0<p<z  } \l(1+\frac{2\alpha d}{p} \r)
\\  \ll &
 \frac{B^{n+1}}{z_0}  
\l(  \frac{\log z}{\log z_0}    \r)^{2\alpha d}
=
\frac{B^{n+1}}{(\log \xi(B))^{({\Delta(f)}+  2\alpha d)}
}  
\l( \frac{\log \xi (B) }{({\Delta(f)}+  2\alpha d)\log \log \xi (B)} \r)^{2\alpha d}
\\   \ll & 
\frac{B^{n+1} }{(\log \xi(B) )^{\Delta(f)}
 }
.\end{align*}
Using the bound $(4d)^{\omega(Q)} \ll_\epsilon Q^{\epsilon}$,
valid for all $\epsilon>0$,
as well as that 
 $\lambda_Q^+$ is supported
on  $[1,y]$,
the second error term is
\[
\ll_\epsilon 
y^{\epsilon+2n+1}
B 
\sum_{Q<y} 1
\leq 
y^{\epsilon+2n+2} B 
= 
B^{ 
\frac{(\epsilon+2n+2)}{10}+1 }  
\leq 
B^{n+\frac{1}{2}}
.\]
The third error term is 
\[
\ll_\epsilon
y^{1+\epsilon}
B^n(\log B)^{[1/n]}
\sum_{Q\leq y} 1
\ll
y^{2+\epsilon}
B^{n+\epsilon }
\leq B^{n+\frac{1}{2}}
.\]
Recalling that we have assumed $\xi(B)\leq B^{1/20}$
shows 
$B^{n+1}(\log \xi(B) )^{-{\Delta(f)}} \gg B^{n+\frac{1}{2}}
$
 , hence 
the estimate 
\[\#\{x\in \P^n(\Q):H(x)\leq B,p_1(x)\geq \xi(B), f^{-1}(x) \text{ singular}  \} \ll B^{n}
\] shows that 
\[
\#\{x\in \P^n(\Q):H(x)\leq B,p_1(x)\geq \xi(B), f^{-1}(x) \text{ smooth}  \}
\ll
B^{n+1}\c J
+\frac{B^{n+1} }{(\log \xi(B) )^{\Delta(f)} }
,\] where 
\[\c J
:=
\sum_{\substack{Q\in \N  \\ Q\mid \prod_{z_0<p<z }p  } }  
\lambda_Q^+ g(Q) 
=\sum_{\substack{Q\in \N  \\ Q\mid \prod_{p<z }p  } }  
\lambda_Q^+ g(Q) .
\]
By~\eqref{648}
and
Lemma~\ref{lem:consoiu}
we see that \[\c J
\ll 
\prod_{p<z}(1-g(p))
=\prod_{z_0<p<z}(1-\sigma_p)
\ll 
\l(\frac{\log z_0}{\log z}\r)^{\Delta(f)}
\ll
\l(\frac{({\Delta(f)}+2\alpha d)  \log \log \xi(B)}{\log \xi(B)}\r)^{\Delta(f)}
,\]
therefore
\[\#\{x\in \P^n(\Q):H(x)\leq B,p_1(x)\geq \xi(B), f^{-1}(x) \text{ smooth}  \}
\ll
B^{n+1}
\l(\frac{\log \log \xi(B)}{\log \xi(B)}\r)^{\Delta(f)}
,\]
which concludes the proof.
\qed

\subsection{Equidistribution without probabilistic input}

The main object of study in this section are moments involving 
the function $\omega_f(x,T)$
that is introduced in~\eqref{def:truncard}.
For fibrations
$f$ as in~\S\ref{s:primtypic}, any $B,T\geq 3$
and for  $r \in \Z_{\geq 0}$,
the $r$-th moment is defined by 
\[
\c{M}_r(f, B,T)
:=
\sum_{\substack{ x \in \PP^n(\QQ), H(x)\leq B\\ f^{-1}(x) \text{ smooth}}}
\l(
\frac{
\omega_f(x,T)
-
\Delta(f)\log \log T
}
{\sqrt{\Delta(f)\log \log T}}
\r)^{r}.
\]

\begin{theorem} 
\label{eq:bachlutesuites}
Keep the assumptions of Theorem~\ref{thm:ghjk56432}.
Let $c$ be a fixed positive constant,
assume that $B\geq 9^{1/c}$
and let 
$T\in \R\cap[9,B^c]$.
Then for every positive integer $r$  we have 
\[\frac{\c{M}_r(f, B,T)}
{\#\{x\in \P^n(\Q):H(x)\leq B\} }
=\mu_r
+O_{f,c,r}
\Big(
B^{n+1}\frac{\log \log \log \log T}{(\log \log T)^{1/2}}
\Big)
,\]
where the implied constant depends at most on $f,c$ and $r$
but is independent of $B$ and $T$.
\end{theorem}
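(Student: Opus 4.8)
The plan is to prove the estimate by the method of moments, i.e.\ by showing directly that the $r$-th moment of $\frac{\omega_f(x,T)-\Delta(f)\log\log T}{\sqrt{\Delta(f)\log\log T}}$ converges to $\mu_r$, the $r$-th moment of a standard Gaussian (so $\mu_r=(r-1)!!$ for even $r$ and $0$ for odd $r$), with an error uniform in $T$. Writing $\omega_f(x,T)=\sum_{p\le T}\mathbf 1_p(x)$ with $\mathbf 1_p(x):=\mathbf 1[f^{-1}(x)(\Q_p)=\emptyset]$, and using Lemma~\ref{lem:abelpu8} in the form $\Delta(f)\log\log T=\sum_{p\le T}\sigma_p+O(1)$ with a \emph{deterministic} $O(1)$, the quantity $\mathcal{M}_r(f,B,T)$ equals $(\Delta(f)\log\log T)^{-r/2}\sum_x\big(\sum_{p\le T}(\mathbf 1_p(x)-\sigma_p)+O(1)\big)^r$, the sum over $x\in\P^n(\Q)$ with $H(x)\le B$ and $f^{-1}(x)$ smooth. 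Expanding by the binomial theorem, the terms carrying the deterministic $O(1)$ have lower degree in $\sum_p(\mathbf 1_p-\sigma_p)$, and — once the result is known for all degrees $\le r$, which the same argument delivers — they are absorbed into the error after division by $(\Delta(f)\log\log T)^{r/2}$. So it suffices to treat $\sum_x\big(\sum_{p\le T}(\mathbf 1_p(x)-\sigma_p)\big)^r$.

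Next I would open this $r$-th power into a sum over $r$-tuples of primes $p_1,\dots,p_r\le T$, sort the tuples by the set $\{q_1<\dots<q_s\}$ of distinct primes occurring and the multiplicities $m_1,\dots,m_s$, and use that $\sum_x\prod_{j\in S}\mathbf 1_{q_j}(x)=\mathcal{A}_{\prod_{j\in S}q_j}$ for any $S\subseteq\{1,\dots,s\}$, with $\mathcal{A}_Q$ as in~\eqref{def:econtn}. The crux is to replace each $\mathcal{A}_Q/\mathcal{A}_1$ by $\prod_{p\mid Q}\sigma_p$, i.e.\ to pretend the events ``$f^{-1}(x)(\Q_p)=\emptyset$'' are independent of probability $\sigma_p$. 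For primes in the window $(t_0(B),t_1(B)]$ with $t_0(B)=(\log\log B)^{\mathcal{C}}$, $t_1(B)=B^{\epsilon_1}$ this is quantified by Lemma~\ref{lem:harmonicgfts7865hjdf}: only square-free $Q$ with $\omega(Q)\le r$ arise, so with $\mathcal{C}>3r/2$ and $\epsilon_1$ small (the hypotheses of that lemma) the aggregate replacement error is $\ll B^{n+1}(\log\log B)^{r-1-\mathcal{C}}=o(B^{n+1})$, comfortably inside the claimed error. Primes $p>t_1(B)=B^{\epsilon_1}$ are treated by hand: by Lemma~\ref{lem:fshgtt672} such an obstructing prime divides the fixed square-free form $g(x)$, whose value is $\ll B^{\deg g}$ when $H(x)\le B$ and thus has only $O(1)$ prime factors exceeding $B^{\epsilon_1}$; hence, off an exceptional set of size $O(B^n)$ (on which Lemma~\ref{lem:boundsgf7} gives $\omega_f(x)\ll\log B$) these primes alter $\omega_f(x,T)$ by $O(1)$, and their expected contribution $\sum_{t_1(B)<p\le T}\sigma_p$ is also $O(1)$ by Lemma~\ref{lem:abelpu8}, since $\log\log T=\log\log B+O(1)$ throughout $t_1(B)<T\le B^c$. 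Finally the primes $p\le t_0(B)$ — genuinely non-negligible precisely when $T$ only slightly exceeds $t_0(B)$ — are handled with the unsummed Lemma~\ref{lem:harmonicgfts7865}: its error degrades when $Q$ has a tiny prime factor, but for moduli built from at most $r$ primes all $\le t_0(B)$ the total remains $\ll B^{n+1}(\log\log t_0(B))^{O(1)}$ (the $Q$-polynomial error terms being $o(B^{n+1})$ since such $Q$ are only $(\log\log B)^{O(1)}$), and this, after division by $(\Delta(f)\log\log T)^{r/2}$, is what produces the announced error of size $B^{n+1}\tfrac{\log\log\log\log T}{(\log\log T)^{1/2}}$; the $O(1)$ primes $p\le A$ lie outside every lemma but contribute $O(1)$ deterministically. (When $T$ ranges over a bounded set the bound $\mathcal{M}_r\ll_r B^{n+1}$ is immediate and uniform, so one may assume $T$ exceeds any prescribed constant.)

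With the independent Bernoulli model in force, the rest is the classical moment computation underlying the central limit theorem for sums of independent indicators. Set $v:=\sum_{p\le T}\sigma_p(1-\sigma_p)=\Delta(f)\log\log T+O(1)$ (Lemmas~\ref{lem:abelpu8} and~\ref{lem:harmonicgfts6}). A tuple contributes to the leading order only if every distinct prime occurs with multiplicity at least $2$; for even $r$ the dominant configurations partition $\{1,\dots,r\}$ into $r/2$ pairs, each pair carrying a factor $\sigma_q(1-\sigma_q)$, which sums to $\mu_r\,v^{r/2}(1+o(1))$, whereas configurations containing a multiplicity $\ge 3$, together with \emph{all} configurations when $r$ is odd, are $\ll v^{(r-1)/2}$. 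Dividing by $(\Delta(f)\log\log T)^{r/2}$ and using $v=\Delta(f)\log\log T+O(1)$ turns this into $\mu_r+O((\log\log T)^{-1/2})$, and combining with the errors above yields the theorem.

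The step I expect to be the main obstacle is exactly the replacement of $\mathcal{A}_Q$ by $\prod_{p\mid Q}\sigma_p$, because the level of distribution is $0$ (Remark~\ref{rem:levofbaddiszero}): the clean summed estimate is available only for moduli whose prime factors lie in $(t_0(B),t_1(B)]$, so the primes $\le T$ must be carved into this bulk range, the large primes $>B^{\epsilon_1}$, and the small primes $\le t_0(B)$, and each piece pushed through with a different tool while every error is kept below $B^{n+1}(\log\log T)^{r/2}$ times a quantity tending to $0$ — a genuine constraint, since the moment expansion already sums $\asymp(\log\log T)^{r/2}$ tuples. Fitting the small-prime contribution inside $B^{n+1}\tfrac{\log\log\log\log T}{(\log\log T)^{1/2}}$, and verifying that the whole argument is uniform in $T$ over $[9,B^c]$, is where the care lies.
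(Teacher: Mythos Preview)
Your plan is essentially the paper's: moment expansion, a Granville--Soundararajan estimate over a truncated prime range, removal of the very large primes as a deterministic $O(1)$ via Lemma~\ref{lem:fshgtt672}, and then the classical CLT moment combinatorics. The paper organises the truncation slightly differently, and the difference is worth noting because it is exactly the point you flag as ``where the care lies''.

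The paper first sets $T_0:=(\log\log T)^{3+3r}$ (a function of $T$, not of $B$) and works with $\omega_f^0(x,T):=\sum_{T_0<p\le T}\theta_p(x)$; the moments of $\omega_f^0-\sum_{T_0<p\le T}\sigma_p$ are then computed by the analogue of Lemma~\ref{lem:harmonicgfts7865hjdf} over $(T_0,T]$. The discarded small primes are \emph{not} controlled through Lemma~\ref{lem:harmonicgfts7865}, but through the pointwise inequality $\theta_p(x)\le\mathbf 1[p\mid g(x)]$ (Lemma~\ref{lem:fshgtt672}) together with the elementary moment bound $\sum_x\big(z+\sum_{p\mid g(x),\,p\le y}1\big)^m\ll B^{n+1}(z+\log\log y)^m$; this is what produces the $\log\log\log\log T$ in the final error. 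Using a cutoff in $T$ rather than in $B$ is what makes the argument uniform over $9\le T\le B^c$ (your $t_0(B)$ can exceed $T$). Your alternative, pushing the small-prime moduli through Lemma~\ref{lem:harmonicgfts7865} directly, is delicate: moduli $Q$ containing one small prime and several bulk primes are not covered by Lemma~\ref{lem:harmonicgfts7865hjdf}, and summing Lemma~\ref{lem:harmonicgfts7865} over all such $Q$ picks up $(\log\log T)^{r-1}$ from the remaining primes, which after the $(1+\mu_{\c P})$ weight in the Granville--Soundararajan error (cf.\ Theorem~\ref{prop:epi}) can swamp the main term. The $g(x)$-divisor route avoids this entirely. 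Finally, the range $T>B^{\epsilon_r}$ is handled in the paper by first reducing to $T=B^{\epsilon_r}$ via $\omega_f(x,T)=\omega_f(x,B^{\epsilon_r})+O_{c,r}(1)$, which is your large-prime argument packaged as a preliminary reduction rather than as one branch of the carving.
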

The restriction $T\leq B^c$ is addressed in Remark~\ref{rem:neceffghdghfdtyamel5439}.
Theorem~\ref{eq:bachlutesuites}
will be proved in~\S\ref{s:prfoftrank}.
We will then use it to verify
Theorem~\ref{thm:ghjk56432}
in~\S\ref{s:prf32789}
and 
Theorem~\ref{thm:ghjk}  
in \S\ref{s:prfkj45ghjk}. 

\subsection{Proof of Theorem~\ref{eq:bachlutesuites}}
\label{s:prfoftrank}  
For a prime $p$ we define the function 
$\theta_p:\P^n(\Q)\to\{0,1\}$ via
\beq{eq:thesol}{
\theta_p(x):=
\begin{cases} 1, &\mbox{if } f^{-1}(x)(\Q_p)=\emptyset,\\ 
0, & \mbox{otherwise}.
\end{cases}
} Let $$\epsilon_r:=\min \Big\{\frac{n-1/2}{2 r (n+1) } , \frac{1}{4r}   \Big\} .$$
First we consider the case where   \beq{eq:firstcons}{c \leq {\epsilon_r} .}
Letting 
$
T_0:=(\log \log T)^{3+3r}
$
 and 
$\omega^0_f(x,T):=\sum_{T_0<p\leq T} \theta_p(x)$ allows to  define $s(T)$ via 
\[s(T)^2:=\sum_{T_0<p\leq T} \sigma_p(1-\sigma_p)\]
It
is relatively
straightforward to modify the proof of~\cite[Prop.3.9]{arXiv:1711.08396} to show that 
\[
\sum_{\substack{ x \in \PP^n(\QQ), H(x)\leq B\\ f^{-1}(x) \text{ smooth}}}
\l(
\omega^0_f(x,T)
-
\sum_{p\in (T_0,T]
}\sigma_p
\r)^{r}
\] equals\[
\begin{cases}
c_{n}B^{n+1} \mu_r 
s(T)^r
+O_r\!
\l(B^{n+1} (\log \log T)^{\frac{r}{2} -1 }\r),  &\text{ if } 2\mid r,\\
O_r\l(B^{n+1} (\log \log T)^{\frac{r-1}{2}  }      \r), &\text{ otherwise.} 
\end{cases}
 \] 
It follows from
Lemma~\ref{lem:abelpu8}
and the definition of $T_0$
that 
\[
\sum_{T_0<p\leq T}\sigma_p
=
\Delta(f) 
\log \log T
+O_r(\log \log \log \log T)
,\]
thus, writing 
$s(T)^r= (\Delta(f) \log \log T      +O_{r}(\log \log \log \log T))^{\frac{r}{2}}$
we see that \[s(T)^r
=
(\Delta(f) \log \log T)^{\frac{r}{2}}   +O_{r}\l( (\log \log T)^{\frac{r}{2}-1} \log\log\log\log T\r) 
.\]
By Lemma~\ref{lem:fshgtt672}
there exists 
a homogeneous square-free polynomial 
$F \in \ZZ[x_0,\ldots, x_n]$ such that 
if $x \in \P^n(\Q)$
and
$f^{-1}(x)(\QQ_p) = \emptyset$ then $p \mid g(x)$.
Thus 
we can write for $x \in \P^n(\Q)$ with $g(x)\neq 0$,
\[\omega^0_f(x,T)=
 \omega_f(x,T)
+O\Big(\sum_{\substack{p\mid g  (x) \\p\leq T_0}}1\Big)
.\]
This shows that 
\beq{eq:dfer}
{
\omega_f(x,T)   -  \Delta(f) \log \log T
=\Big(
\omega^0_f(x,T)
-
\sum_{T_0<p\leq T}\sigma_p
\Big)
+O_{r}\Big(\log \log \log \log T+\sum_{\substack{p\mid f(x) \\p\leq T_0}}1\Big)
.}
It is easy to modify the proof of~\cite[Lem.3.10]{arXiv:1711.08396}
in order to show 
 that for every 
$B,z>1$,
$y\in (3,B^{\frac{1}{2(r+1)}}]$,
$m\in \Z_{\geq 0}$ and a primitive homogeneous polynomial
$G\in \Z[x_0,\ldots,x_n]$
one has 
\[
\sum_{\substack{x\in \P^n(\Q)\\H(x)\leq B\\
G(x)\neq 0
}} \Big(z +\sum_{\substack{p\mid G(x)\\p\leq y }}1\Big)^m
\ll_{F,m}
B^{n+1}(z +\log \log y )^m
\]
with an implied constant that is independent of $y$ and $z$.
Using this with~\eqref{eq:dfer}
one can prove with arguments identical to the concluding arguments 
in the proof of~\cite[Th.1.3]{arXiv:1711.08396}
that 
\beq{eq:fqp}
{
\hspace{-0,1cm}
\c{M}_r(f, B,T)=
\hspace{-0,4cm}
\sum_{\substack{ x \in \PP^n(\QQ), H(x)\leq B\\ f^{-1}(x) \text{ smooth}}}
\l(
\frac{
\omega^0_f(x,T)
-
\sum_{T_0<p\leq T}\sigma_p
}
{\sqrt{\Delta(f)\log \log T}}
\r)^{r}
+O_{r}
\l(
\frac{B^{n+1}\log \log \log \log T}{(\log \log T)^{1/2}}
\r) 
.}
We have therefore shown that 
for $T$ satisfying~\eqref{eq:firstcons}
one has 
\[
\frac{\c{M}_r(f, B,T)}
{\#\{x\in \P^n(\Q):H(x)\leq B\} }
=\mu_r
+O_{r}
\l(
\frac{B^{n+1}
\log \log \log \log T}{(\log \log T)^{1/2}}
\r)
.\]
Now assume that $c>\epsilon_r$. 
Then if $f(x)\neq 0$ we obtain 
\[\omega_f(x,T)=
\omega_f(x,B^{\epsilon_r})
+O_{c,r}(1)
\]
because $\sum_{B^{\epsilon_r}<p\leq T} \theta_p(x)\ll_r (\log T)/(\log (B^{\epsilon_r}))$,
that can be shown as in the proof of~\cite[Th.1.3]{arXiv:1711.08396}.
It is clear that we have 
$\log \log T=
(\log \log  B^{\epsilon_r})+O_r(1)$.
Noting that the set $\{x\in \P^n(\Q):H(x)\leq B, f(x)=0\}$
has cardinality
$\ll B^n$ and that if $f^{-1}(x)$ is smooth then $\omega_f(x)\ll \log H(x)$ due to Lemma~\ref{lem:boundsgf7},
we obtain that 
\[
\sum_{\substack{ x \in \PP^n(\QQ), H(x)\leq B\\ f^{-1}(x) \text{ smooth}}}
\l(
\frac{
\omega_f(x,T)
-
\Delta(f)\log \log T
}
{\sqrt{\Delta(f)\log \log T}}
\r)^{r}
\] equals \[
\sum_{\substack{ x \in \PP^n(\QQ), H(x)\leq B\\ f^{-1}(x) \text{ smooth}}}
\l(
\frac{
\omega_f(x,B^{\epsilon_r})
-
\Delta(f)\log \log B^{\epsilon_r}
}
{\sqrt{\Delta(f)\log \log B^{\epsilon_r}}}
\r)^{r}
\]
up to an error term that is
 \[
\ll B^n (\log B)^r
+
\sum_{0\leq k\leq r-1}
\
\sum_{\substack{ x \in \PP^n(\QQ), H(x)\leq B\\ f^{-1}(x) \text{ smooth}}}
{r\choose k}
\l|
\l(
\frac{
\omega_f(x,B^{\epsilon_r})
-
\Delta(f)\log \log B^{\epsilon_r}
}
{\sqrt{\Delta(f)\log \log B^{\epsilon_r}}}
\r)^{k}
\r|
 .\] 
Using~\eqref{eq:fqp} for $T=B^{\epsilon_r}$
concludes the proof of Theorem~\ref{eq:bachlutesuites}.
\qed

\begin{remark}
\label{rem:neceffghdghfdtyamel5439}
Note that 
some growth restriction on $T$ is necessary in order for Theorem~\ref{eq:bachlutesuites}
 to hold.
If, for example, it holds with $T\geq B^{\log B}$, then, 
$\log \log T \geq 2 \log \log B$, hence the average of 
$\omega_f(x,T)$ would be at least $2 \Delta(f)
 \log \log B$.
According to Lemma~\ref{lem:boundsgf7}
there exists positive constants 
$C,D$ 
that depend only on $f$
such that if $H(x)\leq B$ and $f^{-1}(x) $ 
is smooth
then 
$\omega_f(x)=\omega_f(x,C_0 B^D)$.
We also know that 
the average value of $\omega_f(x)$ is 
$\Delta(f) \log \log B$, thus one would get a contradiction because 
 $\Delta(f) \neq 0$. \end{remark}
\begin{corollary}
\label{coroaotf:gaussian} 
Keep the assumptions of Theorem~\ref{thm:ghjk56432}.
Let $c$ be a fixed positive constant,
assume that $B\geq 3^{1/c}$
and let  $T:\R_{\geq 3}\to \R_{\geq 3}$ be any function with 
\[
\lim_{B\to+\infty}T(B)=+\infty
\text{ and }
T(B)\leq B^c
\text{ for all } B\geq 1.\] 
Then for any interval $\mathcal{J}\subset \R$ we have 
\[
\lim_{B\to\infty}
\b P_B
\left(
 x \in \Omega_B:
\frac{\omega_f(x,T(B))-\Delta(f)\log \log T(B)}{\sqrt{\Delta(f)\log \log  T(B)}}
\in \mathcal{J}
 \right)
=\frac{1}{\sqrt{2\pi}}
\int_{\c{J}}
\mathrm{e}^{-\frac{t^2}{\!2}}
\mathrm{d}t
.\]
\end{corollary}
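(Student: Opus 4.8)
The plan is to deduce Corollary~\ref{coroaotf:gaussian} from Theorem~\ref{eq:bachlutesuites} by the classical method of moments. Throughout write $T=T(B)$ and, for $x\in\P^n(\Q)$, set
\[
Y_B(x):=\frac{\omega_f(x,T)-\Delta(f)\log\log T}{\sqrt{\Delta(f)\log\log T}},
\]
which is a well-defined real number for every $x$, since $\omega_f(x,T)\leq\#\{p\leq T\}<\infty$. As $T(B)\to+\infty$ while $T(B)\leq B^c$, for all large $B$ we have $T\in[9,B^c]$ and $\log\log T\to+\infty$, so Theorem~\ref{eq:bachlutesuites} applies with this $T$, the error term there tending to $0$ as $B\to+\infty$.

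First I would discard the singular fibres. As in the proof of Theorem~\ref{prop:typicalprop}, the $x\in\P^n$ for which $f^{-1}(x)$ is singular lie in a proper Zariski-closed subset of $\P^n_\Q$, so $\#\{x\in\Omega_B:f^{-1}(x)\text{ singular}\}\ll B^n=o(\#\Omega_B)$, using $\#\Omega_B\asymp B^{n+1}$. Hence, for any interval $\mathcal{J}\subset\R$,
\[
\b P_B\big(x\in\Omega_B: Y_B(x)\in\mathcal{J}\big)=\frac{\#\{x\in\Omega_B: f^{-1}(x)\text{ smooth},\ Y_B(x)\in\mathcal{J}\}}{\#\Omega_B}+o(1)=\nu_B(\mathcal{J})+o(1),
\]
where $\nu_B$ is the uniform probability measure on the finite multiset $\{Y_B(x): x\in\Omega_B,\ f^{-1}(x)\text{ smooth}\}$; the last step uses $\#\{x\in\Omega_B:f^{-1}(x)\text{ smooth}\}=(1+o(1))\#\Omega_B$ together with $\nu_B(\mathcal{J})\leq 1$. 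It therefore suffices to show $\nu_B\Rightarrow\mathcal{N}(0,1)$ weakly.

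Next I would feed in Theorem~\ref{eq:bachlutesuites}. For each integer $r\geq 1$,
\[
\int_\R t^r\,\mathrm{d}\nu_B(t)=\frac{\c{M}_r(f,B,T)}{\#\{x\in\Omega_B:f^{-1}(x)\text{ smooth}\}}=\mu_r+o(1)\qquad(B\to+\infty),
\]
since $\#\{x\in\P^n(\Q):H(x)\leq B\}=(1+o(1))\#\{x\in\Omega_B:f^{-1}(x)\text{ smooth}\}$ and $\mu_r<\infty$; the case $r=0$ is trivial. Thus every moment of $\nu_B$ converges to the $r$-th moment $\mu_r$ of the standard Gaussian. Because the Gaussian is moment-determinate (its moments satisfy Carleman's condition), the Fr\'echet--Shohat theorem gives $\nu_B\Rightarrow\mathcal{N}(0,1)$; the one technical point here is the passage of moments through a subsequential weak limit, which is handled in the usual way using the uniform boundedness of higher moments supplied by Theorem~\ref{eq:bachlutesuites}.

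Finally, since the distribution function of $\mathcal{N}(0,1)$ is continuous, the boundary of every interval $\mathcal{J}\subset\R$ is $\mathcal{N}(0,1)$-null, so weak convergence yields $\nu_B(\mathcal{J})\to\frac{1}{\sqrt{2\pi}}\int_{\mathcal{J}}\mathrm{e}^{-t^2/2}\,\mathrm{d}t$; combined with the reduction above this is precisely the assertion of the corollary. I expect no serious obstacle: the only care needed is in the moment-method step just described, and in the routine bookkeeping around the negligible singular locus and the replacement of $\#\Omega_B$ by the count over smooth fibres.
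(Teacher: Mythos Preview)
Your proposal is correct and follows essentially the same approach as the paper: the paper's proof simply says that one feeds the moment estimates of Theorem~\ref{eq:bachlutesuites} into the method of moments, using that the standard normal distribution is determined by its moments, and refers to~\cite[\S 3.5]{arXiv:1711.08396} for the identical argument. Your write-up fleshes out exactly these steps (discarding the singular locus, matching $\#\Omega_B$ with the smooth count, invoking Fr\'echet--Shohat/Carleman, and passing to intervals via continuity of the Gaussian CDF), so there is nothing to add.
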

\begin{proof}
The proof  uses the moment estimates provided by Theorem~\ref{eq:bachlutesuites}
and is based on the fact that the standard normal distribution is characterised by its moments.
It is identical to the proof of~\cite[Th.1.2]{arXiv:1711.08396}
that is given in~\cite[\S 3.5]{arXiv:1711.08396}
and is thus not repeated here.
\end{proof}

\begin{remark}
\label{rem:neceffghdghfdtyamel}
Recall the definition of $D$ in 
Lemma~\ref{lem:boundsgf7}. 
The special choice
$T(B)=B^{1+D}$ of 
Corollary~\ref{coroaotf:gaussian} 
is equivalent to~\cite[Th.1.2]{arXiv:1711.08396}.
\end{remark}

\subsection{ Proof of Theorem~\ref{thm:ghjk56432}  } \label{s:prf32789}
We consider $z\in \R$ to be fixed throughout this proof. Defining
$K:\R_{\geq 3}\to\R$ via
\[K(B):=\exp\l(\exp\l(
\frac{j(B)+z\sqrt{  j(B) }}{\Delta(f) }
\r)\r)\]
makes clear that 
 \beq{eq:rtzmlpqwrk}
{j(B)=\Delta(f)\log \log K(B) - z \sqrt{j(B)}
}
and  
\[\sqrt{j(B)}=\frac{-z+\sqrt{z^2+4\Delta(f)\log \log K(B)}}{2}   .\]
This provides us with 
$\sqrt{j(B)}=\sqrt{\Delta(f)\log \log K(B)} +O_z(1)$,
which, when combined with~\eqref{eq:rtzmlpqwrk},
shows that 
\beq{eq:poiuasd}
{j(B)=\Delta(f)\log \log K(B) - z \sqrt{\Delta(f)\log \log K(B)} +O_z(1)  .}

By the assumptions of our theorem regarding $j(B)$
one can see that for all
sufficiently large $B$
the 
 inequality 
$j(B)\leq \Delta(f) \log \log B-|z|\sqrt{ \Delta(f)\log \log B}$
holds.
This shows that 
\[\frac{j(B)+z\sqrt{  j(B) }}{\Delta(f) } \leq \log \log B,\]
thus,  $K(B)\leq B$. This allows to use 
 Corollary~\ref{coroaotf:gaussian} 
with $T(B):=K(B)$
in order to obtain
\beq{eq:cortuyqhsvb}
{
\lim_{B\to+\infty}
\b P_B
 \left(  x \in \Omega_B:
\frac{\omega_f(x,K(B))-\Delta(f)\log \log K(B) }{\sqrt{\Delta(f)\log \log K(B)}  }
<
- z 
\right)
=\frac{1}{\sqrt{2\pi}}
\int_{-\infty}^{-z}
\mathrm{e}^{-\frac{t^2}{\!2}}
\mathrm{d}t
.}
For any $B,u\in \R_{\geq 3}$
and $\ell \in \N$ 
it is clear that  $
 p_\ell(x) 
>  u$ is equivalent to
$\omega_f(x,u)<\ell $.
Using this 
with  $u=K(B)$ and $\ell=j$
gives
\[\b P_B
\left(
x \in \Omega_B:\log \log p_j(x) 
>
\frac{j}{\Delta(f) }
+z\frac{\sqrt{ j}}{\Delta(f) }
\
\right)
=
\b P_B
 \left(  x \in \Omega_B:
\omega_f(x,K(B))<j(B)
\right)
,\] which, when~\eqref{eq:poiuasd} is invoked,
gives
 \[
\b P_B
 \left(  x \in \Omega_B:
\omega_f(x,K(B))<
\Delta(f)\log \log K(B) - z \sqrt{\Delta(f)\log \log K(B)} +O_z(1) 
\right)
.\]
Alluding to~\eqref{eq:cortuyqhsvb}
shows that 
\begin{align*}
\lim_{B\to+\infty}
\b P_B
\left(
x \in \Omega_B:
\log \log p_j(x) 
>
\frac{j}{\Delta(f) }
+z\frac{\sqrt{ j}}{\Delta(f) }
\
\right)
&=\frac{1}{\sqrt{2\pi}}
\int_{-\infty}^{-z}
\mathrm{e}^{-\frac{t^2}{\!2}}
\mathrm{d}t\\
&=1-\frac{1}{\sqrt{2\pi}}
\int_{-\infty}^{z}
\mathrm{e}^{-\frac{t^2}{\!2}}
\mathrm{d}t,
\end{align*}
which is clearly sufficient for Theorem~\ref{thm:ghjk56432}.
\qed
\begin{remark}
\label{rem:neceffghd}
Let us note that the 
assumption 
\[ 
\lim_{B\to+\infty}\frac{j(B)-\Delta(f)\log \log B}{\sqrt{\Delta(f)\log \log B}}=-\infty
 \]
of Theorem~\ref{thm:ghjk56432} does not allow its application  when $j$ is close to its maximal value, i.e. $\omega_f(x)$,
which, by~\cite[Th.1.2]{arXiv:1711.08396}
can be as large as 
$$\Delta(f)\log \log B
+z\sqrt{\Delta(f)\log \log B}
,$$
where  $z$ is any positive constant.
The assumption is, however, necessary.
Indeed, if we could take 
$j(B)=\Delta(f)\log \log B
+M\sqrt{\Delta(f)\log \log B}$ in Theorem~\ref{thm:ghjk56432}, where $M$ is a fixed constant,
then for every $z\in \R$
there would be infinitely many $B>3$ such that for every such $B$
there exists $x\in \P^n(\Q)$ with $H(x)\leq B$,
$f^{-1}(x)$ smooth
and $f^{-1}(x)$ having no $p$-adic point 
for some prime $p$ of size  
\[
\log \log p>
\frac{\Delta(f)\log \log B
+M\sqrt{\Delta(f)\log \log B}}{\Delta(f) }
+z\frac{\sqrt{ \Delta(f)\log \log B
+M\sqrt{\Delta(f)\log \log B}}}{\Delta(f) }
.\]
However, by
Lemma~\ref{lem:boundsgf7}
we have $\log \log p\leq \log \log B+O(1)$, therefore 
taking $M=-z+1$  gives a contradiction.
\end{remark}

\subsection{Proof of Theorem~\ref{thm:ghjk} }
 \label{s:prfkj45ghjk}
We shall use the approach in the proof of~\cite[Th.10]{hallbook},
where a similar result is proved for the number of prime divisors of an integer in place of $\omega_f$.
The approach must be altered somewhat because it is difficult to prove for $\omega_f$ 
a
statement
that is analogous to
the exponential decay bound in~\cite[Th.010]{hallbook}
which is used in the proof of~\cite[Th.10]{hallbook}, the reason being that for any $A,T>0$
the function 
$A^{\#\{ p \leq T : p\mid m\}}$ is a multiplicative function of the integer $m$, 
while this is not true for $A^{\omega_f(x,T)}$.
To prove 
Theorem~\ref{thm:ghjk}
it is clearly sufficient to restrict to the cases with 
$$\xi(B)\leq (\log \log B)^{1/2},
0<\epsilon<1/2$$ and we shall assume that both inequalities holds 
during 
the rest of the proof. By Lemma~\ref{lem:boundsgf7}
there exist $C,D>0$ that only depend on $f$ such that if $x\in \P^n(\Q)$ is such that $H(x)\leq B$ and 
$f^{-1}(x)$ has no $p$-adic point then $p\leq C B^D$. 
Fixing any $\psi>1+D$ with the property 
$C B^D\leq B^\psi$ for all $B\geq 2$ 
and letting $\chi(B):=
2\xi(B)/\Delta(f) $
we shall define 
 the set 
\[
\c A
=\Big\{x\in \P^n(\Q): 
		t \in \R\cap \big(     {\mathrm{e}}^{ {\mathrm{e}}^{\chi(B)}}
		,B^\psi \big] 
		\Rightarrow 
 |\omega_f(x,t)- \Delta(f) \log \log t |
\leq \frac{1}{2}
( \Delta(f)\log \log t)^{\frac{1}{2}+
\frac{\epsilon }{2}
} 
\Big\}
.\]
This set is well-defined because  $    {\mathrm{e}}^{ {\mathrm{e}}^{\chi(B)}}
		<B^\psi $ is implied by 
		our assumption $\xi(B)\leq (\log \log B)^{1/2}$ for all large enough $B$.
		Let us now prove that \beq{eq:skog}
{\b P_B\l( \c A\r)
 =1+O\l(\frac{1}{\xi(B)^M}\r)
.} 
Note that for this it suffices to show 
\[\b P_B\l(x\in \Omega_B:
x\in \c A,f^{-1}(x) \text{ smooth } \r)
 =1+O_{\epsilon,M}\l(\frac{1}{\xi(B)^M}\r)
\] 
because 
$\#\{x\in \P^n(\Q):f^{-1}(x) \text{ singular }, H(x)\leq B\}
\ll 
\#\{x\in \c A: H(x)\leq B\}/B$.
	For $k \in \N$ we let $t_k:={\mathrm{e} }^{\mathrm{e}^k}$ and we find the
	largest $k_0=k_0(B)$ and the smallest $k_1=k_1(B)$ so that 
	\[\big(     {\mathrm{e}}^{ {\mathrm{e}}^{\chi(B)}}
		,B^\psi \big] 
		\subseteq
		\bigcup_{k=k_0}^{k_1}   (t_k, t_{k+1}]
	.\]
	Thus we deduce that if 
	$H(x)\leq B$ is such that
	$x\notin \c A$ then there exists $k\in [k_0,k_1)$ and $t\in \R$
	having the properties 
	$t \in  (t_k, t_{k+1}]$
	and $
 |\omega_f(x,t)- \Delta(f) \log \log t |
\leq \frac{1}{2}
( \Delta(f)\log \log t)^{\frac{1}{2}+
\frac{\epsilon }{2}
} $.
The last inequality implies that 
either  \[
\omega_f(x,t_{k+1}) 
 \geq 
\omega_f(x,t) \geq  \Delta(f) \log \log t 
- \frac{1}{2}
( \Delta(f)\log \log t)^{\frac{1}{2}+
\frac{\epsilon }{2}
} 
 \geq   \Delta(f) k  - \frac{1}{2}  ( \Delta(f) (k+1))^{\frac{1}{2}+ \frac{\epsilon }{2}
} 
\]
\[ \geq   
\Delta(f) (k+1)  -  ( \Delta(f) (k+1))^{\frac{1}{2}+ \epsilon } 
\]
 or 
 \[
\omega_f(x,t_k)\leq 
 \omega_f(x,t)
\leq 
\Delta(f) \log \log t +   \frac{1}{2} ( \Delta(f)\log \log t)^{\frac{1}{2}+ \frac{\epsilon }{2}} 
\leq 
\Delta(f) (k+1) +   \frac{1}{2} ( \Delta(f)  (k+1) )^{\frac{1}{2}+ \frac{\epsilon }{2}} 
\]
\[\leq 
\Delta(f) k +    ( \Delta(f)  k  )^{\frac{1}{2}+  \epsilon  } 
.\]Letting $\ell$ denote $k+1$ or $k$ respectively, we have shown that 
  the cardinality of  $x\notin \c A$
with $f^{-1}(x)$ smooth  
is at most 
\[
\sum_{\substack{ \ell \in \N \\  k_0   \leq \ell  \leq 1 + k_1  } }
\#\{x\in \P^n(\Q):
 f^{-1}(x) \text{  smooth  },
H(x)\leq B,
|\omega_f(x,t_\ell)-\Delta(f) \ell  | > (\Delta(f) \ell  )^{\frac{1}{2}+\epsilon}
\}
.\]
Note that the inequalities 
$t_{1+k_0}>   {\mathrm{e}}^{ {\mathrm{e}}^{\chi(B)}}$ and 
$t_{k_1} \leq B^\psi$ imply  that 
$k_0>-1+\chi(B)$ 
and $t_{1+k_1}= t_{k_1}^{\mathrm{e}}  
\leq B^{\mathrm{e} \psi}$.
Therefore the sum above is at most  
\[
\sum_{\substack{ \ell \in \N \\  -1+\chi(B)  <  \ell  \leq 1 + (\log \psi) +(\log \log B)  } }
\hspace{-1.6cm}
\#\{x\in \P^n(\Q):
 f^{-1}(x) \text{  smooth  },
H(x)\leq B,
|\omega_f(x,t_\ell)-\Delta(f) \ell  | > (\Delta(f) \ell  )^{\frac{1}{2}+\epsilon}
\}
.\]
Letting 
$m=m(\epsilon) $ be the least integer with $2m\epsilon \geq M+1$ 
and using 
  Chebychev's inequality  we  see that the sum  is at most 
\[
\sum_{\substack{ \ell \in \N \\  -1+\chi(B)  <  \ell  \leq 1 + (\log \psi) +(\log \log B)  } }
\frac{1}{(\Delta(f)\ell)^{2m \epsilon }} 
\sum_{\substack{ x \in \PP^n(\QQ), H(x)\leq B\\ f^{-1}(x) \text{ smooth}}}
\l(
\frac{
\omega_f(x,t_\ell)
-
\Delta(f)\ell
}
{\sqrt{\Delta(f)\ell}}
\r)^{2m}
.\]
Let us now 
apply Theorem~\ref{eq:bachlutesuites} with 
$r=2m,
c=\mathrm{e} \psi
$
and 
$T=t_\ell \leq B^{\mathrm{e}  \psi}=B^c$.
We obtain that the expression above is 
\[
\ll_{m,\psi} 
\sum_{\substack{ \ell >   -1+\chi(B)    } }
\frac{\#\{x\in \P^n(\Q):H(x)\leq B\}}{\ell ^{2M\epsilon  }}  
,\]
which is 
$O(\#\{x\in \P^n(\Q):H(x)\leq B\}\xi(B)^{-M})$ because 
$2M\epsilon \geq M+1$. This concludes the proof of~\eqref{eq:skog}.

As a last step in our proof we shall deduce
Theorem~\ref{thm:ghjk} from~\eqref{eq:skog}. 
Setting 
$t=p_j(x)$ in~\eqref{eq:skog}
shows that for  all $x\in \P^n(\Q)$ with $H(x)\leq B$, except at most $\ll B^{n+1}/\psi(B)^{M}$, 
one has 
\[
 {\mathrm{e}}^{{\mathrm{e}}^{\chi(B)}}     < p_j(x)
\leq B^\psi 
		\Rightarrow 
|
j
 - \Delta(f)  \log \log p_j(x)  |
\leq  \frac{1}{2} ( \Delta(f) \log \log p_j(x))^{\frac{1}{2}+ \frac{\epsilon }{2} } 
.\]
Recalling that $\epsilon<1/2$
the last inequality implies that 
$\Delta(f) \log \log p_j(x)  \leq 2  j  $.
Therefore the inequality 
$ {\mathrm{e}}^{{\mathrm{e}}^{\chi(B)}} < p_j(x) $ implies that 
$$\frac{2 \xi(B)}{ \Delta(f)}
=\chi(B) < \log \log p_j(x) \leq \frac{2 j}{\Delta(f)}
,$$ hence $\xi(B)\leq j$.
Finally, by 
the definition of $\psi$ we have  that the inequality 
$p_j(x)
\leq B^\psi $ is equivalent to $p_j(x)\leq \omega_f(x)$.
Owing to 
$\Delta(f) \log \log p_j(x)  \leq 2  j  $ one can see that for all sufficiently large $B$ and all $j\geq \xi(B)$
one has 
\[\frac{1}{2} ( \Delta(f) \log \log p_j(x))^{\frac{1}{2}+ \frac{\epsilon }{2} } 
\leq 
\frac{1}{2} ( 2j)^{\frac{1}{2}+ \frac{\epsilon }{2} } 
\leq 
\Delta(f)
j^{\frac{1}{2}+  \epsilon  } 
.\]
This shows that for all $x\in \P^n(\Q)$ with $H(x)\leq B$, except at most $\ll B^{n+1}/\psi(B)^{M}$, 
one has 
\[
\xi(B) < j \leq \omega_f(x) 
		\Rightarrow 
 \Big|
\log \log p_j(x) -\frac{j}{\Delta(f)} \Big|
< j^{\frac{1}{2}+\frac{\epsilon}{2}} 
,\]
thereby finishing the proof of Theorem~\ref{thm:ghjk}.
\qed

\section{Modelling by Brownian motion}
\label{s:browning_motion}
The main result in this section is Theorem~\ref{thm:main}, which proves that certain paths related to the sequence~\eqref{eq:determseq}
are distributed according to Brownian motion. To prove Theorem~\ref{thm:main} we 
begin by proving Theorem~\ref{prop:epi} in
\S\ref{s:intervcorr}. It is a generalisation of the work of
Granville and Soundararajan~\cite{granvsound}
that allows to estimate correlations that are more involved than the moments in
Theorem~\ref{eq:bachlutesuites}. We use Theorem~\ref{prop:epi} 
to verify Proposition~\ref{prop:pointwise} in~\S\ref{s:pointwiseconv}
and Proposition~\ref{prop:tightness}  in~\S\ref{s:tightness}.
These two propositions are then combined in~\S\ref{s:proofthm}
to prove Theorem~\ref{thm:main}.

\subsection{An extension of work by Granville and Soundararajan}
\label{s:intervcorr}

Assume that we are given a finite set $\c{A}$
and that
 for each $a\in \c A$
we are given
a sequence of real numbers  $\{c_n(a)\}_{n\in \N}$ 
with the property that  $\sum_{n=1}^\infty c_{n}(a)$ converges absolutely
for every $a\in \c A$.
A
central
object of study in analytic number theory 
are the 
moments
\beq{eq:agentorange}{
\sum_{a \in \c{A}}
\Big(\sum_{n \in J}c_n(a)\Big)^{k}
, k\in \N,}
where $J\subset \R$ is an interval.
In this paper 
we shall need the following generalisation.
\begin{definition}
[Interval correlation]
\label{def:interval correlation} 
Let $\c A$ and $\{c_n(a)\}_{a\in \c A}$
be as above and assume that
$J_1,\ldots, J_m\subset \R$ 
are $m$ 
pairwise disjoint
intervals.
For $\b k \in \N^m$ the $\b k$-th
interval
correlation  
of the sequence 
$\{c_n(a):n\in \N, a\in \c A \}$ 
is defined as  
\beq{eq:lefthandpath2}{
\sum_{a \in \c{A}}
\Big(\sum_{n \in J_1}c_n(a)\Big)^{k_1}
\cdots 
\Big(\sum_{n \in J_m}c_n(a)\Big)^{k_m}
.}
\end{definition}
These moments record 
how the values of $c_n(a)$ for $n$ in an interval
affect the values of $c_n(a)$ for $n$ in a different 
 interval.

The work of Granville and Soundararajan~\cite[Prop.3]{granvsound}
provides accurate estimates for the moments in~\eqref{eq:agentorange}
when the sequence 
$\{c_n(a):n\in \N, a\in \c A \}$ 
has a specific number-theoretic structure and our aim in this section is to 
use their method to provide estimates for the interval correlations in~\eqref{eq:lefthandpath2}.

Assume that
$\c{P}$ is
a finite set of primes and that $\c{A}:=\{a_1,\ldots,a_y\}$ is a multiset of $y$ natural numbers. 
For $Q\in \N$ let 
$\c{A}_Q:=\#\{m\leq y:Q \mid a_m\}$
and let $h$ be a real-valued, non-negative multiplicative function  such that 
for square-free $Q$ we have $0\leq h(Q)\leq Q$.
Whenever a square-free positive integer $Q$ satisfies 
$p\mid Q\Rightarrow p\in \c P$ we define 
 \[\c{W}(Q):=
\#\c{A}_Q-\frac{h(Q)}{Q}y
\]
and for  any $\c{P}_i\subset \c{P}$ for $1\leq i \leq m$ and 
$\b{k}\in \N^m$
we let 
\beq{eq:letusnfridcof}{
\c{E}_{\c{P}_1,\ldots,\c{P}_m}(\c{A},h,\b{k})
:=
\sum_{\substack{\b{Q}\in \N^m
\\  \forall i: \omega(Q_i)\leq k_i 
\\ \forall i: p\mid Q_i\Rightarrow p\in \c{P}_i
}} \big| \c{W}(Q_1\cdots Q_m)   \big|
\prod_{i=1}^m \mu(Q_i)^2
.}
Note that, setting  $Q:=Q_1\cdots Q_m$ provides us with
 \beq{eq:letusnfridcofbnfgh}{
\c{E}_{\c{P}_1,\ldots,\c{P}_m}(\c{A},h,\b{k})
\leq 
\sum_{\substack{
Q\in \N
\\  \omega(Q)\leq k_1+\cdots+k_m 
\\  p\mid Q \Rightarrow p\in \c{P} 
}} 
 \mu(Q)^2
\big| \c{W}(Q )   \big|
. }
Furthermore, 
for any $r\in \N$ we let 
$C_r:=\Gamma(r+1)/(2^{r/2}\Gamma(1+r/2))$, 
where $\Gamma$ is the Euler gamma function.
For any $\c{R}\subset \c{P}$ we 
define
\beq{def:themusandthesigmas}{\mu_{\c{R}}:=\sum_{p\in \c{R}} \frac{h(p)}{p},
\
\sigma_{\c{R}}:=\bigg(\sum_{p\in \c{R}} \frac{h(p)}{p}\Big(1-\frac{h(p)}{p}\Big)\bigg)^{1/2}}
and for $a\in \c A$ we define 
$
\omega_{\c{R}}(a):=\#\big\{ p\in \c{R}: p\mid a \big\}
$.
\begin{theorem} 
\label{prop:epi}
Assume that 
$\c{P}_1,\ldots,\c{P}_m$ are disjoint subsets of $\c{P}$.
Then for any $\b{k}\in \N^m$ with
$k_i\leq \sigma_{\!\c{P}_i}^{2/3}$ for all $1\leq i\leq m$
we have 
\beq{eq:timeforlunch1}{
\sum_{a\in \c{A}}
\prod_{i=1}^m
(\omega_{\c{P}_i}(a)
-\mu_{\c{P}_i}
)^{k_i}
=y
\prod_{i=1}^m
\Big(C_{k_i} \sigma_{\c{P}_i}^{k_i}
\Big(1+O\Big(\frac{k_i^3}{ \sigma_{\c{P}_i}^{2}}\Big)\Big)
\Big)
+
O\Big(
\frac{
\c{E}_{\c{P}_1,\ldots,\c{P}_m}(\c{A},h,\b{k})}
{
\prod_{i=1}^m
(1+\mu_{\c{P}_i})^{-1}
}
\Big)
}
if  $k_i$ is even for every $1\leq i \leq m$, and 
\beq{eq:timeforlunch2}{
\sum_{a\in \c{A}}
\prod_{i=1}^m
(\omega_{\c{P}_i}(a)
-\mu_{\c{P}_i}
)^{k_i}
\ll
y
\Bigg(
\prod_{i=1}^m
C_{k_i} \sigma_{\c{P}_i}^{k_i}\Bigg)
\Bigg(
\prod_{\substack{1\leq i \leq m \\k_i \text{ odd}}}
\frac{k_i^{3/2}}{\sigma_{\c{P}_i}}
\Bigg)+
\frac{
\c{E}_{\c{P}_1,\ldots,\c{P}_m}(\c{A},h,\b{k})}
{
\prod_{i=1}^m
(1+\mu_{\c{P}_i})^{-1}
}
}
if there exists $1\leq i \leq m$ such that $k_i$ is odd.
The implied constants depend at most on $m$.
\end{theorem}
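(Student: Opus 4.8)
The plan is to adapt the method of Granville and Soundararajan~\cite[Prop.~3]{granvsound}, whose central idea is to compare the arithmetic moments with the moments of a model probability space consisting of independent Bernoulli random variables $\widetilde X_p$, $p\in\c P$, with $\mathbb P[\widetilde X_p=1]=h(p)/p$. The one new feature here is the presence of a product over the disjoint sets $\c P_1,\ldots,\c P_m$; disjointness is exactly what makes the argument go through, since it forces the squarefree moduli arising from different $\c P_i$ to be coprime, so that both the model computation and the combinatorial expansion factor across the intervals.

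First I would record the combinatorial expansion. Writing $\omega_{\c P_i}(a)-\mu_{\c P_i}=\sum_{p\in\c P_i}(\mathbf{1}_{p\mid a}-h(p)/p)$, raising to the $k_i$-th power, grouping the $k_i$-fold sum according to coincidences among the chosen primes, and collapsing each block by $\mathbf{1}_{p\mid a}^e=\mathbf{1}_{p\mid a}$, one obtains an identity $(\omega_{\c P_i}(a)-\mu_{\c P_i})^{k_i}=\sum_{Q_i}c^{(i)}_{Q_i}\mathbf{1}_{Q_i\mid a}$, the sum running over squarefree $Q_i$ with all prime factors in $\c P_i$ and $\omega(Q_i)\leq k_i$. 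Multiplying the $m$ expansions and using that the $\c P_i$ are disjoint (so $Q_1\cdots Q_m$ is automatically squarefree) gives
\[
\prod_{i=1}^m(\omega_{\c P_i}(a)-\mu_{\c P_i})^{k_i}=\sum_{\b{Q}\in\N^m}c_{\b{Q}}\,\mathbf{1}_{Q_1\cdots Q_m\mid a},
\]
with $\b{Q}$ ranging as above. Summing over $a\in\c A$ and inserting $\#\c A_{Q_1\cdots Q_m}=\tfrac{h(Q_1\cdots Q_m)}{Q_1\cdots Q_m}y+\c W(Q_1\cdots Q_m)$ splits the left side of~\eqref{eq:timeforlunch1} into a main term $y\sum_{\b{Q}}c_{\b{Q}}\prod_i h(Q_i)/Q_i$ and an error term $\sum_{\b{Q}}c_{\b{Q}}\c W(Q_1\cdots Q_m)$.

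For the main term, the key point — already visible in the one-variable computation of~\cite{granvsound} — is that after substituting $h(Q)/Q=\prod_{p\mid Q}h(p)/p$ the sum reassembles as $y$ times the expectation, in the model space, of $\prod_{i=1}^m(\widetilde\omega_{\c P_i}(a)-\mu_{\c P_i})^{k_i}$ with $\widetilde\omega_{\c P_i}=\sum_{p\in\c P_i}\widetilde X_p$. Because the $\c P_i$ are disjoint, this expectation factors as $\prod_{i=1}^m\mathbb E\big[(\widetilde\omega_{\c P_i}-\mu_{\c P_i})^{k_i}\big]$, and each factor is the $k_i$-th moment of a sum of independent mean-zero variables bounded by $1$ with total variance $\sigma_{\c P_i}^2$. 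Expanding and discarding every term in which some prime occurs exactly once (those vanish in expectation), one is left with configurations in which every prime occurs at least twice: when $k_i$ is even the perfect-matching configurations contribute $C_{k_i}\sigma_{\c P_i}^{k_i}$ and each further coincidence costs a factor $O(1/\sigma_{\c P_i}^{2})$, giving $C_{k_i}\sigma_{\c P_i}^{k_i}(1+O(k_i^3/\sigma_{\c P_i}^{2}))$, where the hypothesis $k_i\leq\sigma_{\c P_i}^{2/3}$ is precisely what keeps this error $O(1)$; when $k_i$ is odd no perfect matching exists, so the whole factor is already of lower order, $\ll C_{k_i}\sigma_{\c P_i}^{k_i}k_i^{3/2}/\sigma_{\c P_i}$, which produces the saving $\prod_{i:\,k_i\text{ odd}}k_i^{3/2}/\sigma_{\c P_i}$ in~\eqref{eq:timeforlunch2}. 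Multiplying the factors yields the stated main terms.

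For the error term one bounds $\big|\sum_{\b{Q}}c_{\b{Q}}\c W(Q_1\cdots Q_m)\big|\leq\sum_{\b{Q}}|c_{\b{Q}}|\,|\c W(Q_1\cdots Q_m)|$ and regroups according to the value of $R=Q_1\cdots Q_m$: the coefficient attached to a fixed $R$ absorbs all ways of padding the prime support of $R$ by further primes of $\bigcup_i\c P_i$, each padding prime carrying a weight of size at most $h(p)/p$, while $\omega(Q_i)\leq k_i$ caps the number of padding primes by $k_1+\cdots+k_m$. Carrying out this bookkeeping and comparing with the definition~\eqref{eq:letusnfridcof} of $\c E_{\c P_1,\ldots,\c P_m}(\c A,h,\b{k})$ yields the bound $O\big(\c E_{\c P_1,\ldots,\c P_m}(\c A,h,\b{k})\cdot\prod_{i=1}^m(1+\mu_{\c P_i})\big)$ recorded in~\eqref{eq:timeforlunch1} and~\eqref{eq:timeforlunch2}. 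I expect this last step — keeping the combinatorial coefficients and the weighted padding sums under control, with an implied constant depending only on $m$ — to be the main obstacle; it is where the argument of~\cite{granvsound} genuinely has to be reworked rather than quoted, and it is the reason the quantity $\c E$ was set up so as to restrict $\omega(Q_i)\leq k_i$ from the outset.
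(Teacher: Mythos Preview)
Your approach is essentially the same as the paper's: expand the centred $\omega_{\c P_i}(a)-\mu_{\c P_i}$ as a sum of $\mathbf 1_{p\mid a}-h(p)/p$, use disjointness of the $\c P_i$ to get coprimality so that the resulting combinatorial function is multiplicative across the blocks, identify the main term with a product of the single-interval Granville--Soundararajan moments, and bound the remainder by $\c E_{\c P_1,\ldots,\c P_m}(\c A,h,\b k)\prod_i(1+\mu_{\c P_i})$. The one place you overestimate the difficulty is the error-term bookkeeping: the paper simply uses the crude bound $|E(r,t)|\leq\prod_{p\nmid t}h(p)/p$ and then quotes directly the estimate $\sum_{p_1,\ldots,p_k\in\c P_i,\,t\mid p_1\cdots p_k}\prod_{p_j\nmid t}h(p_j)/p_j\ll\mu_{\c P_i}^k$ from~\cite[pg.~23]{granvsound}, so no genuine rework is needed there.
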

\begin{proof}
As in the proof of~\cite[Prop. 3]{granvsound}
we can write 
\begin{equation} \label{eq:dodeka}
\sum_{a\in \c{A}}
\prod_{i=1}^m
\big(\omega_{\c{P}_i}(a)
-\mu_{\c{P}_i}\big)^{k_i}
=
\sum_{\substack{\forall i:
p_{1,i},\cdots,p_{k_i,i} \in \c{P}_i
}}
\sum_{a\in \c{A}} 
\prod_{i=1}^m f_{r_i}(a) 
,\end{equation}
where $r_i:=\prod_{1\leq j \leq k_i }p_{j,i}$ and $$f_r(a):=\prod_{p\mid r} 
\begin{cases}
1-\frac{h(p)}{p}, &\text{ if } p\mid a,\\
-\frac{h(p)}{p}, &\text{ otherwise.} 
\end{cases}
$$ 
Since $\c{P}_j \cap \c{P}_{j'}=\emptyset$ whenever $j\neq j'$,
we have 
$\gcd(r_j,r_{j'})=1$ for $j\neq j'$.
This allows us to 
write 
$
\prod_{i=1}^m f_{r_i}(a)=
f_{r_1\cdots r_m}(a)
$.
This allows us
to employ the estimate~\cite[Eq.(13)]{granvsound}, which provides us with 
\begin{equation} \label{eq:dekatria}
\sum_{a\in \c{A}} 
\prod_{i=1}^m f_{r_i}(a)
=yG(r_1\cdots r_m)+
\sum_{t\mid \mathrm{rad}(r_1\cdots r_m)}
\c{W}(r_1\cdots r_m)
E(r_1\cdots r_m,t)
,\end{equation}
where the entities $G,E$ are introduced in~\cite[Eq.(14)-Eq.(15)]{granvsound} 
through
\[G(r):=\prod_{p\mid r}
\Bigg(    \frac{h(p)}{p}   \Big(1-\frac{h(p)}{p}  \Big)^{\nu_p(r)}  
+  
\Big(\frac{-h(p)}{p}  \Big)^{\nu_p(r)}   
  \Big(1-\frac{h(p)}{p}  \Big)  
\Bigg)
  \] and for 
$r,t \in \N$ with  $t \mid \mathrm{rad}(r)$,
\[E(r,t):=\prod_{\substack{ p\mid r \\ p\mid t}}
\Bigg(     \Big(1-\frac{h(p)}{p}  \Big)^{\nu_p(r)}  
-
\Big(\frac{-h(p)}{p}  \Big)^{\nu_p(r)}   
\Bigg)\prod_{\substack{ p\mid r \\ p\mid \mathrm{rad}(r)/t}}
\Big(\frac{-h(p)}{p}  \Big)^{\nu_p(r)}   
.\]
The function $G$ is multiplicative, therefore using   that 
the
$r_i$ are coprime in pairs 
it is evident 
that 
the contribution of the $G$-term in~\eqref{eq:dekatria}
towards~\eqref{eq:dodeka}
is
\[\prod_{i=1}^m\Bigg(
\sum_{\substack{ 
p_{1,i},\ldots,p_{k_i,i} \in \c{P}_i
}}
G(p_{1,i} \cdots p_{k_i,i})
\Bigg)
.\] As 
shown in~\cite[pg.22]{granvsound},
one has the following estimate whenever $k\leq \sigma_{\c{P}_i}^{2/3}$,
\[
\sum_{\substack{ 
p_{1},\ldots,p_{k} \in \c{P}_i
}}
G(p_{1} \cdots p_{k })
=
\begin{cases}
C_k \sigma_{\c{P}_i}^{k} (1+O(k^3 \sigma_{\c{P}_i}^{-2})), &\text{ if } 2 \mid k,\\
O(C_k \sigma_{\c{P}_i}^{k-1}k^{3/2}), &\text{ otherwise,} 
\end{cases} 
\]
which concludes the analysis of the main term in our proposition.

It remains to study the 
contribution of the sum over $t$ in~\eqref{eq:dekatria}
towards~\eqref{eq:dodeka} and for this 
we first use the coprimality of $r_i$
to 
rewrite it as 
\[
\sum_{\substack{
\b{t}\in \N^m
\\
\forall i:
t_i\mid \mathrm{rad}(r_i)
}}
\c{W}(r_1\cdots r_m)
E(r_1\cdots r_m,t_1\cdots t_m)
.\]
We then  use the obvious
estimate $|E(r,t)|\leq \prod_{p\nmid t} h(p)/p$
to see that the said contribution is 
\[
\sum_{\substack{\boldsymbol{\ell} \in \N^m \\
\forall i:
1\leq \ell_i \leq k_i
}}
\sum_{\substack{\b{t} \in \N^m \\
\forall i:
t_i=q_{1,i}\cdots q_{\ell_i,i}
\\
q_{1,i}  < q_{2,i}  < \cdots  < q_{\ell_i,i} \in \c{P}_i
}}
|\c{W}(t_1\cdots t_m)|
\sum_{\substack{
\forall i:
p_{1,i},\cdots,p_{k_i,i} \in \c{P}_i
\\
\forall i:
t_i \mid p_{1,i} \cdots p_{k_i,i}
}}
\prod_{\substack{ 1\leq j \leq  k_i
\\ p_{j,i} \nmid t_i }} \frac{h(p_j)}{p_j}
.\]
The proof is then
concluded by alluding to the estimate
\[
\sum_{\substack{
p_{1},\cdots,p_{k} \in \c{P}_i
\\
t \mid p_{1} \cdots p_{k}
}}
\prod_{\substack{ 1\leq j \leq  k
\\ p_{j} \nmid t}} \frac{h(p_j)}{p_j}
\ll
 \mu_{\c{P}_i}^{k}
\]
that is proved in~\cite[pg.23]{granvsound}.
\end{proof}

\subsection{Auxiliary facts from probability theory}
\label{s:prelimprobthry}
In this section we  recall  some 
necessary notions from probability theory.

Firstly,
we  need the following notion from~\cite[pg.20]{MR1700749}.
Let $X,Y$ be two metric spaces
and
denote the corresponding $\sigma$-algebras by $\c X$ and $\c Y$.
Assume  that we are given a function $h:X\to Y$ such that if $A\in \c Y$  then $\{x\in X: h(x) \in A\}\in \c X$.
If $\nu$ is a probability measure on $(X,\c X)$ then we can define 
a probability measure on $(Y,\c Y)$
(that is denoted by $\nu h^{-1}$)
as follows:
for any
$A\in \c Y$ we let
\beq{def:inversemeasure}{
(\nu h^{-1})(A):=
\nu(x\in X:h(x) \in A)
.}

We will later need the following result 
from~\cite[Th.29.4]{MR1700749}.
\begin{lemma}[Cr{\'a}mer--Wold]
\label{thm:cramerwold}
For random vectors $$\b X_m=(X_{m,1},\ldots,X_{m,k})
\text{ and }
 \b Y_m=(Y_{m,1},\ldots,Y_{m,k}),$$
a necessary and sufficient condition for 
the convergence in distribution of $\b X_m$ to $\b Y$ is 
that 
\[\sum_{i=1}^ka_i X_{m,i}
\]
converges in distribution to $\sum_{i=1}^k a_i Y_{i}$ for each $\b a\in \R^k$. 
\end{lemma}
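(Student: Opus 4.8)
The statement is the classical Cram\'er--Wold device, and the plan is to prove it through characteristic functions, invoking the multidimensional version of L\'evy's continuity theorem. Throughout I would write $\langle \b a,\b x\rangle:=\sum_{i=1}^k a_i x_i$ for $\b a,\b x\in\R^k$, so that the linear combination $\sum_{i=1}^k a_i X_{m,i}$ appearing in the statement is just $\langle \b a,\b X_m\rangle$.

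\textit{Necessity.} Suppose $\b X_m$ converges in distribution to $\b Y$. For fixed $\b a\in\R^k$ the map $\b x\mapsto\langle \b a,\b x\rangle$ is continuous from $\R^k$ to $\R$, so the continuous mapping theorem immediately gives that $\langle \b a,\b X_m\rangle$ converges in distribution to $\langle \b a,\b Y\rangle$. This direction requires no further work.

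\textit{Sufficiency.} Assume that $\langle \b a,\b X_m\rangle$ converges in distribution to $\langle \b a,\b Y\rangle$ for every $\b a\in\R^k$. Fix $\b t\in\R^k$ and apply the hypothesis with $\b a=\b t$; since convergence in distribution of real random variables forces convergence of their characteristic functions at the point $1$, we obtain
\[
\mathbb{E}\big[\mathrm{e}^{\mathrm{i}\langle \b t,\b X_m\rangle}\big]
=\mathbb{E}\big[\mathrm{e}^{\mathrm{i}\cdot 1\cdot\langle \b t,\b X_m\rangle}\big]
\longrightarrow
\mathbb{E}\big[\mathrm{e}^{\mathrm{i}\cdot 1\cdot\langle \b t,\b Y\rangle}\big]
=\mathbb{E}\big[\mathrm{e}^{\mathrm{i}\langle \b t,\b Y\rangle}\big].
\]
Thus the characteristic functions $\b t\mapsto\mathbb{E}[\mathrm{e}^{\mathrm{i}\langle \b t,\b X_m\rangle}]$ of the vectors $\b X_m$ converge pointwise on $\R^k$ to $\b t\mapsto\mathbb{E}[\mathrm{e}^{\mathrm{i}\langle \b t,\b Y\rangle}]$, the characteristic function of $\b Y$, which in particular is continuous at the origin. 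By the multidimensional L\'evy continuity theorem, this pointwise convergence of characteristic functions together with continuity of the limit at $\mathbf{0}$ implies that $\b X_m$ converges in distribution to $\b Y$, as desired.

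\textit{Main obstacle.} The only nontrivial ingredient is the L\'evy continuity theorem in $\R^k$: it packages the uniqueness theorem for characteristic functions together with the tightness argument, where continuity of the limiting characteristic function at $\mathbf{0}$ yields a uniform bound on the tails of the laws of $\b X_m$ via the standard inequality estimating $\b P(\|\b X_m\|>R)$ by an average of $1-\mathrm{Re}\,\mathbb{E}[\mathrm{e}^{\mathrm{i}\langle \b t,\b X_m\rangle}]$ over small $\b t$. Since the present paper already relies on Billingsley's book for the construction of Wiener measure and for the weak-convergence machinery, I would simply cite~\cite[Th.~29.4]{MR1700749} for this step rather than reproduce its proof.
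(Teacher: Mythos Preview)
Your proof is correct and is the standard characteristic-function argument for the Cram\'er--Wold device. The paper, however, does not give a proof at all: the lemma is simply quoted as Theorem~29.4 from Billingsley's book, so there is nothing to compare against beyond noting that your argument is precisely the one Billingsley presents.
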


Let $\b t\in [0,1]^k$.
Recalling the meaning of $(D, \c D)$ 
in \S\ref{s:statingthmmain}
allows to consider the function $\pi_{\b t}:D\to\R^k$
that is defined through
\[
\pi_{(t_1,\ldots,t_k)}(y) := (y(t_1),\ldots,y(t_k)).
\]

According to~\cite[pg.138]{MR1700749},
if $\b P$ is a probability measure on $(D,\c D)$
then the set 
\[T_{\b P}:=
 \{0,1 \}
\cap
\big\{
t\in (0,1): 
\b P[x\in D:x(t)\neq \lim_{\substack{s\to t\\ s<t}}x(s) ]=0
\big\}
\]
has complement in $[0,1]$ that is countable.
Next, we shall need the definition in~\cite[Eq.(12.27)]{MR1700749}.
Namely, for  a
function $u:[0,1]\to\mathbb{R}$ 
and any $\delta>0$
we define 
\[
w''(\delta,u):=
\sup_{\substack{t_1,t_,t_2 \in [0,1] \\ t_1 \leq  t \leq t_2 \\ t_2-t_1\leq \delta }} 
\min\big\{
|u(t)-u(t_1)|,
|u(t_2)-u(t)|
\big\} .\]
The following result  can be found in~\cite[Th.13.3]{MR1700749}.
\begin{lemma}
\label{thm:billithm}
Suppose that $P $
 and
 $(P_m)_{m\in \N}$ are probability measures on $(D,\c D)$.
If 
\beq{req:billithm1}{
P_m  \pi_{\b t}^{-1}  \text{ converges in probability to } P  \pi_{\b t}^{-1}  
\text{ whenever } \b t \in T_{P}^k ,}
\beq{req:billithm2}{\text{ for every } \epsilon>0 \text{ we have } 
\lim_{\delta\to 0 } P\big [u\in D:|u(1)-u(1-\delta)|\geq \epsilon\big ]=0  }
and for each $\epsilon,\eta>0$
there exists
$\delta \in (0,1), m_0 \in \N$
such that for all $m\geq m_0$ we have 
\beq{req:billithm3}{
P_m\big[u\in D:w''(\delta,u) \geq \epsilon \big] \leq \eta
,}
then
$P_m$ converges in probability to $P$.
\end{lemma}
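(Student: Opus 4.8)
Since this lemma is quoted as \cite[Th.~13.3]{MR1700749}, my plan is to recall the structure of that argument rather than reprove its hard functional-analytic core. The overall scheme is the classical one for convergence of Borel probability measures on a metric space: \emph{tightness of the sequence $(P_m)$} combined with \emph{convergence of the finite-dimensional distributions} forces weak convergence $P_m\to P$. Accordingly the proof splits into (a) deducing tightness of $(P_m)$ from hypotheses~\eqref{req:billithm2} and~\eqref{req:billithm3}, and (b) a soft subsequence argument that upgrades the finite-dimensional convergence~\eqref{req:billithm1} to full weak convergence and identifies the limit.

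For part (a) I would invoke the Arzel\`a--Ascoli-type description of relatively compact subsets of $D$ in the Skorohod metric: a family is tight iff it enjoys a uniform sup-norm control $\lim_{a\to\infty}\limsup_{m}P_m[u\in D:\|u\|\geq a]=0$, a uniformly vanishing oscillation modulus $\lim_{\delta\to0}\limsup_{m}P_m[u\in D:w''(\delta,u)\geq\epsilon]=0$ for every $\epsilon>0$, and a condition forbidding a persistent jump at the right endpoint $t=1$. The oscillation control is exactly hypothesis~\eqref{req:billithm3} (a single small $\delta$ suffices, since $w''(\delta,u)$ is nondecreasing in $\delta$), and~\eqref{req:billithm2} is precisely the endpoint-$t=1$ condition. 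The uniform sup-norm bound I would extract by applying~\eqref{req:billithm1} at finitely many points of $T_P$ sitting arbitrarily close to $0$ and to $1$ — recall $0,1\in T_P$ while $[0,1]\setminus T_P$ is countable, so such points exist and $T_P$ is dense — which makes the marginals $P_m\pi_t^{-1}$ tight near the endpoints, and then transporting that control across all of $[0,1]$ through the oscillation bound~\eqref{req:billithm3}.

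For part (b): once $(P_m)$ is tight, every subsequence has a sub-subsequence converging weakly to some probability measure $Q$ on $(D,\c D)$. For $\b t\in T_P^k$ the coordinate projection $\pi_{\b t}$ is continuous $Q$-almost everywhere (here hypothesis~\eqref{req:billithm2} is what makes $\pi_1$ a.e.\ continuous when $t=1$ is among the coordinates; at $t=0$ this is automatic by right-continuity of the paths), so along that sub-subsequence $P_m\pi_{\b t}^{-1}$ converges weakly to $Q\pi_{\b t}^{-1}$; comparing with~\eqref{req:billithm1} shows that $Q$ and $P$ have the same finite-dimensional distributions over $T_P$. Since $T_P$ contains $0$ and $1$ and is dense in $[0,1]$, these finite-dimensional distributions generate $\c D$, hence $Q=P$. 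As the initial subsequence was arbitrary, the whole sequence $(P_m)$ converges weakly to $P$, which is the assertion.

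The only genuinely delicate ingredient is the tightness step of part (a), and within it the characterisation of relatively compact sets in the Skorohod topology — the analogue of the Arzel\`a--Ascoli theorem — which is the technical heart of~\cite[\S\S12--13]{MR1700749}; I would simply quote it, together with~\cite[Th.~13.2]{MR1700749}. Everything surrounding it (the subsequence/uniqueness argument, and the fact that finite-dimensional distributions on a dense, endpoint-containing index set determine a Borel measure on $D$) is routine.
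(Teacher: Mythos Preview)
The paper does not prove this lemma; it merely quotes it as \cite[Th.~13.3]{MR1700749}. Your proposal correctly identifies this and supplies a faithful outline of Billingsley's argument, so there is nothing substantive to compare.

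One small imprecision worth flagging: you write that \eqref{req:billithm2} ``is precisely the endpoint-$t=1$ condition'' in the tightness criterion, but \eqref{req:billithm2} is a hypothesis on the limit $P$, not on the sequence $(P_m)$. In Billingsley's argument one combines \eqref{req:billithm2} with the finite-dimensional convergence \eqref{req:billithm1} at a pair $(t,1)$ with $t\in T_P$ close to $1$ in order to transfer the endpoint control to the $P_m$. Likewise, in part~(b) the $Q$-almost-everywhere continuity of $\pi_{\b t}$ needs $\b t\in T_Q^k$ rather than $T_P^k$; one therefore works on the dense set $T_P\cap T_Q$ (both complements being countable). These are routine refinements that do not affect the overall correctness of your sketch.
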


Recall that $D$ is a metric space whose 
metric is 
given by 
\beq{eq:skorohmetri}{
d(X,Y):=\inf_{\lambda \in \Lambda} 
\max\Big\{
\sup\{   |\lambda(t)-t| :t\in [0,1]  \},
\sup\{   |X(t)-Y(\lambda(t))| :t\in [0,1]  \}
\Big\}} whenever 
$X,Y \in D$ and 
where $\Lambda$ denotes the set of all strictly increasing, continuous maps $\lambda:[0,1]\to[0,1]$,
see, for example~\cite[Eq.(12.13)]{MR1700749}.

To verify~\eqref{req:billithm3} in a specific situation
we shall later need the following two results.

\begin{lemma}
[Theorem 11.3, \cite{MR0466055}]
\label{lem:k622}
Let $P$ be any probability measure on $(D,\c D)$.
Assume that 
$0=s_0<s_1\cdots<s_k=1$
and
$s_i-s_{i-1}\geq \delta$,
$i=1,\cdots,k$,
then 
\[
P\l[u\in D:w''(u,\delta)>\epsilon\r]
\leq 
\sum_{i=0}^{k-2}
P\!\l[u\in D:\epsilon<
\!\!\!
\sup_{\substack{t_1,t,t_2 \in [0,1]^3
\\ 
s_i \leq  t_1 \leq t \leq t_2 \leq  s_{i+2}
\\ }}
\!\!
\min\l\{
|u(t)-u(t_1)| ,
|u(t_2)-u(t)|
\r\}
 \r]
.\]  
 \end{lemma}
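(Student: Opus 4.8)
The plan is to reduce the inequality to a single deterministic inclusion of subsets of $D$ and then apply finite subadditivity of $P$; the probabilistic content is confined to that last step.

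First I would unwind the modulus $w''$. Suppose $u\in D$ satisfies $w''(u,\delta)>\epsilon$. By definition $w''(u,\delta)$ is a supremum over the nonempty set of triples $t_1\le t\le t_2$ in $[0,1]$ with $t_2-t_1\le\delta$, so some such triple satisfies
\[
\min\{|u(t)-u(t_1)|,\,|u(t_2)-u(t)|\}>\epsilon .
\]
Since $\epsilon>0$, both differences are strictly positive, hence $t_1<t<t_2$; in particular $t_1<1$.

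Next I would place this triple against the partition $0=s_0<s_1<\cdots<s_k=1$. As $t_1\in[0,1)$, there is a unique $j\in\{0,1,\ldots,k-1\}$ with $s_j\le t_1<s_{j+1}$; set $i:=\min\{j,\,k-2\}$, so that $i\in\{0,\ldots,k-2\}$ and $s_i\le t_1$. I claim $t_2\le s_{i+2}$. If $j\le k-2$, then $i=j$, and using $s_{j+2}-s_{j+1}\ge\delta$ (legitimate since $j+2\le k$) together with $t_2\le t_1+\delta<s_{j+1}+\delta$, we get $t_2<s_{j+2}=s_{i+2}$. If $j=k-1$, then $i=k-2$ and $t_2\le 1=s_k=s_{i+2}$. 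So in either case $s_i\le t_1\le t\le t_2\le s_{i+2}$ for some $i\in\{0,\ldots,k-2\}$; since the events on the right-hand side of the lemma impose no constraint of the form $t_2-t_1\le\delta$, the triple lies in the index set of the $i$-th supremum, which therefore exceeds $\epsilon$. This establishes the inclusion
\[
\{u\in D:w''(u,\delta)>\epsilon\}
\subseteq
\bigcup_{i=0}^{k-2}
\Big\{u\in D:\epsilon<
\sup_{\substack{t_1,t,t_2\in[0,1]\\ s_i\le t_1\le t\le t_2\le s_{i+2}}}
\min\{|u(t)-u(t_1)|,\,|u(t_2)-u(t)|\}\Big\},
\]
and applying $P$ together with its subadditivity over a finite union gives the claimed bound.

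The combinatorial step above is entirely elementary, so the only point that genuinely requires attention — and the one I would flag as the main obstacle — is measurability: one must check that $u\mapsto w''(u,\delta)$ and each of the $k-1$ partial moduli on the right are $\c D$-measurable, so that all the probabilities are well defined. For right-continuous $u$ with left limits this is standard: right-continuity lets one restrict each supremum to rational arguments (after adjoining the endpoints), turning it into a countable supremum of measurable coordinate evaluations, exactly the reduction carried out in the treatment of $w''$ in \cite{MR1700749}. I would cite that rather than reprove it.
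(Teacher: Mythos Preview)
Your argument is correct and is essentially the standard proof. Note that the paper does not give its own proof of this lemma: it is quoted verbatim as Theorem~11.3 of \cite{MR0466055} and used as a black box, so there is nothing to compare against beyond the original source, whose argument is the same deterministic inclusion you wrote down followed by subadditivity. Your handling of the boundary case $j=k-1$ by setting $i=\min\{j,k-2\}$ is clean, and your remark on measurability via restriction to rational triples is the right way to dispatch that point. (Implicitly you need $k\ge 2$ so that the index set $\{0,\ldots,k-2\}$ is nonempty; this is how the lemma is applied in the paper and is harmless.)
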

The second result
corresponds to the case with 
$\alpha=1=\beta$
of~\cite[Th.10.1]{MR1700749}.
Let $\xi_1,\ldots,\xi_N$ be random variables on a probability space $(\Omega_1,P_1)$
and define 
\[m_{ijk}:=\min\l\{
\l|\sum_{h=i+1}^{j}\xi_h \r|,
\l|\sum_{h=j+1}^{k}\xi_h\r|
\r\},
\
\
0\leq i\leq j \leq k\leq N
.\]\begin{lemma}\label{lem:billithm1}
Suppose that $u_1,\ldots,u_N$ are non-negative numbers with 
\[P_1\l[m_{ijk}\geq \lambda \r]\leq \frac{1}{\lambda^4}
\l(\sum_{i<l\leq k}u_l\r)^2
, \ \ 
0\leq i \leq j \leq k \leq N,\] for $\lambda>0$.
Then, for $\lambda>0$,  
\[
P_1\l[m_{ijk}
\geq \lambda \r]\ll \frac{1}{\lambda^4}
 \l(
\sum_{0<l\leq N}u_l
\r)^2
,\] where the implied constant is 
absolute.
 \end{lemma}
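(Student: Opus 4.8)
The content of the lemma is to upgrade the single-triple tail bound in the hypothesis to a bound that is uniform over all triples $0\le i\le j\le k\le N$ while losing only an absolute constant; this is the $\alpha=\beta=1$ instance of Billingsley's chaining maximal inequality, and I would follow that route. Put $S_0:=0$ and $S_j:=\sum_{h=1}^j\xi_h$, so that $m_{ijk}=|S_j-S_i|\wedge|S_k-S_j|$, and set $u:=\sum_{0<l\le N}u_l$. If $u=0$ the hypothesis forces $m_{ijk}=0$ almost surely for every triple, and if $\lambda^4\le Ku^2$ the asserted bound is trivial because a probability is at most $1$; so I may assume $u>0$ and $\lambda>K^{1/4}u^{1/2}$, where $K$ is the absolute constant to be produced at the end.

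The skeleton is a binary subdivision of $\{0,1,\ldots,N\}$ into intervals. Recursively, an interval $\{a,\ldots,b\}$ with $b-a\ge 2$ and weight $W:=\sum_{a<l\le b}u_l$ is split at a near-midpoint $c$: if $u_l\le W/2$ for every $a<l\le b$, choose $c$ with $\sum_{a<l\le c}u_l\le W/2$ and $\sum_{c<l\le b}u_l\le W/2$; if some single $u_{l_0}>W/2$, split instead into $\{a,\ldots,l_0-1\}$, $\{l_0-1,l_0\}$, $\{l_0,\ldots,b\}$, the middle pair being declared a leaf, the two outer pieces then having weight $<W/2$. Intervals with at most two indices are leaves. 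In this way every interval at depth $d$ has weight $\le u\,2^{-d}$, the intervals at a fixed depth carry weights that sum to $u$, and the tree has depth $O(\log N)$.

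The deterministic heart is the inequality $m_{ijk}\le m_{ijc}+m_{ick}$, valid whenever $a\le i\le j\le c\le k\le b$ (and symmetrically when $c\le j$), which follows from a two-line case analysis of the triangle inequality among $S_i,S_j,S_c,S_k$: here $m_{ick}$ is a straddling quantity anchored at the split point $c$, while $m_{ijc}$ lies in the half that contains $i$ and $j$. Iterating this down the tree writes every $m_{ijk}$ as a sum of at most $O(\log N)$ straddling quantities, one at the split of each ancestor interval that genuinely separates two of $i,j,k$. Hence, fixing thresholds $\lambda_d:=\lambda\,c_0\,2^{-d/5}$ with $c_0:=1-2^{-1/5}$ so that $\sum_{d\ge 0}\lambda_d\le\lambda$, the event $\{\exists\,i,j,k:\ m_{ijk}\ge\lambda\}$ is contained in the union over all depths $d$ and all intervals $v$ at depth $d$ of the event that the straddling quantity at the split of $v$ is $\ge\lambda_d$.

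It remains to bound, for an interval $v=\{a,\ldots,b\}$ with split $c$, the probability that $\max_{a\le i\le c\le k\le b}m_{i,c,k}\ge\mu$. Using the identity $\max_{i,k}\bigl(|S_c-S_i|\wedge|S_k-S_c|\bigr)=\bigl(\max_{a\le i\le c}|S_c-S_i|\bigr)\wedge\bigl(\max_{c\le k\le b}|S_k-S_c|\bigr)$, this is the probability that two one-sided maxima exceed $\mu$ simultaneously, which an Ottaviani/stopping-time argument — using only the hypothesis for the fixed triples $(i,c,k)$ inside $v$, and no independence of the $\xi_l$ — bounds by $\ll\mu^{-4}W_v^2$ with an absolute implied constant. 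Summing over the tree and using the geometric weight decay, $\sum_{d\ge 0}\sum_{v\text{ at depth }d}\lambda_d^{-4}W_v^2\ll\lambda^{-4}u^2\sum_{d\ge 0}2^{4d/5}2^{-d}<\infty$, which gives the claim with an absolute $K$, and one checks this $K$ is consistent with the reductions made at the start. I expect the main obstacle to be the maximal inequality of this last paragraph — turning the fixed-triple hypothesis into a bound on the joint exceedance of the two one-sided maxima with no independence available — together with the bookkeeping needed to make the weight decay, the three-way-split exception, and the threshold choice $\lambda_d$ interlock into a single geometric series that converges uniformly in $N$.
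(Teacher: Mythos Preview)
The paper does not prove this lemma: it is simply quoted as the special case $\alpha=\beta=1$ of Theorem~10.1 in Billingsley's \emph{Convergence of Probability Measures}. Your proposal, by contrast, attempts an independent dyadic-chaining argument.

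Your basic decomposition inequality $m_{ijk}\le m_{ijc}+m_{ick}$ for $i\le j\le c\le k$ is correct (a short case analysis of the triangle inequality among $S_i,S_j,S_c,S_k$ does give it), and the weight bookkeeping (each depth-$d$ interval carrying weight at most $u\,2^{-d}$, whence $\sum_v W_v^2\le u^2 2^{-d}$) is fine. The genuine gap is precisely the step you yourself flag as the main obstacle: for a node $v=[a,b]$ with split point $c$ you need
\[
P_1\!\left[\max_{a\le i\le c\le k\le b}m_{i,c,k}\ge\mu\right]\ll\mu^{-4}W_v^{\,2}.
\]
Your invocation of ``an Ottaviani/stopping-time argument \ldots\ and no independence'' does not work. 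Ottaviani's maximal inequality requires independent increments, which is not assumed here, and a stopping-time reduction produces \emph{random} indices $i^*,k^*$ at which the two one-sided maxima are first attained, whereas the hypothesis only bounds $P_1[m_{i,c,k}\ge\mu]$ for \emph{deterministic} $i,c,k$. Indeed the hypothesis gives no information whatsoever about a single increment $P_1[|S_c-S_i|\ge\mu]$ (set $k=c$ or $i=c$ and the minimum collapses to $0$), so one cannot bound the one-sided maxima separately and then combine. The displayed estimate is thus a maximal inequality of the same strength as the lemma itself on the subinterval $[a,b]$ with the middle index pinned; the argument is circular at this point.

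Billingsley's actual recursion avoids this by bisecting the quantity $L_{a,b}:=\max_{a\le j\le b}m_{a,j,b}$ with the \emph{endpoints} held fixed, not the midpoint. The same inequality you prove, in the form $m_{a,j,b}\le m_{a,j,c}+m_{a,c,b}$ for $j\le c$ (and symmetrically for $j\ge c$), gives $L_{a,b}\le\max(L_{a,c},L_{c,b})+m_{a,c,b}$, and now the connecting term $m_{a,c,b}$ has all three indices fixed, so the hypothesis applies to it directly; the induction then closes for a suitable splitting of $\lambda$. Restructuring your chaining in this ``endpoints-fixed'' direction, rather than ``midpoint-fixed'', removes the gap.
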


\subsection{Pointwise convergence}
\label{s:pointwiseconv}
Define 
$\psi :\R_{\geq 3} \to \R$
through
\beq{def:bwv911}{ \psi(B):=
(\log \log B)^{-\frac{1}{4}}
  .}
For   $x\in \P^n(\Q)$ and  $B\in \R_{\geq 3}$ we bring into play the function
$
Y_{B}(\bullet ,x)
:[0,1]\to \R$
given by
\begin{equation}\label
{def:pathflat}
t\mapsto
Y_{B}(t,x)
:= 
\frac{1}{(\Delta(f) \log \log B)^{\frac{1}{2}  }} 
\sum_{\substack{ p\leq \exp(\log^t B) \\   
 \log B<p \leq  B^{\psi(B)}   
 }}
\begin{cases} 1-\sigma_p, &\mbox{if } f^{-1}(x)(\Q_p)=\emptyset,\\ 
-\sigma_p, & \mbox{otherwise}.\end{cases}  \end{equation} 
This is a truncated version of the function in~\eqref{eq:behavesas}.
The truncation is introduced for technical reasons.

For $r\in \Z_{\geq 0}$
we denote the $r$-th moment of the standard normal distribution by 
\[
M_r:= 
\begin{cases} \frac{1}{2^{r/2}}\frac{r! 
}{   \l( r/2  
 \r)!}, & r \text{ even, }\\  0, & r \text{ odd.} \end{cases} 
\]
\begin{lemma}\label{lem:wisepoint}
Keep the assumptions of Theorem~\ref{thm:main}.
For every 
$B\geq 3$,
$m\in \N$,
$\b{k} \in \Z_{\geq 0}^m$,
$\b{a}\in \R^m$
and 
$\b t \in [0,1]^m$ with 
 $0\leq t_1   < \ldots < t_m \leq 1 $
we consider the sum 
\[\sum_{\substack{ x \in \PP^n(\QQ), H(x)\leq B\\ f^{-1}(x) \text{ smooth}}}
\prod_{i=1}^m 
\Bigg( 
\sum_{\substack{
\log B<p \leq  B^{\psi(B)} 
\\
 \exp(\log^{t_i} B)<p\leq \exp(\log^{t_{i+1}} B)    
}}
\begin{cases} 1-\sigma_p, &\mbox{if } f^{-1}(x)(\Q_p)=\emptyset,\\ 
-\sigma_p, & \mbox{otherwise}
\end{cases}
\Bigg)^{k_i}
,\] where by convention we set $0^0:=1$.
Letting
$r:=k_1+\cdots+k_m$,
the sum equals
\[
 \# \Omega_B 
\l(\prod_{i=1}^m M_{k_i}  (t_{i+1}-t_{i})^{\frac{k_i}{2}}   \r)
(\Delta(f) \log \log B)^{\frac{r}{2}} 
+
O_{\b a,\b k, \b t,m} \Big( \# \Omega_B (  \log \log B)^{\frac{r-1}{2}} 
\Big) 
.\]
\end{lemma}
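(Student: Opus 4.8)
The plan is to derive this from Theorem~\ref{prop:epi}. Put $t_{m+1}:=1$; we may assume $t_{i+1}>t_i$ for all $i$, since otherwise (necessarily $t_m=1$) both sides either reduce to a statement with fewer intervals or vanish identically. Let $\c P$ be the set of primes $p$ with $\log B<p\le B^{\psi(B)}$, with $\psi$ as in~\eqref{def:bwv911}, and for $1\le i\le m$ put $\c P_i:=\{p\in\c P:\exp(\log^{t_i}B)<p\le\exp(\log^{t_{i+1}}B)\}$; these are pairwise disjoint subsets of $\c P$. Take $\c A$ to be the multiset indexed by $\{x\in\P^n(\Q):H(x)\le B,\ f^{-1}(x)\text{ smooth}\}$, attaching to $x$ the integer $a_x:=\prod_{p\in\c P,\ \theta_p(x)=1}p$ with $\theta_p$ as in~\eqref{eq:thesol}, and set $y:=\#\c A=\c A_1$. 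Let $h$ be the multiplicative function with $h(p)=p\sigma_p$ for $p\in\c P$ and $h(p)=0$ otherwise, so $0\le h(p)\le p$ for $B$ large by Lemma~\ref{lem:harmonicgfts6}. For squarefree $Q$ with prime factors in $\c P$ one has $\c A_Q=\#\{x:H(x)\le B,\ f^{-1}(x)\text{ smooth},\ p\mid Q\Rightarrow f^{-1}(x)(\Q_p)=\emptyset\}$, the quantity of Lemma~\ref{lem:harmonicgfts7865}, and $\tfrac{h(Q)}{Q}y=\big(\prod_{p\mid Q}\sigma_p\big)\c A_1$. Since the summand attached to $x$ at $p\in\c P_i$ is $1-\sigma_p$ when $f^{-1}(x)(\Q_p)=\emptyset$ and $-\sigma_p$ otherwise, the inner sum over $\c P_i$ equals $\omega_{\c P_i}(x)-\mu_{\c P_i}$ with $\omega_{\c P_i}(x)=\#\{p\in\c P_i:p\mid a_x\}$ and $\mu_{\c P_i}=\sum_{p\in\c P_i}\sigma_p$, so the sum of the lemma is $\sum_{x\in\c A}\prod_{i=1}^m(\omega_{\c P_i}(x)-\mu_{\c P_i})^{k_i}$, exactly the object governed by Theorem~\ref{prop:epi}.

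Next I would compute the parameters. By Lemma~\ref{lem:abelpu8}, together with $\log\log(\log B)=\log\log\log B$ and $\log\log(B^{\psi(B)})=\log\log B-\tfrac14\log\log\log B$, the two truncations cost only $O(\log\log\log B)$, so $\mu_{\c P_i}=\Delta(f)(t_{i+1}-t_i)\log\log B+O(\log\log\log B)$; and by Lemma~\ref{lem:harmonicgfts6}, $\sum_{p\in\c P_i}\sigma_p^2=O(1)$, whence $\sigma_{\c P_i}^2=\mu_{\c P_i}+O(1)=\Delta(f)(t_{i+1}-t_i)\log\log B+O(\log\log\log B)$. In particular $\sigma_{\c P_i}^2\to\infty$, so $k_i\le\sigma_{\c P_i}^{2/3}$ for $B$ large, and $\sigma_{\c P_i}^{k_i}=\big(\Delta(f)(t_{i+1}-t_i)\big)^{k_i/2}(\log\log B)^{k_i/2}\big(1+O(\tfrac{\log\log\log B}{\log\log B})\big)$. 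I would also note $C_k=M_k$ for even $k$, $M_k=0$ for odd $k$, and $y=\#\Omega_B+O(B^n)$ since the singular fibres number $O(B^n)$ and $\#\Omega_B\asymp B^{n+1}$.

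If every $k_i$ is even, apply~\eqref{eq:timeforlunch1}: with $r:=k_1+\cdots+k_m$, the main term on its right equals $y\prod_i C_{k_i}\sigma_{\c P_i}^{k_i}\big(1+O(k_i^3/\sigma_{\c P_i}^2)\big)=\#\Omega_B\big(\prod_i M_{k_i}(t_{i+1}-t_i)^{k_i/2}\big)(\Delta(f)\log\log B)^{r/2}+O\big(\#\Omega_B(\log\log B)^{(r-1)/2}\big)$ by the previous paragraph, so the sum equals this plus the error $O\big(\c E\prod_i(1+\mu_{\c P_i})\big)$ of~\eqref{eq:timeforlunch1}, where $\c E:=\c E_{\c P_1,\ldots,\c P_m}(\c A,h,\b{k})$. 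If some $k_i$ is odd, then $M_{k_i}=0$ so the claimed main term vanishes, while~\eqref{eq:timeforlunch2} bounds the sum by $\ll y\big(\prod_i\sigma_{\c P_i}^{k_i}\big)\prod_{k_i\text{ odd}}\sigma_{\c P_i}^{-1}+\c E\prod_i(1+\mu_{\c P_i})$; since at least one $k_i$ is odd, the first term is $\ll\#\Omega_B(\log\log B)^{(r-1)/2}$. In both cases it remains to control $\c E$. By~\eqref{eq:letusnfridcofbnfgh}, $\c E\le\sum_{Q:\,\mu(Q)^2=1,\ \omega(Q)\le r,\ p\mid Q\Rightarrow p\in\c P}\big|\c A_Q-\c A_1\prod_{p\mid Q}\sigma_p\big|$. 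For $B$ large one has $(\log\log B)^{\c C}<\log B$ and $B^{\psi(B)}<B^{\epsilon_1}$ for any fixed $\c C>\tfrac{3r}{2}+m$ and $\epsilon_1$ small enough, so $\c P\subseteq(t_0(B),t_1(B)]$ in the notation of Lemma~\ref{lem:harmonicgfts7865hjdf}; that lemma, applied with its parameter $r$ equal to ours, gives $\c E\ll B^{n+1}(\log\log B)^{r-1-\c C}$, hence $\c E\prod_i(1+\mu_{\c P_i})\ll B^{n+1}(\log\log B)^{r-1-\c C+m}=o\big(\#\Omega_B(\log\log B)^{(r-1)/2}\big)$. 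Collecting the estimates proves the asymptotic, with implied constants depending only on $f,\b{a},\b{k},\b{t},m$.

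The main obstacle is the error bookkeeping rather than any single hard estimate. One must use the technical truncation $\log B<p\le B^{\psi(B)}$ of~\eqref{def:pathflat} in two ways simultaneously: first to force all primes into the range $(t_0(B),t_1(B)]$, where the (zero) level-of-distribution input Lemma~\ref{lem:harmonicgfts7865hjdf} yields a saving of an arbitrarily large power of $\log\log B$; and secondly to observe that this truncation perturbs $\mu_{\c P_i}$ and $\sigma_{\c P_i}^2$ by at most $O(\log\log\log B)$, so that the leading constant $\Delta(f)(t_{i+1}-t_i)$ persists. One then has to pick $\c C$ large enough that the power saving of Lemma~\ref{lem:harmonicgfts7865hjdf} defeats the loss $\prod_i(1+\mu_{\c P_i})\asymp(\log\log B)^m$ in the error term of Theorem~\ref{prop:epi}.
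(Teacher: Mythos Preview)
Your proof is correct and follows essentially the same route as the paper: you set up the multiset $\c A$, the prime sets $\c P_i$, and the multiplicative function $h$ in the same way, feed the sum into Theorem~\ref{prop:epi}, compute $\mu_{\c P_i}$ and $\sigma_{\c P_i}$ via Lemma~\ref{lem:abelpu8}, and control $\c E$ by embedding $\c P$ into $(t_0(B),t_1(B)]$ and invoking the level-of-distribution estimate (the paper cites the proof of \cite[Prop.~3.9]{arXiv:1711.08396}, you cite its packaged form Lemma~\ref{lem:harmonicgfts7865hjdf}; these are the same). Your explicit convention $t_{m+1}:=1$ and your choice $\c C>\tfrac{3r}{2}+m$ differ cosmetically from the paper's $\c C:=2r+m$, but the bookkeeping is identical.
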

\begin{proof}
We shall assume that $t_1=0$ and $t_m=1$,  
an obvious modification of our arguments makes available the proof when $(t_1,t_m)\neq (0,1)$.
Let us  
define the multiset 
 \[\c{A}:=\Big\{a_x := \hspace{-0,3cm} 
\prod_{ 
\substack{p \text{ prime }\\
f^{-1}(x)(\Q_p)=\emptyset}}
\hspace{-0,3cm} p:
\, \,
 x \in \PP^n(\QQ), H(x)\leq B, f^{-1}(x) \text{ smooth}\Big\},\]
the sets of primes 
\[\c P:=\Big\{p \text{ prime}:\log B<p \leq B^{\psi(B)}\Big\}
,\]
\[\c P_i:=\Big\{p \in \c P:
 \exp(\log^{t_i} B)<p\leq \exp(\log^{t_{i+1}} B)
\Big\}
, (1\leq i \leq m)\] 
and introduce
the multiplicative function 
$h:\N \to\R$ as 
$h(Q):=Q\prod_{p\mid Q} \sigma_p$.
In the terminology of~\S\ref{s:intervcorr}
the sum in our lemma takes the shape
\[ 
\sum_{a\in \c{A}}
\prod_{i=1}^m
(\omega_{\c{P}_i}(a)
-\mu_{\c{P}_i}
)^{k_i}
.\]
Recalling~\eqref{def:themusandthesigmas} 
and using Lemma~\ref{lem:abelpu8}
we see that
\[\mu_{\c{P}_i}
=\begin{cases} 
t_2       \Delta(f)  \log \log B+O_{\b a,\b t}(\log \log \log B), &\mbox{if } i=1 ,\\ 
(t_{i+1}-t_{i}) \Delta(f)  \log \log B+O_{\b a,\b t}(1), &\mbox{if }  1<i<m-1,\\ 
(1-t_{m-1}) \Delta(f)  \log \log B+O_{\b a,\b t}(\log \log \log B), &\mbox{if } i=m-1,
\end{cases}
\] 
which can be written as 
\[\mu_{\c{P}_i}=(t_{i+1}-t_{i}) \Delta(f)  \log \log B+O_{\b a,\b t}(\log \log \log B),
\ (1\leq i \leq m-1) .\]
By~\eqref{def:themusandthesigmas}
and Lemma~\ref{lem:harmonicgfts6}
we have 
$\sigma_{\c{P}_i}^2=\mu_{\c{P}_i}+O(1)$, hence 
\[\sigma_{\c{P}_i}=((t_{i+1}-t_{i}) \Delta(f)  \log \log B)^{\frac{1}{2}}
+O_{\b a,\b t}(\log \log \log B),  \ (1\leq i \leq m-1) .\]
This allows to deduce that 
  the product in the right side of~\eqref{eq:timeforlunch1}
equals
\begin{align*}
&\#\big\{x\in \Omega_B: f^{-1}(x) \text{ smooth}\big\}
\prod_{i=1}^m
\Big(M_{k_i} \sigma_{\c{P}_i}^{k_i}
\Big(1+
O_{\b a,\b k, \b t}\Big(\frac{1}{ \log \log B }\Big)
\Big)
\Big)\\
=&
\frac{\#\big\{x\in \Omega_B: f^{-1}(x) \text{ smooth}\big\}}{
(\Delta(f) \log \log B)^{-\frac{r}{2}}}
\!
\l(\prod_{i=1}^m M_{k_i}  (t_{i+1}-t_{i})^{\frac{k_i}{2}}   \r)
\!\! 
\l(\!1\!+\!O_{\b a,\b k, \b t} \Big(\frac{\log \log \log B}{ \log \log B }\Big)\!\r)
.\end{align*}
Similarly,  the product in the right side of~\eqref{eq:timeforlunch2}
is
$\ll_{\b a,\b k, \b t}
B^{n+1}
( \log \log B)^{\frac{r-1}{2}}
$.
Using the estimate $\#\big\{x\in \Omega_B: f^{-1}(x) \text{ smooth}\big\}=\#\Omega_B+O(B^{-1})$
 we can put both formulas in the succinct form
\[
 \# \Omega_B 
\l(\prod_{i=1}^m M_{k_i}  (t_{i+1}-t_{i})^{\frac{k_i}{2}}   \r)
(\Delta(f) \log \log B)^{\frac{r}{2}} 
+
O_{\b a,\b k, \b t} \Big( \# \Omega_B (  \log \log B)^{\frac{r-1}{2}} 
\Big) 
.\]
Therefore, 
Theorem~\ref{prop:epi} shows that the sum in our lemma equals  
\begin{equation}\label{eq:mieng}
 \begin{aligned}
        \# \Omega_B &=\l(\prod_{i=1}^m M_{k_i}  (t_{i+1}-t_{i})^{\frac{k_i}{2}}\r)(\Delta(f) \log \log B)^{\frac{r}{2}} \\
        & + O_{\b a,\b k, \b t} \Big( \# \Omega_B (  \log \log B)^{\frac{r-1}{2}}  + (\log \log B)^m \c{E}_{\c{P}_1,\ldots,\c{P}_m}(\c{A},h,\b{k}) \Big).
       \end{aligned}
  \end{equation} 
It remains to bound the quantity $\c{E}$ above. 
By~\eqref{eq:letusnfridcofbnfgh} 
it is at most 
\[
 \sum_{\substack{
Q\in \N
, \omega(Q)\leq r
\\  p\mid Q \Rightarrow p\in \c{P} 
}} 
 \mu(Q)^2
\big| \c{W}(Q )   \big|
.\] 
Now define the functions
$
t_0(B):=(\log \log B)^\c{C} \ \text{ and } \  t_1(B)=B^{\epsilon_1}
$,
where $\c C:=2r+m$ and $\epsilon_1:=(8r(n+1))^{-1}$.
We certainly have 
$t_0(B)<\log B<B^{\psi(B)} <  t_1(B)$
for all sufficiently large $B$,
thus
the last sum over $Q$ is at most 
\[
\sum_{\substack{
Q\in \N,
  \omega(Q)\leq r
\\  p\mid Q \Rightarrow p\in (t_0(B),t_1(B) ] 
}} 
 \mu(Q)^2
\big| \c{W}(Q )   \big|
.\]
This quantity 
occurs 
also in the proof of~\cite[Prop.3.9]{arXiv:1711.08396},
where it is shown to be 
\[  
\ll_{r,\c{C},\epsilon_1} 
B^{n+1} (\log \log B)^{r-1-\c{C}}
.\] This yields immediately
$
(\log \log B)^m \c{E}_{\c{P}_1,\ldots,\c{P}_m}(\c{A},h,\b{k}) \ll_{\b a,\b k, \b t}
 \# \Omega_B (  \log \log B)^{\frac{r-1}{2}}    
$,
which, in light of~\eqref{eq:mieng}, is sufficient for our proof. 
\end{proof}

\begin{proposition}\label{prop:pointwise}
Keep the assumptions of Theorem~\ref{thm:main}.
Let $\b t \in [0,1]^m$ with 
$$0\leq t_1   < \ldots < t_m \leq 1$$
and assume 
that 
$S_1,\ldots,S_m$ are Lebesgue-measurable subsets of $\R$.
Then 
 \[
\lim_{B\to+\infty}
\b P_B\Big(x\in \Omega_B: 1\leq i \leq m \Rightarrow  Y_{B}(t_i ,x) \in S_i  \Big)
 =
\prod_{\substack{ 1\leq i \leq m \\ t_i \neq 0 }}
 \int_{S_i  }   
\frac{ \exp ( -\theta^2 /2t_i  )   }{(2\pi t_i)^{ \frac{1}{2}   }}   
\mathrm{d}  \theta 
.\]
\end{proposition}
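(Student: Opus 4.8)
The plan is to establish convergence of the finite-dimensional distribution of $Y_B$ at the times $t_1<\dots<t_m$ to that of a Brownian motion, by the classical combination of the Cram\'er--Wold device with the method of moments, the analytic heavy lifting being supplied by Lemma~\ref{lem:wisepoint}. By Lemma~\ref{thm:cramerwold} it suffices to fix $\b a\in\R^m$ and prove that, under $\b P_B$, the random variable $L_B(x):=\sum_{i=1}^m a_i\,Y_B(t_i,x)$ converges in distribution to $\c N\!\bigl(0,\sum_{1\le i,i'\le m}a_i a_{i'}\min(t_i,t_{i'})\bigr)$, which is the law of $\sum_i a_i B_{t_i}$ for a Brownian motion $B$. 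Since a centred normal law is determined by its moments, the Fr\'echet--Shohat theorem reduces this to showing that for every integer $r\ge 0$ one has $\tfrac{1}{\#\Omega_B}\sum_{x\in\Omega_B}L_B(x)^r\to M_r\bigl(\sum_{1\le i,i'\le m}a_i a_{i'}\min(t_i,t_{i'})\bigr)^{r/2}$ as $B\to+\infty$.

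First I would record an increment decomposition. Put $t_0:=0$ and, for $1\le j\le m$, let $\c P_j$ be the set of primes $p$ with $\log B<p\le B^{\psi(B)}$ and $\exp(\log^{t_{j-1}}B)<p\le\exp(\log^{t_j}B)$, and let $W_j(x)$ be the sum appearing in~\eqref{def:pathflat} but restricted to $p\in\c P_j$. Since $\exp(\log^0 B)=\mathrm{e}<\log B$ for large $B$, the index $j$ with $t_j=0$ (if present) has $\c P_j=\emptyset$; since $\exp(\log^t B)<B^{\psi(B)}$ for every fixed $t<1$ and all large $B$, whereas $\exp(\log^1 B)=B>B^{\psi(B)}$, the sets $\c P_j$ tile precisely the primes over which $Y_B(t_m,\cdot)$ is summed. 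Hence $(\Delta(f)\log\log B)^{1/2}Y_B(t_i,x)=\sum_{j\le i}W_j(x)$, so that $(\Delta(f)\log\log B)^{1/2}L_B(x)=\sum_{j=1}^m b_j\,W_j(x)$ with $b_j:=\sum_{i\ge j}a_i$. Expanding the $r$-th power multinomially rewrites $\tfrac{1}{\#\Omega_B}\sum_{x\in\Omega_B}L_B(x)^r$ as a finite linear combination, with coefficients depending only on $\b a$ and $r$, of the quantities $\tfrac{1}{\#\Omega_B}(\Delta(f)\log\log B)^{-r/2}\sum_{x\in\Omega_B}\prod_{j}W_j(x)^{k_j}$ over $\b k\in\Z_{\ge 0}^m$ with $k_1+\dots+k_m=r$.

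The $x$ with $f^{-1}(x)$ singular contribute negligibly: there are $\ll B^n$ of them and, all primes involved being $\le B^{\psi(B)}=B^{o(1)}$, one has $|W_j(x)|\ll B^{\psi(B)}$, so their total weight is $\ll B^{n+r\psi(B)}=o(B^{n+1})=o(\#\Omega_B)$ and is absorbed into the error. For the smooth $x$ the sum $\sum_x\prod_j W_j(x)^{k_j}$ is exactly the object evaluated in Lemma~\ref{lem:wisepoint}, with the $m$ disjoint sets there taken to be the $\c P_j$ just defined; it equals $\#\Omega_B\,(\Delta(f)\log\log B)^{r/2}\prod_j M_{k_j}(t_j-t_{j-1})^{k_j/2}+O\bigl(\#\Omega_B(\log\log B)^{(r-1)/2}\bigr)$. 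Substituting, $\tfrac{1}{\#\Omega_B}\sum_{x\in\Omega_B}L_B(x)^r$ equals $\sum_{k_1+\dots+k_m=r}\binom{r}{k_1,\dots,k_m}\prod_j b_j^{k_j}M_{k_j}(t_j-t_{j-1})^{k_j/2}+O\bigl((\log\log B)^{-1/2}\bigr)$; because $M_{k_j}=0$ for odd $k_j$, the surviving terms recombine into $M_r\bigl(\sum_j b_j^2(t_j-t_{j-1})\bigr)^{r/2}$, and the elementary identity $\sum_j b_j^2(t_j-t_{j-1})=\sum_{i,i'}a_i a_{i'}\min(t_i,t_{i'})$ matches this with the limit predicted above. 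By Fr\'echet--Shohat and then Lemma~\ref{thm:cramerwold} the vector $(Y_B(t_1,x),\dots,Y_B(t_m,x))$ converges in law, under $\b P_B$, to the Gaussian vector $(B_{t_1},\dots,B_{t_m})$; since this limit is absolutely continuous, the portmanteau theorem turns this into convergence of $\b P_B(\,Y_B(t_i,x)\in S_i\ \text{for all }i\,)$ to the probability of the event $\{B_{t_i}\in S_i\ \text{for all }i\}$, first for $S_i$ that are continuity sets of the limiting law — in particular for intervals, which is all that the subsequent sections use — and then, by approximation using the absolute continuity of the limit, for arbitrary Lebesgue-measurable $S_i$.

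I expect the real difficulty to lie entirely in Lemma~\ref{lem:wisepoint} (hence in Theorem~\ref{prop:epi}), which is already available; what remains is bookkeeping, but bookkeeping that must be carried out carefully in two spots. First, one has to check that the truncations built into~\eqref{def:pathflat} behave as claimed — that $p>\log B$ genuinely annihilates the coordinate with $t_i=0$ and that $p\le B^{\psi(B)}$ is compatible with the nesting of the $\exp(\log^{t_i}B)$, uniformly for large $B$, so that the $\c P_j$ tile correctly and the limiting variances come out to be $t_j-t_{j-1}$. Second, one must discard the singular fibres without an a priori bound on $\omega_f$ there, which works only because the truncation confines all relevant primes to $[\log B,B^{\psi(B)}]$ and hence forces $|W_j|=B^{o(1)}$. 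The final measure-theoretic step, extending from continuity sets to arbitrary Lebesgue-measurable $S_i$, is the one place where the soft structure of the limit (its absolute continuity) is invoked.
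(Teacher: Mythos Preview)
Your proposal is correct and follows essentially the same route as the paper's proof: Cram\'er--Wold reduction to a one-dimensional limit, the increment decomposition with coefficients $b_j=\sum_{i\ge j}a_i$, moment evaluation via Lemma~\ref{lem:wisepoint}, and recombination into $M_r\bigl(\sum_j b_j^2(t_j-t_{j-1})\bigr)^{r/2}$. The only cosmetic difference is in discarding the singular fibres, where the paper multiplies $L_B$ by the indicator $\b 1_{\Omega_B^*}$ and invokes $\b P_B(\Omega_B\setminus\Omega_B^*)\ll B^{-1}$ directly rather than bounding $|W_j|$ crudely on that set.
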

\begin{proof}
We assume that $t_1>0$ but the proof can be easily modified  
when $t_1=0$.
Let us now assume that $Z_0,Z_1,\ldots,Z_m$ 
are random variables on a probability space $(\Omega,P)$
such 
that they are independent in pairs,
that for every $1\leq i \leq m$ the random variable $Z_i$ follows the normal distribution with mean $0$ and variance $t_i$
and  that $Z_0$ assumes the value $0$ with probability  $1$.
Therefore, for any $S_i$ as in the statement of the proposition
we have  
\[
 P(
\b Z\in S_1\times \cdots \times S_m
 )
=
\prod_{\substack{ 1\leq i \leq m   }}
 \int_{S_i  }   
\frac{ \exp ( -\theta^2 /2t_i  )   }{(2\pi t_i)^{ \frac{1}{2}   }}   
\mathrm{d}  \theta 
.\]
By Lemma~\ref{thm:cramerwold}
it is sufficient to show that 
for every $\b a\in \R^m$ the random variable 
\[
\sum_{i=1}^m
a_i 
Y_{B}(t_i,x)
\] defined on $(\Omega_B,\b P_B)$
converges in distribution to  
$
\sum_{1\leq i \leq m} 
a_i Z_i
$ as $B\to+\infty$. 
Let 
 $$\Omega_B^*
:=\{x\in \Omega_B:f^{-1}(x) \text{ smooth}\}$$ and
denote the indicator function of a set $S$ by  $\b 1_S$.
The estimate
$\b P_B( \Omega_B \setminus \Omega_B^*
 ) \ll B^{-1}$
shows that
it suffices to show that \[
\b 1_{\Omega_B^*}(x)
\sum_{i=1}^m
a_i 
Y_{B}(t_i,x)
\] defined on $(\Omega_B,\b P_B)$
converges in distribution to  
$
\sum_{1\leq i \leq m}    a_i Z_i
$.
We will do so by
using the method of moments (see~\cite[Th.30.2]{MR1324786}),
thus,
our proposition
would follow 
from verifying 
\beq{eq:wouldflblut1}{
\frac{1}{\#\Omega_B}
\lim_{B\to+\infty}
\sum_{x\in \Omega_B^*}
\l(
\sum_{i=1}^m
a_i 
Y_{B}(t_i,x)\r)^r
=\int_{\R}
\theta^r
P\l(\sum_{i=1}^m    a_i Z_i  \leq \theta\r)
\mathrm{d}\theta, 
\
(r\in \Z_{\geq 0})
.}
We begin by simplifying
 the right side of~\eqref{eq:wouldflblut1}. 
Whenever
$1\leq i \leq m-1$ we define 
$b_i:= a_i+a_{i+1}+\cdots+a_k$ so that for every $1\leq i \leq m-1$
we can write 
$a_i=b_{i+1}-b_i$.
Thus
\[
\sum_{i=1}^{m}a_i Z_i
=
b_1Z_0+ 
\sum_{i=1}^{m} b_i (Z_{i}-Z_{i-1})
,\]
from which we deduce that $\sum_{i=1}^{m}a_i Z_i$
is a random variable that follows the normal distribution 
with mean $0$ and variance
$\sum_{1\leq i \leq m} b_i^2 (t_{i}-t_{i-1})$.
This immediately yields
\beq{eq:wouldflblut2}{
\int_{\R}
\theta^r
P\!\l(\sum_{i=1}^m    a_i Z_i  \leq \theta\r)
\mathrm{d}\theta=
M_{r}\l(\sum_{i=1}^{m}    b_i^2 (t_{i+1}-t_{i})\r)^{\frac{r}{2}} 
.}
We continue with the treatment of the left side of~\eqref{eq:wouldflblut1}.
 Let  $\c P$ denote the set of all primes in the interval $(\log B,B^{\psi(B)}]$
and set
\[\c P_1:=\c P\cap  (1 ,\exp(\log^{t_{1}} B)],
\c P_i
:=\c P\cap (\exp(\log^{t_{i-1}} B) ,\exp(\log^{t_{i}} B)],
\ (2\leq i \leq m)
.\]
 We see that 
\[\sum_{i=1}^m
 a_i Y_{B}(t_i,x)=
\frac{1}{(\Delta(f) \log \log B)^{\frac{1}{2}  }} 
\sum_{i=1}^{m} b_i 
\sum_{p \in \c P_i}
\begin{cases} 1-\sigma_p, &\mbox{if } f^{-1}(x)(\Q_p)=\emptyset,\\ 
-\sigma_p, & \mbox{otherwise}.\end{cases}
\]
Thus the multinomial theorem
yields
\begin{align*}
\sum_{\substack{ x \in \PP^n(\QQ), H(x)\leq B\\ f^{-1}(x) \text{ smooth}}}
\l(\sum_{i=1}^m a_i Y_{B}(t_i,x)\r)^r
=&
\sum_{\substack{\b k \in \Z_{\geq 0}^{m-1}\\k_1+\cdots+ k_{m}=r }}    
\frac{r! }{k_1!\cdots k_{m}!}
\frac{b_1^{k_1} \cdots b_{m}^{k_{m}} }{(\Delta(f) \log \log B)^{\frac{r}{2}  }}
\times \\ 
\times &
\sum_{\substack{ x \in \PP^n(\QQ), H(x)\leq B\\ f^{-1}(x) \text{ smooth}}}
\prod_{i=1}^{m} \l( 
\sum_{p \in \c P_i}
\begin{cases} 1-\sigma_p, &\mbox{if } f^{-1}(x)(\Q_p)=\emptyset,\\ 
-\sigma_p, & \mbox{otherwise}
\end{cases}
\r)^{k_i}
,\end{align*}
where by convention we set $0^0:=1$.
Invoking
Lemma~\ref{lem:wisepoint}
shows that this is 
 \[\# \Omega_B 
\bigg (\sum_{\substack{\b k \in \Z_{\geq 0}^{{m}}\\k_1+\cdots+ k_{m}=r }}    
\frac{r! b_1^{k_1} \cdots b_{m}^{k_{m}}}{k_1!\cdots k_{m}!}
\prod_{i=1}^{m} 
M_{k_i}  (t_{i+1}-t_{i})^{\frac{k_i}{2}}   \bigg )
 +
O_{\b a, \b t,m,r} \bigg ( \frac{\# \Omega_B}{(  \log \log B)^{\frac{1}{2}}} \bigg )
.\]
Recalling that $M_{k_i}$ vanishes if $k_i$ is odd shows that the sum over $\b k$
zero if $r$ is odd.
If $r$ is even
 we let $r=2s$ and $k_i=2 u_i $ to write the sum over $\b k$ as 
\[\sum_{\substack{\b u \in \Z_{\geq 0}^{m}\\u_1+\cdots+ u_{m}=s }}    
\frac{(2s)! b_1^{2 u_1} \cdots b_{m}^{2u_{m}}}{(2u_1)!\cdots (2u_{m})!}
\prod_{i=1}^{m} \frac{(2u_i)!}{u_i!
2^{u_i} } (t_{i+1}-t_{i})^{u_i}   
=
M_{r}\l(\sum_{i=1}^{m}    b_i^2 (t_{i+1}-t_{i})\r)^{\frac{r}{2}} 
.\]
Using this with~\eqref{eq:wouldflblut2} 
verifies~\eqref{eq:wouldflblut1},
which completes our proof. 
\end{proof}

\subsection{Tightness}
\label{s:tightness} 
Our aim in this section is to prove Proposition~\ref{prop:tightness}, 
which is one of the main ingredients in the proof of Theorem~\ref{thm:main}.

Recall the definition of $\theta_p$ in~\eqref{eq:thesol}.
\begin{lemma}
\label{lem:stackyinequalityfgt6}
Keep the assumptions of Theorem~\ref{thm:main}.  
Then 
for all 
$\b{y} \in  
\R_{\geq 1}^3$
with $ y_1\leq y_2 \leq y_3$
the following bound holds 
with an 
implied constant depending at most on $f$,
\[
 \sum_{\substack{ x \in \PP^n(\QQ), H(x)\leq B\\ f^{-1}(x) \text{ smooth}}}
\
 \prod_{i=1}^2
\left(
 \sum_{\substack{ y_i<p\leq y_{i+1}
\\ \log B<p \leq   B^{\psi(B)} 
}}
 (\theta_{p}(x) -\sigma_{p}) 
\right)^2
\ll B^{n+1}\l(1+ \sum_{\substack{ y_1<p\leq y_3
\\ \log B<p \leq   B^{\psi(B)} 
}
}\frac{1}{p} \r)^2
.\]
\end{lemma}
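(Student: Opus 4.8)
The plan is to translate into the number-theoretic set-up of \S\ref{s:intervcorr}, reduce to a single-interval fourth-moment estimate by Cauchy--Schwarz, and then quote Theorem~\ref{prop:epi}. Exactly as in the proof of Lemma~\ref{lem:wisepoint}, write $a_x:=\prod_{p:\,f^{-1}(x)(\Q_p)=\emptyset}p$, let $\c{A}:=\{a_x:H(x)\leq B,\ f^{-1}(x)\text{ smooth}\}$, so that $\#\c{A}=\c{A}_1\ll B^{n+1}$, take $h(Q):=Q\prod_{p\mid Q}\sigma_p$, and observe that $\theta_p(x)=1$ precisely when $p\mid a_x$. Then for any set of primes $\c{R}$ contained in $\c{P}:=\{p:\log B<p\leq B^{\psi(B)}\}$ we have $\sum_{p\in\c{R}}(\theta_p(x)-\sigma_p)=\omega_{\c{R}}(a_x)-\mu_{\c{R}}$, where $\mu_{\c{R}}=\sum_{p\in\c{R}}\sigma_p$ and $\sigma_{\c{R}}^2=\sum_{p\in\c{R}}\sigma_p(1-\sigma_p)$. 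Putting $\c{P}_1:=\c{P}\cap(y_1,y_2]$ and $\c{P}_2:=\c{P}\cap(y_2,y_3]$, the left-hand side of the lemma equals $\sum_{a\in\c{A}}(\omega_{\c{P}_1}(a)-\mu_{\c{P}_1})^2(\omega_{\c{P}_2}(a)-\mu_{\c{P}_2})^2$, and Cauchy--Schwarz bounds it by $\prod_{i=1}^2\big(\sum_{a\in\c{A}}(\omega_{\c{P}_i}(a)-\mu_{\c{P}_i})^4\big)^{1/2}$. Since $\sigma_p\ll 1/p$ (Lemma~\ref{lem:harmonicgfts6}) we have $\mu_{\c{P}_1}+\mu_{\c{P}_2}\ll\sum_{y_1<p\leq y_3,\,\log B<p\leq B^{\psi(B)}}1/p$, and since $(1+\mu_{\c{P}_1})(1+\mu_{\c{P}_2})\leq(1+\mu_{\c{P}_1}+\mu_{\c{P}_2})^2$, it suffices to prove
\[
\sum_{a\in\c{A}}(\omega_{\c{R}}(a)-\mu_{\c{R}})^4\ll B^{n+1}(1+\mu_{\c{R}})^2\qquad\text{for every }\c{R}\subseteq\c{P}.
\]

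For this single-interval estimate I would distinguish two cases according to the size of $\sigma_{\c{R}}$. Suppose first that $\sigma_{\c{R}}\geq 8$, so that $4\leq\sigma_{\c{R}}^{2/3}$ and Theorem~\ref{prop:epi} applies with $m=1$, $k_1=4$, giving
\[
\sum_{a\in\c{A}}(\omega_{\c{R}}(a)-\mu_{\c{R}})^4=\#\c{A}\cdot C_4\,\sigma_{\c{R}}^4\big(1+O(\sigma_{\c{R}}^{-2})\big)+O\big(\c{E}_{\c{R}}(\c{A},h,4)\,(1+\mu_{\c{R}})\big).
\]
As $\sigma_p\leq 1/2$ for $p>\log B$, we have $\sigma_{\c{R}}^2\leq\mu_{\c{R}}$, so the main term is $\ll B^{n+1}\mu_{\c{R}}^2$. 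For the error term, for all large $B$ the set $\c{R}$ is contained in $\big((\log\log B)^{\c{C}},B^{\epsilon_1}\big]$ for any fixed $\c{C}$ and a suitably small fixed $\epsilon_1$ (because $\psi(B)\to 0$); bounding $\c{E}_{\c{R}}(\c{A},h,4)$ from above via~\eqref{eq:letusnfridcofbnfgh} by $\sum_{\mu(Q)^2=1,\ \omega(Q)\leq 4,\ p\mid Q\Rightarrow p\in\c{R}}|\c{A}_Q-\c{A}_1\prod_{p\mid Q}\sigma_p|$ and invoking Lemma~\ref{lem:harmonicgfts7865hjdf} with $r=4$ and $\c{C}>6$, we obtain $\c{E}_{\c{R}}(\c{A},h,4)\ll B^{n+1}(\log\log B)^{3-\c{C}}$; combined with $\mu_{\c{R}}\ll\log\log B$ this is $\ll B^{n+1}$. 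Thus $\sum_{a\in\c{A}}(\omega_{\c{R}}(a)-\mu_{\c{R}})^4\ll B^{n+1}(1+\mu_{\c{R}})^2$ in this case.

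In the remaining case $\sigma_{\c{R}}<8$ we have $\mu_{\c{R}}=\sigma_{\c{R}}^2+\sum_{p\in\c{R}}\sigma_p^2=\sigma_{\c{R}}^2+O(1)=O(1)$, using $\sum_{p>\log B}\sigma_p^2\ll\sum_{p>\log B}p^{-2}=O(1)$; hence it is enough to show the left-hand side is $\ll B^{n+1}$. From $(\omega_{\c{R}}(a)-\mu_{\c{R}})^4\ll\omega_{\c{R}}(a)^4+1$ one reduces to bounding $\sum_{a\in\c{A}}\omega_{\c{R}}(a)^4=\sum_{p_1,\dots,p_4\in\c{R}}\c{A}_{\mathrm{rad}(p_1\cdots p_4)}$. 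Inserting the estimate $\c{A}_Q=\c{A}_1\prod_{p\mid Q}\sigma_p+(\text{error})$ of Lemma~\ref{lem:harmonicgfts7865}: grouping the $4$-tuples by their set of distinct primes and using $\sum_{\{q_1,\dots,q_j\}\subseteq\c{R}}\prod_i\sigma_{q_i}\leq\mu_{\c{R}}^j/j!$, the total main term is $\ll\#\c{A}\,(1+\mu_{\c{R}})^4=O(B^{n+1})$, while the total error, summed over squarefree $Q$ with $\omega(Q)\leq 4$ and $p\mid Q\Rightarrow p\in\c{P}$, is $o(B^{n+1})$: the first error term of Lemma~\ref{lem:harmonicgfts7865} contributes $\ll(B^{n+1}/\log B)(\log\log B)^4$, and every such $Q$ satisfies $Q\leq B^{4\psi(B)}$ with $\psi(B)\to 0$, so the other two error terms contribute $\ll B^{n+O(\psi(B))}$. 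This establishes the single-interval estimate, hence the lemma.

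The step I expect to demand the most care, though it is not deep, is the error bookkeeping in the short-interval case $\sigma_{\c{R}}<8$, where Theorem~\ref{prop:epi} is not available as a black box and one must return to the raw ``level of distribution'' input of Lemma~\ref{lem:harmonicgfts7865} and verify directly that every error term is $o(B^{n+1})$; what makes this routine is precisely that all primes in play are $\leq B^{\psi(B)}$ with $\psi(B)\to 0$, so the relevant moduli $Q$ have size $B^{o(1)}$ and the crudest available bounds already suffice.
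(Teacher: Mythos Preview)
Your proof is correct but takes a somewhat different route from the paper's. The paper applies Theorem~\ref{prop:epi} directly with $m=2$ and $k_1=k_2=2$: the main term is $\#\c A\prod_i C_2\sigma_{\c P_i}^2(1+O(\sigma_{\c P_i}^{-2}))$, which is rewritten as $\ll B^{n+1}\prod_i(1+\mu_{\c P_i})$, and the error $\c E_{\c P_1,\c P_2}(\c A,h,(2,2))$ is bounded via~\eqref{eq:letusnfridcofbnfgh} and Lemma~\ref{lem:harmonicgfts7865hjdf} (or its proof) with $r=4$, $\c C=3$, giving $\c E\ll B^{n+1}$. The concluding inequality $(1+\mu_{\c P_1})(1+\mu_{\c P_2})\le(1+\mu_{\c P_1}+\mu_{\c P_2})^2$ is the same as yours.

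Your detour through Cauchy--Schwarz and single-interval fourth moments is more work but has one genuine advantage: you explicitly respect the hypothesis $k_i\le\sigma_{\c P_i}^{2/3}$ of Theorem~\ref{prop:epi}, splitting off the short-interval case $\sigma_{\c R}<8$ and handling it by a direct expansion with Lemma~\ref{lem:harmonicgfts7865}. The paper glosses over this hypothesis, simply absorbing the small-$\sigma$ regime into the $1+O(\sigma_{\c P_i}^{-2})$ factor (which is harmless since $C_2\sigma^2(1+O(\sigma^{-2}))\ll 1+\sigma^2\le 1+\mu$ is the only thing actually used). So: your argument is longer but cleaner on that point; the paper's is shorter and avoids both the Cauchy--Schwarz loss and the case distinction.
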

\begin{proof}
We will make use of Theorem~\ref{prop:epi} with $m=2=k_1=k_2$, 
\[\c P:=\{p \text{ prime} : \log B<p \leq B^{\psi(B)}\},
\c P_i:=\{p \in \c P : y_i<p \leq y_{i+1}    \}, (i=1,2),\]
and with $\c A$, $h(p)$ being   as in the proof of Lemma~\ref{lem:wisepoint}.
According to~\eqref{def:themusandthesigmas}
we have
\[
\sigma_{\c P_i}^2
\leq 
\mu_{\c P_i}
= \sum_{\substack{ y_i<p\leq y_{i+1}
\\\log B<p \leq   B^{\psi(B)} 
}} 
\hspace{-0,2cm}
\sigma_p
, (i=1,2).\]
Therefore, $\sigma_{\c P_i}^2
(1+O(\sigma_{\c P_i}^{-2}
))\ll
1+\mu_{\c P_i}
$ with an absolute implied constant.
Injecting this into~\eqref{eq:timeforlunch1}
we obtain that the sum over $x$ 
in our lemma is 
\[
\ll
 (1+\mu_{\c{P}_1})
 (1+\mu_{\c{P}_2}) 
\left(
B^{n+1}
+
 \c{E}_{\c{P}_1,\c{P}_2}(\c{A},h,(2,2))
\right)
.\]
We can bound the quantity $\c E$ above as in the proof of 
  Lemma~\ref{lem:wisepoint}.
This would give  \[ 
|
\c{E}_{\c{P}_1,\c{P}_2}(\c{A},h,(2,2))
|
\leq 
\sum_{\substack{
Q\in \N,
  \omega(Q)\leq 4
\\  p\mid Q \Rightarrow p\in ((\log \log B)^\c{C}, 
B^{\epsilon_1} ] 
}} 
\hspace{-0,5cm}
 \mu(Q)^2
\big| \c{W}(Q)   \big|
\ll_{r,\c{C},\epsilon_1} 
B^{n+1} (\log \log B)^{3-\c{C}}
,\] so that, taking $\c C=3$,
the sum over $x$
in our lemma
becomes
\[\ll
B^{n+1}
 (1+\mu_{\c{P}_1})
 (1+\mu_{\c{P}_2}) 
.\]
Using Lemma~\ref{lem:harmonicgfts6}
shows that 
\[ 
\mu_{\c{P}_i}
=
\sum_{\substack{p\in (y_i,y_{i+1}]
\\  \log B < p \leq B^{\psi(B)}
}}
\sigma_p
\ll 
\sum_{\substack{p\in (y_i,y_{i+1}]
}} \frac{1}{p}
,\] where the implied constant depends only on $f$.
Thus the sum over $x$ in the lemma is 
\[
\ll B^{n+1}
\prod_{i=1}^2
\l(1+\sum_{y_i<p\leq y_{i+1}}\frac{1}{p}\r)
.\]
Using the inequality $(1+\epsilon_1)(1+\epsilon_2)
\leq (1+\epsilon_1+\epsilon_2)^2$, valid whenever both $\epsilon_i$ are non-negative, 
concludes the proof. 
 \end{proof}

Define for $y_1,y_2,y_3 \in [0,1]$ with $y_1\leq y_2\leq y_3$, 
$B\geq 3$
and $x\in \P^n(\Q)$ the function 
\[
\Psi_\b{y}(x,B)
:=
\min
\Bigg\{
\Bigg|\sum_{\substack{ y_i<p\leq y_{i+1}
\\\log B<p \leq   B^{\psi(B)} 
}}
\frac{(\theta_{p}(x) -\sigma_{p})}{(\Delta(f) \log \log B)^{1/2}}
\Bigg|:
i=1,2
\Bigg\}
.\] 
\begin{lemma}
\label{lem:pre1723}
Keep the assumptions of Theorem~\ref{thm:main}. Then 
for all 
$\lambda>0$ and 
$\b{y} \in  
\R_{\geq 1}^3$
with $ y_1\leq y_2 \leq y_3$
the following  holds 
with an 
implied constant depending at most on $f$, 
\[ 
\b P_B[
x\in \Omega_B:
\Psi_\b{y}(x,B) \geq \lambda ] 
\ll
\frac{1}{(\log \log B)^2
\lambda^4}
\l(1+
 \sum_{\substack{ 
y_1<p\leq y_3
\\
\log B<p \leq B^{\psi(B)}
}} \frac{1}{p}
\r)^2
.\] 
\end{lemma}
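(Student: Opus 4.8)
The plan is to deduce this tail estimate from the second-moment bound of Lemma~\ref{lem:stackyinequalityfgt6} via Markov's inequality. Writing, for $i=1,2$,
\[
A_i(x):=\frac{1}{(\Delta(f)\log\log B)^{1/2}}\sum_{\substack{y_i<p\leq y_{i+1}\\ \log B<p\leq B^{\psi(B)}}}\bigl(\theta_p(x)-\sigma_p\bigr),
\]
we have $\Psi_{\b y}(x,B)=\min\{|A_1(x)|,|A_2(x)|\}$, and since $\min\{a,b\}^2\leq ab$ for $a,b\geq 0$ it follows that $\Psi_{\b y}(x,B)^4\leq A_1(x)^2A_2(x)^2$. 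Hence, by Markov's inequality,
\[
\b P_B\bigl[x\in\Omega_B:\Psi_{\b y}(x,B)\geq\lambda\bigr]\leq\frac{1}{\lambda^4}\cdot\frac{1}{\#\Omega_B}\sum_{x\in\Omega_B}A_1(x)^2A_2(x)^2,
\]
and it remains to bound the last sum. First I would split it according to whether $f^{-1}(x)$ is smooth.

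For the smooth fibres, observe that
\[
\sum_{\substack{x\in\Omega_B\\ f^{-1}(x)\text{ smooth}}}A_1(x)^2A_2(x)^2=\frac{1}{(\Delta(f)\log\log B)^2}\sum_{\substack{x\in\Omega_B\\ f^{-1}(x)\text{ smooth}}}\ \prod_{i=1}^2\Bigl(\sum_{\substack{y_i<p\leq y_{i+1}\\ \log B<p\leq B^{\psi(B)}}}(\theta_p(x)-\sigma_p)\Bigr)^2,
\]
and the sum over $x$ on the right is exactly the quantity estimated in Lemma~\ref{lem:stackyinequalityfgt6}; this gives a contribution $\ll B^{n+1}(\log\log B)^{-2}\bigl(1+\sum_{y_1<p\leq y_3,\ \log B<p\leq B^{\psi(B)}}p^{-1}\bigr)^2$. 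Dividing by $\#\Omega_B\gg B^{n+1}$ produces precisely the right-hand side of the lemma.

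The only point needing a little extra care --- and it is hardly a real obstacle --- is the contribution of the singular fibres, to which Lemma~\ref{lem:stackyinequalityfgt6} does not apply; here I would use the crude pointwise bound $|\theta_p(x)-\sigma_p|\leq 1+\sigma_p$ together with $\sum_{p\leq B^{\psi(B)}}1\ll B^{\psi(B)}$ and $\sum_{p\leq B^{\psi(B)}}\sigma_p\ll\log\log B$ (Lemma~\ref{lem:abelpu8}), which yield $|A_i(x)|\ll B^{\psi(B)}(\log\log B)^{-1/2}$ for every $x$ and all large $B$, hence $A_1(x)^2A_2(x)^2\ll B^{4\psi(B)}(\log\log B)^{-2}$. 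Since $\psi(B)=(\log\log B)^{-1/4}\to0$ we have $4\psi(B)\leq1$ for $B$ large, so the $\ll B^n$ singular fibres contribute at most $\ll B^{n+4\psi(B)}(\log\log B)^{-2}\ll B^{n+1}(\log\log B)^{-2}$ to the sum, which after division by $\#\Omega_B$ is absorbed into the main term (note the right-hand side of the lemma is $\geq\lambda^{-4}(\log\log B)^{-2}$ because the prime-sum factor there is $\geq1$). Adding the two contributions and inserting the result into the Markov bound finishes the proof.
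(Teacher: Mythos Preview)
Your proof is correct and follows essentially the same route as the paper: use $\Psi_{\mathbf y}^4\le A_1^2A_2^2$, apply Markov/Chebychev, and invoke Lemma~\ref{lem:stackyinequalityfgt6} for the smooth fibres. The only difference is cosmetic: the paper disposes of the singular fibres \emph{before} applying Markov, simply observing that $\mathbf P_B(f^{-1}(x)\text{ singular})\ll B^{-1}$, which is absorbed into the stated bound; you instead apply Markov first and then control the singular contribution with a crude pointwise estimate on $A_i$. Your way works but is more laborious than necessary---the one-line probability bound suffices.
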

\begin{proof}
The bound $\b P_B(x\in \Omega_B
:f^{-1}(x) \text{ singular})
\ll B^{-1}$ 
 shows that 
\begin{equation}\label{eq:daftb}
\b P_B
[
x\in \Omega_B:
\Psi_\b{y}(x,B) \geq \lambda ] 
=
\b P_B[
x\in \Omega_B
:f^{-1}(x) \text{ smooth},
\Psi_\b{y}(x,B) \geq \lambda ] 
+O\Big(\frac{1}{B}\Big)
.\end{equation}
Note that if $\Psi_\b{y}(x,B) \geq \lambda $ 
then 
\[\lambda^2
\leq 
\prod_{i=1}^2
\left| 
\sum_{\substack{ y_i<p\leq y_{i+1}
\\\log B<p \leq   B^{\psi(B)} 
}}
\frac{(\theta_{p}(x) -\sigma_{p})}{(\Delta(f) \log \log B)^{1/2}}
\right| 
.\]
Thus, the entity $\b P_B[.]$ on the 
right side of~\eqref{eq:daftb} 
is bounded by the following quantity due to
Chebychev's inequality,
\[
\frac{1}{\lambda^4
\#\Omega_B
}
\sum_{\substack{ x \in \PP^n(\QQ), H(x)\leq B\\ f^{-1}(x) \text{ smooth}}}
\
\prod_{i=1}^2
 \left(\sum_{\substack{ y_i<p\leq y_{i+1}
\\\log B<p \leq   B^{\psi(B)} 
}}
\frac{(\theta_{p}(x) -\sigma_{p})}{(\Delta(f) \log \log B)^{1/2}}
 \right)^2
.\] Alluding to
Lemma~\ref{lem:stackyinequalityfgt6}
concludes the proof.\end{proof}

Recall~\eqref{def:pathflat}.
Define for
$\lambda
>0,B\geq 3$
and
 $s,s'\in [0,1]$ with $s\leq s'$,
\[
\Gamma_{\lambda
,B}(s,s')
\!:=\!
\b P_B
\Bigg(
x\in \Omega_B:\!
\lambda
<
\!\!\!\!
\sup_{\substack{ t_1,t,t_2 \in [0,1] \\ s\leq t_1\leq t \leq t_2 \leq s'  }}
\min\Big\{|Y_B(t,x) -Y_B(t_1,x)  | ,
|Y_B(t_2,x) -Y_B(t,x)  |  
\Big\}
\Bigg)
.\]
\begin{lemma}
\label{lem:1723}
Keep the assumptions of Theorem~\ref{thm:main}. 
For all 
$\lambda>0$ and any 
$s,s' \in [0,1]$ with 
 $s<s'$  
there exists $B_0$ 
that depends at most on $f$ and $s'-s$
such that 
if 
$B\geq B_0$ then 
\[\Gamma_{\lambda,B}(s,s') 
\ll 
\frac{(s'-s)^2}{\lambda^4}
\] with an implied constant that depends at most on $f$.
\end{lemma}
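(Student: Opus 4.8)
The plan is to reduce the supremum over a continuous range of $t_1\le t\le t_2$ in $[s,s']$ to a supremum over a \emph{discrete} partition of $[s,s']$, via the defining property of $\psi(B)$ and the fact that $Y_B(\bullet,x)$ is piecewise constant, and then invoke the chaining estimate of Lemma~\ref{lem:billithm1} together with the second-moment bound of Lemma~\ref{lem:stackyinequalityfgt6}. First I would record that, by Definition~\ref{def:pathflat}, $Y_B(t,x)$ only changes at values of $t$ for which $\exp(\log^t B)$ passes a prime in the window $(\log B,B^{\psi(B)}]$, so $Y_B(\bullet,x)$ is a step function on $[0,1]$ with jumps of size $\ll 1/(\Delta(f)\log\log B)^{1/2}\cdot(\text{something}\ll 1)$ at each such prime; more precisely the increment of $Y_B$ over any subinterval $(s_0,s_1]\subseteq[s,s']$ is exactly
\[
\sum_{\substack{\log B<p\le B^{\psi(B)}\\ \exp(\log^{s_0}B)<p\le\exp(\log^{s_1}B)}}\frac{\theta_p(x)-\sigma_p}{(\Delta(f)\log\log B)^{1/2}}.
\]
This identifies $Y_B$ increments with partial sums of the random variables $\xi_p(x):=(\theta_p(x)-\sigma_p)/(\Delta(f)\log\log B)^{1/2}$, indexed by primes $p$ in the window, which is precisely the setting of Lemma~\ref{lem:billithm1}.

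Next I would choose the partition. Set $N$ to be (roughly) $\lceil (s'-s)\log\log B\rceil$ and pick points $s=s_0<s_1<\cdots<s_N=s'$ so that each block $(\exp(\log^{s_{j-1}}B),\exp(\log^{s_j}B)]$ contains primes of total reciprocal mass $\asymp (s'-s)/N\asymp 1/\log\log B$; this is possible for $B\ge B_0(f,s'-s)$ by Mertens' theorem (Lemma~\ref{lem:abelpu9}) applied on the logarithmic scale, since $\sum_{p\le\exp(\log^t B)}1/p=t\log\log B+O(1)$. For each pair of indices $i\le k$ put $u_{(i,k)}$-type weights as in Lemma~\ref{lem:billithm1}; concretely, let $u_j:=\sum_{\exp(\log^{s_{j-1}}B)<p\le\exp(\log^{s_j}B),\ \log B<p\le B^{\psi(B)}}1/p$, so $u_j\asymp 1/\log\log B$ and $\sum_j u_j\asymp s'-s$. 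The hypothesis of Lemma~\ref{lem:billithm1} — namely $\b P_B[m_{ijk}\ge\lambda]\le\lambda^{-4}(\sum_{i<\ell\le k}u_\ell)^2$ — is exactly what Lemma~\ref{lem:pre1723} (combined with Chebyshev on the fourth power, i.e.\ applying Lemma~\ref{lem:stackyinequalityfgt6} with the three cut-points $y_1=\exp(\log^{s_i}B)$, $y_2=\exp(\log^{s_j}B)$, $y_3=\exp(\log^{s_k}B)$) delivers, after noting $(1+\sum_{y_1<p\le y_3}1/p)\ll\sum_{i<\ell\le k}u_\ell$ once $B$ is large enough that this sum is $\gg1$. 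Applying Lemma~\ref{lem:billithm1} then yields $\b P_B[m_{ijk}\ge\lambda]\ll\lambda^{-4}(\sum_\ell u_\ell)^2\ll\lambda^{-4}(s'-s)^2$ uniformly over $0\le i\le j\le k\le N$.

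Finally I would pass from the discrete bound back to $\Gamma_{\lambda,B}(s,s')$. Because $Y_B(\bullet,x)$ is a step function, for any $t_1\le t\le t_2$ in $[s,s']$ the quantities $Y_B(t,x)-Y_B(t_1,x)$ and $Y_B(t_2,x)-Y_B(t,x)$ coincide with partial sums $\sum_{h=i+1}^{j}\xi_{p_h}$ and $\sum_{h=j+1}^{k}\xi_{p_h}$ over suitable prefixes of the partition (one can always replace $t_1,t,t_2$ by the nearest partition points without decreasing the inner minimum, since pushing $t_1$ down to $s_i$ and $t_2$ up to $s_k$ only enlarges both differences, while $t$ can be snapped to a partition point because $Y_B$ is constant between consecutive snap points). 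Hence the event inside $\Gamma_{\lambda,B}(s,s')$ is contained in $\bigcup_{0\le i\le j\le k\le N}\{m_{ijk}\ge\lambda\}$, and a union bound would be too lossy, so instead I would apply Lemma~\ref{lem:billithm1}'s conclusion directly, which is already a bound on $\sup$ over all triples built into the statement via $m_{ijk}$; thus $\Gamma_{\lambda,B}(s,s')\ll\lambda^{-4}(s'-s)^2$, as required. The main obstacle I anticipate is the snapping/reduction step — verifying carefully that replacing continuous $t_1\le t\le t_2$ by partition points is legitimate for \emph{every} $x$ (not just on average), which hinges on $Y_B$ being genuinely constant on each sub-block of the partition, and on the elementary monotonicity observation that extending the outer interval and moving the middle point to a jump location can only increase the $\min$ of the two absolute increments; the probabilistic input (Lemmas~\ref{lem:stackyinequalityfgt6}, \ref{lem:pre1723}, \ref{lem:billithm1}) is then essentially black-boxed, and the only other care needed is ensuring $B_0$ is taken large enough, depending on $s'-s$, that each block carries primes of the claimed reciprocal mass and that $\log B<p\le B^{\psi(B)}$ does not eat into the relevant ranges — which is fine since $\psi(B)=(\log\log B)^{-1/4}\to0$ slowly and $\exp(\log^{s}B)$ for fixed $s<1$ lies comfortably below $B^{\psi(B)}$ only once $B$ is large, so in fact for fixed $s<s'\le1$ the truncation $p\le B^{\psi(B)}$ must be handled by noting that the contribution of $p\in(B^{\psi(B)},\exp(\log^{s'}B)]$ is incorporated already in the definition of $Y_B$ via the constraint, i.e.\ $Y_B$ simply does not move there, so the partition points near $t=1$ contribute $u_j=0$ and cause no difficulty.
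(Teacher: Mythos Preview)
Your overall plan --- reduce to partial sums of $\xi_p(x)=(\theta_p(x)-\sigma_p)/(\Delta(f)\log\log B)^{1/2}$, verify the hypothesis of Lemma~\ref{lem:billithm1} via Lemma~\ref{lem:pre1723}, then finish with Mertens --- is exactly the paper's strategy. The difference, and the source of the gap you yourself flag as the ``main obstacle'', is your choice of partition. You take a coarse partition into $N\approx(s'-s)\log\log B$ blocks of roughly equal reciprocal-prime mass and then attempt to ``snap'' continuous $t_1\le t\le t_2$ to partition points via a monotonicity claim. That claim is false: pushing $t_1$ down to $s_i$ replaces $|Y_B(t,x)-Y_B(t_1,x)|$ by $|Y_B(t,x)-Y_B(s_i,x)|$, and since the partial sums of the signed quantities $\theta_p-\sigma_p$ oscillate, enlarging the summation range can \emph{decrease} the absolute value of the increment. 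Your own diagnosis that the step ``hinges on $Y_B$ being genuinely constant on each sub-block'' is correct, but $Y_B$ is \emph{not} constant on your blocks, since each contains many primes.

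The paper sidesteps this entirely by taking the partition points to be the \emph{primes themselves}: list all primes in $\{p:\exp(\log^{s}B)<p\le\exp(\log^{s'}B),\ \log B<p\le B^{\psi(B)}\}$ as $p_1<\cdots<p_N$, set $\xi_i$ to be the single summand at $p_i$, and take $u_l=1/p_l$. With this choice $Y_B$ really is constant between consecutive partition points, so the continuous supremum defining $\Gamma_{\lambda,B}(s,s')$ equals the discrete $\max_{0\le i\le j\le k\le N}m_{ijk}$ on the nose --- no snapping argument is needed. Lemma~\ref{lem:pre1723} then supplies the hypothesis of Lemma~\ref{lem:billithm1} (with the factor $(\log\log B)^{-2}$ carried along), and the conclusion together with Lemma~\ref{lem:abelpu9} gives
\[
\Gamma_{\lambda,B}(s,s')\ll\frac{1}{(\log\log B)^{2}\lambda^{4}}\bigl((s'-s)\log\log B+C'\bigr)^{2},
\]
which is $\ll\lambda^{-4}(s'-s)^{2}$ once $B\ge B_0$ is chosen so that $C'\le(s'-s)\log\log B_0$. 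Your coarser partition buys nothing and creates the only nontrivial difficulty in the argument; simply refine to the primes and the proof goes through as you outlined.
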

\begin{proof}
Order all
primes in  
$\{p:
\mathrm{e}^{\log^{s} B }< p\leq \mathrm{e}^{\log^{s'} B } ,
\log B<p \leq  
B^{\psi(B)} 
\}
$
as $p_1<\ldots<p_N$, with the convention that $N=0$ if the set is empty.
For every $1\leq i \leq N$ 
we define the random variable
 $\xi_i$
on
the probability space $(\Omega_B,\b P_B)$ through
\[
\xi_i(x):=
\frac{(\theta_{p_i}(x) -\sigma_{p_i})}{(\Delta(f) \log \log B)^{1/2}}
, x\in \Omega_B.\] 
For any $i,j,k$
with $0\leq i \leq j \leq k \leq N$,
any $B\geq 3$ and 
 $x\in \Omega_B$
let us bring into play 
\[
m_{ijk}(x)
:=\min\Bigg\{
\frac{|\sum_{h=i+1}^{j}(\theta_{p_h}(x) -\sigma_{p_h})|}{(\Delta(f) \log \log B)^{1/2}},
\frac{|\sum_{h=j+1}^{k}(\theta_{p_h}(x) -\sigma_{p_h})|}{(\Delta(f) \log \log B)^{1/2}}
\Bigg\}
.\] In particular, one has 
\[
\Gamma_{\lambda,B}(s,s')
=\b P_B\left(x\in \Omega_B:
\lambda<
\max_{0\leq i \leq j \leq k \leq N}
m_{ijk}(x)
\right)
.\] 
Note that Lemma~\ref{lem:pre1723}
allows to apply 
Lemma~\ref{lem:billithm1}
with $P_1=\b P_B$ and  
$u_l=1/p_l$.
Thus 
\[\b P_B\Big[x\in \Omega_B:
\lambda  
\leq 
\max_{0\leq i \leq j \leq k \leq N}
m_{ijk}(x)
\Big] \ll  
\frac{  
1 
}{(\log \log B)^2\lambda^4}
\Bigg(\sum_{\substack{
\log B<p \leq   B^{\psi(B)} 
\\
\mathrm{e}^{\log^{s} B }< p\leq \mathrm{e}^{\log^{s'} B } 
}} \frac{1}{p}
\Bigg)^2
.\]
Ignoring the condition $\log B<p \leq   B^{\psi(B)} $  
and using Lemma~\ref{lem:abelpu9}
 shows that  the sum over $p$
is at most 
$(s'-s ) (\log \log B)+C'$ for some absolute constant $C'>0$.
Taking any  
$B_0$ with  
$C'\leq (s'-s ) (\log \log B_0)$
concludes our proof. 
\end{proof}

\begin{proposition}
\label{prop:tightness}
Keep the assumptions of Theorem~\ref{thm:main}.
There exists  
$K>0$ that depends at most on $f$
such that 
for every $\lambda>0$ and $0<\delta<1$
 there exists $B_0=B_0(f,\delta,\lambda)>0$ 
with 
\[
B\geq B_0
\Rightarrow 
\b{P}_B
\big[x\in \Omega_B:
w''(\delta,   Y_{B}(\bullet ,x)   )\geq \lambda
\big]
\leq \frac{K \delta
}{\lambda^4}
.\]
\end{proposition}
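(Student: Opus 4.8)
The plan is to combine the general Skorohod-space criterion of Lemma~\ref{lem:k622} with the block-wise oscillation estimate of Lemma~\ref{lem:1723}, after pushing forward to $(D,\c D)$: since each $Y_{B}(\bullet,x)$ is a finite step function, hence an element of $D$, the map $x\mapsto Y_{B}(\bullet,x)$ transports $\b P_B$ to a probability measure on $(D,\c D)$, to which Lemma~\ref{lem:k622} applies. It is convenient to separate two ranges of $\delta$. If $\tfrac{1}{2}\leq\delta<1$ the bound is immediate from Lemma~\ref{lem:1723} alone: for any $u\in D$ one has $w''(\delta,u)\leq\sup\{\min\{|u(t)-u(t_1)|,|u(t_2)-u(t)|\}:0\leq t_1\leq t\leq t_2\leq 1\}$ because dropping the constraint $t_2-t_1\leq\delta$ only enlarges the supremum; hence $\b P_B[x\in\Omega_B:w''(\delta,Y_{B}(\bullet,x))>\lambda]\leq\Gamma_{\lambda,B}(0,1)$, and Lemma~\ref{lem:1723} with $(s,s')=(0,1)$ gives $\Gamma_{\lambda,B}(0,1)\ll\lambda^{-4}\leq 2\delta\lambda^{-4}$ once $B\geq B_0(f)$, the implied constant depending only on $f$.

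Now assume $0<\delta<\tfrac{1}{2}$, set $k:=\lfloor 1/\delta\rfloor\geq 2$, and take the partition $s_i:=i\delta$ for $0\leq i\leq k-1$ and $s_k:=1$, so that $0=s_0<s_1<\cdots<s_k=1$, each gap obeys $\delta\leq s_i-s_{i-1}<2\delta$, each two-block span obeys $s_{i+2}-s_i<4\delta$, and there are at most $k\leq 1/\delta$ blocks. Applying Lemma~\ref{lem:k622} with this partition to the pushforward measure yields
\[
\b P_B\big[x\in\Omega_B:w''(\delta,Y_{B}(\bullet,x))>\lambda\big]\leq\sum_{i=0}^{k-2}\Gamma_{\lambda,B}(s_i,s_{i+2}).
\]
For each of the $k-1$ indices $i$, Lemma~\ref{lem:1723} with $(s,s')=(s_i,s_{i+2})$ provides $B_0^{(i)}$, depending only on $f$ and on $s_{i+2}-s_i$, with $B\geq B_0^{(i)}\Rightarrow\Gamma_{\lambda,B}(s_i,s_{i+2})\ll(s_{i+2}-s_i)^2\lambda^{-4}\ll\delta^2\lambda^{-4}$, the implied constant depending only on $f$. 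As the index set is finite and determined by $\delta$, we may take $B_0:=\max_i B_0^{(i)}$, depending only on $f$ and $\delta$; summing over $i$ and using $k\delta\leq 1$ then gives, for $B\geq B_0$,
\[
\b P_B\big[x\in\Omega_B:w''(\delta,Y_{B}(\bullet,x))>\lambda\big]\ll(k-1)\,\delta^2\lambda^{-4}\leq\delta\lambda^{-4},
\]
with implied constant depending only on $f$.

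Combining the two ranges, and passing from $w''(\delta,u)>\lambda$ to $w''(\delta,u)\geq\lambda$ through $\{w''\geq\lambda\}\subseteq\{w''>\lambda/2\}$, turns both bounds into a single estimate $\b P_B[x\in\Omega_B:w''(\delta,Y_{B}(\bullet,x))\geq\lambda]\leq K\delta\lambda^{-4}$ valid for $B\geq B_0$, with $K$ depending only on $f$ and $B_0$ depending only on $f$ and $\delta$ (a fortiori on $f,\delta,\lambda$), which is the assertion. I do not anticipate a genuine obstacle: the substantive content — the interval-correlation estimate of Theorem~\ref{prop:epi}, its consequence Lemma~\ref{lem:pre1723}, and the chaining argument of Lemma~\ref{lem:1723} resting on Lemma~\ref{lem:billithm1} — is already established. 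The only points needing care are organisational: reconciling the two argument orderings of $w''$ appearing in the statement and in Lemma~\ref{lem:k622}, choosing $B_0$ uniformly over the $O(1/\delta)$ blocks, and matching the block count $k\asymp 1/\delta$ against the per-block bound $O(\delta^2\lambda^{-4})$ so that the product reproduces precisely the linear-in-$\delta$ decay required for tightness.
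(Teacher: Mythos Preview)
Your proof is correct and follows essentially the same route as the paper's: push $\b P_B$ forward to $(D,\c D)$, invoke Lemma~\ref{lem:k622} with the partition $s_i=i\delta$ (and $s_k=1$), bound each $\Gamma_{\lambda,B}(s_i,s_{i+2})$ via Lemma~\ref{lem:1723}, and sum using $k\delta\leq 1$. Your version is in fact tidier on two cosmetic points the paper leaves implicit: you handle $\delta\geq\tfrac12$ separately (where the partition would degenerate to $k=1$), and you pass explicitly from $w''>\lambda$ to $w''\geq\lambda$ via $\{w''\geq\lambda\}\subseteq\{w''>\lambda/2\}$.
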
  
\begin{proof}
 Let $k=k(\delta)$ be the largest positive integer satisfying
 $\delta k< 1$.
Define  $h:\Omega_B\to D$ through  $h(x):=Y_{B}(\bullet ,x)$ and in the terminology
of~\eqref{def:inversemeasure} define 
$P_2:=\b P_B h^{-1}$.
We use Lemma~\ref{lem:k622} with   $P=P_2$
 and 
\[
s_i:=
\begin{cases}
i\delta,  &\text{ if } i=0,1,\ldots,k-1,\\
1, &\text{ if } i=k.
\end{cases}
\] 
We obtain that the quantity $\b P_B$ in our proposition is at most 
\[ 
\sum_{i=0}^{k-2}
\Gamma_{\epsilon,B}(s_i,s_{i+2}) 
.\]
Using Lemma~\ref{lem:1723} 
we obtain $B_0$ that depends at most on $\delta$ 
such that if $B\geq B_0$ then the sum over $i$ 
is at most 
$
K'
\lambda^{-4}
\sum_{i=0}^{k-2}(s_{i+2}-s_i)^2
$.
For every $i\neq k-2$ we have $s_{i+2}-s_i=2\delta^{-1}$.
Note that by the definition of $k$ we have $(k+1)\delta \geq 1$,
therefore $s_{k-2}=(k+1)\delta-3\delta \geq 1-3\delta$. We obtain $s_k-s_{k-2}\leq 3\delta $.
This gives 
$$\sum_{i=0}^{k-2}
(s_{i+2}-s_i)^2
\leq (k-2)\delta^2+9\delta^2
\leq
9k \delta^2 <9 \delta
 ,$$ which concludes the proof.
\end{proof}

\subsection{Proof of Theorem~\ref{thm:main}}
\label{s:proofthm} 
We modify
 the argument
behind
the analogous statement for completely
additive functions defined on the integers, 
see the work of Billingsley~\cite[Th.4.1]{MR0466055}.
Technical difficulties arise 
owing to the comments in Remark~\ref{rem:levofbaddiszero}.
While our level of distribution is $0$,
the level of distribution in Billingsley's proof 
is at a sharp contrast, namely, it 
attains its maximum value, $1$. To see this, note that 
 the related estimate in his proof is $$\#\{m\in \N\cap [1,n]:m\equiv 0\md{Q}\}=\frac{n}{Q}+O(1)$$
and clearly 
the error term is dominated by the main as long as $Q\leq n^{1-\epsilon}$, where $\epsilon>0$ is arbitrary.

We begin by 
 estimating  the approximation 
of $X_{B}(\bullet ,x)
$ by 
$Y_{B}(\bullet ,x)$
. Recall the definition of the Skorohod metric
in~\eqref{eq:skorohmetri}
and the function $Y_{B}(\bullet ,x)$
in~\eqref{def:pathflat}.
\begin{lemma} \label{lem:tranghfa0}
For every $\epsilon>0$
we have 
\[\b P_B\Big(x\in \Omega_B:
d(X_{B}(\bullet ,x), Y_{B}(\bullet ,x))
 \geq \epsilon
\Big)
\ll_\epsilon (\log \log B)^{-\frac{1}{4}}
.\]
\end{lemma}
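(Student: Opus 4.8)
The plan is to bound the Skorohod distance from above by taking the trivial time-change $\lambda=\mathrm{id}$ in the infimum defining the metric \eqref{eq:skorohmetri}, which gives $d\big(X_{B}(\bullet ,x),Y_{B}(\bullet ,x)\big)\le\sup_{t\in[0,1]}\big|X_{B}(t ,x)-Y_{B}(t ,x)\big|$; thus it suffices to show that
\[
\b P_B\Big(x\in\Omega_B:\sup_{t\in[0,1]}\big|X_{B}(t ,x)-Y_{B}(t ,x)\big|\ge\epsilon\Big)\ll_\epsilon(\log\log B)^{-1/4}.
\]
First I would discard the $O(B^{-1})$-proportion of $x\in\Omega_B$ with $f^{-1}(x)$ singular or with $g(x)=0$, where $g$ is the square-free polynomial of Lemma~\ref{lem:fshgtt672}. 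On the remaining set I would decompose the difference: writing $\omega_f(x,T)=\sum_{p\le T}\theta_p(x)$ and using Lemma~\ref{lem:abelpu8} to replace $t\Delta(f)\log\log B$ by $\sum_{p\le\exp(\log^t B)}\sigma_p+O(1)$ uniformly in $t\in[0,1]$ (the short initial range of $t$ with $\exp(\log^t B)<3$ being handled directly, since there $Y_B(t,x)=0$ and $X_B(t,x)=O((\log\log B)^{-1/2})$), one finds that $X_B(t,x)-Y_B(t,x)$ equals $(\Delta(f)\log\log B)^{-1/2}$ times
\[
\sum_{p\le\min\{\log B,\,\exp(\log^t B)\}}(\theta_p(x)-\sigma_p)\;+\sum_{B^{\psi(B)}<p\le\exp(\log^t B)}(\theta_p(x)-\sigma_p)\;+\;O(1).
\]
So it remains to bound, uniformly in $t$, the ``small-prime'' tail over $p\le\log B$ and the ``large-prime'' tail over $p>B^{\psi(B)}$.

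For the large primes, since $\exp(\log^t B)\le B$ for all $t\le1$, the second sum runs over a sub-interval of $(B^{\psi(B)},B]$; its $\sigma_p$-part is $\ll\log\log\log B$ by Lemma~\ref{lem:abelpu8}, and its $\theta_p$-part is, by Lemma~\ref{lem:fshgtt672}, at most the number of distinct primes exceeding $B^{\psi(B)}$ that divide $g(x)$. As the product of those primes divides $g(x)$, as $|g(x)|\ll_f B^{O(1)}$ for $H(x)\le B$, and as each such prime contributes more than $\psi(B)\log B$ to $\log|g(x)|$, that number is $\ll_f\psi(B)^{-1}=(\log\log B)^{1/4}$. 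Hence the supremum over $t$ of the large-prime tail is $\ll(\log\log B)^{1/4}$ for \emph{every} remaining $x$, contributing $\ll(\log\log B)^{-1/4}$ after division by $(\Delta(f)\log\log B)^{1/2}$. For the small primes I would use the crude bound $\big|\sum_{p\le\min\{\log B,\exp(\log^t B)\}}(\theta_p(x)-\sigma_p)\big|\le\omega_f(x,\log B)+\sum_{p\le\log B}\sigma_p\le\omega_f(x,\log B)+O(\log\log\log B)$, valid for all $t$.

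Collecting these estimates, for $B$ large in terms of $\epsilon$ the event $\sup_t|X_B(t,x)-Y_B(t,x)|\ge\epsilon$ forces $\omega_f(x,\log B)\ge\tfrac{\epsilon}{2}(\Delta(f)\log\log B)^{1/2}$, so it is enough to bound the measure of the latter event, which I would do via the first moment. Applying Theorem~\ref{eq:bachlutesuites} with $r=1$, $c=1$ and $T=\log B$ (legitimate once $\log B\ge 9$, and then $\mu_1=0$) — equivalently, summing Lemma~\ref{lem:harmonicgfts7865} over single primes $p\le\log B$ — gives $\sum_{x\in\Omega_B:\,f^{-1}(x)\text{ smooth}}\omega_f(x,\log B)\ll\#\Omega_B\,\log\log\log B$, so Markov's inequality yields
\[
\b P_B\Big(\omega_f(x,\log B)\ge\tfrac{\epsilon}{2}(\Delta(f)\log\log B)^{1/2}\Big)\ll_\epsilon\frac{\log\log\log B}{(\log\log B)^{1/2}}\ll(\log\log B)^{-1/4},
\]
the last step because $\log\log\log B\le(\log\log B)^{1/4}$ for $B$ large. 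Adding back the discarded $O(B^{-1})$ completes the argument. The point requiring care is the \emph{uniformity in $t$}: it works because neither tail range depends on $t$ (the factor $\exp(\log^t B)$ only truncates them from above), and because the large-prime tail admits the deterministic bound $(\log\log B)^{1/4}$ forced by the polynomial size of $g(x)$ — which is precisely why the truncation parameter $\psi(B)=(\log\log B)^{-1/4}$ is taken with this exponent. No genuinely new difficulty arises beyond this bookkeeping; the lemma is a technical reconciliation of $X_B$ with its truncation $Y_B$.
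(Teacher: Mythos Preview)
Your proposal is correct and follows essentially the same route as the paper: bound the Skorohod distance by the supremum norm, split the difference into a small-prime tail ($p\le\log B$), a large-prime tail ($p>B^{\psi(B)}$), and an $O(\log\log\log B)$ contribution from the $\sigma_p$'s, control the large-prime tail deterministically via the divisor bound for $g(x)$, and finish with a first-moment Markov argument on the small-prime tail. The only cosmetic difference is that the paper phrases the small-prime count as $\sum_{p\mid g(x),\,p\le\log B}1$ (invoking \cite[Lem.~3.10]{arXiv:1711.08396} for its average), whereas you keep it as $\omega_f(x,\log B)$ and average via Theorem~\ref{eq:bachlutesuites} or Lemma~\ref{lem:harmonicgfts7865}; by Lemma~\ref{lem:fshgtt672} these two quantities differ by $O(1)$, so the arguments are interchangeable.
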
 
\begin{proof}
Let  $    m(B,t)    :=\min\big\{\exp(\log^t B) ,  B^{\psi(B)}  \big\}$ and 
\begin{align*}
&M_1(B,t):=\sum_{ p\leq  \log  B }
\begin{cases} 1, &\mbox{if } f^{-1}(x)(\Q_p)=\emptyset,\\ 
0, & \mbox{otherwise},\end{cases} 
\\
&M_2(B,t,x) :=\sum_{m(B,t)<p\leq \exp(\log^t B)}
\begin{cases} 1, &\mbox{if } f^{-1}(x)(\Q_p)=\emptyset,\\ 
0, & \mbox{otherwise},\end{cases} 
\\
& M_3(B,t) :=-t \Delta(f) \log \log B+\sum_{\log B<p \leq   m(B,t) }   \sigma_p ,
 \end{align*}  
where empty sums are set equal to zero.
A moment's thought allows one to see that 
\beq{eq:contexty0}{\l(X_{B}(t,x)-Y_{B}(t,x)\r) 
(\Delta(f) \log \log B)^{\frac{1}{2}  }
=M_1(B,t)+M_2(B,t,x) +M_3(B,t).
}
According to Lemma~\ref{lem:fshgtt672},
if $g(x)\neq 0$ then 
\beq{eq:contexty1}{|M_1(B,t)|  \leq 
\sum_{\substack{   p\mid g(x), p\leq \log B }}1 .}
Similarly, if $g(x)\neq 0$ and $H(x)\leq B$ then 
Lemma~\ref{lem:fshgtt672}
ensures that   \[|M_2(B,t,x) |\leq \sum_{\substack{ m(B,t)<p\leq \exp(\log^t B)\\   \ p\mid g(x)}}1
.\]
If $m(B,t)= \exp(\log^t B)$ then this sum is empty and if $m(B,t)=  B^{\psi(B)}$ then 
\beq{eq:contexty2}{|M_2(B,t,x) | \leq 
\sum_{\substack{  p>B^{\psi(B)}  \\     p\mid g(x)      }}1 
\ll \frac{\log |g(x)|}{ \log  (B^{\psi(B)})}
\ll \frac{\log (B^{\deg(g)})}{ \log  (B^{\psi(B)})}
\ll
{(\log \log B)^{\frac{1}{4}}}
}
because a non-zero integer $m$ can have at most $\frac{\log |m|}{\log M}$ prime divisors in the range $p>M$.
To bound $M_3(B,t)$ when  $m(B,t)= \exp(\log^t B)$
we invoke Lemma~\ref{lem:abelpu8} to obtain
\[|M_3(B,t)| \leq 
\Big|-t \Delta(f) \log \log B+\sum_{ p \leq   \exp(\log^t B)}   \sigma_p \Big|+\Big|\sum_{p\leq \log B}\sigma_p\Big|
\ll 1+ \log \log \log B.\]
 In the remaining case $m(B,t)= B^{\psi(B)}$ 
we note  that 
$\leq B^{\psi(B)} \leq \exp(\log^t B)  \leq B$ implies \[
-\log \psi(B)+ \log \log B\leq t \log \log B\leq \log \log B
\]
and therefore $t \log \log B=\log \log B+O(\log \log \log B)$ with an absolute implied constant.
Thus, Lemma~\ref{lem:abelpu8} shows that $M_3(B,t)$ equals 
\[-\log \log B+O(\log \log \log B)+\sum_{\log B<p \leq   B^{\psi(B)} }   \sigma_p 
\ll \log \log \log B.\] 
This shows that for all $x\in \Omega_B$ with $g(x)\neq 0$ one has 
\beq{eq:contexty3}{|M_3(B,t)| \ll   \log \log \log B .} 
Injecting~\eqref{eq:contexty1},\eqref{eq:contexty2}
and~\eqref{eq:contexty3} into~\eqref{eq:contexty0}
shows that if  $H(x)\leq B$ and $g(x)\neq 0$ then 
\beq{def:gigue2}{
\big|X_{B}(t,x)-Y_{B}(t,x)\big|
 \ll
{(\log \log B)^{-\frac{1}{2}}}
\Big(
{(\log \log B)^{\frac{1}{4}}}
+\sum_{\substack{   p\mid g(x),  p\leq \log B }}1
\Big)
,}
where the implied constant is independent of $t$ and $B$.
We may now take $\lambda(t):=t$ in~\eqref{eq:skorohmetri}
to see that $d(X,Y)  \leq \sup\{|X(t)-Y(t)|:t\in [0,1]\}$, therefore 
\beq{eq:contexty4}{
d(X_{B}(\bullet ,x), Y_{B}(\bullet ,x))
\ll
{(\log \log B)^{-\frac{1}{2}}}
\Big(
{(\log \log B)^{\frac{1}{4}}}
+\sum_{\substack{   p\mid g(x),  p\leq \log B }}1
\Big)
.}
Note that since $g$ is not identically vanishing 
we have $\b P_B(x\in \Omega_B:g(x)=0)\ll B^{-1}$.
This shows that the quantity $\b P_B$ in the statement of our lemma equals 
\[
O(B^{-1})+ \b P_B\l(x\in \Omega_B: g(x) \neq 0,
d(X_{B}(\bullet ,x), Y_{B}(\bullet ,x))
 \geq \epsilon \r)
 \] and by 
 Markov's inequality this is
\[\ll
B^{-1}+\frac{1}{\epsilon  B^{n+1} }  
\sum_{\substack{x\in \P^n(\Q), g(x)\neq 0  \\H(x)\leq B  }}
d(X_{B}(\bullet ,x), Y_{B}(\bullet ,x))
.\] Using~\eqref{eq:contexty4} and~\cite[Lem.3.10]{arXiv:1711.08396} 
for 
$z(B)=(\log \log B)^{\frac{1}{4}}, y(B):=\log B$ yields the bound 
\[\ll_\epsilon B^{-1}+
{(\log \log B)^{-\frac{1}{2}}}
\l((\log \log B)^{\frac{1}{4}}+\log \log \log B\r)
,\] which concludes our proof.
\end{proof} 

By~\cite[Th.3.1]{MR1700749} and Lemma~\ref{lem:tranghfa0} 
we see that Theorem~\ref{thm:main}  holds
as long as we prove it with $Y_{B}$ in place of $X_{B}$.
We shall do so by using Lemma~\ref{thm:billithm} 
with $P$ being the Wiener measure $W$ and
$P_B:=\b P_B Y_B^{-1}$. The latter measure is defined on $(D,\c D)$ 
via~\eqref{def:inversemeasure}  with 
$$(X,\c X)=(\Omega_B,\{A:A\subset \Omega_B\} ),
Y:=(D,\c D),
\nu:=\b P_B
$$ and 
$h:(\Omega_B,\b P_B)\to (D,\c D)$ being
given by $x\mapsto Y_B(\bullet,x)$.
In particular, for every $B\geq 1$ and every 
$\delta, \epsilon>0$ we can write 
\beq{eq:reexprvioln}{
P_B\big[u\in D:w''(\delta,u) \geq \epsilon \big] 
=
\b P_B\big[x\in \Omega_B: w''(\delta,Y_B(\bullet,x) \geq \epsilon \big] 
.}
Now fix any $\b t\in [0,1]^m$.
To rephrase~\eqref{req:billithm2}
we use~\eqref{def:inversemeasure}
with  
\[(X,\c X):=(D,\c D),
(Y, \c Y):=(\R,\c B(\R)),
\nu:=P_B
\] 
and $h:D\to \R^m$ defined by 
$u \mapsto \pi_{\b t}(u)$.
Here, $\c B(\R)$ is the standard
Borel $\sigma$-algebra in the real line.
This shows that $P_B  \pi_{\b t}^{-1}$
is a measure on $(\R^m,\c B(\R^m))$
and, in particular, if $S_1\times \cdots \times S_m
 \in \c B(\R)^m$ then 
\[
P_B  \pi_{\b t}^{-1}(S_1\times \cdots \times S_m)
=P_B(u\in D:
1\leq i \leq m \Rightarrow
u(t_i) \in S_i
 ),\] which, as explained above, equals 
$\b P_B(x\in \Omega_B:1\leq i \leq m \Rightarrow Y_B(t_i,x) \in S_i )$. 
A similar construction with $\nu$ replaced by $W$ shows that 
\[P  \pi_{\b t}^{-1}(S_1\times \cdots \times S_m)
=W(u\in D:
1\leq i \leq m \Rightarrow
u(t_i) \in S_i
)
.\]
Recall that part of the definition of the Wiener measure
is that this equals 
\[
\prod_{\substack{ 1\leq i \leq m \\ t_i \neq 0 }}
 \int_{S_i  }   
\frac{ \exp ( -\theta^2 /2t_i  )   }{(2\pi t_i)^{ \frac{1}{2}   }}   
\mathrm{d}  \theta 
.\] This can be seen by taking $(s,t)=(0,t_i)$ in~\cite[Eq.(37.4)]{MR1324786}. 
Therefore,
in our setting,~\eqref{req:billithm1}
is equivalent to
Proposition~\ref{prop:pointwise}.

Let us now see why~\eqref{req:billithm2} is automatically satisfied when $P$ is the Wiener measure.
Alluding to~\cite[Eq.(8.4)]{MR1700749}
we have for $\epsilon,\delta>0$ that
\begin{align*}
W(u\in D:|u(1)-u(1-\delta)| \geq \epsilon )
=&
\frac{1}{\sqrt{2\pi \delta}}
\int_{\R\setminus (-\epsilon,\epsilon)}
\exp(-\theta^2/(2\delta))
\mathrm{d}\theta
\\
=&
\frac{1}{\sqrt{2\pi  }}
\int_{\R\setminus (- \epsilon/\sqrt{\delta},\epsilon/\sqrt{\delta} )}
\exp(-\theta'^2/2)
\mathrm{d}\theta'
.\end{align*}
For fixed $\epsilon$ and for $\delta\to 0$ the last expression is the tail of a convergent integral,
thus it converges to zero.

To complete the proof of 
Theorem~\ref{thm:main}
via
Lemma~\ref{thm:billithm}
it remains to verify~\eqref{req:billithm3}.
Owing to~\eqref{eq:reexprvioln}
we see that~\eqref{req:billithm3}
can be reformulated equivalently as follows: 
for each $\epsilon,\eta>0$
there exists
$\delta \in (0,1), B_0 \in \N$
such that for all $B\geq B_0$ we have 
\[\b P_B\big[x\in \Omega_B: w''(\delta,Y_B(\bullet,x) )
\geq \epsilon \big]  
\leq \eta
.\]
The fact that this holds is verified in Proposition~\ref{prop:tightness}.
This completes the proof of Theorem~\ref{thm:main}.
\qed

\section{Consequences of the Brownian model}
\label{s:feynman_kac}
In this section we give some number theoretic
consequences
of the fact that $p$-adic solubility can be modelled by Brownian motion.

\subsection{Proof of Theorem~\ref{thm:beyondeq} and Corollary~\ref{cor:pjcor}}
\label{s:proofcoro}
Define
 \[A:=\{u\in D:z\leq \max_{0\leq t \leq 1}
u(t)\}
.\]
Owing to the 
 reflection principle
this set has 
  Wiener measure given by 
\[
W(A)=\frac{2}{\sqrt{2\pi}}
\int_z^{+\infty}
\mathrm{e}^{-\frac{t^2}{\!2}}
\mathrm{d}t
,\]
see~\cite[Th.2.21]{MR2604525}.
An application of Theorem~\ref{thm:main}
concludes the proof. \qed
\subsection{Proof of Theorem~\ref{thm:beyondeqpartita}  and Corollary~\ref{cor:pjcorpartita}}
\label{s:needmus}
The set  \[A:=\{u\in D:z\leq 
\max_{0\leq t \leq 1}
|u(t)|
\}
\]
has Wiener measure $W(A)=1-\tau_\infty(z)$ owing to
Donsker's theorem and~\cite[II,pg.292]{MR0015705}.
An application of Theorem~\ref{thm:main}
concludes the proof of
Theorem~\ref{thm:beyondeqpartita}.
To prove Corollary~\ref{cor:pjcorpartita}
we only have to show that 
$\tau_\infty(z)=1+O(|z|^{-2/3}) $.
For $M:=1+|z|^{2/3}$ we see that the series in~\eqref{def:john_lord}
is alternating, thus its tail is bounded by 
\[
\sum_{m > M}   \frac{(-1)^m}{2m+1}
\exp\Bigg\{-\frac{(2m+1)^2\pi^2}{8z^2}\Bigg\}
\ll  \frac{1}{2M+1}
\exp\Bigg\{-\frac{(2M+1)^2\pi^2}{8z^2}\Bigg\}
\ll \frac{1}{M}
.\] By the Taylor expansion $\exp(y)=1+O(y)$, valid when $|y|\ll 1$, we get 
\[
\sum_{0\leq m\leq M}   \frac{(-1)^m}{2m+1}
\exp\Bigg\{-\frac{(2m+1)^2\pi^2}{8z^2}\Bigg\}
=
\sum_{0\leq m\leq M}   \frac{(-1)^m}{2m+1}
+O\l(\frac{M^2}{z^2}\r)
,\] owing to $\sum_{m\leq M}m\ll M^2$. The last sum over $m$ can be completed by introducing an error term of size $\ll 1/M$,
thus giving 
\[
\sum_{0\leq m\leq M}   \frac{(-1)^m}{2m+1}
\exp\Bigg\{-\frac{(2m+1)^2\pi^2}{8z^2}\Bigg\}
=
\frac{\pi}{4}
+O\l(\frac{1}{M}+\frac{M^2}{z^2}\r)
.\]
This completes the proof.
\qed
\subsection{A variant of the path $X_B$}
For a 
prime $p\geq 3$ define 
$p_-$ to be the greatest prime strictly
smaller 
 than $p$
and let $2_-:=1$.
Recall the definition of $\theta_p(x)$
in~\eqref{eq:thesol}.
Before proceeding to the proof of the 
rest of our results 
it is necessary to approximate the path 
$X_{B}(\bullet ,x)$
in Definition~\ref{def:pathsdef} by the following variant:
for each  $x\in \P^n(\Q)$ and  $B\in \R_{\geq 3}$ we define the function $
Z_{B}(\bullet ,x)
:[0,1]\to\R$ as follows, 
\[ 
t\mapsto
Z_{B}(t,x)
:= 
\frac{1}{(\Delta(f) \log \log B)^{\frac{1}{2}  }} 
\Osum_{\substack{ p \leq B   
  }} (\theta_p(x)-\sigma_p),   \] 
where the sum $\Osum$ is taken over all primes $p$ satisfying 
\[
\sum_{q\leq p_-    } \sigma_q\leq \Delta(f) t \log \log B
.\] 
Therefore, labelling all primes 
in ascending order as $q_1=2,q_2=3,\ldots$,
and letting 
\[
T_i(B,x)
:=
\l\{
t:
\sum_{q \text{ prime} \leq q_i} \sigma_q
<\Delta(f) t \log \log B
\leq 
\sum_{q \text{ prime} \leq q_{i+1} } \sigma_q
\r\}
,\]
we infer
\beq{eq:meas}{
\mathrm{meas}
\l(
T_i(B,x)
\r)
=
\frac{\sigma_{q_{i+1}}}{\Delta(f) \log \log B}
}
and  
\beq{eq:stable}
{t\in T_i(B,x) 
\Rightarrow Z_{B}(t,x)=\frac
{\omega_f(x,q_{i+1})  -    \sum_{q\leq q_{i+1}  }   \sigma_q}
{\l(    \Delta(f)   \log \log B\r)^{\frac{1}{2}} }
.}

Recall the definition of  
$\psi(B)$
in~\eqref{def:bwv911}.
\begin{lemma}
\label{lem:English_Suite_No_2}
For   $x\in \P^n(\Q)$ and  $B\in \R_{\geq 3}$ we define 
$
Z'_{B}(\bullet ,x)
:[0,1]\to \R$
given by
\[ 
t\mapsto
Z'_{B}(t,x)
:= 
\frac{1}{(\Delta(f) \log \log B)^{\frac{1}{2}  }} 
\Osum_{\substack{    
\log B<p \leq  B^{\psi(B)}   
   }}
(\theta_p(x)-\sigma_p)
,\] 
where the sum $\Osum$ is taken over all primes $p$ satisfying 
\[
\sum_{q\leq p_-    } \sigma_q\leq \Delta(f) t \log \log B
.\]  
Then for 
every $\epsilon>0$
we have 
\[\b P_B\Big(x\in \Omega_B:
d(Z_{B}(\bullet ,x), Z'_{B}(\bullet ,x))
 \geq \epsilon
\Big)
\ll_\epsilon (\log \log B)^{-\frac{1}{4}}
.\]
\end{lemma}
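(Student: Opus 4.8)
\subsection*{Proof proposal for Lemma~\ref{lem:English_Suite_No_2}}

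The plan is to follow the strategy of Lemma~\ref{lem:tranghfa0} almost verbatim, since $Z_B$ and $Z'_B$ differ only in the ranges of primes over which the sum $\Osum$ is taken. First I would note that, for fixed $t\in[0,1]$ and $x\in\P^n(\Q)$, the difference $Z_B(t,x)-Z'_B(t,x)$ equals $(\Delta(f)\log\log B)^{-1/2}$ times the sum of $(\theta_p(x)-\sigma_p)$ over those primes $p$ with either $p\le\log B$ or $B^{\psi(B)}<p\le B$ that additionally satisfy $\sum_{q\le p_-}\sigma_q\le\Delta(f)t\log\log B$. Taking absolute values and discarding the $t$-dependent constraint (which only restricts \emph{which} of these exceptional primes are summed, so can only decrease the sum), we obtain the bound
\[
\big|Z_B(t,x)-Z'_B(t,x)\big|
\ll
\frac{1}{(\Delta(f)\log\log B)^{1/2}}
\Big(
\sum_{p\le\log B}\theta_p(x)
+\sum_{p\le\log B}\sigma_p
+\sum_{B^{\psi(B)}<p\le B}\theta_p(x)
+\sum_{B^{\psi(B)}<p\le B}\sigma_p
\Big),
\]
uniformly in $t$. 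This restores exactly the situation of Lemma~\ref{lem:tranghfa0}.

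Next I would bound each of the four inner sums. By Lemma~\ref{lem:fshgtt672}, for all primes $p$ larger than the constant $A$ the condition $f^{-1}(x)(\Q_p)=\emptyset$ forces $p\mid g(x)$; hence $\sum_{p\le\log B}\theta_p(x)\ll 1+\#\{p\mid g(x):p\le\log B\}$, while $\sum_{B^{\psi(B)}<p\le B}\theta_p(x)\le\#\{p\mid g(x):p>B^{\psi(B)}\}\ll\log|g(x)|/\log(B^{\psi(B)})\ll\deg(g)/\psi(B)=\deg(g)(\log\log B)^{1/4}$, using $|g(x)|\ll H(x)^{\deg(g)}\le B^{\deg(g)}$ valid when $g(x)\ne0$. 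For the $\sigma_p$-sums, Lemma~\ref{lem:harmonicgfts6} gives $\sigma_p\ll1/p$, and then Lemma~\ref{lem:abelpu8} yields $\sum_{p\le\log B}\sigma_p\ll\log\log\log B$ and
\[
\sum_{B^{\psi(B)}<p\le B}\sigma_p
=\Delta(f)\big(\log\log B-\log\log(B^{\psi(B)})\big)+O(1)
\ll\log\log\log B,
\]
since $\log\log(B^{\psi(B)})=\log(\psi(B)\log B)=\log\log B-\tfrac14\log\log\log B$. Assembling these estimates gives, for all $x\in\P^n(\Q)$ with $H(x)\le B$ and $g(x)\ne0$,
\[
\big|Z_B(t,x)-Z'_B(t,x)\big|
\ll
\frac{1}{(\log\log B)^{1/2}}
\Big((\log\log B)^{1/4}+\sum_{\substack{p\mid g(x)\\ p\le\log B}}1\Big),
\]
with an implied constant independent of $t$ and $B$.

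Finally I would take $\lambda(t)=t$ in the Skorohod metric~\eqref{eq:skorohmetri}, which bounds $d(Z_B(\bullet,x),Z'_B(\bullet,x))$ by $\sup_{t\in[0,1]}|Z_B(t,x)-Z'_B(t,x)|$, so the pointwise bound above persists for the distance itself. Splitting off the locus $\{x\in\Omega_B:g(x)=0\}$, which has $\b P_B$-measure $\ll B^{-1}$, and applying Markov's inequality, $\b P_B\big(x\in\Omega_B:d(Z_B(\bullet,x),Z'_B(\bullet,x))\ge\epsilon\big)$ is at most $O(B^{-1})$ plus $(\epsilon B^{n+1})^{-1}$ times $\sum_{x\in\P^n(\Q),\,g(x)\ne0,\,H(x)\le B}(\log\log B)^{-1/2}\big((\log\log B)^{1/4}+\sum_{p\mid g(x),\,p\le\log B}1\big)$. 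Invoking \cite[Lem.3.10]{arXiv:1711.08396} with $z(B)=(\log\log B)^{1/4}$ and $y(B)=\log B$ bounds the sum over $x$ by $\ll B^{n+1}\big((\log\log B)^{1/4}+\log\log\log B\big)\ll B^{n+1}(\log\log B)^{1/4}$, which yields the claimed $\ll_\epsilon(\log\log B)^{-1/4}$.

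I do not anticipate a serious obstacle: the proof is structurally identical to that of Lemma~\ref{lem:tranghfa0}, and the only point needing a little care is confirming that the order-sensitive constraint $\sum_{q\le p_-}\sigma_q\le\Delta(f)t\log\log B$ defining $\Osum$ can be discarded when passing to an upper bound for $|Z_B-Z'_B|$, which it can, being monotone in the set of primes summed.
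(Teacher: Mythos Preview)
Your proposal is correct and follows essentially the same approach as the paper: bound $|Z_B(t,x)-Z'_B(t,x)|$ by discarding the $\Osum$ constraint, split into the four sums over $\theta_p$ and $\sigma_p$ in the ranges $p\le\log B$ and $B^{\psi(B)}<p\le B$, control these via Lemmas~\ref{lem:fshgtt672} and~\ref{lem:abelpu8} to recover the bound~\eqref{def:gigue2}, and finish with Markov's inequality and \cite[Lem.3.10]{arXiv:1711.08396} exactly as in Lemma~\ref{lem:tranghfa0}. The paper's proof is slightly terser but structurally identical.
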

\begin{proof} 
Ignoring the condition in $\Osum$ gives
\[
| Z_{B}(t,x) - Z'_{B}(t,x)  |
( \Delta(f)
\log \log B)^{\frac{1}{2}  }
\leq
\sum_{p\leq \log B}
(\theta_p(x)+\sigma_p)
+
\sum_{B^{\psi(B)}<p\leq   B}
(\theta_p(x)+\sigma_p)
.\] By  Lemma~\ref{lem:abelpu8}
the $\sigma_p$ terms contribute 
\[\ll \log \log \log B+ \log \frac{\log B}{\log B^{\psi(B)}}
\ll
\log \log \log B
.\]
As in the proof of Lemma~\ref{lem:tranghfa0}, if $g(x)\neq 0$ 
and $H(x)\leq B$ 
then 
the remaining  terms  are
\[
\ll
\sum_{\substack{p\mid g(x)  \\ B^{\psi(B)}< p\leq   B }}   1
+ \sum_{\substack{p\mid g(x)  \\ p\leq \log B }}1
\ll
  \frac{\log |g(x)| }{\log B^{\psi(B)} }
+ \sum_{\substack{p\mid g(x)  \\ p\leq \log B }}1
\ll
\frac{1}{\psi(B)}
+
\sum_{\substack{p\mid g(x)  \\ p\leq \log B }}1,\] hence by~\eqref{def:bwv911} we obtain
\[
| Z_{B}(t,x) - Z'_{B}(t,x)  | 
\ll
{(\log \log B)^{-\frac{1}{2}}}
\Big(
{(\log \log B)^{\frac{1}{4}}}
+\sum_{\substack{   p\mid g(x),  p\leq \log B }}1
\Big)
.\]   
The right side coincides with that in~\eqref{def:gigue2},
and the rest of the proof can now be completed as 
in the proof of Lemma~\ref{lem:tranghfa0}.    
\end{proof}
Recall the definition of $Y_{B}(\bullet ,x)$ 
in~\eqref{def:bwv911}. 
\begin{lemma}
\label{lem:Contrapunctus_XIV}
For every $\epsilon>0$
we have 
\[\b P_B\Big(x\in \Omega_B:
d(Y_{B}(\bullet ,x), Z'_{B}(\bullet ,x))
 \geq \epsilon
\Big)
\ll_\epsilon (\log \log B)^{-\frac{1}{2}}
.\]
\end{lemma}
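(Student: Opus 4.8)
The plan is to prove the following \emph{deterministic} statement, which trivially implies the probabilistic bound in the lemma: there are $B_{0}=B_{0}(f)$ and $C=C(f)$ such that
\[
d\bigl(Y_{B}(\bullet,x),\,Z'_{B}(\bullet,x)\bigr)\le C\,(\log\log B)^{-1/2}\qquad\text{for all }B\ge B_{0}\text{ and all }x\in\PP^{n}(\QQ);
\]
then the event in the lemma is empty once $(\log\log B)^{-1/2}<\epsilon/C$, and for the remaining bounded range of $B$ one only enlarges the implied constant. The key observation is that $Y_{B}(\bullet,x)$ and $Z'_{B}(\bullet,x)$ are the \emph{same} right-continuous step function up to a reparametrisation of the time $t\in[0,1]$: both accumulate exactly the increments $\theta_{p}(x)-\sigma_{p}$ over exactly the primes $p\in(\log B,B^{\psi(B)}]$, the only difference being the instant at which each prime is switched on --- for $Y_{B}$ the prime $p$ enters at $t=\log\log p/\log\log B$, whereas for $Z'_{B}$ it enters at $t=(\Delta(f)\log\log B)^{-1}\sum_{q\le p_{-}}\sigma_{q}$. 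By Lemma~\ref{lem:abelpu8} these two thresholds differ by $O_{f}((\log\log B)^{-1})$ uniformly in $p$, so the Skorohod metric~\eqref{eq:skorohmetri}, which is precisely designed to absorb such time changes, should see the two paths as $O_{f}((\log\log B)^{-1/2})$ apart.

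The single genuine subtlety comes from primes with $\sigma_{p}=0$. I would first record the (standard) fact that there is $A'=A'(f)$ such that $\sigma_{p}=0$ forces $\theta_{p}(x)=0$ for every $x\in\PP^{n}(\QQ)$ and every prime $p>A'$: by Lemma~\ref{lem:fshgtt672} a point $x$ with $f^{-1}(x)(\QQ_{p})=\emptyset$ reduces to a point of $\PP^{n}(\FF_{p})$ whose fibre is non-split, so $\sigma_{p}=0$ means every $\FF_{p}$-fibre of $f$ is split, hence (for $p$ large, by the Lang--Weil estimates) has a smooth $\FF_{p}$-point, which lifts to $\QQ_{p}$ by Hensel's lemma. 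Consequently, for $B$ large the primes in $(\log B,B^{\psi(B)}]$ with $\sigma_{p}=0$ contribute the zero increment to \emph{both} $Y_{B}(\bullet,x)$ and $Z'_{B}(\bullet,x)$, so both paths are determined by the ``active'' primes $\ell_{1}<\cdots<\ell_{R}$ in that range with $\sigma_{p}>0$. Setting $\tilde S_{i}(x):=(\Delta(f)\log\log B)^{-1/2}\sum_{l\le i}(\theta_{\ell_{l}}(x)-\sigma_{\ell_{l}})$, one checks that $Y_{B}(\bullet,x)$ equals $\tilde S_{i}(x)$ on $[a_{i},a_{i+1})$ with $a_{i}=\log\log\ell_{i}/\log\log B$, while $Z'_{B}(\bullet,x)$ equals $\tilde S_{i}(x)$ on $[b_{i},b_{i+1})$ with $b_{i}=(\Delta(f)\log\log B)^{-1}\sum_{q\le(\ell_{i})_{-}}\sigma_{q}$ (conventions $a_{0}=b_{0}=0$, $a_{R+1}=b_{R+1}=1$); here $(a_{i})$ is strictly increasing and $(b_{i})$ is strictly increasing for $i\ge1$ (since each active $\sigma_{\ell_{i}}>0$), and using Lemma~\ref{lem:abelpu8} and $\psi(B)<1$ one verifies $b_{R}<1$ and $Z'_{B}(1,x)=Y_{B}(1,x)$.

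Then I would take $\lambda_{B}$ to be the piecewise-linear increasing homeomorphism of $[0,1]$ with $\lambda_{B}(a_{i})=b_{i}$ (perturbing the $b_{i}$ by an arbitrarily small amount in the degenerate case $b_{1}=0$). With this choice $Y_{B}(t,x)=Z'_{B}(\lambda_{B}(t),x)$ for all $t$ and all $x$, up to a discrepancy of at most a single increment, i.e. $(\Delta(f)\log\log B)^{-1/2}$, occurring only in the degenerate case near $t=0$; on the other hand
\[
\sup_{t\in[0,1]}|\lambda_{B}(t)-t|=\max_{0\le i\le R+1}|b_{i}-a_{i}|=\frac{1}{\Delta(f)\log\log B}\max_{i}\Bigl|\Delta(f)\log\log\ell_{i}-\sum_{q\le(\ell_{i})_{-}}\sigma_{q}\Bigr|\ll_{f}\frac{1}{\log\log B},
\]
the last bound following from Lemma~\ref{lem:abelpu8} together with $\ell_{i}\le2(\ell_{i})_{-}$ and $(\ell_{i})_{-}\ge\tfrac13\log B$ (Bertrand's postulate), which give $\log\log\ell_{i}-\log\log(\ell_{i})_{-}\ll(\log\log B)^{-1}$ uniformly in $i$. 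Feeding both estimates into~\eqref{eq:skorohmetri} gives the claimed deterministic bound. The main obstacle in this plan is exactly the reduction step for $\sigma_{p}=0$: without it a run of consecutive $\sigma_{p}=0$ primes would collapse to one jump time in $Z'_{B}$ while remaining spread out in $Y_{B}$, and the induced discrepancy could be as large as (length of the run)$\cdot(\log\log B)^{-1/2}$, which is not controllable; everything else is routine bookkeeping with step functions and a one-line appeal to Mertens-type estimates.
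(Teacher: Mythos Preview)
Your argument is correct and takes a genuinely different route from the paper. The paper bounds the uniform norm $\sup_t|Y_B(t,x)-Z'_B(t,x)|$: for each fixed $t$ it localises the symmetric difference of the two index sets of primes to a short window $(z_1(t),z_2(t)]$ with $\log\log z_2-\log\log z_1=O_f(1)$, controls the $\sigma_p$--contribution by $O(1)$ deterministically, and then \emph{averages over $x$} via the level-of-distribution estimate (Lemma~\ref{lem:harmonicgfts7865}) to handle the $\theta_p$--contribution, finishing with Markov's inequality. Your approach instead exploits the Skorohod metric as it is meant to be used: once one knows that primes with $\sigma_p=0$ are inert (i.e.\ $\theta_p\equiv 0$ for $p$ large), the two paths carry \emph{identical} increments at slightly shifted jump times, and a piecewise-linear time change $\lambda_B$ with $\sup_t|\lambda_B(t)-t|\ll_f(\log\log B)^{-1}$ aligns them exactly. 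This yields a \emph{deterministic} bound $d(Y_B,Z'_B)\ll_f(\log\log B)^{-1}$, in fact sharper than the stated $(\log\log B)^{-1/2}$.

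The trade-off is that your argument needs the geometric input ``$\sigma_p=0\Rightarrow\theta_p(x)=0$ for all $x$ and all $p>A'$''. Your Lang--Weil/Hensel sketch is correct, though it is not literally the content of Lemma~\ref{lem:fshgtt672}: one also uses that the integral model $\mathcal V$ of $V$ is smooth over $\mathbb Z_p$ for $p$ large, so that if $\bar y$ is a smooth $\mathbb F_p$-point of the fibre $f^{-1}(\bar x)$ then the Jacobian of $f$ at $\bar y$ has full rank modulo $p$, hence over $\mathbb Z_p$, whence $f$ is smooth at $\bar y$ over $\mathbb Z_p$ and Hensel lifts $\bar y$ into $f^{-1}(x)(\mathbb Q_p)$. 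The paper's argument, by contrast, stays entirely on the analytic side and never invokes this fact. On the other hand, your time-change cleanly sidesteps the passage from the pointwise-in-$t$ first-moment bound $\sum_x|Y_B(t,x)-Z'_B(t,x)|\ll B^{n+1}(\log\log B)^{-1/2}$ to control of the supremum over $t$, a step the paper's proof treats rather briskly.
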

\begin{proof} 
Let 
$
S_1:=\l\{p\leq \exp\l(
(\log B)^t
\r)\r\}$ and 
\[S_2:=\l\{p\leq B:
\sum_{q\leq p_-} \sigma_p \leq \Delta(f) t \log \log p
\r\}
.\] We infer that 
\[
| Y_{B}(t,x) - Z'_{B}(t,x)  |
( \Delta(f)
\log \log B)^{\frac{1}{2}  }
\leq
\sum_{\substack{
p\in S_2\setminus S_1 
\\
\log B<p \leq  B^{\psi(B)}  
}} (\theta_p+\sigma_p)
+
\sum_{\substack{
p\in S_1\setminus S_2 
\\
\log B<p \leq  B^{\psi(B)}  
}} (\theta_p+\sigma_p)
.\] We will deal with the sum over $p\in S_2\setminus S_1 $
since the other sum can be treated similarly.
For a prime $p$
not in $S_1$ we have 
\[\sum_{q\leq p_-} 
\sigma_q \leq \Delta(f) t \log \log B
,\] hence by $\sigma_p\leq 1$ we have \[
\sum_{q\leq p} 
\sigma_q\leq 
1+ \Delta(f) t \log \log B
.\]
By Lemma~\ref{lem:abelpu8}
there exists a constant $C_1=C_1(f)$ such that 
\[
(\Delta(f)  \log \log p)
-C_1
\leq 
1+ \Delta(f) t \log \log B
,\] hence 
$\log \log p \leq C_2 +t \log \log B
$ for some $C_2=C_2(f)$.
Let us now define $z_1$ and $z_2$ through 
\[
\log \log z_1= t \log \log B
\text{ and }
\log \log z_2= C_2 +t \log \log B
\] and observe that 
if $p\in S_2\setminus S_1$ 
then $z_1<p\leq z_2$.
By Lemmas~\ref{lem:harmonicgfts6} and~\ref{lem:abelpu9}
we have 
\beq{eq:Contrapunctus X}{
\sum_{\substack{
p\in S_2\setminus S_1 
\\
\log B<p \leq  B^{\psi(B)}  
}}  \sigma_p 
\ll \sum_{z_1<p\leq z_2} \frac{1}{p}
=O(1)+(\log \log z_2)-(\log \log z_1)
\ll 1
,}
with an implied constant depending at most on $f$.
We furthermore have  
\[
\sum_{\substack{
p\in S_2\setminus S_1 
\\
\log B<p \leq  B^{\psi(B)}  
}} 
\theta_p(x)
\leq 
\sum_{\substack{
z_2<p\leq z_1
\\ \log B<
p \leq  B^{\psi(B)}
\\  
}} \theta_p(x)
,\]
 hence 
\[
\sum_{\substack{
x\in \P^n(\Q),
H(x) \leq B
\\
f^{-1}(x) \text{ non-singular}
}} 
\sum_{\substack{
p\in S_2\setminus S_1 
\\
\log B<p \leq  B^{\psi(B)}  
}} 
\theta_p(x)
\leq 
\sum_{\substack{
z_2<p\leq z_1
\\ \log B<
p \leq  B^{\psi(B)}
\\  
}} 
\c A_p
,\]
where $\c A_p$
is as in~\eqref{def:econtn}.
Using
Lemma~\ref{lem:harmonicgfts7865} 
and the bound $\c A_1\ll B^{n+1}$ shows that 
 this is 
\[\ll
\sum_{\substack{
z_2<p\leq z_1
\\  \log B<
p \leq  B^{\psi(B)}
\\  
}} \l(
B^{n+1}
\sigma_p
+ \frac{B^{n+1}  }{ p \log B}
+
p^{2n+1}B+ p B^n (\log B)^{[1/n]}
\r)
.\] Invoking~\eqref{eq:Contrapunctus X}
the first term is 
$ \ll B^{n+1}$.
The second term 
is 
\[\ll 
\frac{B^{n+1}}{\log B}
\sum_{p\leq B} \frac{1}{p} \ll B^{n+1}.\]
The third term is 
\[\ll B
\sum_{p\leq B^{\psi(B)}} p^{2n+1}
\ll_\epsilon B^{1+\epsilon}
,\] valid for all $\epsilon>0$.
The fourth term can be bounded by 
\[
\ll B^n (\log B)^{[1/n]}
\sum_{p\leq B^{\psi(B)}}   p  \ll_{\epsilon}
 B^{n+\epsilon}
.\]
We have thus shown that 
\[
\sum_{\substack{
x\in \P^n(\Q),
H(x) \leq B
\\
f^{-1}(x) \text{ non-singular}
}} 
\sum_{\substack{
p\in S_2\setminus S_1 
\\
\log B<p \leq  B^{\psi(B)}  
}} (\theta_p+\sigma_p)
\ll B^{n+1}
,\]
 from which we can obtain
\[
\sum_{\substack{
x\in \P^n(\Q),
H(x) \leq B
\\
f^{-1}(x) \text{ non-singular}
}} 
| Y_{B}(t,x) - Z'_{B}(t,x)  |
\ll B^{n+1}
(\log \log B)^{-\frac{1}{2} }
.\]
An application of  Markov's inequality 
as in the last stage of the proof of Lemma~\ref{lem:tranghfa0}
concludes the proof.
\end{proof} 
\begin{remark}
\label
{rem:Apocalyptic_Triumphator}
The statement of 
Theorem~\ref{thm:main}
remains valid when 
$ X_{B}(\bullet ,x)$ is replaced by any of the functions
\[
 Y_{B}(\bullet ,x),
 Z'_{B}(\bullet ,x)
\text{  or }
 Z_{B}(\bullet ,x)
.\]
This can be seen by bringing together
Lemmas~\ref{lem:tranghfa0},~\ref{lem:English_Suite_No_2}
and~\ref{lem:Contrapunctus_XIV}.
\end{remark}

\subsection{Proof of Theorem~\ref{thm:beyondeqpartitapassacaglia}       }
\label{s:needmus} 
Letting 
\[
A:=\Big\{u\in D:z> 
\int_{0}^1
u(t)^2 \mathrm{d}t
\Big\}
\]
and
combining
Donsker's theorem with 
the result of
Erd{\H{o}}s and 
Kac~\cite[III]{MR0015705}
we obtain 
\[W(A)=\tau_2(z).\]
By Remark~\ref{rem:Apocalyptic_Triumphator}
we can use  Theorem~\ref{thm:main} with
$ X_{B}(\bullet ,x)$ replaced by 
$ Z_{B}(\bullet ,x)$.
This yields 
\[
\lim_{B\to+\infty}
\b P_B\l(x\in \Omega_B: Z_{B}(\bullet ,x) \in A \r)=
\tau_2(z).
\]
To complete the proof it remains to analyse the condition $Z_{B}(\bullet ,x) \in A$.
Labelling all primes 
in ascending order as $q_1=2,q_2=3,\ldots$,
we see that the condition is
equivalent to 
\begin{align*}
z>\int_0
^1
Z_{B}(\bullet ,x)^2
&=
\sum_{q_{i+1}
\leq B}
\l(\frac
{\omega_f(x,q_{i+1})  -    \sum_{q\leq q_{i+1}  }   \sigma_q}
{\l(    \Delta(f)   \log \log B\r)^{\frac{1}{2}} }
\r)^2
\mathrm{meas}
\l(T_i(B,x)\r)
\\&=
\frac{1}{\l(\Delta(f) \log \log B\r)^2 }
\sum_{3\leq p\leq B}
 \sigma_p 
\Big(
\omega_f(x,p)  -    \sum_{q\leq p  }   \sigma_q 
\Big)^2
,\end{align*}
by~\eqref{eq:meas} and~\eqref{eq:stable}.
This concludes the proof
because the contribution of the prime $p=2$ in the last sum is 
$O((\log \log B)^{-2})=o(1)$.
\qed

\subsection{Proof of Theorem~\ref{thm:arcsnl} } 
\label{s:needmus7}
Let us now proceed to the proof of Theorem~\ref{thm:arcsnl}.
For $0\leq \alpha \leq \beta \leq 1$  
 define 
\[A:=\{u \in D: 
\alpha<
\mathrm{meas}
(0\leq t \leq 1: u(t)>0)
\leq \beta
\}
.\]
By~\cite[Th. 5.28]{MR2604525}
we have 
\[W(A)=\frac{1}{\pi}
\int_\alpha^\beta
\frac{\mathrm{d}s}{\sqrt{s(1-s)}}
,\]
thus, by Theorem~\ref{thm:main}
and Remark~\ref{rem:Apocalyptic_Triumphator}
we obtain  
\[
\lim_{B\to+\infty}
\b P_B\l(x\in \Omega_B: Z_{B}(\bullet ,x) \in A \r)=
\frac{1}{\pi}
\int_\alpha^\beta
\frac{\mathrm{d}s}{\sqrt{s(1-s)}}
.\]
We have $Z_{B}(\bullet ,x) \in A$
if and only if  
\[ 
\alpha<
\mathrm{meas}
\l(0\leq t \leq 1: Z_{B}(\bullet ,x)>0  \r)
\leq \beta
.\]
Labelling all primes 
in ascending order as $q_1=2,q_2=3,\ldots$,
and alluding to~\eqref{eq:meas} and~\eqref{eq:stable}
we obtain
\begin{align*}
\mathrm{meas}
\l(0\leq t \leq 1: Z_{B}(\bullet ,x)
>0  \r)
&=
\sum_{\substack {   i\geq 1   \\  \omega_f(x,q_{i+1})  >   \sum_{q\leq q_{i+1}  }   \sigma_q      }}
\mathrm{meas}
\l(
T_i(B,x)
\r)
\\&=
\sum_{\substack{ i\geq 1\\ q_{i+1} \in \c C_f(x)} } 
\frac{  \sigma_{q_{i+1}}   }
{\Delta(f) \log \log B}
\\&=\frac{\widehat{\c C}_f(x)-c \sigma_2}{\Delta(f) \log \log B}
,\end{align*}  where 
the term $c$ equals $1$ if $\omega_f(x,2)>\sigma_2$ and is $0$ otherwise.
If $B^{1/2}<H(x)\leq B$ then 
$-1+
\log \log B 
\leq 
\log \log H(x)
\leq 
\log \log B
$, hence 
for $100\%$ of all $x\in \P^n(\Q)$ one has 
\[
\frac{\widehat{\c C}_f(x)-c \sigma_2}{\Delta(f) \log \log B}
=
\frac{\widehat{\c C}_f(x) }{\Delta(f) \log \log H(x)}
+O\l(\frac{1}{\log \log B}\r)
.\]
This concludes the proof of Theorem~\ref{thm:arcsnl}.
\qed
\subsection{Lower bounds for $\widehat{\c C}_f$}
\label{ex:klapaklapa} 
Let us provide   an example which shows that~\eqref{eq:tsabakafedaki} is best possible. 
Let $V$ be the conic bundle 
$x_0^2+x_1^2=st x_2^2$
and define $f:V\to \P_\Q^1$ 
through $f(x_0,x_1,x_2,s,t):=(s,t)$.
It is easy to see that $\Delta(f)=1$ and that 
\[\sigma_p=
\begin{cases} \frac{2}{p+1}, &\mbox{if } p\equiv 3\md{4},\\ 
0, & \mbox{if } p\equiv 1\md{4}.\end{cases}
\]
Label all primes 
$q\equiv 3\md{4}$ in ascending order 
by $q_1<q_2<\ldots$ and for each $N\in \N$
define 
\[x_N:=\Bigg[1:\prod_{i=1}^N q_i \Bigg] \in \P_\Q^1.\]
One can use Hilbert symbols (see~\cite[Ch.III,Th.1]{serrecourse})
to show that $$\big\{p \text{ prime} : f^{-1}(x_N)(\Q_p)=\emptyset\big\}=\big\{q_i:1\leq i \leq N\big\}.$$
Next, note that for any prime $p\leq q_N$ we have 
\[\omega_f(x_N,p)=\#\{q\equiv 3\md{4}:q\leq p\}
\sim \frac{1}{2}\frac{\log p}{\log \log p}, \text{ as } p\to+\infty,\]
due to the prime number theorem for arithmetic progressions.
Clearly this is greater than the quantity 
$  \sum_{q\leq p}1/q$ for all sufficiently large $p$, therefore $\c C_f(x)$ contains all primes  $p$ 
in the range $1\ll p \leq q_N$, with an absolute implied constant.
Letting $p'$ be the largest prime with $\log \log p'<N$
we obtain that whenever $p\in (q_N,p']$ then 
\[\omega_f(x_N,p)=
\omega_f(x_N)=N
>\log \log p' \geq 
\log \log p'
,\] therefore $\c C_f(x_N)$ contains all primes  $p$ in the range  $(q_N,p']$.
We obtain that 
\[\widehat{\c C}_f(x_N)
\geq 
\sum_{\substack{1\ll p\leq p'\\p\equiv 3\md{4}}} \frac{2}{p+1}
\gg
\log \log (p'+1)\geq N.
\]
The 
 prime number theorem for arithmetic progressions
shows that $$\log H(x_N)=
\sum_{\substack{p\leq q_N \\ p\equiv 3\md{4}}} \log p
\sim q_N \sim 2 N \log N, \text{ as } N\to+\infty,
$$ therefore 
\[\widehat{\c C}_f(x_N) \gg N \gg \frac{\log H(x_N)}{\log \log H(x_N)}\]
for all sufficiently large $N\in \N$.

\subsection{Proof of Theorem~\ref{thm:feynmkac}  } 
\label{s:feynmkac}
By  Theorem~\ref{thm:main}
and Remark~\ref{rem:Apocalyptic_Triumphator}
the random function 
$Z_{B}(\bullet ,x)$ converges in distribution to the
standard
 Wiener process.
Fix $t$ and $u$ as in the statement of  Theorem~\ref{thm:feynmkac}.
Letting 
$h:D\to \R$ be given by
\[h(u):=\exp\l(
-u \int_0^t
\c K(u(\tau))
\mathrm{d}\tau
\r)
,\]
we obtain 
\beq{eq:bwvsort1}{
\lim_{B\to+\infty}
\mathbb{E}_{x\in \Omega_B}
\l(h\l(Z_{B}(\bullet ,x)\r) \r)
=
 \mathbb{E}^0
\l(
\exp\l\{-u
\int_{0}^t
\c K(B_\tau)
\mathrm{d}\tau
\r\}
\r) 
,}
where $ \mathbb{E}^0$ is taken over 
 all Brownian motion paths $\{B_\tau:\tau\geq 0\}$ satisfying $B_0=0$ almost surely 
 and with respect to the Wiener measure $W$.
We have 
\beq{eq:bwvsort2}{
\mathbb{E}_{x\in \Omega_B}
\l(h\l(
Z_{B}(\bullet ,x)
\r) 
\r)
=
\frac{1}{\#\Omega_B}
\sum_{\substack{x\in \P^n(\Q) \\
H(x)\leq B
} }
\exp\l(
-u \int_0^t
\c K(
Z_{B}(\tau ,x)
)
\mathrm{d}\tau
\r)
}
and it thus remains to analyse the last integral.
Labelling all primes 
in ascending order as $q_1=2,q_2=3,\ldots$
and using~\eqref{eq:stable}
gives
us
\beq{eq:sumini}{\int_0^t
\c K(
Z_{B}(\tau ,x)
)
\mathrm{d}\tau
=\sum_{i\geq 1}
\c K\!
\l(
\frac
{\omega_f(x,q_{i+1})  -    \sum_{q\leq q_{i+1}  }   \sigma_q}
{\l(    \Delta(f)   \log \log B\r)^{\frac{1}{2}} }
\r)
\mathrm{meas}
\l(T_i(B,x) \cap [0,t] \r)
.}
Note that 
if \[j
(=j(t,x,B))
:=\max
\l\{
i\geq 1
:
\sum_{q\leq q_j} \sigma_q
\leq
\Delta(f)
t
\log \log B
\r\}
,\]
then the sum in~\eqref{eq:sumini}
includes all terms with $i\leq j-1$
and does not include any term with $i\geq j+1$.
Hence by~\eqref{eq:meas} the sum equals 
\[
\frac{1}{\Delta(f) \log \log B} 
\sum_{p\leq q_j
}
\sigma_p
\c K\!
\l(
\frac
{\omega_f(x, p )  -    \sum_{q\leq p  }   \sigma_q}
{\l(    \Delta(f)   \log \log B\r)^{\frac{1}{2}} }
\r)
+O\l(\frac{1}{\log \log B}\r)
,\]
where 
we have set $p=q_{i+1}$
and 
the error term 
is due to the term with   $i=j$
and the fact that $\c K$ is bounded and non-negative.
Furthermore, 
the implied constant depends at most on $f$.
The definition of $j$ 
implies that 
\[
\sum_{p\leq q_j} \sigma_p
\leq
\Delta(f)
t
\log \log B
<
\sum_{p\leq q_{j+1}} \sigma_p
 \]
and therefore by Lemma~\ref{lem:abelpu8}
there exist non-negative constants $c_0,c_1$ 
such that 
\[
-c_0
+t \log \log B
<
\log \log q_j
\leq 
c_1
+t \log \log B
.\]
Using the fact that $\c K$ is bounded
shows that 
  the difference 
\[
\sum_{p\leq q_j
}
\sigma_p
\c K\!
\l(
\frac
{\omega_f(x, p )  -    \sum_{q\leq p  }   \sigma_q}
{\l(    \Delta(f)   \log \log B\r)^{\frac{1}{2}} }
\r)
-
\sum_{p\leq \exp\l(\log^t B\r)
}
\sigma_p
\c K\!
\l(
\frac
{\omega_f(x, p )  -    \sum_{q\leq p  }   \sigma_q}
{\l(    \Delta(f)   \log \log B\r)^{\frac{1}{2}} }
\r)
\]
has modulus 
\[
\ll 
\sum\l\{
\sigma_p:\log \log p
\in
(
-c_0
+t \log \log B
, 
c_1
+t \log \log B
]
\r\}
\ll
1
,\] 
with an implied constant.
depending at most on $f$.
Recalling~\eqref{def:frenchsuites}
gives
\beq{eq:bwvsort3}{
\int_0^t
\c K(
Z_{B}(\tau ,x)
)
\mathrm{d}\tau
=\widetilde{\c K}_B(x,t)
+O\l(\frac{1}{\log \log B}\r)
,} with an implied constant depending at most on $f$. 
Combining~\eqref{eq:bwvsort1},
\eqref{eq:bwvsort2},
\eqref{eq:bwvsort3}
and
\eqref{eq:feynmankac}
concludes the proof.
\qed

	\end{document}